\theoremstyle{plain}
\newtheorem{theorem}{Theorem}[section]
\newtheorem{corollary}[theorem]{Corollary}
\newtheorem{lemma}[theorem]{Lemma}
\newtheorem{proposition}[theorem]{Proposition}
\newtheorem{Definition}[theorem]{Definition}
\theoremstyle{remark}
\newtheorem{remark}[theorem]{Remark}
\numberwithin{equation}{section}
\newcommand{\R}{\mathbb R}
\newcommand{\N}{\mathbb N}
\newcommand{\C}{\mathbb C}
\newcommand{\Z}{\mathbb Z}
\newcommand{\ASIP}[1]{\textnormal{ASIP}(#1)}
\newcommand{\ASIPno}{\textnormal{ASIP}}
\newcommand{\ASIPpar}{\textnormal{ASIP}(q,\vec{k})}
\newcommand{\BEP}[1]{\textnormal{BEP}(#1)}
\newcommand{\DBS}{D^{\mathrm{B}}_{\hspace{0.04cm}\mathrm{S}}}
\newcommand{\RASIP}[1]{\textnormal{ASIP}_{\mathsf{R}}(#1)}
\newcommand{\RASIPno}{\textnormal{ASIP}_{\mathsf{R}}}
\newcommand{\RASIPpar}{\textnormal{ASIP}_\mathsf{R}(q,\vec{k},\rho)}
\newcommand{\LASIP}[1]{\textnormal{ASIP}_{\mathsf{L}}(#1)}
\newcommand{\LABEP}[1]{\textnormal{ABEP}_{\mathsf{L}}(#1)}
\newcommand{\LABEPno}{\textnormal{ABEP}_{\mathsf{L}}}
\newcommand{\LABEPpar}{\textnormal{ABEP}_{\mathsf{L}}(\sigma,\vec{k},\lambda)}
\newcommand{\LASIPno}{\textnormal{ASIP}_{\mathsf{L}}}
\newcommand{\LASIPpar}{\textnormal{ASIP}_\mathsf{L}(q,\vec{k},\lambda)}
\newcommand{\SIPno}{\textnormal{SIP}}
\newcommand{\RSIP}[1]{\textnormal{SIP}_{\mathsf{R}}(#1)}
\newcommand{\LSIP}[1]{\textnormal{SIP}_{\mathsf{L}}(#1)}
\newcommand{\genASIP}{L^{\mathrm{ASIP}}_{q,\vec{k}}}
\newcommand{\genBEP}{L^{\mathrm{BEP}}_{\vec{k}}}
\newcommand{\genRASIP}{L_{q,\vec{k},\rho}^{\mathrm{ASIP}_\mathsf{R}}}
\newcommand{\genRSIP}{L_{\vec{k},\rho}^{\mathrm{SIP}_\mathsf{R}}}
\newcommand{\genLASIP}{L_{q,\vec{k},\lambda}^{\mathrm{ASIP}_\mathsf{L}}}
\newcommand{\genLSIP}{L_{\vec{k},\lambda}^{\mathrm{SIP}_\mathsf{L}}}
\newcommand{\genLABEP}{L^{\mathrm{ABEP}_\mathsf{L}}_{\sigma,\vec{k},\lambda}}
\newcommand{\De}{\Delta}
\newcommand{\half}{\frac{1}{2}}
\newcommand{\Om}{\Omega}
\newcommand{\pitensor}{\pi_{j,j+1}}
\newcommand{\pitensortwo}{\pi_{1,2}}
\newcommand{\tensor}{\otimes}
\newcommand{\rphis}[5]{\,_{#1}\varphi_{#2} \left( \genfrac{.}{.}{0pt}{}{#3}{#4}
\ ;#5 \right)}
\newcommand{\rphisempty}[2]{\,_{#1}\varphi_{#2}}
\newcommand{\rFs}[5]{\,_{#1}F_{#2} \left( \genfrac{.}{.}{0pt}{}{#3}{#4}	\ ;#5 \right)}
\newcommand{\su}{\mathfrak{su}}
\newcommand{\U}{\mathcal U}
\newcommand{\rev}{\textnormal{rev}}
\begin{document}
\title[Dynamic Generalizations of ASIP, ABEP and their Dualities]{Dynamic generalizations of the Asymmetric Inclusion Process, Asymmetric Brownian Energy Process and their Dualities}
\author{Carel Wagenaar}
\date{\today}
\begin{abstract}
	Two new interacting particle systems are introduced in this paper: dynamic versions of the asymmetric inclusion process (ASIP) and the asymmetric Brownian energy process (ABEP). Dualities and reversibility of these processes are proven, where the quantum algebra $\U_q(\mathfrak{su}(1,1))$ and the Al-Salam--Chihara polynomials play a crucial role. Two hierarchies of duality functions are found, where the Askey-Wilson polynomials and Jacobi polynomials sit on top.
\end{abstract}
\maketitle
\section{Introduction}
	The main objective of this paper is two introduce to new stochastic processes and prove their reversibility as well as several of their dualities. The first process is a generalization of the asymmetric inclusion process (ASIP \cite{CGRSsu11}) that we call the dynamic asymmetric inclusion process (Dynamic ASIP). This process, which has one extra parameter compared to ASIP, is the inclusion version of the recently introduced (generalized) dynamic asymmetric exclusion process (dynamic ASEP, \cite{Bo,BoCo,GroeneveltWagenaarDyn}). 
	The second process is a diffusion limit of dynamic ASIP that we call dynamic ABEP. This is a generalization of the asymmetric Brownian energy process (ABEP, \cite{CGRSsu11}). \\
	\indent For both processes we show a Markov duality, a useful tool in studying interacting particle systems. Duality allows us to study one Markov process by analysing another (often simpler) one. For example, duality can help considerably in proving hydrodynamic limits of interacting particle systems (e.g. \cite{PRV}) or computing correlation functions (e.g. \cite{Sch}). Crucial for duality is finding duality functions. 	In recent years, many duality functions were found that can be expressed as products of ($q$-)hypergeometric polynomials which are orthogonal with respect to the reversible measure of the process. The advantage of such orthogonal duality functions is that they form an orthogonal basis for the Hilbert space induced by the reversible measure of the process. This can greatly simplify the expansion of observables in terms of the duality function. For example, this was used in \cite{AyaCarRed1}, \cite{AyaCarRed2} to study Bolzmann-Gibbs principles and higher order fluctuation fields, and in \cite{FlReSau} in the study of $n$-point correlation functions in non-equilibrium steady states. \\ 
	\indent In the past 15 years, many examples of hypergeometric orthogonal polynomials have been found to be duality functions for symmetric processes. Franceschini and Giardin\`a \cite{FrGi2019} showed that Krawtchouk polynomials arise as self-duality functions for the generalized symmetric exclusion process (`generalized exclusion' is sometimes also referred to as `partial exclusion'). This process has a product of binomial distributions as reversible measure, which corresponds to the orthogonality of the Krawtchouk polynomials. Other examples are Meixner polynomials, Laguerre polynomials, and Hermite polynomials (\cite{CFGGR}, \cite{FraGiaGro}, \cite{Gr2019}, \cite{ReSau2018}, \cite{Zh}). Very recently, $q$-hypergeometric orthogonal polynomials were also to found as duality functions, this time for asymmetric particle processes. Because of the asymmetry, the products of these polynomials have a nested structure, which connects them to the multivariate orthogonal polynomials of Tratnik-type from \cite{GasRahMulti}. In \cite{CFG}, Carinci, Franceschini, and Groenevelt show that certain $q$-Krawtchouk and $q$-Meixner polynomials appear as duality functions for generalized ASEP and ASIP. The results from \cite{CFG} were extended to multi-species versions of generalized ASEP in \cite{BlBuKuLiUsZh},\cite{FraKuaZho}, where also nested products of $q$-Krawtchouk polynomials appear as duality functions. In \cite{GroeneveltWagenaarDyn} Groenevelt and the author show that $q$-Racah polynomials appear as duality functions for dynamic ASEP. These polynomials sit on top of a hierachy of explicit polynomials which are orthogonal on a finite set. Many other dualities related to (a)symmetric exclusion processes can be derived as a limit of these $q$-Racah polynomials. In this paper, we aim to do something similar to \cite{GroeneveltWagenaarDyn} for the inclusion processes, where now the Askey-Wilson polynomials sit on top of this hierarchy. \\
	\indent A valuable tool in the context of stochastic duality is the use of (Lie/quantum) algebras. For many interacting particle systems, the generator can be expressed as a representation of a special element of the algebra (often the coproduct of the Casimir). Therefore, tools of the algebras can be leveraged which can e.g. help in finding symmetries of the generator, duality functions and reversible measures. In this paper, we rely on the quantum algebra $\U_q(\mathfrak{su}(1,1))$ for our results. However, many results can be stated without the algebra. Therefore, for the readers convenience, the first part of this paper can be read without knowledge on (quantum) algebras. For more information on the algebraic approach to stochastic duality, see e.g. \cite{GiRed,StuSwaVol}.\\
	
	\indent Let us describe the processes and results of this paper in a bit more detail. Dynamic ASIP is an interacting particle system on the lattice $\{1,2,..,M\}$ with closed boundary conditions where each site can hold an infinite number of particles. The process depends on three parameters: the intensity parameter $q>0$, where $q\neq 1$, the parameter $\vec{k}=(k_1,k_2,...,k_M)$ with $k_j >0$ and the dynamic parameter which we will call $\lambda$ or $\rho$.  The parameter $\vec{k}$ determines the balance between the attraction of particles to each other and the (asymmetric) random walk character of the particles. When $\vec{k}$ is close to $0$, the attraction between particles is dominant, when $\vec{k}$ goes to infinity, the random walk of the particles dominates. As is the case for generalized dynamic ASEP, we introduce two versions of dynamic ASIP which can be obtained from each other by reversing the order of sites. The `left' version of dynamic ASIP, ASIP$_\mathsf{L}$, has a boundary value $\lambda$ on the left of site $1$. The `right' version, ASIP$_\mathsf{R}$, has a boundary value $\rho$ on the right of site $M$. Through a height function, this boundary value influences the rates of all particles in the system. The parameter $q$ cannot be seen as an asymmetry parameter, as dynamic ASIP is invariant under sending $q$ to $q^{-1}$ (as is the case for generalized dynamic ASEP). By letting the dynamic parameter go to $\pm\infty$, we get back ASIP with parameter $q$ or $q^{-1}$. \\
	\indent Dynamic ASIP is reversible and has an orthogonal Markov duality with dynamic ASIP on the reversed lattice. The orthogonal duality functions are Askey-Wilson polynomials. These functions sit on top of the Askey-Scheme \cite{KLS}: a hierachy of explicit orthogonal polynomials. By considering limiting cases of this duality, we obtain many other Markov dualities. The resulting duality can be ordered into ones with and without a free parameter, see Table \ref{tab:dualityfunctionsASIP}. The latter is a parameter, independent of both Markov processes, that appears in a non-trivial way in the duality function.  In the case of the self-duality of ASIP (and limits of this), the duality functions without a free parameter are often referred to as the `classical' or `triangular' duality functions and the ones with a free parameter as the `orthogonal' ones. These names are a bit confusing for dynamic ASIP, since the duality functions without a free parameter are orthogonal and do not have a triangular structure. Some of the dualities in Table \ref{tab:dualityfunctionsASIP} are well-known, such as self duality of ASIP, and some of which are new.
	\begin{table}[h]
		\begin{tabular}{|l|ll|}
			\hline
			\multirow{2}{*}{Duality} & \multicolumn{2}{l|}{Type of duality function}                         \\  \cline{2-3} 
			& \multicolumn{1}{l|}{Free parameter} &No free parameter      \\ \hline
			ASIP$_\mathsf{L}\leftrightarrow$ASIP$_\mathsf{R}$                     & \multicolumn{1}{l|}{Askey-Wilson polynomials}            &    Special case Askey-Wilson polynomials     \\ \hline
			ASIP$_\mathsf{L}\leftrightarrow$ASIP                    & \multicolumn{1}{l|}{Big $q$-Jacobi polynomials}            & Al-Salam--Chihara polynomials         \\ \hline
			ASIP$(q)\leftrightarrow$ASIP$(1/q)$                      & \multicolumn{1}{l|}{Big $q$-Laguerre polynomials}      & Triangular  \\ \hline
			Self-duality ASIP                       & \multicolumn{1}{l|}{$q$-Meixner polynomials}      & Triangular \\ \hline
		\end{tabular}\vspace{0.1cm}\\
		\caption{Dualities of (dynamic) ASIP.}\label{tab:dualityfunctionsASIP}.\vspace{-0.8cm}
	\end{table}	

	The other process, dynamic ABEP, is a diffusion process on the lattice $\{1,...,M\}$ where each site $j$ can hold an infinite amount of energy $x_j \geq 0$. It depends on three parameters, the intensity parameter $\sigma$, the parameter $\vec{k}$ and again a boundary value $\lambda$ or $\rho$. It generalizes ABEP in the sense that taking a limit in the dynamic parameter gives back ABEP. The process has similarities with dynamic ASIP. The process is invariant under sending $\sigma$ to $-\sigma$ and the boundary value influences the rates of the process through a height function (which is slightly different from the one of dynamic ASIP). Via a deterministic non-local transformation, this process can be turned into the Brownian energy process (BEP), as is the case for ABEP. Taking a limit in the dynamic parameter gives the known transformation between ABEP and BEP. This non-local transformation can be used to carry over several results of BEP to dynamic ABEP, including duality and reversibility. Therefore, dynamic ABEP is dual to any process to which BEP is dual. Previously known was a self-duality of BEP as well as a duality with SIP. We will generalize the latter by showing that BEP has a duality with dynamic SIP, the $q=1$ version of dynamic ASIP. An overview of the dualites related to BEP can be found in Table \ref{tab:dualityfunctionsBEP}.
	\begin{table}[h]
		\begin{tabular}{|l|ll|}
			\hline
			\multirow{2}{*}{Duality} & \multicolumn{2}{l|}{Type of duality function}                         \\  \cline{2-3} 
			& \multicolumn{1}{l|}{Free parameter} &No free parameter      \\ \hline
			BEP$\leftrightarrow$ dynamic SIP                    & \multicolumn{1}{l|}{Jacobi polynomials}            &    Special case Jacobi polynomials   \\ \hline
			BEP$\leftrightarrow$SIP                    & \multicolumn{1}{l|}{Laguerre polynomials}            & Monomials        \\ \hline
			Self-duality BEP                    & \multicolumn{1}{l|}{Bessel functions}            & Bessel functions        \\ \hline
		\end{tabular}\vspace{0.1cm}\\
		\caption{Dualities of BEP.}\label{tab:dualityfunctionsBEP}.\vspace{-0.8cm}
	\end{table}
\subsection{Outlook}
	It would be interesting to find hydrodynamic limits of dynamic ASIP and dynamic ABEP. In the near future, the author intends to report on the hydrodynamic limit of the latter. It is known that the hydrodynamic limit of BEP is the heat equation \cite{PRV}. Using the non-local transformation between BEP and dynamic ABEP, we can use this result to find a hydrodynamic limit of dynamic ABEP. The resulting PDE will involve a $\tanh$ term and generalizes Burger's equation. Furthermore, so far it has been difficult to introduce a multispecies version of ASIP. Known algebraic methods how to do this for ASEP seem to fail for ASIP, since the resulting operator is not a Markov generator. It would be interesting to see whether a multispecies version of ASIP exists which has an algebraic interpretation. 
\subsection{Outline of the paper}
	The organization of this paper is as follows. In Section \ref{sec:DynASIP} we introduce a slight generalization of ASIP by allowing the parameter $k$ to vary per site. Then we are ready to introduce dynamic ASIP, where we have to be careful how to choose our dynamic parameter. In contrast with the corresponding exclusion process, generalized dynamic ASEP, where the rates are nonnegative for each value of the height function, the rates of dynamic ASIP may become negative if one is not careful in choosing the proper dynamic parameter and state space. After this, we prove the duality between ASIP and dynamic ASIP with the Al-Salam--Chihara polynomials as duality functions. This in turn implies duality of dynamic ASIP on the reversed lattice with ASIP. Therefore, we can use the scalar-product method (see e.g. \cite{CFGGR}) to show that dynamic ASIP is dual to dynamic ASIP on the reversed lattice with Askey-Wilson polynomials as duality function. Crucial here is a summation formula between Al-Salam--Chihara polynomials in base $q$ and $q^{-1}$ to the Askey-Wilson polynomials. Next, we show reversibility of dynamic ASIP, where the reversible measure comes from the orthogonality measure of the $q^{-1}$-Al-Salam--Chihara polynomials, and introduce dynamic SIP, the $q=1$ version of dynamic ASIP. \\
	\indent In Section \ref{sec:AsymmetricDegenerations}, we will take limits of the duality of dynamic ASIP with the same process on the reversed lattice to obtain the other dualities of Table \ref{tab:dualityfunctionsASIP}. \\
	\indent Then we turn our attention to dynamic ABEP in Section \ref{sec:dynABEP}. We introduce the process, some special cases of the process and show that it arises as a diffusion limit of dynamic ASIP. We also prove a duality between dynamic ABEP and SIP and use that to derive that dynamic ABEP can be turned into BEP with a non-local transformation. Using this, we can show the reversibility of dynamic ABEP using the reversibility of BEP. We end the section by giving an overview of the dualities related to BEP (and thus dynamic ABEP), where the main novelty is that we find Jacobi polynomials as duality function between BEP and dynamic SIP. \\
	\indent In sections \ref{sec:Uq11} and \ref{sec:dynasipalgebra} we will discuss the algebraic background of dynamic ASIP, which we postponed until now for the readers` convenience. We first introduce the necessary knowledge about the quantum algebra $\U_q(\mathfrak{su}(1,1))$ and the Al-Salam--Chihara polynomials in Section \ref{sec:Uq11}, before we go to the construction of the generator of dynamic ASIP in Section \ref{sec:dynasipalgebra}. The method of constructing dynamic ASIP is similar to the construction of generalized dynamic ASEP \cite{GroeneveltWagenaarDyn}, where in essence we have to replace $N_j\in\N$, the maximum number of particles that can be at site $j$, by `$-k_j$'. The Al-Salam--Chihara polynomials now play the role the $q$-Krawtchouk polynomials played in \cite{GroeneveltWagenaarDyn}. In algebraic terms, this boils down to going from the compact quantized Lie algebra $\U_q(\mathfrak{su}(2))$ to the non-compact one $\U_q(\mathfrak{su}(1,1))$. This is no surprise, since the same happens when going from ASEP to ASIP. Lastly, in the Appendix we have put two calculations which have been removed from the main text for readability. \\

\subsection{Preliminaries and notations}
	\noindent Let us start with the definition of Markov duality. Let $\{\mathcal{X}_t\}_{t\geq0}$ and $\{\widehat{\mathcal{X}}_t\}_{t\geq0}$ be Markov processes with state spaces $\Omega$ and $\widehat{\Omega}$ and generators $L$ and $\widehat{L}$. We say that $\mathcal{X}_t$ and $\widehat{\mathcal{X}}_t$ are dual to each other with respect to a duality function $D:\Omega\times\widehat{\Omega}\to \C$ if
	\[		
	[L D(\cdot,\hat{\eta})](\eta)=[\widehat{L} D(\eta,\cdot)](\hat{\eta})
	\]  
	for all $\eta\in \Omega$ and $\hat{\eta}\in\widehat{\Omega}$. If $\{\widehat{\mathcal{X}}_t\}_{t\geq0}$ is a copy of $\{\mathcal{X}_t\}_{t\geq0}$, we say that the process $\{\mathcal{X}_t\}_{t\geq0}$ is self-dual with respect to the duality function $D$.\\
	
	For future reference, we also make the following remark.
	\begin{remark}\label{rem:InvTotPart}
		Let $L$ and $\widehat{L}$ be generators of interacting particle systems where the total number of particles is preserved (as all processes in this paper will be). Denote for a vector $\eta\in \R^M$ the sum of its components by $|\eta|$, i.e.
		\[
		|\eta| = \eta_1+\ldots+\eta_M.
		\] 
		Then we have the following two basic results. 
		\begin{itemize}
			\item Let $D(\eta,\hat{\eta})$ be a duality function between the two processes. 
			If $f$ is a function only depending on parameters of the processes and the total number of (dual) particles $|\eta|$ and $|\hat{\eta}|$, then $f(|\eta|,|\hat{\eta}|)D(\eta,\hat{\eta})$ is again a duality function since $f$ is invariant under the action of both generators.
			\item Let $\mu$ be a reversible measure for the process with generator $L$, i.e.
			\[
			\mu(\eta)L(\eta,\eta') = \mu(\eta')L(\eta',\eta),
			\] 
			where $L(\eta,\eta')$ is the jump rate from the state $\eta$ to $\eta'$. Note that both sides of the above equation become zero if $|\eta|\neq |\eta'|$ since in that case $L(\eta,\eta')=0$. If $g$ is a function only depending on parameters of the process and the total number of particles $|\eta|$, then $g(|\eta|)\mu(\eta)$ is again a reversible measure since we can just multiply above detailed balance condition on both sides by $g(|\eta|)=g(|\eta'|)$.
		\end{itemize} 
	\end{remark}
	Let us introduce some notations and conventions we use throughout the paper. For discrete states of particle systems we will use the greek letters $\zeta,\eta$ and $\xi$. For continuous states we use $\vec{x}$ and $\vec{y}$. We fix a scaling parameter $q>0$ with $q\neq 1$, where we will sometimes require $q\in(0,1)$. By $\N$ we denote all positive integers,
	\[
	\Z_{\geq 0}=\N \cup \{0\}, \qquad \R_{\geq0}=\{x\in \R : x\geq 0\},  \qquad \text{and} \qquad \R_{>0}=\{x\in \R : x>0\}.
	\]
	For $a\in\R$, let 
	\[
	[a]_q=\begin{cases}
		\begin{split}&\frac{q^a-q^{-a}}{q-q^{-1}} &\text{ for } q\neq 1,\\
			&a &\text{ for } q= 1,\end{split}
	\end{cases}
	\]
	which is justified by
	\begin{align*}
		\lim\limits_{q\to1}[a]_q=a . \label{eq:limshq}
	\end{align*}
	We use standard notation for $q$-shifted factorials and $q$-hypergeometric functions as in \cite{GR}. In particular, $q$-shifted factorials are given by
	\[
	(a;q)_n = (1-a)(1-aq) \cdots (1-aq^{n-1}), 
	\]
	with $n \in \Z_{\geq0} $ and the convention $(a;q)_0=1$. For $q \in (0,1)$, we define
	\[
	 	(a;q)_\infty = \lim_{n \to \infty} (a;q)_n.
	 \] 
	The $q$-hypergeometric series $_{r+1}\varphi_r$ is given by
	\[
	\rphis{r+1}{r}{a_1,\ldots,a_{r+1}}{b_1,\ldots,b_r}{q,z} = \sum_{n=0}^\infty \frac{(a_1;q)_n \cdots (a_{r+1};q)_n}{(b_1;q)_n \cdots (b_r;q)_n} \frac{z^n}{(q;q)_n}.
	\]
	If for some $k$ we have $a_k=q^{-N}$ with $N \in \Z_{\geq 0}$, the series terminates after $N+1$ terms, since $(q^{-N};q)_n=0$ for $n>N$.
	The shifted factorials are given by
	\[
	(a)_0=1, \qquad (a)_n = a(a+1)\cdots (a+n-1), \qquad n \in \N,  
	\]
	and the hypergeometric series $_{r+1}F_r$ is defined by
	\[
	\rFs{r+1}{r}{a_1,\ldots,a_{r+1}}{b_1,\ldots,b_r}{z} = \sum_{n=0}^\infty \frac{(a_1)_n \cdots (a_{r+1})_n}{(b_1)_n \cdots (b_r)_n}\frac{z^n}{n!},
	\]
	where the series terminates if $a_k \in -\Z_{\geq0}$ for some $k$. The following limit relations hold: the $q$-shifted factorials become shifted factorials,
	\[
	\lim_{q \to 1} \frac{ (a;q)_n }{(1-q)^n} = (a)_n, \qquad a \in \R, \ n \in \Z_{\geq 0},
	\]
	and for $a_1,\ldots,a_{r}, b_1,\ldots,b_r \in \R$ and $n \in \Z_{\geq 0}$,
	\[
	\lim_{q \to 1} \rphis{r+1}{r}{q^{-n},q^{a_1}, \ldots, q^{a_{r}}}{q^{b_1},\ldots,q^{b_r}}{q,z} = \rFs{r+1}{r}{-n,a_1,\ldots,a_{r}}{b_1,\ldots,b_r}{z}.
	\]
	For an ordered $M$-tuple $\eta=(\eta_1,\ldots,\eta_M)$ we denote by $\eta^\rev$ the reversed $M$-tuple,
	\[
	\eta^\rev = (\eta_M,\ldots,\eta_1). 
	\]
	Also, for a particle state $\eta=(\eta_1,...,\eta_M)$ with $\eta_j\in \Z_{\geq 0}$ we define $\eta^{j,m}$ to be the state where a particle moved from site $j$ to site $m$ if possible and $\eta^{j,m}=\eta$ otherwise.
	
	All interacting particle processes and functions in this paper will depend on certain parameters. To simplify notation, we suppress the dependence on the parameters in notations, but occasionally add one or more parameters in the notation to stress dependency on the included parameters. \\

\section{Dynamic ASIP}\label{sec:DynASIP}
	This section will introduce dynamic ASIP by specifying its generator. Before we do so, we will introduce $\ASIP{q,\vec{k}}$, a slight generalization of $\ASIP{q,k}$ from \cite{CGRSsu11} where the parameter $k$ may now vary per site. The generators of these processes can be constructed using quantum algebra techniques, which will be done in sections \ref{sec:Uq11} and \ref{sec:dynasipalgebra}. However, for defining the processes, the quantum algebra is not necessary. We will define two versions of dynamic ASIP: a `left' version $\LASIPno$ and and a `right' version $\RASIPno$. They can be obtained from each other by reversing the order of sites. We will show that both versions are dual to ASIP and are dual to each other with a product of orthogonal hypergeometric polynomials as duality functions. After this we will show that dynamic ASIP is reversible, where the reversible measure comes from the orthogonality measure of the duality function. We end the section by introducing dynamic SIP, the $q=1$ version of dynamic ASIP and showing some dualities as well as its reversibility. 
	\subsection{Definition}\label{subsec:defdynasip}
	As said, let us start with a slight generalization of $\ASIP{q,k}$ with closed boundary conditions introduced in \cite{CGRSsu11}, where the parameter $k$ can differ per site. For $j=1,\ldots,M$, let $k_j>0$ and denote $\vec{k}=(k_1,\ldots,k_M)$. The process $\ASIP{q,\vec{k}}$ is the continuous time Markov jump process on the state space $X_d=\Z_{\geq 0}^M$, where the subscript $d$ stands for discrete states. Besides $\vec{k}$, it depends on the parameter $q$, which corresponds to the asymmetry of the process. Given a state $\eta=(\eta_j)_{j=1}^M\in X_d$, a particle on site $j$ jumps to site $j+1$ at rate
	\begin{align}
		c_j^+ = q^{n_j+k_j-n_{j+1}-1}[n_j]_q[n_{j+1}+k_{j+1}]_q,\label{eq:ratesASIP+}
	\end{align}
	and to site $j-1$ at rate
	\begin{align}
		c_{j}^-&= q^{-(n_{j}+k_{j}-n_{j-1}-1)}[n_{j}]_q[n_{j-1}+k_{j-1}]_q. \label{eq:ratesASIP-}
	\end{align}
	The Markov generator of the process, acting on functions $f \colon X_d\to\R$ with finite support, is then given by
	\[
		[\genASIP f](\eta)=\sum_{j=1}^{M-1} c_j^+[f(\eta^{j,j+1})-f(\eta)]+c_{j+1}^-[f(\eta^{j+1,j})-f(\eta)].
	\]
	The process admits a reversible product measure 
	\begin{align}
		W(\eta)=W\big(\eta;\vec{k},q\big)=q^{u(\eta,\vec{k})}\prod_{j=1}^{M} w(\eta_j;k_j,q),\label{eq:revmeasASIP}
	\end{align}
	where
	\begin{align}
		w(n;k,q)= q^{-n(k-1)}\frac{(q^{2k};q^2)_n}{(q^2;q^2)_n}\label{eq:1siterevASIP}
	\end{align}
	and
	\begin{align}
		u\big(\eta,\vec{k}\big)=\sum_{j=1}^M \eta_j\bigg(k_j + 2\sum_{i=1}^{j-1}k_i\bigg).\label{eq:defu}
	\end{align}
	This can be proven by either checking the detailed balance condition or showing that the generator is symmetric with respect to this measure on the space of function with finite support. The latter will be proven in Section \ref{sec:Uq11}.
	\begin{remark}\* \label{rem:asip}
		\begin{itemize}
			\item When taking $q=1$, one obtains SIP$(\vec{k})$, a slight generalization of the symmetric inclusion process with parameter $k>0$, SIP$(k)$, where this parameter may vary per site. In this process, a particle jumps from site $j$ to $j+1$ at rate $\eta_j(\eta_{j+1}+k_j)$ and from site $j$ to $j-1$ at rate $\eta_j(\eta_{j-1}+k_{j-1})$.
			\item Since the process is reversible, it is self-dual. When $k_1=k_2=...=k_M$, non-trivial self-duality functions where already found, including triangular \cite{CGRSsu11} as well as orthogonal ones \cite{CFG}. In Section \ref{sec:AsymmetricDegenerations} we will generalize these functions to the case where $k_j$ may vary per site.
			\item Note that reversing the order of sites, i.e. interchanging site $j$ with site $M+1-j$ so that $M\leftrightarrow 1$, $M-1 \leftrightarrow 2,$ etc., is equivalent with sending $q$ to $q^{-1}$ in the rates.
			\item ASEP, the exclusion variant of $\ASIPno$, has a symmetry where interchanging `particles' with `free spaces' is similar to sending $q\to q^{-1}$. That is, if a site `$j$' can hold a maximum of `$N_j$' particles, we replace the number of particles `$\eta_j$' by `$N_j-\eta_j$'. Since $\ASIPpar$ can formally be obtained from ASEP by replacing $N_j$ by $-k_j$, one could expect a similar symmetry to be present for $\ASIPpar$. Indeed, one has the following symmetry in the rates
			\begin{align*}
				c_j^+(q,-\eta-\vec{k})&=c_{j+1}^-(q^{-1},\eta), \\
				c_{j+1}^-(q,-\eta-\vec{k})&=c_{j}^+(q^{-1},\eta).
			\end{align*}
			This can, for example, be used to obtain duality for $\ASIP{q,\vec{k}}\leftrightarrow\ASIP{q^{-1},\vec{k}}$ from the self-duality of $\ASIPpar$.
		\end{itemize}
	\end{remark}
	Let us now define dynamic ASIP, an interacting particle system on the statespace $X_d$ which generalizes ASIP. Constructing the generator is done using quantum algebra techniques in Section \ref{sec:dynasipalgebra}. However, the process and results can be stated without knowledge of quantum algebras. \\
	Dynamic ASIP is the inclusion variant of generalized dynamic ASEP \cite{GroeneveltWagenaarDyn} and has 3 parameters: $q$, $\vec{k}$ and one boundary value on either the left or right side. If the boundary value is on the left we will call it $\lambda$ and on the right $\rho$. Analogous to dynamic ASEP, the jump rates of dynamic ASIP will be a product of the ASIP rates and a factor containing a `height function'. Compared to dynamic ASEP, we have to replace `$N_j$' by `$-k_j$' in this height function. Let $\eta\in X_d$ be a particle configuration. There are two ways to define this height function, following either the rule
	\begin{align}
		h_j(\eta)=h_{j-1}+2\eta_j + k_j,\indent \text{(`left to right')} \label{eq:heightlefttoright}
	\end{align}
	or
	\begin{align}
		h_j(\eta)=h_{j+1}+2\eta_j + k_j.\indent \text{(`right to left')}. \label{eq:heightrighttoleft}
	\end{align}
	For \eqref{eq:heightlefttoright}, we introduce a left boundary value $\lambda$ at the virtual site $0$,
	\[
		 h_0^-=\lambda,
	\] 
	and work inductively from the left-most site towards site $j$ to obtain
	\[
		h_j^-(\eta)=\lambda + \sum_{m=1}^j 2\eta_m + k_m.
	\]
	The superscript `$-$' indicates we sum over sites lower than and including $j$. Note that
	\begin{align}
		h_0^-(\eta) < h_1^-(\eta) < ... < h_M^-(\eta).\label{eq:height-monotone}
	\end{align}
	\noindent For \eqref{eq:heightrighttoleft}, we work in the opposite direction. We introduce the right boundary value $\rho$ at the virtual site $M+1$,
	\[
		h_{M+1}^+=\rho,
	\]
	and go from the right-most site towards site $j$ to obtain
	\[
		h_j^+(\eta)=\rho + \sum_{i=j}^M 2\eta_i + k_i.
	\]
	This height function is monotone in the other direction,
	\[
			h_{M+1}^+(\eta)<	h_M^+(\eta) < ... < 	h_1^+(\eta).
	\]
	We will define a `left' and `right' version of dynamic ASIP. These two versions are the same process, but where the order of sites is reversed. Note that both height functions $h^-$ and $h^+$ can also be obtained from each other by reversing the order of sites. To make sure the rates of dynamic ASIP are non-negative, we have to put restrictions on the parameters of the process. The most natural restriction is to require that $\lambda,\rho >-1$, see also the remark after the definition. 
	\begin{Definition}[Dynamic ASIP for $\lambda,\rho > -1$]\*
		\begin{enumerate}
			\item Let $\lambda > -1$. ASIP$_\mathsf{L}(q,\vec{k},\lambda)$, the left version of dynamic ASIP, is the Markov process on the statespace $X_d$ with parameters $q$, $\vec{k}\in \R_{>0}^M$ and left boundary value $\lambda$. Its generator, acting on functions $f\colon X_d\to \R$ with finite support, is given by
			\[
				\Big[\genLASIP f\Big](\zeta) = \sum_{j=1}^{M-1}C_j^{\mathsf{L},+}\big[f(\zeta^{j,j+1})-f(\zeta)\big]+C_{j+1}^{\mathsf{L},-}\big[f(\zeta^{j+1,j})-f(\zeta)\big],
			\]
			where
			\begin{align}
				\begin{split}C_j^{\mathsf{L},+}&=C_j^{\mathsf{L},+}(\zeta,\vec{k},\lambda,q)=c_{j}^+(\zeta) \frac{\big(1-q^{2h^-_{j-1}+2\zeta_{j}}\big)\big(1-q^{2h^-_j+2\zeta_{j+1}}\big)}{\big(1-q^{2h^-_j}\big)\big(1-q^{2h^-_j-2}\big)},\\
				C_j^{\mathsf{L},-}&=	C_j^{\mathsf{L},-}(\zeta,\vec{k},\lambda,q)=c_j^-(\zeta) \frac{\big(1-q^{2h^-_{j-1}-2\zeta_{j-1}}\big)\big(1-q^{2h^-_{j}-2\zeta_{j}}\big)}{\big(1-q^{2h^-_{j-1}}\big)\big(1-q^{2h^-_{j-1}+2}\big)},\end{split} \label{eq:ratesDynASIPL}
			\end{align}
			where $c_j^+$ and $c_j^-$ are the rates of $\ASIP{q,\vec{k}}$ from \eqref{eq:ratesASIP+} and \eqref{eq:ratesASIP-}.
			\item Let $\rho>-1$. ASIP$_\mathsf{R}(q,\vec{k},\rho)$, the right version of dynamic ASIP, is the Markov process on the statespace $X_d$ with parameters $q$, $\vec{k}\in \R_{>0}^M$ and right boundary value $\rho$. Its generator, acting on functions $f\colon X_d\to \R$ with finite support, is given by
			\[
				\Big[\genRASIP f\Big](\xi)=\sum_{j=1}^{M-1}C_j^{\mathsf{R},+}\big[f(\xi^{j,j+1})-f(\xi)\big]+C_{j+1}^{\mathsf{R},-}\big[f(\xi^{j+1,j})-f(\xi)\big],
			\]
			where 
			\begin{align}
				\begin{split}C_j^{\mathsf{R},+}&=c^+_j(\xi) \frac{\left(1-q^{2\xi_j-2h^+_{j}}\right)\Big(1-q^{2\xi_{j+1}-2h^{+}_{j+1}}\Big)}{\Big(1-q^{-2h^{+}_{j+1}}\Big)\Big(1-q^{-2h^+_{j+1}-2}\Big)},\\
				C^{\mathsf R,-}_{j}(\xi) &= c^-_{j}(\xi) \frac{\Big(1-q^{-2\xi_{j-1}-2h^+_j}\Big)\Big(1-q^{-2\xi_{j}-2h^+_{j+1}}\Big)}{\Big(1-q^{-2h^+_j}\Big)\Big(1-q^{-2h^+_{j}+2}\Big)},\end{split} \label{eq:ratesDynASIPR}
			\end{align}
			where $c_j^+$ and $c_j^-$ are the rates of $\ASIP{q,\vec{k}}$ from \eqref{eq:ratesASIP+} and \eqref{eq:ratesASIP-}.
		\end{enumerate}
	\end{Definition}
	\begin{remark}	\*
		\begin{itemize}
			\item The left and right version of dynamic ASIP can be obtained from each other by reversing the order of sites. That is, by interchanging sites $1\leftrightarrow M$, $2\leftrightarrow M-1$, etc. 
			\item 	One can rewrite the rates for ASIP$_\mathsf{L}(q,\vec{k},\lambda)$ into
			\begin{align}
				\begin{split}
				C_j^{\mathsf{L},+}&=[\zeta_j]_q[\zeta_{j+1}+k_{j+1}]_q\frac{[h^-_{j-1}+\zeta_{j}]_q[h^-_{j}+\zeta_{j+1}]_q}{[h^-_j]_q[h^-_j-1]_q},\\
				C_j^{\mathsf{L},-}&=[\zeta_{j}]_q[\zeta_{j-1}+k_{j-1}]_q\frac{[h^-_{j-1}-\zeta_{j-1}]_q[h^-_j-\zeta_{j}]_q}{[h^-_{j-1}]_q[h^-_{j-1}+1]_q}.
				\end{split}\label{eq:ratesdynASIPrewritten}
			\end{align}
			From this we can see that the rates are non-negative if $\lambda>-1$ and that the process is invariant under sending $q\to q^{-1}$. Similarly for ASIP$_\mathsf{R}(q,\vec{k},\rho)$.
		\end{itemize}
	\end{remark}
	The question arises whether we can also define a version of dynamic ASIP when we pick the boundary value $\lambda,\rho\leq-1$. In that case, the rates might become negative. We can solve this problem by restricting our statespace $X_d$ to configurations where the rates are non-negative. For $\LASIPno$, if we only allow states $\zeta$ such that 
	\[
		 2|\zeta|+|\vec{k}|+\lambda < 0 ,
	\]
	then $h^-_j(\zeta) < 0$ for all $j$ by \eqref{eq:height-monotone}. One can then see from the rewritten rates $\eqref{eq:ratesdynASIPrewritten}$ that $C_j^{\mathsf{L},+}(\zeta)$ and $C_j^{\mathsf{L},-}(\zeta)$ will be nonnegative. An entirely similar reasoning shows that the rates $C_j^{\mathsf{R},+}(\xi)$ and $C_j^{\mathsf{R},-}(\xi)$ are nonnegative if
	\[
		2|\xi|+|\vec{k}|+\rho<0.
	\] 
	Therefore we can define $\LASIP{q,\vec{k},\lambda}$ and $\RASIP{q,\vec{k},\rho}$ when $\lambda,\rho \leq -1$ on the statespaces $X_{d,\lambda}$ and $X_{d,\rho}$ respectively, where
	\[X_{d,a}=\{\eta\in X_d :  2|\eta|+ |\vec{k}| + a<0\}.\]
	\begin{Definition}[Dynamic ASIP for $\lambda,\rho \leq -1$]\*
		\begin{enumerate}
			\item Let $\lambda < -2$, then ASIP$_\mathsf{L}(q,\vec{k},\lambda)$ is the Markov process on the statespace $X_{d,\lambda}$ where in the state $\zeta$, a particle jumps from site $j$ to $j+1$ with rate $C_j^{\mathsf{L},+}(\zeta)$ and from site $j$ to $j-1$ with rate $C_j^{\mathsf{L},-}(\zeta)$ from \eqref{eq:ratesDynASIPL}.
			\item Let $\rho \leq -1$, then ASIP$_\mathsf{R}(q,\vec{k},\rho)$ is the Markov process on the statespace $X_{d,\rho}$ where in the state $\xi$, a particle jumps from site $j$ to $j+1$ with rate $C_j^{\mathsf{R},+}(\xi)$ and from site $j$ to $j-1$ with rate $C_j^{\mathsf{R},-}(\xi)$ from \eqref{eq:ratesDynASIPR}.
		\end{enumerate} 
	\end{Definition} 
	\begin{remark}\*
		\begin{itemize}
			\item Note that when $|\vec{k}| + a \geq -2$, the set $X_{d,a}$ is either empty or only contains the state with zero particles. Therefore, we only have a non-trivial process if $\lambda,\rho< -2$ and $\vec{k}$ is chosen such that $X_{d,a}$ contains at least the states with $1$ particle. 
			\item If $0<q<1$, the limit $\lambda\to\infty$ of $\LASIP{q,\vec{k},\lambda}$ is $\ASIP{q,\vec{k}}$ which can be seen by looking at the rates. Similarly, the limit $\lambda\to -\infty$ of $\LASIPpar$ is $\ASIP{q^{-1},\vec{k}}$ and
			\begin{align*}
				&\lim\limits_{\rho\to\infty} \RASIP{q,\vec{k},\rho} = \ASIP{q^{-1},\vec{k}}, \\
				&\lim\limits_{\rho\to -\infty} \RASIP{q,\vec{k},\rho} = \ASIP{q,\vec{k}}.
			\end{align*}
			If $q>1$, we can use that dynamic ASIP is invariant under sending $q$ to $q^{-1}$ and above limits to show that
			\begin{align*}
				&\lim\limits_{\lambda\to\sigma\infty} \LASIP{q,\vec{k},\lambda} = \ASIP{q^{-\sigma},\vec{k}}, \\
				&\lim\limits_{\rho\to \sigma\infty} \RASIP{q,\vec{k},\rho} = \ASIP{q^{\sigma },\vec{k}},
			\end{align*}
			where $\sigma\in\{-1.1\}$.
		\end{itemize}
	\end{remark}
	\subsection{Duality dynamic ASIP with ASIP}
			Next we the duality between ASIP and dynamic ASIP. The proof will be done in Section \ref{sec:dynasipalgebra}. The duality function is a product of 1-site duality functions given in terms of the Al-Salam--Chihara polynomials \cite[\S14.8]{KLS}. We define a renormalized version of these polynomials by a $\rphisempty{3}{2}$,
		\begin{align}
			p_{\mathrm{A}}^{}(n,x;\rho;k;q) = q^{-\half n(2\rho+k+1)} \rphis{3}{2}{q^{-2n},q^{-2x},q^{2x+2\rho+2k}}{q^{2k}, 0}{q^2;q^2}. \label{eq:1siteASCq}
		\end{align}
		Throughout this paper, the first two entries of $p_\mathrm{A}^{}$ will be $\Z_{\geq 0}$-valued. Consequently, the $\rphisempty{3}{2}$ terminates either after $n$ or after $x$ terms, whichever is lower. The $\rphisempty{3}{2}$ part of $p_\mathrm{A}^{}$ has a `dual structure' in the following sense. It can be seen as a polynomial of degree $x$ in the variable $q^{-2n}$  since the $q$-shifted factorial
		\[
		(q^{-2n};q^2)_j
		\] 
		is a polynomial of degree $j$ in the variable $q^{-2n}$. However, it can also be seen as a polynomial of degree $n$ in the variable $q^{-2x}+q^{2x+2\rho+2k}$ since 
		\[
		(q^{-2x};q^2)_j(q^{2x+2\rho+2k};q^2)_j
		\]
		is a polynomial of degree $j$ in the variable $q^{-2x}+q^{2x+2\rho+2k}$. A nested product of $p^{}_{\mathrm{A}}$ is a duality function between ASIP and dynamic ASIP. 
		\begin{theorem}\label{thm:ASCduality}
			Define 
			\begin{align*}
				P^{}_\mathsf{R}(\eta,\xi)= q^{-\half u(\eta,\vec{k})}\prod_{j=1}^M  p_{\mathrm{A}}^{}(\eta_j,\xi_j;h^+_{j+1};k_j;q),
			\end{align*}
			where the factor $u\big(\eta,\vec{k}\big)$ can be found in \eqref{eq:defu}. Then $P^{}_\mathsf{R}(\eta,\xi)$ is a duality function between $\ASIP{q,\vec{k}}$ and $\RASIP{q,\vec{k},\rho}$, i.e.
			\[
			\Big[\genASIP P^{}_\mathsf{R}(\cdot,\xi)\Big](\eta)=\Big[\genRASIP P^{}_\mathsf{R}(\eta,\cdot)\Big](\xi).
			\]
		\end{theorem}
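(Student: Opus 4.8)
The plan is to exploit that both generators are sums of nearest-neighbor terms and that $P_\mathsf{R}$ is a product over sites, so that the duality identity collapses into a single two-site identity per bond; this two-site identity is then established algebraically through $\U_q(\su(1,1))$, with the Al-Salam--Chihara polynomial $p_\mathrm{A}^{}$ playing the role of an intertwiner.

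First I would carry out the \emph{locality reduction}. Since $\genASIP$ only moves particles in $\eta$, when computing $[\genASIP P_\mathsf{R}(\cdot,\xi)](\eta)$ all of $\xi$ --- and hence every height $h^+_m$ --- is frozen. A bond-$(j,j+1)$ move in $\eta$ changes only $\eta_j,\eta_{j+1}$ and multiplies the prefactor $q^{-\half u(\eta,\vec k)}$ by a factor depending only on $k_j,k_{j+1}$ (indeed $u$ from \eqref{eq:defu} changes by $\pm(k_j+k_{j+1})$). Dually, $\genRASIP$ moves only $\xi$; a bond-$(j,j+1)$ move keeps $\eta$, the prefactor, and every height $h^+_m$ with $m\le j$ or $m\ge j+2$ fixed (particle conservation on the bond), while $h^+_{j+1}$ --- which appears only in the $j$-th factor of $P_\mathsf{R}$ --- shifts by $\pm 2$. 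Thus under either generator only the factors at sites $j,j+1$ are touched, and the full identity decouples into a family of two-site identities in the four variables $\eta_j,\eta_{j+1},\xi_j,\xi_{j+1}$, the remaining coordinates together with the outer height $h^+_{j+2}$ entering as fixed parameters (note $h^+_{j+1}=h^+_{j+2}+2\xi_{j+1}+k_{j+1}$).

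Next I would set up the \emph{algebraic framework}, to be developed in Sections \ref{sec:Uq11}--\ref{sec:dynasipalgebra}. Each single site carries a lowest-weight discrete-series representation of $\U_q(\su(1,1))$ whose lowest weight is governed by $k_j$, and on the two-site tensor product the $\ASIP{q,\vec k}$ bond generator built from \eqref{eq:ratesASIP+}--\eqref{eq:ratesASIP-} is, up to normalization, the representation of the coproduct of the Casimir, which is central. The dynamic generator \eqref{eq:ratesDynASIPR} is then obtained by conjugating this coproduct with the height-function dressing: the ratios of $(1-q^{\bullet})$-factors in $C_j^{\mathsf R,\pm}$ are exactly such a gauge transformation, the inclusion analogue of the dynamical deformation used for dynamic ASEP in \cite{GroeneveltWagenaarDyn}. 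The monotonicity \eqref{eq:height-monotone} and the restriction $\rho>-1$ are what make this dressing well defined and the rates nonnegative. The core is then to show that $p_\mathrm{A}^{}(n,x;\rho;k;q)$ of \eqref{eq:1siteASCq} is the kernel intertwining the particle-number basis with the eigenbasis of a twisted group-like (Cartan-type) element $Y_\rho$ whose parameter is the boundary value $\rho$. This is verified from the three-term recurrence and $q$-contiguous ($q$-difference) relations of the Al-Salam--Chihara polynomials \cite[\S14.8]{KLS}: these recurrences say precisely that acting by the ASIP ladder and Cartan elements in the variable $n$ corresponds, through $\sum_x p_\mathrm{A}^{}(n,x;\cdots)(\,\cdot\,)$, to the dynamic difference operators in the variable $x$. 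Because the Casimir is central it has the same action in both bases, so transporting it across the intertwiner turns the $\eta$-side action into the $\xi$-side action; this is exactly the two-site identity from the reduction step, completing the proof.

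I expect the main obstacle to be twofold and to live entirely in this last step. First, one must pin down that the correct height entering the $j$-th factor is $h^+_{j+1}$ (and not $h^+_j$ or $h^+_{j+2}$) and that its $\pm2$ shift under a $\xi$-move matches the argument shift of $p_\mathrm{A}^{}$ built into the recurrence; this bookkeeping is what links the two-site identity to the dynamical dressing and is the easiest place to misalign signs. Second, verifying the intertwining property amounts to a genuine $q$-hypergeometric contiguous relation for the $\rphisempty{3}{2}$ in \eqref{eq:1siteASCq}, and organizing it so that the $\rho$-dependence (equivalently the height $h^+_{j+2}$) factors through cleanly is the delicate computation. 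A useful sanity check before attacking the general case is the degeneration $\rho\to\infty$, which should collapse $p_\mathrm{A}^{}$ to the known self-duality kernel of $\ASIP{q,\vec k}$ from \cite{CFG} and recover that statement as a limit.
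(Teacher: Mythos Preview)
Your locality reduction to two-site identities is exactly how the paper proceeds, and the algebraic framework via $\U_q(\su(1,1))$ is the right one. But your description of the mechanism that transports the Casimir action from $\eta$ to $\xi$ is off in a way that skips the essential step.

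The paper does \emph{not} obtain the dynamic generator by conjugating $\Delta(\Omega)$ with a height-function dressing, and the centrality of $\Omega$ is not what makes the transfer work. The element $Y_\rho$ is twisted \emph{primitive} (not group-like), and its relevance is that the Al-Salam--Chihara polynomials $p_\mathrm{A}^{}(\cdot,x)$ are its eigenfunctions in the representation $\pi_j$ (equation \eqref{eq:ASCeigenfunction}); via the coideal property \eqref{eq:coproductYrho} this lifts to $P_\mathsf{R}$ diagonalizing $\Delta(Y_{h^+_{j+2}})$ (Lemma \ref{lem:etatoxi}, first half). Separately, $\Delta(K^{-2})$ acts on $P_\mathsf{R}$ as an explicit 9-term $\xi$-difference operator coming from the $q$-difference equation of the Al-Salam--Chihara polynomials (Lemma \ref{lem:etatoxi}, second half). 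The crucial input you are missing is the Askey--Wilson algebra identity \eqref{eq:CasimirinYrhoK-2}, which expresses $\Omega$ as a degree-three polynomial in $Y_\rho$ and $K^{-2}$. Feeding the two transfer rules into this polynomial gives the $\xi$-action of $\Delta(\Omega)$; the structure of the polynomial forces all non-particle-conserving terms of the 9-term operator to cancel, and what survives are precisely the rates $C_j^{\mathsf R,\pm}$ (Theorem \ref{Thm:dynamicASIP}).

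So ``the Casimir is central, hence same action in both bases'' is not an argument here: centrality tells you nothing about \emph{what} that action is on the $\xi$ side. It is the explicit decomposition $\Omega=P(Y_\rho,K^{-2})$, together with the eigenfunction property and the 9-term recurrence of $P_\mathsf{R}$, that yields the dynamic generator and hence the duality. Your anticipated obstacle about the height bookkeeping (that $h^+_{j+1}$ is the correct argument and shifts by $\pm2$) is indeed present, but it is absorbed into Lemma \ref{lem:etatoxi} rather than into a conjugation identity.
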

		\begin{proof}
			See Section \ref{sec:dynasipalgebra}.
		\end{proof}
		\begin{remark}
			The factor containing $u\big(\eta,\vec{k}\big)$ can also be put into the 1-site duality function since
			\[
				q^{-\half u(\eta,\vec{k})} = \prod_{j=1}^M q^{-\half \eta_j (k_j+2\sum_{i=1}^{j-1}k_i)}.
			\]
			Together with the height function $h_{j+1}^+$, which depends on $\xi_{j+1},...,\xi_M$, this makes $P^{}_\mathsf{R}$ a nested product of 1-site duality functions. Note that in the limit $q\to 1$ this nested structure disappears. 
		\end{remark}
		From the duality between ASIP and $\RASIPno$ we can also find duality functions between ASIP and ASIP$_\mathsf{L}$, using the following observations.
		\begin{itemize}
			\item $\LASIP{q,\vec{k},\lambda}$ is the same as $\RASIP{q,\vec{k}^\rev,\lambda}$ where the order of sites is reversed.
			\item If we reverse the order of sites of $\ASIP{q,\vec{k}}$, we obtain $\ASIP{q^{-1},\vec{k}^\rev}$.
			\item The rates of dynamic ASIP are invariant under sending $q$ to $q^{-1}$.
		\end{itemize}
		Therefore,  if we reverse the order of sites and replace $q$ by $q^{-1}$ in Theorem \ref{thm:ASCduality}, we obtain duality functions between ASIP and ASIP$_\mathsf{L}$.
		\begin{corollary}\label{cor:dualityLASIPASIP}
			Define 
			\begin{align*}
				P^{}_\mathsf{L}(\eta,\zeta)= q^{\half u(\eta,\vec{k})}\prod_{j=1}^M  p_{\mathrm{A}}^{}(\eta_j,\zeta_j;h^-_{j-1};k_j;q^{-1}).
			\end{align*}
			Then $P^{}_\mathsf{L}(\eta,\zeta)$ are duality functions between $\ASIP{q,\vec{k}}$ and $\LASIP{q,\vec{k},\lambda}$, i.e.
			\[
			\Big[\genASIP P^{}_\mathsf{L}(\cdot,\zeta)\Big](\eta)=\Big[\genLASIP P^{}_\mathsf{L}(\eta,\cdot)\Big](\zeta).
			\]
		\end{corollary}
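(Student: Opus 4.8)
The plan is to obtain the corollary as the site-reversal of Theorem \ref{thm:ASCduality}, following exactly the three observations recorded just above it. The mechanism is the following elementary fact, which I would state once and reuse: writing $R$ for the involution $(Rf)(\eta)=f(\eta^\rev)$ on functions of a configuration, if $D(\eta,\xi)$ is a duality function for generators $L$ (acting in $\eta$) and $\widehat L$ (acting in $\xi$), then $(\eta,\xi)\mapsto D(\eta^\rev,\xi^\rev)$ is a duality function for the conjugated generators $RLR$ and $R\widehat L R$. This is immediate: conjugating the defining identity $[LD(\cdot,\xi)](\eta)=[\widehat L D(\eta,\cdot)](\xi)$ by $R$ in each variable and relabelling $\eta\mapsto\eta^\rev$, $\xi\mapsto\xi^\rev$ turns it into the corresponding identity for $D(\eta^\rev,\xi^\rev)$, since $R$ is an involution.

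Concretely, I would first apply Theorem \ref{thm:ASCduality} with the parameters $(q,\vec k,\rho)$ replaced by $(q^{-1},\vec k^\rev,\lambda)$, giving a duality between $\ASIP{q^{-1},\vec k^\rev}$ and $\RASIP{q^{-1},\vec k^\rev,\lambda}$ with duality function $P_{\mathsf R}$ evaluated at these parameters. Reversing the order of sites then uses the three observations: by the second, $\ASIP{q^{-1},\vec k^\rev}$ becomes $\ASIP{q,\vec k}$; by the third, $\RASIP{q^{-1},\vec k^\rev,\lambda}=\RASIP{q,\vec k^\rev,\lambda}$; and by the first, the site-reversal of the latter is $\LASIP{q,\vec k,\lambda}$. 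Hence $(\eta,\zeta)\mapsto P_{\mathsf R}(\eta^\rev,\zeta^\rev)$, at parameters $(q^{-1},\vec k^\rev,\lambda)$, is a duality function between $\ASIP{q,\vec k}$ and $\LASIP{q,\vec k,\lambda}$.

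It then remains to identify this reversed function with the claimed $P_{\mathsf L}$. The structural heart is the behaviour of the height function: substituting $\vec k\mapsto\vec k^\rev$, $\rho\mapsto\lambda$ and evaluating at $\zeta^\rev$, the right height function at site $j+1$ becomes, after the reindexing $m=M+1-j$,
\[
\lambda+\sum_{i=j+1}^{M}\big(2(\zeta^\rev)_i+(\vec k^\rev)_i\big)=\lambda+\sum_{\ell=1}^{m-1}(2\zeta_\ell+k_\ell)=h^-_{m-1}(\zeta),
\]
so that, together with $(\vec k^\rev)_j=k_m$, the $j$-th Al-Salam--Chihara factor turns into $p_{\mathrm A}(\eta_m,\zeta_m;h^-_{m-1};k_m;q^{-1})$ and the nested product reproduces exactly the one in $P_{\mathsf L}$. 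For the scalar prefactor $q^{-\half u(\eta,\vec k)}$ of $P_{\mathsf R}$, the substitution $q\mapsto q^{-1}$, $\vec k\mapsto\vec k^\rev$ together with the reversal $\eta\mapsto\eta^\rev$ turns its exponent into $+\half u(\eta^\rev,\vec k^\rev)$; here I would use the reversal identity $u(\eta^\rev,\vec k^\rev)=2|\eta||\vec k|-u(\eta,\vec k)$, and then discard the resulting factor $q^{|\eta||\vec k|}$, which depends only on the total particle number $|\eta|$, by Remark \ref{rem:InvTotPart}.

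The reversal principle itself is purely formal, so the only place where care is genuinely needed is this bookkeeping: carrying the reversed index $m=M+1-j$ consistently through the height functions, the parameter vector $\vec k^\rev$, and the $q$-power prefactor. I expect the index-chasing — in particular confirming that the reversed right height function is precisely the left height function $h^-_{m-1}$, and correctly normalizing the overall $q$-power via the identity for $u$ — to be the main (and essentially only) obstacle.
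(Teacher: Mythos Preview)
Your approach is exactly the paper's: the three bullet points and the concluding sentence preceding the corollary are all the proof the paper provides, and you have fleshed them out correctly, including the height-function bookkeeping and the reversal identity $u(\eta^\rev,\vec k^\rev)=2|\eta|\,|\vec k|-u(\eta,\vec k)$. One caveat: your own computation, carried to its conclusion, leaves the prefactor $q^{-\half u(\eta,\vec k)}$ after discarding $q^{|\eta||\vec k|}$, not the $q^{+\half u(\eta,\vec k)}$ printed in the statement; this appears to be a sign typo in the paper (the minus sign is what is actually used downstream, e.g.\ where the $u$-factors are said to ``exactly cancel'' in the proof of Lemma~\ref{lem:AWsumASC}, and in the factorisation underlying Corollary~\ref{cor:orthASC}), so you should flag the discrepancy rather than silently claim a match.
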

	\subsection{Duality $\LASIPno$ and $\RASIPno$}
		Since both $\LASIPno$ and $\RASIPno$ are dual to the reversible process $\ASIPno$, they are dual to each other. Via the scalar-product method \cite[Proposition 4.1]{CFGGR}, duality functions can be obtained. The resulting duality functions are given by a nested product of Askey-Wilson polynomials, which can be defined by a $\rphisempty{4}{3}$. We give a sketch of the proof here, the details can be found in Section \ref{sec:dynasipalgebra}.

	\begin{theorem}\label{thm:AWduality}
		Define for $v\neq 0$ the 1-site duality function
		\begin{align}
			\begin{split}p^{}_{\mathrm{AW}}(y,x;\lambda,\rho,v,k;q) =   (v&q^{\rho+\lambda+k+1};q^2)_x(vq^{-\rho-\lambda-k-1};q^{-2})_y\\ &\times \rphis{4}{3}{q^{-2y},q^{2y+2\lambda+2k},q^{-2x},q^{2x+2\rho+2k}}{q^{2k},vq^{\rho+\lambda+k+1},v^{-1}q^{\rho+\lambda+k+1}}{q^2,q^2}.\end{split}\label{eq:1sitedualityAW}
		\end{align}
		Then the function
		\begin{align}
			P^v_{\mathrm{AW}}(\zeta,\xi)=P^v_{\mathrm{AW}}(\zeta,\xi;\lambda,\rho,\vec{k};q)=\prod_{j=1}^{M}p^{}_{\mathrm{AW}}(\zeta_j,\xi_j;h_{j-1}^-(\zeta),h_{j+1}^+(\xi),v,k_j;q)\label{eq:dualityAW}
		\end{align}
		is a duality function between $\LASIP{q,\vec{k},\lambda}$ and $\RASIP{q,\vec{k},\rho}$, i.e.
		\begin{align}
			[\genLASIP P^v_{\mathrm{AW}}(\cdot,\xi)](\zeta)=[\genRASIP P^v_{\mathrm{AW}}(\zeta,\cdot)](\xi).\label{eq:duality equation dynamic ASIP}
		\end{align}
	\end{theorem}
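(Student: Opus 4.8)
The plan is to use the scalar-product method of \cite[Proposition 4.1]{CFGGR}, exploiting that both $\LASIPno$ and $\RASIPno$ are dual to the \emph{reversible} process $\ASIP{q,\vec{k}}$ by Corollary \ref{cor:dualityLASIPASIP} and Theorem \ref{thm:ASCduality}. Recall that $\genASIP$ is self-adjoint with respect to its reversible measure $W$ (as will be shown in Section \ref{sec:Uq11}), and that by the second item of Remark \ref{rem:InvTotPart} the measure $W_c(\eta):=c^{|\eta|}W(\eta)$ is again reversible for every constant $c>0$, since $c^{|\eta|}$ depends only on the total number of particles. First I would fix such a $c$ and set
\begin{align*}
	D_c(\zeta,\xi) := \sum_{\eta\in X_d} P^{}_\mathsf{L}(\eta,\zeta)\,W_c(\eta)\,P^{}_\mathsf{R}(\eta,\xi).
\end{align*}
Writing $\langle\cdot,\cdot\rangle$ for the bilinear form induced by $W_c$, the dualities of Corollary \ref{cor:dualityLASIPASIP} and Theorem \ref{thm:ASCduality} together with the self-adjointness of $\genASIP$ give, at least formally,
\begin{align*}
	[\genLASIP D_c(\cdot,\xi)](\zeta)=\langle \genASIP P_\mathsf{L}(\cdot,\zeta),P_\mathsf{R}(\cdot,\xi)\rangle=\langle P_\mathsf{L}(\cdot,\zeta),\genASIP P_\mathsf{R}(\cdot,\xi)\rangle=[\genRASIP D_c(\zeta,\cdot)](\xi),
\end{align*}
so that $D_c$ is a duality function between $\LASIPno$ and $\RASIPno$.

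The remaining work is to identify $D_c$ with $P^v_{\mathrm{AW}}$. Because $W_c$, $P_\mathsf{L}$ and $P_\mathsf{R}$ are (nested) products over the sites and the prefactors $q^{\pm\frac12 u(\eta,\vec{k})}$ of $P_\mathsf{L}$ and $P_\mathsf{R}$ cancel, the summand factorizes: the height functions $h^-_{j-1}(\zeta)$ and $h^+_{j+1}(\xi)$ entering site $j$ do not depend on the summation variable $\eta$, so for fixed $\zeta,\xi$
\begin{align*}
	D_c(\zeta,\xi)=\prod_{j=1}^M \sum_{n=0}^\infty c^{n}q^{n(k_j+2\sum_{i<j}k_i)}\,p_{\mathrm A}(n,\zeta_j;h^-_{j-1};k_j;q^{-1})\,w(n;k_j,q)\,p_{\mathrm A}(n,\xi_j;h^+_{j+1};k_j;q).
\end{align*}
Each single-site sum pairs an Al-Salam--Chihara polynomial in base $q^{-1}$ (in the index $\zeta_j$) with one in base $q$ (in the index $\xi_j$) against the one-site ASIP weight. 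The crux is a summation formula evaluating this bilinear sum as the one-site Askey--Wilson function $p_{\mathrm{AW}}(\zeta_j,\xi_j;h^-_{j-1},h^+_{j+1},v,k_j;q)$, up to an explicit factor of the form $A^{\zeta_j}B^{\xi_j}$; here the free parameter $v$ is a reparametrization of the fugacity $c$, which is consistent with the fact that the two ``new'' lower parameters $vq^{\rho+\lambda+k+1}$ and $v^{-1}q^{\rho+\lambda+k+1}$ of the ${}_4\varphi_3$ have $v$-independent product, exactly what a generating-function variable in the weight produces. Taking the product over $j$, the spurious factors collect into $A^{|\zeta|}B^{|\xi|}$, a function of the total particle numbers only, which may be discarded by the first item of Remark \ref{rem:InvTotPart}. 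This yields $D_c(\zeta,\xi)=A^{|\zeta|}B^{|\xi|}\,P^v_{\mathrm{AW}}(\zeta,\xi)$ and hence \eqref{eq:duality equation dynamic ASIP} for the values of $v$ so attained.

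I expect the main obstacle to be precisely this single-site summation formula: it is a genuine $q$-hypergeometric identity turning a product of two terminating ${}_3\varphi_2$'s in reciprocal bases, summed against the weight, into a terminating ${}_4\varphi_3$, and proving it will require transformation and summation theorems from \cite{GR} together with careful bookkeeping of the two bases and of the height-function shifts. A secondary, more routine obstacle is analytic: the infinite sum defining $D_c$ converges only for $q\in(0,1)$ and a suitable range of $c$, so the formal interchange of $\genASIP$ with the sum in the scalar-product step must be justified there. Finally, to upgrade from the values of $v$ so obtained to all $v\neq 0$, I would observe that for fixed $\zeta,\xi$ both sides of \eqref{eq:duality equation dynamic ASIP} are finite Laurent-polynomial expressions in $v$, so an identity valid on an infinite set of values of $v$ extends to all $v\neq 0$.
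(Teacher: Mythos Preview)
Your proposal is correct and follows essentially the same route as the paper: the scalar-product method with the free fugacity $v$ (your $c$) inserted into the ASIP weight, factorization over sites, the single-site Al-Salam--Chihara $\to$ Askey--Wilson summation (this is Lemma~\ref{lem:AWsumASC}, drawn from \cite[Lemma~4.6]{Groeneveltquantumaskey}), and then extension in $v$. Two small corrections: the spurious prefactor that telescopes is not literally of the form $A^{|\zeta|}B^{|\xi|}$ but a ratio of infinite $q$-shifted factorials depending only on $|\zeta|,|\xi|,\lambda,\rho,|\vec k|$ (still invariant, so Remark~\ref{rem:InvTotPart} applies), and the two sides of \eqref{eq:duality equation dynamic ASIP} are meromorphic rather than Laurent-polynomial in $v$, so the extension to all $v\neq 0$ (and to $q>1$) goes by analytic continuation; the interchange of $\genASIP$ with the infinite sum is justified in the paper via Proposition~\ref{prop:genASIPsym}.
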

	\begin{proof}[Sketch of proof]
		Let $L_x,L_y,L_z$ be generators of Markov processes $\mathcal{X}_t,\mathcal{Y}_t$ and $\mathcal{Z}_t$ on a countable statespace. If $\mathcal{Y}_t$ is reversible w.r.t.~ the reversible measure $w_y$, $D_1(x,y)$ is a duality function between $\mathcal{X}_t$ and $\mathcal{Y}_t$, and $D_2(y,z)$ a duality function between $\mathcal{Y}_t$ and $\mathcal{Z}_t$, then the scalar-product method says that $\mathcal{X}_t$ is dual to $\mathcal{Z}_t$ with duality function given by
		\[
			D(x,z)=\sum_y D_1(x,y)D_2(y,z)w_y(y),
		\]
		if the sum converges. Applying this in our setting tells us that
		\begin{align*}
			D(\zeta,\xi)&=\sum_{\eta\in X_d} P^{}_\mathsf{R}(\eta,\xi)P^{}_\mathsf{L}(\eta,\zeta)W(\eta)
		\end{align*}
		is a duality function between $\LASIPno$ and $\RASIPno$, where $w$ is the reversible measure of ASIP from \eqref{eq:revmeasASIP}. In Section \ref{sec:dynasipalgebra} we explicitly compute this summation to prove the theorem.
	\end{proof}
	\begin{remark}\*
		\begin{itemize}
			\item Askey-Wilson polynomials are on top of the so-called `Askey-scheme': a hierarchy of explicit orthogonal polynomials. By taking appropriate limits in these functions, one can obtain many other orthogonal polynomials, such as the Al-Salam--Chihara polynomials and Jacobi polynomials. We will exploit this in sections \ref{sec:AsymmetricDegenerations} and \ref{sec:dynABEP}.
			\item Since $\LASIPno$ and $\RASIPno$ can be obtained from each other by reversing the order of sites, one would expect a similar symmetry in the duality function $P^v_{\mathrm{AW}}$. This is indeed the case. From the definition \eqref{eq:1sitedualityAW} we obtain the symmetry
			\[
				p^{}_{\mathrm{AW}}(x,y;\lambda,\rho,v,k;q) = p^{}_{\mathrm{AW}}(y,x;\rho,\lambda,v,k;q^{-1}) 
			\]
			 for the 1-site duality function. Then, using the definition \eqref{eq:dualityAW}, we obtain\footnote{The case when there are only 2 sites gives some intuition for the computation behind this symmetry.}
			\begin{align}
				P^v_{\mathrm{AW}}(\zeta,\xi;\lambda,\rho,\vec{k};q) = P^v_{\mathrm{AW}}(\xi^\rev,\zeta^\rev;\rho,\lambda,\vec{k}^\rev;q^{-1}).\label{eq:symmetryPaw}
			\end{align}
		\end{itemize}
	\end{remark}
	\subsection{Reversibility dynamic ASIP}
	We will now show that $\LASIPpar$ and $\RASIPpar$ admit reversible measures for both $\lambda,\rho>-1$ and $\lambda,\rho\leq -1$. Throughout this section we will assume that $q<1$. This is because we will use infinite shifted factorials $(a;q)_\infty$ which are only defined for $q<1$. Since the rates of dynamic ASIP are invariant under sending $q\to q^{-1}$, we can then obtain results for dynamic ASIP for $q>1$ as well. \\
	
	The reversibility follows from the orthogonality relations of the 1-site duality functions $p^{}_\mathrm{A}$ which is the content of the following proposition. Let us first define the relevant measure,
	\begin{align*}
		w^{}_\mathsf{dyn}(z;a,k,q)= \begin{dcases}
			\frac{1-q^{4z+2a+2k}}{1-q^{2z+2a+2k}}\frac{(q^{2k};q^2)_z}{(q^2;q^2)_z}\frac{(q^{2z+2a +2};q^2)_\infty}{(q^{2z+2a+2k +2};q^2)_\infty}q^{2z(z+a)} &\text{ if }a>-1, \\
			\frac{1-q^{4z+2a+2k}}{1-q^{2z+2a+2k}}\frac{(q^{2k};q^2)_z}{(q^2;q^2)_z}\frac{(q^{-2z-2a-2k};q^2)_\infty}{(q^{-2z-2a};q^2)_\infty}q^{-2z(z+a+k)} &\text{ if }a\leq -1. \end{dcases}
	\end{align*}
		\begin{proposition}				
			We have the following orthogonality relations for the Al-Salam--Chihara polynomials $p_\mathsf{A}^{}$.
			\begin{enumerate}[label=(\roman*)]
				\item If $\lambda > -1$, we have
				\begin{align}
					&\sum_{y=0}^\infty p^{}_\mathsf{A}(n,y;\lambda,k;q^{-1}) p^{}_\mathsf{A}(n',y;\lambda,k;q^{-1}) w^{}_\mathsf{dyn}(y;\lambda,k,q) = \frac{\delta_{n,n'}}{w(n;k,q)}\label{eq:orthASC1/q}\\
					&\sum_{n=0}^\infty p^{}_\mathsf{A}(n,y;\lambda,k;q^{-1}) p^{}_\mathsf{A}(n,y';\lambda,k;q^{-1}) w(n;k,q) = \frac{\delta_{y,y'}}{w_\mathsf{dyn}^{}(y;\lambda,k,q)} \label{eq:dualorthASC1/q}
				\end{align}
				for all $n,n',y,y'\in\Z_{\geq 0}$.
				\item If $\rho \leq -1$, we have
				\begin{align}
					&\sum_{n=0}^\infty p^{}_\mathsf{A}(n,x;\rho,k;q) p^{}_\mathsf{A}(n,x';\rho,k;q) w(n;k,q) = \frac{\delta_{x,x'}}{w_\mathsf{dyn}(x;\rho,k,q)}\label{eq:dualorthASCq}
				\end{align}
				for all $x,x'\in \{x\in \Z_{\geq 0}: x+\rho + k < 0\}$.
			\end{enumerate}
		\end{proposition}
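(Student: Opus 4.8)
The plan is to identify the renormalized polynomials $p^{}_\mathsf{A}$ of \eqref{eq:1siteASCq} as a gauge-transformed family of Al-Salam--Chihara polynomials on a $q$-quadratic lattice, and then to establish the three relations by combining a direct $q$-series summation with the standard orthogonal-polynomial duality. Writing the effective base as $q^2$ and comparing the $\rphisempty{3}{2}$ in \eqref{eq:1siteASCq} with the Al-Salam--Chihara $\rphisempty{3}{2}$ of \cite[\S14.8]{KLS}, one reads off $ae^{i\theta}=q^{-2x}$ and $ae^{-i\theta}=q^{2x+2\rho+2k}$, hence $a=q^{\rho+k}$, $b=q^{k-\rho}$, $ab=q^{2k}$, with lattice variable $q^{-2x}+q^{2x+2\rho+2k}$. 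This makes precise the dual structure noted after \eqref{eq:1siteASCq}: $p^{}_\mathsf{A}(n,x)$ is a polynomial of degree $n$ in that lattice variable and simultaneously a polynomial of degree $x$ in $q^{-2n}$. The prefactor $q^{-\frac12 n(2\rho+k+1)}$ depends only on $n$, so it is a pure gauge factor that merely rescales squared norms.

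First I would prove the relations obtained by summing over $n$, namely \eqref{eq:dualorthASC1/q} and \eqref{eq:dualorthASCq}, since these are the directly computable ones. Here the summand is $w(n;k,q)$ from \eqref{eq:1siterevASIP} times the gauge factor and a product of two terminating $\rphisempty{3}{2}$'s, each a polynomial of bounded degree in the variable $q^{-2n}$. Interchanging the finite expansion with the sum over $n$ reduces the identity to a finite linear combination of geometric-type evaluations $\sum_{n\ge0} w(n;k,q)\,q^{-2nj}$, each summable in closed form by the $q$-binomial theorem, after which a $q$-Chu--Vandermonde summation should collapse the double sum to the Kronecker delta together with the factor $1/w^{}_\mathsf{dyn}$ in its appropriate branch. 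In case (ii) the support condition $x+\rho+k<0$ is exactly what makes $a=q^{\rho+k}>1$ and forces the evaluation points to form a finite set, so that only the sum-over-$n$ relation can hold.

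For part (i) it then remains to prove the companion relation \eqref{eq:orthASC1/q}, the genuine orthogonality of the $q^{-1}$-Al-Salam--Chihara polynomials against the Gaussian-type discrete weight $w^{}_\mathsf{dyn}(\cdot;\lambda,k,q)$, the factor $q^{2z(z+a)}$ guaranteeing convergence for $q<1$. I would obtain this either by quoting the known discrete orthogonality of the $q^{-1}$-Al-Salam--Chihara family and matching the weights and norms to $w^{}_\mathsf{dyn}$ and $1/w(n;k,q)$ under the change of variables above, or by a transpose argument: setting $A_{n,y}=p^{}_\mathsf{A}(n,y;\lambda,k;q^{-1})\sqrt{w(n;k,q)\,w^{}_\mathsf{dyn}(y;\lambda,k,q)}$, relation \eqref{eq:dualorthASC1/q} says that $A$ has orthonormal columns, while \eqref{eq:orthASC1/q} is the statement that its rows are orthonormal, i.e.\ that $A$ is unitary.

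The main obstacle is precisely this last passage from isometry to unitarity. The $q^{-1}$-Al-Salam--Chihara moment problem is indeterminate, so column-orthonormality does not automatically yield row-orthonormality; \eqref{eq:orthASC1/q} holds only because $w^{}_\mathsf{dyn}$ is an $N$-extremal measure, equivalently because the polynomials are complete in $\ell^2(w^{}_\mathsf{dyn})$. I would therefore need either to verify this completeness directly or to cite the classification of extremal measures for this family. The remaining work --- carrying the gauge factor $q^{-\frac12 n(2\rho+k+1)}$ and the ratios of infinite $q$-shifted factorials correctly through the change of base $q\leftrightarrow q^{-1}$ so as to land exactly on the stated $w^{}_\mathsf{dyn}$ and $1/w(n;k,q)$ --- is routine but error-prone bookkeeping.
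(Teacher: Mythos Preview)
Your outline is correct but goes in the opposite direction from the paper for part (i). The paper's proof is essentially two citations plus the dual-orthogonality principle of Appendix~\ref{app:dualorth}: it quotes Askey--Ismail \cite{AI} for the primary relation \eqref{eq:orthASC1/q} (which already identifies $w^{}_\mathsf{dyn}$ as an $N$-extremal measure for the indeterminate $q^{-1}$-Al-Salam--Chihara moment problem), then obtains \eqref{eq:dualorthASC1/q} as its dual; similarly \eqref{eq:dualorthASCq} is the dual of the standard orthogonality \cite[(14.8.3)]{KLS}, using the discrete mass points that appear when $a=q^{k+\rho}>1$. You instead propose a self-contained $q$-series evaluation of the sum-over-$n$ relations first, followed by a transpose/unitarity argument to recover \eqref{eq:orthASC1/q}. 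Your diagnosis that this last step is the real obstacle, and that it hinges precisely on $N$-extremality (completeness of the polynomials in $\ell^2(w^{}_\mathsf{dyn})$), is exactly right --- and it is the same completeness the paper leans on implicitly through the citation of \cite{AI}. Your route, if carried out, would be more self-contained and would make the role of the indeterminacy explicit; the paper's route is far shorter and sidesteps both the $q$-series bookkeeping and the need to verify completeness independently.
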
	
		\begin{proof}
			The first orthogonality relation \eqref{eq:orthASC1/q} can be found in \cite{AI} and \eqref{eq:dualorthASC1/q} is the dual\footnote{For a brief explanation and proof of the dual orthogonality, see appendix \ref{app:dualorth}.} orthogonality relation of \eqref{eq:orthASC1/q}. The relation \eqref{eq:dualorthASCq} is the dual orthogonality relation of \cite[(14.8.3)]{KLS}, with $a=q^{k+\rho}$ and $b=q^{k-\rho}$.
		\end{proof}
		\begin{remark}
			If $\lambda \leq -1$, the $q^{-1}$-Al-Salam--Chihara polynomials are still orthogonal. However, the support of the measure will not be a subset of the nonnegative integers. Therefore, it will not be a reversible measure for dynamic ASIP. The same holds for the $q$-Al-Salam--Chihara polynomials when $\rho > -1$.
		\end{remark} 
		Using the previous proposition, we can show that the duality functions $P^{}_\mathsf{L}$ and $P^{}_\mathsf{R}$ are orthogonal with respect to the reversible measure $W$ for $\ASIPpar$ from \eqref{eq:revmeasASIP} and the measures 
		\begin{align}
			&W_\mathsf{L}(\zeta)=W_\mathsf{L}(\zeta;\lambda,\vec{k},q)= \prod_{j=1}^M w_\mathsf{dyn}(\zeta_j;h^-_{j-1}(\zeta),k_j,q),\\
			&W_\mathsf{R}(\xi)=W_\mathsf{R}(\xi;\rho,\vec{k},q)=\prod_{j=1}^M w_\mathsf{dyn}(\xi_j;h^+_{j+1}(\xi),k_j,q).
		\end{align}
		\begin{corollary}\* \label{cor:orthASC}
			\begin{enumerate}[label=(\roman*)]
				\item If $\lambda > -1$, the duality functions $P_\mathsf{L}^{}$ are orthogonal with respect to the measures $W_\mathsf{L}$ and $W$,
				\begin{align}
					&\sum_{\eta\in X_d} P_\mathsf{L}^{}(\eta,\zeta) P_\mathsf{L}^{}(\eta,\zeta') W(\eta) = \frac{\delta_{\zeta,\zeta'}}{W_\mathsf{L}(\zeta)} \qquad \text{for all } \zeta,\zeta'\in X_d,\label{eq:orthoP_L1}\\
					&\sum_{\zeta\in X_d} P^{}_\mathsf{L}(\eta,\zeta) P^{}_\mathsf{L}(\eta',\zeta) W_\mathsf{L}(\zeta) = \frac{\delta_{\eta,\eta'}}{W(\eta)} \qquad \text{for all } \eta,\eta'\in X_d.\label{eq:orthoP_L2}
				\end{align}
				\item If $\rho \leq -1$, the duality functions $P^{}_\mathsf{R}$ are orthogonal with respect to the measure $W$,
				\begin{align}
					&\sum_{\eta\in X_d} P^{}_\mathsf{R}(\eta,\xi) P^{}_\mathsf{R}(\eta,\xi') W(\eta) = \frac{\delta_{\xi,\xi'}}{W_\mathsf{R}(\xi)} \qquad \text{for all }\xi\in X_{d,\rho}.\label{eq:orthoP_R}
				\end{align}
			\end{enumerate}
		\end{corollary}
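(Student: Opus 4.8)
The plan is to deduce all three multivariate orthogonality relations from the one-site relations of the preceding proposition, exploiting that $P_\mathsf{L}$ and $P_\mathsf{R}$ are products of the one-site functions $p_\mathsf{A}$ and that $W$, $W_\mathsf{L}$, $W_\mathsf{R}$ are product measures. The only thing preventing a naive ``product of one-site identities'' is that the one-site factors are coupled through the height functions: in $p_\mathsf{A}(\eta_j,\xi_j;h^+_{j+1}(\xi);k_j;q)$ the third slot $h^+_{j+1}(\xi)=\rho+\sum_{i>j}(2\xi_i+k_i)$ depends on the coordinates of $\xi$ at sites to the right of $j$, and symmetrically $h^-_{j-1}(\zeta)$ depends on the sites to the left. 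I would therefore not factor the sum outright but sum out one coordinate at a time, in an order for which the relevant height is already frozen, so that exactly one of the proposition's relations applies at each step and a Kronecker delta is produced that unlocks the next step.

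I would start with \eqref{eq:orthoP_R}, which is cleanest. Since $u(\eta,\vec{k})$ depends only on $\eta$, the two factors $q^{-\frac12 u(\eta,\vec{k})}$ in $P_\mathsf{R}(\eta,\xi)P_\mathsf{R}(\eta,\xi')$ cancel against the factor $q^{u(\eta,\vec{k})}$ of $W(\eta)$ from \eqref{eq:revmeasASIP}, leaving a clean product over $j$ of $p_\mathsf{A}(\eta_j,\xi_j;h^+_{j+1}(\xi);k_j;q)\,p_\mathsf{A}(\eta_j,\xi'_j;h^+_{j+1}(\xi');k_j;q)\,w(\eta_j;k_j,q)$. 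I would then evaluate from the right: for $j=M$ both heights equal $\rho=h^+_{M+1}$ irrespective of $\xi,\xi'$, so the dual orthogonality \eqref{eq:dualorthASCq} evaluates the $\eta_M$-sum to $\delta_{\xi_M,\xi'_M}/w_\mathsf{dyn}(\xi_M;\rho,k_M,q)$. This delta forces $\xi_M=\xi'_M$, whence $h^+_M(\xi)=h^+_M(\xi')$, so \eqref{eq:dualorthASCq} now applies to the $\eta_{M-1}$-sum, and so on down to $j=1$. The product of the resulting one-site factors is exactly $\delta_{\xi,\xi'}/W_\mathsf{R}(\xi)$.

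The relation \eqref{eq:orthoP_L1} is entirely analogous: summing over $\eta$ and combining the $q^{\pm\frac12 u}$ prefactors with the $q^{u}$ of $W$, the heights $h^-_{j-1}$ now depend on $\zeta,\zeta'$ only, so the induction runs from the left (at $j=1$ the height is $\lambda$ for both arguments), using the dual orthogonality \eqref{eq:dualorthASC1/q} of the $q^{-1}$-Al-Salam--Chihara polynomials at each step. I expect \eqref{eq:orthoP_L2} to be the genuine obstacle, because there the summation variable is $\zeta$ itself and the weight $W_\mathsf{L}(\zeta)=\prod_j w_\mathsf{dyn}(\zeta_j;h^-_{j-1}(\zeta),k_j,q)$ is self-referential: the height $h^-_{j-1}(\zeta)$ appears simultaneously in both polynomials and in the weight of the $j$-th factor, so the $\zeta$-sum does not factorize. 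The key observation is that $h^-_{j-1}(\zeta)$ involves only $\zeta_1,\dots,\zeta_{j-1}$, so summing from the right ($\zeta_M$ first) keeps every relevant height fixed during its own summation; at each step \eqref{eq:orthASC1/q} applies and yields $\delta_{\eta_j,\eta'_j}/w(\eta_j;k_j,q)$. Its hypothesis $h^-_{j-1}(\zeta)>-1$ holds for every $\zeta$ by $\lambda>-1$ together with the monotonicity \eqref{eq:height-monotone}. The cascade of deltas forces $\eta=\eta'$, so the surviving $q^{\pm\frac12 u}$ prefactors recombine into $1/W(\eta)$, giving \eqref{eq:orthoP_L2}. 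Throughout, the only bookkeeping that needs care is matching the $q$-prefactors against the normalisations of $W$, $W_\mathsf{L}$, $W_\mathsf{R}$, which is routine once the summation order above is fixed.
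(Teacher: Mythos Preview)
Your approach is correct and is precisely what the paper has in mind: the corollary is stated without explicit proof as an immediate consequence of the one-site relations in the preceding proposition, and your iterated one-coordinate-at-a-time summation—in the order that freezes the relevant height (right-to-left in $\eta$ for \eqref{eq:orthoP_R}, left-to-right in $\eta$ for \eqref{eq:orthoP_L1}, right-to-left in $\zeta$ for \eqref{eq:orthoP_L2}), with each Kronecker delta propagating equality of heights to the next step—is exactly how one fills in those details. Your closing caveat is well placed: the $q^{\pm\frac12 u}$ prefactors cancel cleanly against the $q^{u}$ in $W$ only in the $P_\mathsf{R}$ case, and the $P_\mathsf{L}$ bookkeeping does require a little more care than a single sentence suggests, but the mechanism you describe is the right one.
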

		\begin{remark}
			Since $\LASIPno$ and $\RASIPno$ are the same process where the order of sites is reversed, above corollary might at first glance seem odd since this symmetry between the processes is not immediately clear. This is because $\ASIPpar$ is an asymmetric process where the direction of the asymmetry depends on whether $q\in(0,1)$ or $q\in(1,\infty)$. By reversing the order of sites in \eqref{eq:orthoP_L1} and \eqref{eq:orthoP_L2}, one can show that $P^{}_\mathsf{R}(\eta,\xi;q^{-1})$ is an orthogonal duality function for $\rho > -1$ between $\RASIPpar$ and $\ASIP{q^{-1},\vec{k}}$. Similarly, one can show that $P^{}_\mathsf{L}(\eta,\zeta;q^{-1})$ is an orthogonal duality function between $\LASIPpar$ and $\ASIP{q^{-1},\vec{k}}$ if $\lambda\leq -1$.
		\end{remark}
		Since $W(\eta)$ is a reversible measure for $\ASIPno$, its generator is symmetric with respect to functions with finite support. This does not have to hold for arbitrary functions since $\genASIP$ is an unbounded operator. However, as a consequence of the orthogonal duality, we get that $\genASIP$ is symmetric with respect to the duality functions $P^{}_\mathsf{L}$ and $P^{}_\mathsf{R}$ in the weighted $L^2$ space with respect to the reversible measure $W(\eta)$. 
		\begin{proposition}\label{prop:genASIPsym}
			Let $H$ be the Hilbert space of square integrable functions on $X_d$ with respect to $W(\eta)$. That is, it has inner product
			\[
				\langle f,g\rangle_H = \sum_{\eta\in X_d} f(\eta)g(\eta)W(\eta).
			\]
			Then $\genASIP$ is symmetric with respect to the duality functions $P^{}_\mathsf{L}$ and $P^{}_\mathsf{R}$, i.e.
			\begin{align*}
				&\langle \genASIP f,g\rangle_H = \langle f,\genASIP g\rangle_H,
			\end{align*}
			for all $f,g\in \{P^{}_\mathsf{L}(\cdot,\zeta)\}_{\zeta\in X_d}\cup \{P^{}_\mathsf{R}(\cdot,\xi)\}_{\xi\in X_d}$.
		\end{proposition}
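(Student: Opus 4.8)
The plan is to deduce the symmetry of $\genASIP$ on the duality functions directly from the orthogonality relations in Corollary \ref{cor:orthASC} together with the duality equations of Theorem \ref{thm:ASCduality} and Corollary \ref{cor:dualityLASIPASIP}. The key observation is that the orthogonality relations let us expand any $P^{}_\mathsf{R}(\cdot,\xi)$ and $P^{}_\mathsf{L}(\cdot,\zeta)$ in a convenient dual basis, and that the duality equation converts the action of $\genASIP$ on one variable into the action of the (symmetric-under-reversible-measure) dynamic generator on the other. Since the whole claim is symmetric in the roles of $P^{}_\mathsf{L}$ and $P^{}_\mathsf{R}$, it suffices to establish $\langle \genASIP f, g\rangle_H = \langle f,\genASIP g\rangle_H$ for the three types of pairs $(f,g)$ drawn from the two families.

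First I would treat the diagonal case, say $f=P^{}_\mathsf{R}(\cdot,\xi)$ and $g=P^{}_\mathsf{R}(\cdot,\xi')$. By Theorem \ref{thm:ASCduality}, applying $\genASIP$ in the $\eta$-variable equals applying $\genRASIP$ in the $\xi$-variable, so
\[
\langle \genASIP P^{}_\mathsf{R}(\cdot,\xi),P^{}_\mathsf{R}(\cdot,\xi')\rangle_H = \sum_{\eta\in X_d}\big[\genRASIP P^{}_\mathsf{R}(\eta,\cdot)\big](\xi)\,P^{}_\mathsf{R}(\eta,\xi')\,W(\eta).
\]
I would then move the finitely-supported difference operator $\genRASIP$ (acting on $\xi$) outside the sum over $\eta$ and use the orthogonality relation \eqref{eq:orthoP_R}, which collapses $\sum_\eta P^{}_\mathsf{R}(\eta,\xi)P^{}_\mathsf{R}(\eta,\xi')W(\eta)$ to a diagonal term $\delta_{\xi,\xi'}/W_\mathsf{R}(\xi)$. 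The resulting expression is manifestly symmetric in $\xi,\xi'$, which gives the claim; the mixed diagonal case with two copies of $P^{}_\mathsf{L}$ is identical, using \eqref{eq:orthoP_L1} and Corollary \ref{cor:dualityLASIPASIP} in place of the right-hand data.

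The genuinely new case is the cross term, $f=P^{}_\mathsf{L}(\cdot,\zeta)$ and $g=P^{}_\mathsf{R}(\cdot,\xi)$. Here I would apply the duality equation of Corollary \ref{cor:dualityLASIPASIP} to $\langle \genASIP P^{}_\mathsf{L}(\cdot,\zeta),P^{}_\mathsf{R}(\cdot,\xi)\rangle_H$, turning the left entry into $\big[\genLASIP P^{}_\mathsf{L}(\eta,\cdot)\big](\zeta)$, and apply Theorem \ref{thm:ASCduality} to the other side to turn $P^{}_\mathsf{R}(\cdot,\xi)$ into $\genRASIP$ acting on $\xi$; the point is that both converge to the \emph{same} quantity $\langle \genASIP P^{}_\mathsf{L}(\cdot,\zeta),P^{}_\mathsf{R}(\cdot,\xi)\rangle_H$ via the already-established $\LASIPno\leftrightarrow\RASIPno$ duality $D(\zeta,\xi)=\sum_\eta P^{}_\mathsf{R}(\eta,\xi)P^{}_\mathsf{L}(\eta,\zeta)W(\eta)$ from the sketch of Theorem \ref{thm:AWduality}. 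Concretely, $\langle \genASIP P^{}_\mathsf{L}(\cdot,\zeta),P^{}_\mathsf{R}(\cdot,\xi)\rangle_H=[\genLASIP D(\cdot,\xi)](\zeta)$ while $\langle P^{}_\mathsf{L}(\cdot,\zeta),\genASIP P^{}_\mathsf{R}(\cdot,\xi)\rangle_H=[\genRASIP D(\zeta,\cdot)](\xi)$, and these agree by \eqref{eq:duality equation dynamic ASIP}.

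The main obstacle is justifying the interchange of the unbounded operator $\genASIP$ with the infinite sum over $\eta\in X_d$ defining $\langle\cdot,\cdot\rangle_H$, since $\genASIP$ is explicitly noted to be unbounded and the duality functions are not finitely supported. I would handle this by exploiting that $\genASIP$, $\genLASIP$ and $\genRASIP$ are \emph{local} difference operators: each acts only through finitely many neighbouring-site jumps, so for fixed $\xi$ (or $\zeta$) the composite still yields an absolutely convergent sum once one checks that $P^{}_\mathsf{R}$, $P^{}_\mathsf{L}$ and $W$ decay fast enough — this decay follows from the $q$-shifted factorials and the Gaussian-type factor $q^{\pm 2z(z+a)}$ in $w_\mathsf{dyn}$ together with the terminating nature of the $\rphisempty{3}{2}$ in $p_\mathrm{A}^{}$. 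Granting this dominated-convergence bookkeeping, the three cases above complete the proof; I expect the only delicate points to be verifying absolute convergence of the defining bilinear forms and confirming that moving the finite-range dynamic generators through the $\eta$-sum is legitimate, both of which reduce to elementary growth estimates on the explicit weights.
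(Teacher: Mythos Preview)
Your argument is circular in both the diagonal and the cross cases. In the diagonal case you reduce $\langle \genASIP P^{}_\mathsf{R}(\cdot,\xi),P^{}_\mathsf{R}(\cdot,\xi')\rangle_H$ to $\big[\genRASIP(\delta_{\cdot,\xi'}/W_\mathsf{R})\big](\xi)$ and then assert this is ``manifestly symmetric in $\xi,\xi'$.'' It is not: symmetry of that expression is exactly the detailed balance condition $W_\mathsf{R}(\xi)C^{\mathsf{R}}(\xi,\xi')=W_\mathsf{R}(\xi')C^{\mathsf{R}}(\xi',\xi)$ for $\RASIPno$, i.e.\ Theorem \ref{thm:DynASIPRev}, which in the paper is proved \emph{using} Proposition \ref{prop:genASIPsym}. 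The same circularity occurs for the $P^{}_\mathsf{L}$ diagonal via Theorem \ref{thm:DynASIPRev}(i). For the cross term you invoke the duality equation \eqref{eq:duality equation dynamic ASIP}, but the rigorous proof of that identity (Section \ref{sec:dynasipalgebra}, the scalar-product proposition) again relies on Proposition \ref{prop:genASIPsym}: the step ``$\genASIP$ is symmetric with respect to $W$ for the duality functions $P^{}_\mathsf{L}$ and $P^{}_\mathsf{R}$'' is precisely what you are trying to establish. So none of your three cases can be closed with results already available at this point in the paper.

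The paper's route avoids this trap entirely and is worth comparing. It truncates $P^{}_\mathsf{L}(\cdot,\zeta)$ to $P^{N}_\mathsf{L}(\eta,\zeta)=P^{}_\mathsf{L}(\eta,\zeta)\mathbf{1}_{|\eta|\le N}$, uses the finite-support symmetry of $\genASIP$ (which \emph{is} already known from reversibility of ASIP), and then passes to the limit $N\to\infty$ by dominated convergence. The crucial point is how the dominating function is obtained: duality converts $\genASIP$ acting on $\eta$ into $\genLASIP$ acting on $\zeta$, a \emph{finite} difference operator, so $|[\genASIP P^N_\mathsf{L}(\cdot,\zeta)](\eta)|$ is bounded by a finite linear combination of $|P^{}_\mathsf{L}(\eta,\zeta')|$ with $\zeta'$ ranging over neighbours of $\zeta$. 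Each of these is in $L^2(W)$ by Corollary \ref{cor:orthASC}, and Cauchy--Schwarz gives integrability of the product. This uses only the ASIP reversibility, the duality relations, and the $L^2$ norms of the $P^{}_\mathsf{L}$ --- no reversibility of dynamic ASIP and no $\LASIPno\leftrightarrow\RASIPno$ duality. Your convergence bookkeeping is in the right spirit, but the structural input you are feeding into it is downstream of the proposition itself.
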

		\begin{proof}
			We will prove this for $f=P_\mathsf{L}^{}(\eta,\zeta)$ and $g=P_\mathsf{L}^{}(\eta,\zeta')$, other cases are proven entirely similar. Define for $N\in \Z_{\geq 0}$ the restriction of $P^{}_\mathsf{L}$ to states where the total number of particles is smaller or equal $N$,
			\[
				P_\mathsf{L}^N(\eta,\xi)= P_\mathsf{L}^{}(\eta,\xi)\mathbf{1}_{|\eta| \leq N}.
			\]
			Since $W(\eta)$ is a reversible measure and $P_\mathsf{L}^N(\cdot,\zeta)$ has finite support, we have 
			\[
				\langle \genASIP P^{N}_\mathsf{L}(\cdot,\zeta),P^{N}_\mathsf{L}(\cdot,\zeta')\rangle_H = \langle P^{N}_\mathsf{L}(\cdot,\zeta),\genASIP P^{N}_\mathsf{L}(\cdot,\zeta')\rangle_H.
			\]
			We take the limit $N\to\infty$ on both sides and use the dominated convergence theorem to justify taking the limit inside the infinite sum. For finding a dominating function, we will use the duality relation for $P_\mathsf{L}^{}$. First note that
			\[
				\big[\genASIP P_\mathsf{L}^N(\cdot,\zeta)\big](\eta) = \begin{cases} \big[\genASIP P_\mathsf{L}(\cdot,\zeta)\big](\eta) \qquad &\text{for } |\eta|\leq N, \\ 0 \qquad &\text{for }|\eta| > N. \end{cases}
			\]
			Therefore, we can use the duality relation from Corollary \ref{cor:dualityLASIPASIP},
			\begin{align*}
				\big|\big[\genASIP P_\mathsf{L}^N(\cdot,\zeta)\big](\eta)\big| &\leq \big|\genASIP\big[ P_\mathsf{L}(\cdot,\zeta)\big](\eta)\big| \\
				&= \big|\big[\genLASIP P_\mathsf{L}(\eta,\cdot)\big](\zeta)\big| \\
				&\leq  C(\zeta)\sum_{j=1}^{M-1} |P_\mathsf{L}(\eta,\zeta^{j,j+1})| + |P_\mathsf{L}(\eta,\zeta^{j+1,j})| + 2|P_\mathsf{L}(\eta,\zeta)|,
			\end{align*}
			where $C(\zeta)$ is the maximum over $j$ of the absolute value of the rates $C^{\mathsf{L},\pm}_j(\zeta)$ of $\LASIPno$. Since we also have
			\[
				|P_\mathsf{L}^N(\eta,\xi)| \leq |P_\mathsf{L}^{}(\eta,\xi)|,
			\]
			we obtain a dominating function
			\[
				\big|\big[\genASIP P_\mathsf{L}^N(\cdot,\zeta)\big](\eta) P_\mathsf{L}^N(\eta,\zeta')\big| \leq  |P_\mathsf{L}^{}(\eta,\zeta')|C(\zeta)\sum_{j=1}^{M-1} |P_\mathsf{L}(\eta,\zeta^{j,j+1})| + |P_\mathsf{L}(\eta,\zeta^{j+1,j})| + 2|P_\mathsf{L}(\eta,\zeta)|.
			\]
			To see that the right-hand side is integrable, use Cauchy-Schwarz and 
			\[
			\big|\big| P_\mathsf{L}^{}(\cdot,\zeta)\big|\big|_H = 1/W_\mathsf{L}(\zeta)
			\]
			by Corollary \ref{cor:orthASC}.
		\end{proof}
		Next, we proceed to showing reversibility of dynamic ASIP, which follows quite directly from the orthogonality relations of the duality functions $P^{}_\mathsf{L}$ and $P^{}_\mathsf{R}$ and the previous proposition.
		\begin{theorem}[Reversibility]\*\label{thm:DynASIPRev}
			\begin{enumerate}[label=(\roman*)]
				\item The measure $W_\mathsf{L}$ is reversible for $\LASIP{q,\vec{k},\lambda}$ on $X_d$ if $\lambda >-1$ and on $X_{d,\lambda}$ if $\lambda\leq -1$. 
				\item The measure $W_\mathsf{R}$ is reversible for $\RASIP{q,\vec{k},\rho}$ on $X_d$ if $\rho >-1$ and on $X_{d,\rho}$ if $\rho\leq -1$. 
			\end{enumerate}
		\end{theorem}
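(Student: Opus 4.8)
The plan is to deduce reversibility from the detailed balance condition, to which it is equivalent for a conservative jump process, and to establish detailed balance by comparing two evaluations of a single inner product. Concretely, for part (i) with $\lambda > -1$ it suffices to prove
\[
	W_\mathsf{L}(\zeta)\,\genLASIP(\zeta,\zeta') = W_\mathsf{L}(\zeta')\,\genLASIP(\zeta',\zeta)
\]
for all $\zeta,\zeta'\in X_d$, where $\genLASIP(\zeta,\zeta')$ denotes the jump rate from $\zeta$ to $\zeta'$. The three ingredients are the symmetry of $\genASIP$ on the duality functions (Proposition \ref{prop:genASIPsym}), the duality relation of Corollary \ref{cor:dualityLASIPASIP}, and the orthogonality relation \eqref{eq:orthoP_L1}.

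First I would evaluate $\langle \genASIP P^{}_\mathsf{L}(\cdot,\zeta), P^{}_\mathsf{L}(\cdot,\zeta')\rangle_H$ by applying the duality relation to the left entry. Since $[\genASIP P^{}_\mathsf{L}(\cdot,\zeta)](\eta) = [\genLASIP P^{}_\mathsf{L}(\eta,\cdot)](\zeta) = \sum_{\zeta''}\genLASIP(\zeta,\zeta'')\,P^{}_\mathsf{L}(\eta,\zeta'')$, summing this against $P^{}_\mathsf{L}(\eta,\zeta')W(\eta)$ over $\eta$ and interchanging with the finite $\zeta''$-sum, the orthogonality \eqref{eq:orthoP_L1} collapses the $\eta$-sum and leaves $\genLASIP(\zeta,\zeta')/W_\mathsf{L}(\zeta')$. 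By Proposition \ref{prop:genASIPsym} the same inner product equals $\langle P^{}_\mathsf{L}(\cdot,\zeta), \genASIP P^{}_\mathsf{L}(\cdot,\zeta')\rangle_H$, and the identical manipulation on the right entry produces $\genLASIP(\zeta',\zeta)/W_\mathsf{L}(\zeta)$. Equating the two expressions gives
\[
	\frac{\genLASIP(\zeta,\zeta')}{W_\mathsf{L}(\zeta')} = \frac{\genLASIP(\zeta',\zeta)}{W_\mathsf{L}(\zeta)},
\]
which is exactly the desired detailed balance condition.

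The only analytic point is the interchange of the two summations, and it is mild. Because $\genLASIP$ is a nearest-neighbour hopping generator, for fixed $\zeta$ only finitely many $\zeta''$ contribute, and for each such $\zeta''$ the sum $\sum_\eta \genLASIP(\zeta,\zeta'')\,P^{}_\mathsf{L}(\eta,\zeta'')\,P^{}_\mathsf{L}(\eta,\zeta')\,W(\eta)$ converges absolutely by Cauchy--Schwarz, since $P^{}_\mathsf{L}(\cdot,\zeta'')$ and $P^{}_\mathsf{L}(\cdot,\zeta')$ lie in $H$ with $\|P^{}_\mathsf{L}(\cdot,\zeta'')\|_H^2 = 1/W_\mathsf{L}(\zeta'')$ by \eqref{eq:orthoP_L1}; this is essentially the estimate already made in the proof of Proposition \ref{prop:genASIPsym}. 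The case of $\RASIP{q,\vec{k},\rho}$ with $\rho\leq -1$ is handled identically, now using $P^{}_\mathsf{R}$, the duality of Theorem \ref{thm:ASCduality}, and the orthogonality \eqref{eq:orthoP_R}; here the argument takes place on $X_{d,\rho}$, which is preserved under the dynamics because jumps conserve the total particle number.

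Finally, the two remaining cases, $\LASIP{q,\vec{k},\lambda}$ with $\lambda\leq -1$ and $\RASIP{q,\vec{k},\rho}$ with $\rho > -1$, follow from the site-reversal/$q\to q^{-1}$ symmetry recorded in the remark after Corollary \ref{cor:orthASC}: there $P^{}_\mathsf{L}(\,\cdot\,;q^{-1})$ is an orthogonal duality function between $\LASIP{q,\vec{k},\lambda}$ and $\ASIP{q^{-1},\vec{k}}$ for $\lambda\leq -1$, and $P^{}_\mathsf{R}(\,\cdot\,;q^{-1})$ is an orthogonal duality function between $\RASIP{q,\vec{k},\rho}$ and $\ASIP{q^{-1},\vec{k}}$ for $\rho > -1$. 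Since $\ASIP{q^{-1},\vec{k}}$ is reversible and its generator is symmetric on these duality functions (Proposition \ref{prop:genASIPsym} applied with $q^{-1}$ in place of $q$), repeating the argument of the second and third paragraphs verbatim yields the detailed balance condition in these cases, with the normalizations of the reversed orthogonality relations producing exactly $W_\mathsf{L}$ and $W_\mathsf{R}$. I expect no conceptual obstacle; the main thing to check carefully is the summation interchange, which the orthogonality relations and the membership of the duality functions in $H$ supply.
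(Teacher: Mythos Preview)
Your proof is correct and follows essentially the same approach as the paper: both arguments combine the duality of Corollary \ref{cor:dualityLASIPASIP}, the orthogonality relation \eqref{eq:orthoP_L1}, and the symmetry of $\genASIP$ from Proposition \ref{prop:genASIPsym} to extract the detailed balance condition. The only cosmetic difference is that the paper frames the computation as expanding $\delta_{\zeta_1}$ in the basis $\{P^{}_\mathsf{L}(\cdot,\zeta)\}$ and working in the inner product $\langle\cdot,\cdot\rangle_{H_\mathsf{L}}$, whereas you compute $\langle \genASIP P^{}_\mathsf{L}(\cdot,\zeta), P^{}_\mathsf{L}(\cdot,\zeta')\rangle_H$ directly; your case breakdown for the remaining parameter ranges is also organized slightly differently but is equivalent.
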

		\begin{proof}
			Let us first prove (i) for $\lambda > -1$. Fix $\zeta_1,\zeta_2\in X_d$ such that $\zeta_1\neq\zeta_2$ and define
			\[
			\delta_{\zeta_1}(\zeta)=\delta_{\zeta_1,\zeta}, \qquad \zeta\in X_d.
			\]
			Note that for reversibility, it is enough to show that
			\begin{align}
				\big\langle \genLASIP \delta_{\zeta_1}, \delta_{\zeta_2} \big\rangle_{H_\mathsf{L}} =\big\langle \delta_{\zeta_1},\genLASIP \delta_{\zeta_2} \big\rangle_{H_\mathsf{L}} \label{eq:revdelta}.
			\end{align}
			Indeed, we have
			\begin{align*}
				\big\langle \genLASIP \delta_{\zeta_1}, \delta_{\zeta_2} \big\rangle_{H_\mathsf{L}} &= \sum_{\zeta\in X_d}\big[\genLASIP\delta_{\zeta_1}\big](\zeta)\delta_{\zeta_2}(\zeta)W_\mathsf{L}(\zeta) \\
				&= W_\mathsf{L}(\zeta_2)\big[\genLASIP\delta_{\zeta_1}\big](\zeta_2) \\
				&= W_\mathsf{L}(\zeta_2) C_\mathsf{L}(\zeta_1,\zeta_2),
			\end{align*}
			where $C_\mathsf{L}(\zeta_1,\zeta_2)$ is the rate in which $\LASIPpar$ goes from state $\zeta_1$ to state $\zeta_2$. Therefore, the detailed balance condition follows from \eqref{eq:revdelta}. \\
			\\
			By \eqref{eq:orthoP_L1}, we have
			\begin{align*}
				\delta_{\zeta_1}(\zeta_2)&=  W_\mathsf{L}(\zeta_1)\sum_{\eta\in X_d} P^{}_\mathsf{L}(\eta,\zeta_1) P^{}_\mathsf{L}(\eta,\zeta_2) W(\eta),\\
				\delta_{\zeta_2}(\zeta_1)&=  W_\mathsf{L}(\zeta_2)\sum_{\eta\in X_d} P^{}_\mathsf{L}(\eta,\zeta_1) P^{}_\mathsf{L}(\eta,\zeta_2) W(\eta).
			\end{align*}
			Therefore, using that $P^{}_\mathsf{L}$ is a duality function between $\ASIPpar$ and $\LASIPpar$, and Proposition \ref{prop:genASIPsym},
			\begin{align*}
				\big[\genLASIP \delta_{\zeta_1}\big](\zeta_2) &=  W_\mathsf{L}(\zeta_1)\sum_{\eta\in X_d} P^{}_\mathsf{L}(\eta,\zeta_1) \big[\genLASIP P^{}_\mathsf{L}(\eta,\cdot)\big](\zeta_2) W(\eta) \\
				&=  W_\mathsf{L}(\zeta_1)\sum_{\eta\in X_d} P^{}_\mathsf{L}(\eta,\zeta_1) \big[\genASIP P^{}_\mathsf{L}(\cdot,\zeta_2)\big](\eta) W(\eta) \\
				&=  W_\mathsf{L}(\zeta_1)\sum_{\eta\in X_d} \big[\genASIP P^{}_\mathsf{L}(\cdot,\zeta_1) \big](\eta) P^{}_\mathsf{L}(\eta,\zeta_2) W(\eta)\\
				&=  W_\mathsf{L}(\zeta_1)\sum_{\eta\in X_d} \big[\genLASIP P^{}_\mathsf{L}(\eta,\cdot) \big](\zeta_1) P^{}_\mathsf{L}(\eta,\zeta_2) W(\eta).
			\end{align*}
			Hence
			\begin{align*}
				\big\langle \genLASIP \delta_{\zeta_1}, \delta_{\zeta_2} \big\rangle_{H_\mathsf{L}} &= W_\mathsf{L}(\zeta_2)\big[\genLASIP\delta_{\zeta_1}\big](\zeta_2)\\
				&= W_\mathsf{L}(\zeta_1)W_\mathsf{L}(\zeta_2) \sum_{\eta\in X_d} \big[\genLASIP P^{}_\mathsf{L}(\eta,\cdot) \big](\zeta_1) P^{}_\mathsf{L}(\eta,\zeta_2) W(\eta) \\
				&=  W_\mathsf{L}(\zeta_1) \big[\genLASIP \delta_{\zeta_2}\big](\zeta_1)\\
				&=\big\langle \delta_{\zeta_1}, \genLASIP \delta_{\zeta_2} \big\rangle_{H_\mathsf{L}},
			\end{align*}
			proving \eqref{eq:revdelta}. Since ASIP$_\mathsf{R}$ is just ASIP$_\mathsf{L}$ with the order of sites reversed, we also get (ii) for $\rho>-1$. The statement (i) in the case $\lambda \leq -1$ (and therefore (ii) with $\rho\leq -1$) is proven similarly. 
		\end{proof}
		\begin{remark}\*\label{rem:revdynasip}
			\begin{itemize}
				\item By taking a limit in the dynamic parameter $\lambda$ or $\rho$, dynamic ASIP becomes ASIP. One would expect that we can take this limit in the reversible measure of dynamic ASIP as well and obtain a reversible measure of ASIP. This is indeed true.  For example, we have
				\begin{align*}
					\lim\limits_{\lambda\to\infty}q^{2\lambda|\eta|}W_\mathsf{L}(\eta)=q^{|\eta|(2|\eta|-1)} W(\eta),
				\end{align*}
				which follows by a straighforward calculation.
				\item Since dynamic ASIP is invariant under sending $q$ to $q^{-1}$, the measures $W_\mathsf{L}$ and $W_\mathsf{R}$ are reversible for dynamic ASIP with $q>1$ as well. One just has to plug in $q^{-1}$ in the measures in that case.
			\end{itemize}
		\end{remark}
	The duality functions $P^{v}_\mathrm{\!AW}$ between $\LASIPno$ and $\RASIPno$ are (bi)orthogonal  with respect to the reversible measure $W_\mathsf{L}$ if $\lambda >-1$ and $\rho\leq -1$ and with respect to $W_\mathsf{R}$ if $\rho >-1$ and $\lambda \leq 1$. They are biorthogonal in the sense that $P^{v}_\mathrm{\!AW}$ is orthogonal to $P^{v^{-1}}_\mathrm{\!AW}$. The conditions on $\lambda$ and $\rho$ come from the orthogonality of $P_\mathsf{L}^{}$ and $P_\mathsf{R}^{}$ given in corollary \ref{cor:orthASC}.
	\begin{theorem}\label{thm:AWdualityOrth}\* Define the factor
		\begin{align*}
			&\omega^{v}_{\mathrm{AW}}(y,x,\lambda,\rho,q)= \frac{ (vq^{2y+|\vec{k}|+\lambda-\rho+1};q^2)_\infty(v^{-1}q^{2y+|\vec{k}|+\lambda-\rho+1};q^2)_\infty}{(vq^{-2x-|\vec{k}|+\lambda-\rho+1};q^2)_\infty(v^{-1}q^{-2x-|\vec{k}|+\lambda-\rho+1};q^2)_\infty}.
		\end{align*}
		If $\lambda >-1$ and $\rho \leq -1$, we have for all $\xi,\xi'\in X_{d,\rho}$ the orthogonality relations
		\begin{align}
			\sum_{\zeta\in X_d} P^v_{\mathrm{AW}}(\zeta,\xi;q) P^{v^{-1}}_{\mathrm{AW}}(\zeta,\xi';q) W_\mathsf{L}(\zeta,q) \omega^{v}_{\mathrm{AW}}(|\zeta|,|\xi|,\lambda,\rho,q)&= \frac{\delta_{\xi,\xi'}}{W_\mathsf{R}(\xi,q)}. \label{eq:aworthozetaq}
		\end{align}
		\ \\
		If $\lambda \leq -1$ and $\rho > -1$, they are orthogonal with respect to the reversible measure $W_\mathsf{R}$,
		\begin{align}
			\sum_{\xi\in X_d} P^v_{\mathrm{AW}}(\zeta',\xi;q^{-1}) P^{v^{-1}}_{\mathrm{AW}}(\zeta,\xi;q^{-1}) W_\mathsf{R}(\xi,q) \omega^{v}_{\mathrm{AW}}(|\xi|,|\zeta|,\rho,\lambda,q)&= \frac{\delta_{\zeta,\zeta'}}{W_\mathsf{L}(\zeta,q)}.\label{eq:aworthoxiq}
		\end{align}
	\end{theorem}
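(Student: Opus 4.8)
The plan is to prove the first relation \eqref{eq:aworthozetaq} directly and to deduce the second relation \eqref{eq:aworthoxiq} from it by the site-reversal symmetry \eqref{eq:symmetryPaw}. Reversing the order of sites interchanges $\LASIPno$ with $\RASIPno$, swaps $\lambda\leftrightarrow\rho$, sends $q\to q^{-1}$, and turns $W_\mathsf{L}$ into the corresponding $W_\mathsf{R}$ (and conversely); applying this together with \eqref{eq:symmetryPaw} converts the hypotheses $\lambda>-1,\rho\le-1$ of \eqref{eq:aworthozetaq} into the hypotheses $\lambda\le-1,\rho>-1$ of \eqref{eq:aworthoxiq}. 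Hence it suffices to establish \eqref{eq:aworthozetaq}, and I would do so via the scalar-product structure that already underlies Theorem \ref{thm:AWduality}.

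The key input is the summation formula computed in Section \ref{sec:dynasipalgebra}, which expresses the Askey-Wilson duality function as a $v$-gauged scalar product of the Al-Salam--Chihara duality functions, of the shape
\begin{align*}
	P^v_{\mathrm{AW}}(\zeta,\xi)=\sum_{\eta\in X_d} g_v(\eta,\zeta,\xi)\,P^{}_\mathsf{R}(\eta,\xi)\,P^{}_\mathsf{L}(\eta,\zeta)\,W(\eta),
\end{align*}
where $g_v$ collects the $v$-dependent prefactors of the $\rphisempty{4}{3}$ in \eqref{eq:1sitedualityAW} (the $\rphisempty{4}{3}$ itself being invariant under $v\to v^{-1}$). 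I would substitute this representation for both $P^v_{\mathrm{AW}}(\zeta,\xi)$ and $P^{v^{-1}}_{\mathrm{AW}}(\zeta,\xi')$ into the left-hand side of \eqref{eq:aworthozetaq}. Since $\rho\le-1$ the index $\xi$ ranges over the finite set $X_{d,\rho}$, but the $\zeta$-sum and the two $\eta$-sums are infinite, so I would first justify interchanging the order of summation; for $q<1$ this follows from the geometric decay built into the terminating $\rphisempty{4}{3}$ together with the infinite $q$-shifted factorials present in the measures.

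Having interchanged, I would carry out the inner $\zeta$-sum first. The point to verify is that the $\zeta$-dependence of the product of gauge factors $g_v(\eta,\zeta,\xi)\,g_{v^{-1}}(\eta',\zeta,\xi')$ is exactly cancelled by $\omega^{v}_{\mathrm{AW}}(|\zeta|,|\xi|)$, which depends on $\zeta$ only through $|\zeta|$, i.e.\ through the top height $h^-_M(\zeta)=\lambda+2|\zeta|+|\vec{k}|$. Granting this cancellation, the $\zeta$-summand becomes $P^{}_\mathsf{L}(\eta,\zeta)P^{}_\mathsf{L}(\eta',\zeta)W_\mathsf{L}(\zeta)$ times a $\zeta$-free factor, so the dual orthogonality \eqref{eq:orthoP_L2} of $P^{}_\mathsf{L}$ (valid since $\lambda>-1$) produces $\delta_{\eta,\eta'}/W(\eta)$ and collapses the double $\eta,\eta'$ sum to a single one. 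The remaining $\eta$-sum is then $\sum_{\eta\in X_d}P^{}_\mathsf{R}(\eta,\xi)P^{}_\mathsf{R}(\eta,\xi')W(\eta)=\delta_{\xi,\xi'}/W_\mathsf{R}(\xi)$ by the orthogonality \eqref{eq:orthoP_R} of $P^{}_\mathsf{R}$ (valid since $\rho\le-1$), and checking that the surviving $v$-factors evaluate to $1$ on the diagonal $\eta=\eta'$, $\xi=\xi'$ yields precisely the right-hand side of \eqref{eq:aworthozetaq}.

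The hard part is the exact algebraic bookkeeping of the gauge factors: one must confirm that $g_v\,g_{v^{-1}}\,\omega^{v}_{\mathrm{AW}}$ is genuinely $\zeta$-free and normalized to $1$ on the diagonal, and it is this requirement that forces the specific form of $\omega^{v}_{\mathrm{AW}}$, in particular its dependence on the total particle numbers through the top-level heights $h^-_M$ and $h^+_1$. An alternative route, if the summation formula proves awkward to gauge, is purely computational: establish the single-site biorthogonality of the Askey-Wilson polynomials $p^{}_{\mathrm{AW}}$ as a $v$-gauged version of the dual orthogonality relation from \cite[\S14.1]{KLS}, and then nest it over the sites $j=1,\dots,M$. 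At each step the one-site relation removes one variable and shifts the height functions $h^-_{j-1}$ and $h^+_{j+1}$, and $\omega^{v}_{\mathrm{AW}}$ emerges as the telescoped product of the one-site normalization constants, which is precisely why only the boundary heights survive; this is more laborious but avoids the scalar-product identity entirely.
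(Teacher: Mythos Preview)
Your approach is essentially the same as the paper's: both substitute the scalar-product representation $P^v_{\mathrm{AW}}(\zeta,\xi)=\dfrac{(vq^{-2|\xi|-|\vec{k}|+\lambda-\rho+1};q^2)_\infty}{(vq^{2|\zeta|+|\vec{k}|+\lambda-\rho+1};q^2)_\infty}\sum_{\eta} v^{|\eta|}P_\mathsf{L}(\eta,\zeta)P_\mathsf{R}(\eta,\xi)W(\eta)$ from Section~\ref{sec:dynasipalgebra}, observe that $\omega^{v}_{\mathrm{AW}}$ exactly cancels the $\zeta$-dependent prefactors, sum over $\zeta$ via \eqref{eq:orthoP_L2}, then over $\eta$ via \eqref{eq:orthoP_R}, and deduce \eqref{eq:aworthoxiq} from \eqref{eq:aworthozetaq} by site reversal. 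The paper is simply more explicit about the form of your $g_v$ (it is $v^{|\eta|}$ times a ratio of infinite $q$-shifted factorials in $|\zeta|,|\xi|$), which makes the cancellation with $\omega^{v}_{\mathrm{AW}}$ and the disappearance of $v^{|\eta|-|\eta'|}$ on the diagonal $\eta=\eta'$ immediate; your ``hard part'' is therefore a one-line check once that formula is in hand.
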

	\begin{proof}
		In Section \ref{sec:dynasipalgebra} we will show that
		\begin{align*}
			P^v_\mathrm{AW}(\zeta,\xi)= \frac{(vq^{-2|\xi|-|\vec{k}|+\lambda-\rho+1};q^2)_\infty}{(vq^{2|\zeta|+|\vec{k}|+\lambda-\rho+1};q^2)_\infty}\sum_{\eta\in X_d} v^{|\eta|}P_\mathsf{L}^{}(\eta,\zeta)P_\mathsf{R}^{}(\eta,\xi)W(\eta).
		\end{align*}
		Using this, we have that
		\begin{align*}
			\sum_{\zeta\in X_d} P^v_{\mathrm{AW}}(\zeta,\xi;q) P^{v^{-1}}_{\mathrm{AW}}(\zeta,\xi';q) W_\mathsf{L}(\zeta,q) \omega^{v}_{\mathrm{AW}}(|\zeta|,|\xi|,\lambda,\rho,q) 
		\end{align*}
		is equal to 
		\begin{align}
			\sum_{\zeta,\eta,\eta'\in X_d}v^{|\eta|-|\eta'|}P_\mathsf{L}^{}(\eta,\zeta)P_\mathsf{R}^{}(\eta,\xi)W(\eta)P_\mathsf{L}^{}(\eta',\zeta)P_\mathsf{R}^{}(\eta',\xi')W(\eta')W_\mathsf{L}(\zeta,q). \label{eq:sumzetaetaeta'}
		\end{align}
		If we first sum over $\zeta$ and use the orthogonality relation \eqref{eq:orthoP_L2} for $P^{}_\mathsf{L}$, we get that \eqref{eq:sumzetaetaeta'} is equal to
		\begin{align*}
			\sum_{\eta\in X_d}P_\mathsf{R}^{}(\eta,\xi)P_\mathsf{R}^{}(\eta',\xi')W(\eta)
		\end{align*}
		for $\lambda > -1$. Now \eqref{eq:aworthozetaq} follows from the orthogonality relation \eqref{eq:orthoP_R} for $P^{}_\mathsf{R}$, which is valid for $\rho \leq -1$.
		The second equations \eqref{eq:aworthoxiq} follows from the previous one since $\LASIPno$ and $\RASIPno$ are the same processes but with the order of sites reversed. That is, use the symmetry \eqref{eq:symmetryPaw} of $P^v_{\mathsf{AW}}$ and 
		\[
			W_\mathsf{L}(\xi^\rev,\rho)=W_\mathsf{R}(\xi,\rho). \qedhere
		\]	
	\end{proof}
	\begin{remark}\*
		\begin{itemize}
			\item The factor $\omega_{\mathrm{AW}}^v$ only depends on parameters that are kept invariant by the generators of $\LASIPno$ and $\RASIPno$. Therefore one could also choose to put this factor in the duality function $P^v_\mathrm{AW}$ or the reversible measures $W_\mathsf{L}^{}$ and $W_\mathsf{R}^{}$. However, we choose to put this factor in the orthogonality relation like this to keep the duality functions and reversible measures simpler.
			\item Since dynamic ASIP is invariant under sending $q$ to $q^{-1}$, one would expect that the requirement $q<1$ is not necessary for the orthogonality relations of the previous theorem. This is indeed true. One can show that $P^v_\mathrm{AW}$ is almost invariant under sending $(q,v)$ to $(q^{-1},v^{-1})$, 
			\begin{align*}
				P^{v^{-1}}_{\mathrm{AW}}(\zeta,\xi;q^{-1})=(-v)^{|\zeta|+|\xi|}q^{-\alpha(|\xi|)-\beta(|\zeta|)}P^{v}_\mathrm{AW}(\zeta,\xi;q),
			\end{align*}
			where
			\begin{align*}
				&\alpha(x)=-x(x-|\vec{k}|-\lambda-\rho),\\
				&\beta(y)=-y(y+|\vec{k}|+\lambda+\rho).
			\end{align*}
			Applying this symmetry to \eqref{eq:aworthozetaq}, we have for $\lambda >-1$ and $\rho\leq -1$ that
			\begin{align*}
				\hspace{1cm}\sum_{\zeta\in X_d} P^v_{\mathrm{AW}}(\zeta,\xi;q^{-1}) P^{v^{-1}}_{\mathrm{AW}}(\zeta,\xi';q^{-1}) W_\mathsf{L}(\zeta,q) \omega^{v}_{\mathrm{AW}}(|\zeta|,|\xi|,\lambda,\rho,q)q^{2\beta(|\zeta|)}&= \frac{\delta_{\xi,\xi'}}{W_\mathsf{R}(\xi,q)q^{2\alpha(|\xi|)}}.
			\end{align*}
			Similarly, from \eqref{eq:aworthoxiq} we obtain for $\lambda\leq -1$ and $\rho>-1$ that
			\begin{align*}
							\hspace{0.2cm}\sum_{\xi\in X_d} P^v_{\mathrm{AW}}(\zeta,\xi;q) P^{v^{-1}}_{\mathrm{AW}}(\zeta',\xi;q) W_\mathsf{R}(\xi,q) \omega^{v}_{\mathrm{AW}}(|\xi|,|\zeta|,\rho,\lambda,q)q^{2\beta(|\xi|)}&= \frac{\delta_{\zeta,\zeta'}}{W_\mathsf{L}(\zeta,q)q^{2\alpha(|\zeta|)}}.
			\end{align*}
			\item Only looking at the $\rphisempty{4}{3}$ part of $P^v_\mathrm{AW}$, one can see that it is invariant under sending $v$ to $v^{-1}$. Even more is true, one can show that 
					\[
						P^{v^{-1}}_{\mathrm{AW}}(\zeta,\xi)= \gamma P^v_\mathrm{AW}(\zeta,\xi),
					\]
					where $\gamma$ is a factor depending on the total number of particles of both processes. Thus we can obtain orthogonality instead of biorthogonality by putting this factor $\gamma$ into $\omega_{\mathrm{AW}}^{v}$. However, the factor $\omega_\mathrm{AW}^{v}$ would contain four more infinite shifted factorials. Therefore, we choose to keep the biorthogonality.
			\item These orthogonality relations do not correspond to the usual measure of the Askey-Wilson polynomials, which consists of an integral over the unit circle and, if the parameters satisfy certain conditions, a sum over discrete mass points. The orthogonality relations of this theorem correspond to the dual orthogonality of the Askey-Wilson polynomials when the measure has discrete mass points, see Appendix \ref{app:dualorth} for a brief explanation and proof of dual orthogonality.
		\end{itemize}
	\end{remark}
	\subsection{Dynamic SIP}
		By letting $q\to1$ in the rates of dynamic ASIP, we obtain a dynamic version of SIP$(\vec{k})$ from Remark \ref{rem:asip}. Similar to dynamic ASIP, we have to be careful with choosing the parameters in order to have non-negative rates. The height functions $h_j^+$ and $h_j^-$ are the same as the ones defined in Section \ref{subsec:defdynasip}.
		\begin{Definition}[Dynamic SIP for $\lambda,\rho > -1$] \*
			\begin{enumerate}
				\item Let $\lambda > -1$. SIP$_\mathsf{L}(q,\vec{k},\lambda)$, the left version of dynamic SIP, is the Markov process on the statespace $X_d$ with parameters $\vec{k}\in \R_+^M$ and left boundary value $\lambda$. A particle jumps from site $j$ to $j+1$ at rate
				\begin{align}
					 \frac{\zeta_j(\zeta_{j+1}+k_{j+1})(h_{j-1}^{-}(\zeta)+\zeta_j)(h_j^-(\zeta)+\zeta_{j+1})}{h_j^-(\zeta)(h_j^-(\zeta)-1)},\label{eq:LDynSIPjj+1}
				\end{align}
				and from site $j$ to site $j-1$ at rate
				\begin{align}
					\frac{\zeta_j(\zeta_{j-1}+k_{j-1})(h_{j-1}^{-}(\zeta)-\zeta_{j-1})(h_j^-(\zeta)-\zeta_{j})}{h_{j-1}^-(\zeta)(h_{j-1}^-(\zeta)+1)}. \label{eq:LDynSIPjj-1}
				\end{align}
				\item Let $\rho>-1$. SIP$_\mathsf{R}(\vec{k},\rho)$, the right version of dynamic SIP, is the Markov process on the statespace $X_d$ with parameters $\vec{k}$ and right boundary value $\rho$. A particle jumps from site $j$ to site $j+1$ at rate
				\begin{align}
					\frac{\xi_j(\xi_{j+1}+k_{j+1})(h_{j}^{+}(\xi)-\xi_j)(h_{j+1}^+(\xi)-\xi_{j+1})}{h_{j+1}^+(\xi)(h_{j+1}^+(\xi)+1)}, \label{eq:RDynSIPjj+1}
				\end{align}
				and from site $j$ to site $j-1$ at rate
				\begin{align}
					\frac{\xi_j(\xi_{j-1}+k_{j-1})(h_{j}^{+}(\xi)+\xi_{j-1})(h_{j+1}^+(\xi)+\xi_{j})}{h_j^+(\xi)(h_j^+(\xi)-1)}. \label{eq:RDynSIPjj-1}
				\end{align}
			\end{enumerate}
		\end{Definition}
		Just as before, when we want $\lambda,\rho \leq -1$, we have the same rates, but on a different statespace.
		\begin{Definition}[Dynamic SIP for $\lambda,\rho \leq -1$]\*
			\begin{enumerate}
				\item Let $\lambda \leq -1$, then SIP$_\mathsf{L}(\vec{k},\lambda)$ is the Markov process on the statespace $X_{d,\lambda}$ where in the state $\zeta$, a particle jumps from site $j$ to $j+1$ with rate given by \eqref{eq:LDynSIPjj+1} and from $j$ to $j-1$ with rate \eqref{eq:LDynSIPjj-1}.
				\item Let $\rho \leq -1$, then SIP$_\mathsf{R}(\vec{k},\rho)$ is the Markov process on the statespace $X_{d,\rho}$ where in the state $\xi$, a particle jumps from site $j$ to $j+1$ with rate given by \eqref{eq:RDynSIPjj+1} and from site $j$ to $j-1$ with rate given by \eqref{eq:RDynSIPjj-1}.
			\end{enumerate} 
		\end{Definition} 
		\begin{remark}
			When letting $\lambda\to\pm\infty$ in SIP$_\mathsf{L}(\vec{k},\lambda)$, one obtains SIP$(\vec{k})$ from Remark \ref{rem:asip}. Similarly, when letting  $\rho\to\pm\infty$ in SIP$_\mathsf{R}(\vec{k},\rho)$, one also obtains SIP$(\vec{k})$.
		\end{remark}
		By replacing $v$ by $q^{2v}$ and letting $q\to 1$ in an appropriate way in the duality relation \eqref{eq:duality equation dynamic ASIP}, we obtain duality functions between $\LSIP{\vec{k},\lambda}$ and $\RSIP{\vec{k},\rho}$. The corresponding 1-site duality functions are given in terms of Wilson polynomials, which are the $q=1$ variant of the Askey-Wilson polynomials. We have put a hat on the duality functions and measures without a parameter $q$ to distinguish between them and their counterparts that do depend on $q$.
		\begin{proposition}
			Define the 1-site duality functions
			\begin{align*}
				\hat{p}^{}_{\mathrm{W}}(y,x;\lambda,\rho;v,k) =   \big(\tfrac12\alpha+v\big)_x\big(\tfrac12\alpha-v\big)_y\rFs{4}{3}{-y,y+\lambda+k,-x,x+\rho+k}{k,\half\alpha+v,\half\alpha-v}{1},
			\end{align*}
			where $\alpha = \rho+\lambda+k+1$. Then
			\begin{align}
				\hat{P}^v_W(\zeta,\xi) = \prod_{j=1}^M \hat{p}^{}_{\mathrm{W}}(\zeta_j,\xi_j;h_{j-1}^-(\zeta),h_{j+1}^+(\xi);v,\vec{k})
			\end{align}
			is a duality function between $\LSIP{\vec{k},\lambda}$ and $\RSIP{\vec{k},\rho}$, i.e.
			\begin{align}
				[\genLSIP \hat{P}^v_{\mathrm{W}}(\cdot,\xi)](\zeta)=[\genRSIP \hat{P}^v_{\mathrm{W}}(\zeta,\cdot)](\xi).\label{eq:dualityequationdynamicSIP}
			\end{align}
		\end{proposition}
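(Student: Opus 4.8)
The plan is to obtain this duality as a $q \to 1$ degeneration of the Askey--Wilson duality of Theorem \ref{thm:AWduality}, rather than verifying it by hand. The starting point is the observation, already recorded in this subsection, that $\LSIP{\vec{k},\lambda}$ and $\RSIP{\vec{k},\rho}$ are the $q\to1$ limits of $\LASIP{q,\vec{k},\lambda}$ and $\RASIP{q,\vec{k},\rho}$: using $[a]_q \to a$ in the rewritten rates \eqref{eq:ratesdynASIPrewritten} (and their right-handed analogues), the height-function factors collapse to the rational expressions \eqref{eq:LDynSIPjj+1}--\eqref{eq:RDynSIPjj-1}, so that $\genLASIP \to \genLSIP$ and $\genRASIP \to \genRSIP$. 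Since both generators are local difference operators (each moves a single particle between neighboring sites), for fixed $\zeta,\xi$ the duality equation \eqref{eq:duality equation dynamic ASIP} is a finite linear identity, and it suffices to control the limit of each rate and of the duality function at the finitely many configurations involved.

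First I would perform the substitution $v \mapsto q^{2v}$ in the $1$-site Askey--Wilson function \eqref{eq:1sitedualityAW}. Writing everything in the base $Q = q^2$, the lower parameters $vq^{\rho+\lambda+k+1}$ and $v^{-1}q^{\rho+\lambda+k+1}$ become $Q^{\frac12\alpha+v}$ and $Q^{\frac12\alpha-v}$ with $\alpha = \rho+\lambda+k+1$, the upper parameters become $Q^{-y},Q^{y+\lambda+k},Q^{-x},Q^{x+\rho+k}$, and the argument is $z=Q$. The ${}_4\varphi_3$ is balanced (four factors above and the four factors $Q^k,Q^{\frac12\alpha\pm v},(Q;Q)_n$ below), so the powers of $(1-Q)$ produced by the limit relation $(Q^a;Q)_n/(1-Q)^n \to (a)_n$ cancel term by term, and since $z=Q\to1$ one recovers exactly the terminating ${}_4F_3$ at argument $1$ appearing in $\hat{p}_{\mathrm{W}}$. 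For the prefactors I would use the same relation together with its base-$q^{-2}$ counterpart, giving $(vq^{\rho+\lambda+k+1};q^2)_x \sim (-\ln Q)^x (\tfrac12\alpha+v)_x$ and $(vq^{-\rho-\lambda-k-1};q^{-2})_y \sim (\ln Q)^y (\tfrac12\alpha-v)_y$, so that the $1$-site function satisfies $p_{\mathrm{AW}}(\zeta_j,\xi_j) \sim (-1)^{\xi_j}(\ln Q)^{\zeta_j+\xi_j}\,\hat{p}_{\mathrm{W}}(\zeta_j,\xi_j)$ as $q\to1$.

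Taking the product over $j$ then gives $P^v_{\mathrm{AW}}(\zeta,\xi) \sim (-1)^{|\xi|}(\ln Q)^{|\zeta|+|\xi|}\,\hat{P}^v_{\mathrm{W}}(\zeta,\xi)$. The crucial point is that the discrepancy $(-1)^{|\xi|}(\ln Q)^{|\zeta|+|\xi|}$ depends only on the total particle numbers $|\zeta|$ and $|\xi|$, which are conserved by both $\genLASIP$ and $\genRASIP$. Hence by Remark \ref{rem:InvTotPart} the rescaled function $\tilde P := (-1)^{|\xi|}(\ln Q)^{-|\zeta|-|\xi|}P^v_{\mathrm{AW}}$ is again a duality function between $\LASIP{q,\vec{k},\lambda}$ and $\RASIP{q,\vec{k},\rho}$ for every $q\neq1$, and it converges pointwise to $\hat{P}^v_{\mathrm{W}}$. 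Passing to the limit in $[\genLASIP \tilde P(\cdot,\xi)](\zeta) = [\genRASIP \tilde P(\zeta,\cdot)](\xi)$ --- legitimate because each side is a finite sum whose summands converge individually, by the first step --- yields \eqref{eq:dualityequationdynamicSIP}.

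The main obstacle is bookkeeping rather than conceptual: one must verify precisely that the leading $q\to1$ asymptotics of both the ${}_4\varphi_3$ and the two shifted-factorial prefactors combine into a factor of the form $c(q)^{|\zeta|+|\xi|}$ times a sign depending only on $|\xi|$, since it is exactly this factorization through the conserved quantities $|\zeta|,|\xi|$ that lets Remark \ref{rem:InvTotPart} remove it. A secondary check is that the base-$q^{-2}$ factorial $(vq^{-\rho-\lambda-k-1};q^{-2})_y$ degenerates with the correct sign; I would handle it by writing $(a;q^{-2})_y$ as a product and applying the expansion $1-Q^{c}\sim -c\ln Q$ factor by factor. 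Once these asymptotics are pinned down, the convergence of the generators established in the first step closes the argument.
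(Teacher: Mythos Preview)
Your proposal is correct and follows essentially the same route as the paper's proof: substitute $v\mapsto q^{2v}$, observe that the generators converge to $\genLSIP$ and $\genRSIP$, and pass to the $q\to1$ limit in the Askey--Wilson duality after stripping off a normalization factor that depends only on the conserved totals $|\zeta|$ and $|\xi|$. The only cosmetic difference is that the paper divides by the explicit factor $(1-q^2)^{|\xi|}(1-q^{-2})^{|\zeta|}$ and then invokes the standard limits $(q^{2a};q^2)_j/(q^{2b};q^2)_j\to(a)_j/(b)_j$ and ${}_{r+1}\varphi_r\to{}_{r+1}F_r$, whereas you phrase the same cancellation via $(\ln Q)$-asymptotics; since $(1-q^2)\sim -\ln(q^2)$ and $(1-q^{-2})\sim \ln(q^2)$ these are exactly the same normalization up to the same sign $(-1)^{|\xi|}$, so the two arguments coincide.
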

		\begin{proof}
			We take the limit $q\to 1$ in the duality relation \eqref{eq:duality equation dynamic ASIP} in an appropriate way. Since 
			\[
				\lim\limits_{q\to 1} \genLASIP = \genLSIP
			\]
			and 
			\[
			 \lim\limits_{q\to 1} \genRASIP=\genRSIP,
			\]
			we only have to compute
			\begin{align}
				\lim\limits_{q\to 1} P^v_\mathrm{AW}(\zeta,\xi)=\lim\limits_{q\to1} \prod_{j=1}^M p^{}_{\mathrm{AW}}(\zeta_j,\xi_j;h_{j-1}^-(\zeta),h_{j+1}^+(\xi),v,k;q),\label{eq:AWqto1}
			\end{align}
			where we recall that
			\begin{align*}
				\begin{split}p^{}_{\mathrm{AW}}(y,x;\lambda,\rho,v,k;q) =   (v&q^{\rho+\lambda+k+1};q^2)_x(vq^{-\rho-\lambda-k-1};q^{-2})_y\\ &\times \rphis{4}{3}{q^{-2y},q^{2y+2\lambda+2k},q^{-2x},q^{2x+2\rho+2k}}{q^{2k},vq^{\rho+\lambda+k+1},v^{-1}q^{\rho+\lambda+k+1}}{q^2,q^2}.\end{split}
			\end{align*}
			Since
			\begin{align}
				\lim\limits_{q\to1}\frac{1-q^a}{1-q^b} = \frac{a}{b},\label{eq:fracqto1}
			\end{align}
			we have
			\[
				\lim\limits_{q\to 1} \frac{(q^{2a};q^2)_j}{(q^{2b};q^2)_j}=\frac{(a)_j}{(b)_j}.
			\]
			Therefore, we have the following limit between a basic hypergeometric series and a regular hypergeometric series,
			\[
				\lim\limits_{q\to 1} \rphis{r+1}{r}{q^{-2y},q^{2a_1},...,q^{2a_{r}}}{q^{2b_1},...,q^{2b_r}}{q^2;q^2} = \rFs{r+1}{r}{-y,a_1,...,a_{r}}{b_1,...,b_r}{1}.
			\]
			In order to get a non-zero limit of \eqref{eq:AWqto1}, we replace $v$ by $q^{2v}$ and divide the duality function $P^v_{AW}$ by $(1-q^2)^{|\xi|}(1-q^{-2})^{|\zeta|}$. The latter is done since the factors in front of the $\rphisempty{4}{3}$ of the $1$-site duality function $p^{}_{\mathrm{AW}}$ from \eqref{eq:1sitedualityAW} go to $0$ when $q\to1$, Since this factor is kept invariant by both generators, we still have a duality function. Then taking the limit $q\to 1$ gives
			\begin{align*}
				\lim\limits_{q\to 1} \frac{p^{}_{\mathrm{AW}}(x,y;\lambda,\rho;q^{2v},\vec{k},q)}{(1-q^2)^{|\xi|}(1-q^{-2})^{|\zeta|}} = \hat{p}^{}_{\mathrm{W}}(y,x;\lambda,\rho;v,k),
			\end{align*}
			proving the proposition.
		\end{proof}
		From Theorem \ref{thm:DynASIPRev} and letting $q\to 1$ in the reversible measures $W_\mathsf{R}$ and $W_\mathsf{L}$ we obtain reversible measures for dynamic SIP. We define the $1$-site measure $\hat{w}^{}_\mathsf{dyn}$. This is, up to a factor depending on $|\vec{k}|$, the $q\to1$ limit of $w_\mathsf{dyn}$,
		\begin{align*}
				\hat{w}^{}_\mathsf{dyn}(z;a,k)= \begin{dcases}
				\frac{(2z+a+k)}{(z+a+k)}\frac{(k)_z}{z!}\frac{\Gamma(z+a+k+1)}{\Gamma(z+a+1)} &\text{ if }a>-1, \\
				\frac{(2z+a+k)}{(z+a+k)}\frac{(k)_z}{z!}\frac{\Gamma(-z-a)}{(\Gamma(-z-a-k)} &\text{ if }a\leq -1. \end{dcases}
		\end{align*} 
		Its product we denote by
		\begin{align}
			&\hat{W}_\mathsf{L}(\zeta)=\hat{W}_\mathsf{L}(\zeta;\lambda,\vec{k})= \prod_{j=1}^M \hat{w}_\mathsf{dyn}(\zeta_j;h^-_{j-1}(\zeta),k_j),\\
			&\hat{W}_\mathsf{R}(\xi)=\hat{W}_\mathsf{R}(\xi;\rho,\vec{k})=\prod_{j=1}^M \hat{w}_\mathsf{dyn}(\xi_j;h^+_{j+1}(\xi),k_j).
		\end{align}
		Then we have the following result.
		\begin{proposition}\*
			\begin{enumerate}[label=(\roman*)]
				\item The measure $\hat{W}_\mathsf{L}$ is reversible for $\LSIP{\vec{k},\lambda}$ on $X_d$ if $\lambda >-1$ and on $X_{d,\lambda}$ if $\lambda\leq -1$. 
				\item The measure $\hat{W}_\mathsf{R}$ is reversible for $\RSIP{\vec{k},\rho}$ on $X_d$ if $\rho >-1$ and on $X_{d,\rho}$ if $\rho\leq -1$. 
			\end{enumerate}
		\end{proposition}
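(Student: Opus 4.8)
The plan is to obtain this reversibility as the $q\to1$ limit of the reversibility of dynamic ASIP (Theorem \ref{thm:DynASIPRev}), mirroring the way the dynamic SIP generators and duality functions were produced above. I will treat part (i) for $\lambda>-1$ in detail; the case $\lambda\leq-1$ is analogous, and part (ii) follows from (i) by reversing the order of sites, exactly as in the proof of Theorem \ref{thm:DynASIPRev}, using $\hat{W}_\mathsf{L}(\xi^\rev;\rho)=\hat{W}_\mathsf{R}(\xi;\rho)$. First I would recall that for every $q\in(0,1)$ the detailed balance condition
\[
W_\mathsf{L}(\zeta;q)\,C_\mathsf{L}(\zeta,\zeta';q)=W_\mathsf{L}(\zeta';q)\,C_\mathsf{L}(\zeta',\zeta;q)
\]
holds, where $C_\mathsf{L}(\zeta,\zeta';q)$ is the jump rate of $\LASIP{q,\vec{k},\lambda}$ from $\zeta$ to $\zeta'$; this is the content of Theorem \ref{thm:DynASIPRev}(i). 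The idea is that detailed balance is preserved under limits, provided both the rates and a suitably normalized measure converge.

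For the rates, the convergence $\lim_{q\to1}C_\mathsf{L}(\zeta,\zeta';q)=\hat{C}_\mathsf{L}(\zeta,\zeta')$ to the dynamic SIP rates \eqref{eq:LDynSIPjj+1}--\eqref{eq:LDynSIPjj-1} is immediate from the rewritten ASIP rates \eqref{eq:ratesdynASIPrewritten} together with $\lim_{q\to1}[a]_q=a$; indeed this is precisely how dynamic SIP was defined. The delicate step is the measure: I would show that, after dividing by the configuration-independent constant $(1-q^2)^{|\vec{k}|}$,
\[
\lim_{q\to1}\frac{W_\mathsf{L}(\zeta;q)}{(1-q^2)^{|\vec{k}|}}=\hat{W}_\mathsf{L}(\zeta).
\]
Since $W_\mathsf{L}$ and $\hat{W}_\mathsf{L}$ are products of one-site factors, it suffices to establish $\lim_{q\to1}(1-q^2)^{-k}w_\mathsf{dyn}(z;a,k,q)=\hat{w}_\mathsf{dyn}(z;a,k)$ at each site. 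The finite $q$-shifted factorials are handled by $\lim_{q\to1}(q^{2a};q^2)_j(1-q^2)^{-j}=(a)_j$, which turns $(q^{2k};q^2)_z/(q^2;q^2)_z$ into $(k)_z/z!$ (the two powers of $(1-q^2)^z$ cancelling) and the prefactor $\tfrac{1-q^{4z+2a+2k}}{1-q^{2z+2a+2k}}$ into $\tfrac{2z+a+k}{z+a+k}$, while $q^{2z(z+a)}\to1$.

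The only factor failing to converge on its own is the ratio of infinite $q$-shifted factorials. Rewriting it through the $q$-gamma function $\Gamma_{q^2}(x)=(q^2;q^2)_\infty(q^{2x};q^2)_\infty^{-1}(1-q^2)^{1-x}$ gives, in the case $a>-1$,
\[
\frac{(q^{2z+2a+2};q^2)_\infty}{(q^{2z+2a+2k+2};q^2)_\infty}=(1-q^2)^{k}\,\frac{\Gamma_{q^2}(z+a+k+1)}{\Gamma_{q^2}(z+a+1)},
\]
so exactly one power $(1-q^2)^k$ is produced per site, and $\Gamma_{q^2}\to\Gamma$ as $q\to1$; this accounts for the normalization $(1-q^2)^{|\vec{k}|}=\prod_j(1-q^2)^{k_j}$ and yields $\hat{w}_\mathsf{dyn}$. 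The case $a\leq-1$ is identical using the reflected ratio $(q^{-2z-2a-2k};q^2)_\infty/(q^{-2z-2a};q^2)_\infty=(1-q^2)^{k}\Gamma_{q^2}(-z-a)/\Gamma_{q^2}(-z-a-k)$.

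With both limits in hand, I would divide the detailed balance condition for $\LASIP{q,\vec{k},\lambda}$ by the constant $(1-q^2)^{|\vec{k}|}$ (which, being independent of the configuration, preserves the equality) and let $q\to1$, obtaining
\[
\hat{W}_\mathsf{L}(\zeta)\,\hat{C}_\mathsf{L}(\zeta,\zeta')=\hat{W}_\mathsf{L}(\zeta')\,\hat{C}_\mathsf{L}(\zeta',\zeta),
\]
which is reversibility of $\hat{W}_\mathsf{L}$ for $\LSIP{\vec{k},\lambda}$; part (ii) then follows by site reversal. I expect the main obstacle to be the careful bookkeeping of the powers of $(1-q)$ inside the infinite $q$-shifted factorials, i.e.\ isolating, via the $q$-gamma function, the single factor $(1-q^2)^{k}$ per site so that the renormalized measure possesses a finite, nonzero, configuration-independent limit. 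Once this normalization is pinned down, the passage to the limit in the detailed balance equation is routine.
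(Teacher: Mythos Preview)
Your proposal is correct and follows essentially the same route as the paper: both arguments reduce to showing $\lim_{q\to1}(1-q^2)^{-k}w_\mathsf{dyn}(z;a,k,q)=\hat w_\mathsf{dyn}(z;a,k)$ via the $q$-gamma function identity $(q^{2\alpha};q^2)_\infty/(q^{2\beta};q^2)_\infty=(1-q^2)^{\beta-\alpha}\Gamma_{q^2}(\beta)/\Gamma_{q^2}(\alpha)$, after which the configuration-independent normalization $(1-q^2)^{|\vec k|}$ drops out of the detailed balance equation and the $q\to1$ limit of Theorem~\ref{thm:DynASIPRev} gives the result. Your write-up makes the passage through the detailed balance condition a bit more explicit than the paper does, but the substance is identical.
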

		\begin{proof}
			We will show that
			\begin{align}
				\lim\limits_{q\to 1}\frac{w_\mathsf{dyn}(z;a,k,q)}{(1-q^2)^{k}} = \hat{w}_\mathsf{dyn}(z;a,k),\label{eq:limitrevmeas}
			\end{align}
			after which the result follows from Theorem \ref{thm:DynASIPRev}, the observation that
			\[
				\prod_{j=1}^M (1-q^2)^{k_j}=(1-q^2)^{|\vec{k}|}
			\]
			is kept invariant by the generator and the fact that the generator of dynamic ASIP becomes the generator of dynamic SIP in the limit $q\to 1$. \\
			\indent The only difficulty in taking the limit of $q\to1$ in $w_\mathsf{dyn}$ lies in the infinite shifted factorials. We can make sense of this limit by rewriting the infinite shifted factorials in $w_\mathsf{dyn}$ in terms of the $q$-Gamma function, which is defined by \cite[(1.10.1)]{GR},
			\[
				\Gamma_{q}(\alpha)=\frac{(q;q)_\infty}{(q^{\alpha};q)_\infty}(1-q)^{1-\alpha}, \qquad 0<q<1.
			\]
			It becomes the regular Gamma-function in the limit $q\to 1$,
			\[
				\lim\limits_{q\to1}\Gamma_{q}(\alpha)=\Gamma(\alpha).
			\] 
			Rewriting the definition of the $q$-Gamma function gives
			\[
				(q^{2\alpha};q^2)_\infty=\frac{(q^2;q^2)_\infty}{\Gamma_{q^2}(\alpha)}(1-q^2)^{1-\alpha},
			\]
			and thus
			\[
				\frac{(q^{2\alpha};q^2)_\infty}{(q^{2\beta};q^2)_\infty} = \frac{\Gamma_{q^2}(\beta)}{\Gamma_{q^2}(\alpha)}(1-q^2)^{\beta-\alpha}.
			\]
			This gives for $a>-1$,
			\[
				w_\mathsf{dyn}(z;a,k,q)= \frac{1-q^{4z+2a+2k}}{1-q^{2z+2a+2k}}\frac{(q^{2k};q^2)_z}{(q^2;q^2)_z}\frac{\Gamma_{q^2}(z+a+k+1)}{\Gamma_{q^2}(z+a+1)}(1-q^2)^{k}q^{2z(z+a)}
			\]
			Thus \eqref{eq:limitrevmeas} for $a>-1$ follows from \eqref{eq:fracqto1}. The case $a\leq -1$ is proven similarly.
		\end{proof}

\section{Limits of (orthogonal) dualities for discrete asymmetric processes.}\label{sec:AsymmetricDegenerations}
\subsection{Duality functions which follow from Theorem \ref{thm:AWduality}}
In this section, we will take limits in the duality relation between $\LASIPpar$ and $\RASIPpar$ given in Theorem \ref{thm:AWduality} to obtain other dualities. We will assume throughout this section without loss of generality that $0<q<1$. Recall from Section \ref{sec:DynASIP} the limits from dynamic ASIP to ASIP when $0<q<1$,
\begin{align}
\lim_{\rho \to \pm \infty} \genRASIP = L_{q^{\mp 1},\vec{k}}^{\mathrm{ASIP}} \qquad \text{and} \qquad \lim_{\lambda \to \pm \infty} \genLASIP = L_{q^{\pm 1},\vec{k}}^{\mathrm{ASIP}}. \label{eq:limitsgendynasip}
\end{align}
This gives us three `different\footnote{Different in the sense that they cannot be obtained from reversing the order of sites or sending $q\to q^{-1}$.}' limit cases  of the duality between ASIP$_\mathsf{L}\leftrightarrow$ASIP$_\mathsf{R}$, which are listed in Table \ref{tab:limits_duality_dynamic_ASIP}. When $q>1$, similar limits can be taken which yield the same results in terms of duality functions.
\begin{table}[h] 
	\begin{tabular}{|l|ll|l|}
		\hline
		\ &\multicolumn{2}{|l|}{\hspace{0.85cm} Duality between} & Corresponding limit \\ \hline
		(i)&\multicolumn{1}{|l|}{$\ASIP{q,\vec{k}}$}       &  $\RASIP{q,\vec{k},\rho}$      &       $\lambda\to\infty$              \\ \hline
		(ii)&\multicolumn{1}{|l|}{$\ASIP{q,\vec{k}}$ }      &  $\ASIP{q,\vec{k}}$       &  $\lambda\to\infty$, $\rho\to-\infty$                   \\ \hline
		(iii)&\multicolumn{1}{|l|}{$\ASIP{q,\vec{k}}$}       &  $\ASIP{q^{-1},\vec{k}}$       &     $\lambda,\rho\to\infty$                \\ \hline
	\end{tabular}
	\vspace{0.1cm}
	\caption{Three limits of the duality between $\LASIP{q,\vec{N},\lambda}$ and $\RASIP{q,\vec{N},\rho}$.} \label{tab:limits_duality_dynamic_ASIP}\vspace{-0.7cm}
\end{table}\\
\indent The resulting duality functions will again be products of orthogonal polynomials. We need the following three families of orthogonal polynomials, which are special cases of Askey-Wilson polynomials.
\begin{enumerate}[label=(\roman*)]
	\item The Big $q$-Jacobi polynomials \cite[\S14.5]{KLS} are defined by 
	\[
	J_n(x;a,b,c;q) = \rphis{3}{2}{q^{-n},ab q^{n+1},x}{a q, cq}{q,q}.
	\]
	We define the $1$-site duality function 
	\begin{align*}
		p^{}_{\mathrm{J}}(n,x;\lambda,\rho,v,k;q) &=  (vq^{-\lambda-\rho-k-1};q^{-2})_n J_x(q^{-2n};q^{2(k-1)},q^{2\rho},v^{-1}q^{\rho+\lambda+k-1};q^2)\\
		&=(vq^{-\lambda-\rho-k-1};q^{-2})_n \rphis{3}{2}{q^{-2x},q^{2x+2\rho+2k},q^{-2n}}{q^{2k},v^{-1}q^{\rho+\lambda+k+1}}{q^2,q^2}
	\end{align*}
	\item The $q$-Meixner polynomials \cite[\S14.13]{KLS} are defined by
	\begin{align}
	M_n(q^{-x};b,c;q)= \rphis{2}{1}{q^{-n}, q^{-x}}{bq}{q, -\frac{q^{n+1}}{c}}.
	\end{align}
	We define the $1$-site duality function
	\begin{align*}
		p^{}_{\mathrm M}(n,x;\lambda,\rho,v,k;q)&=M_{x}(q^{-2n};q^{2(k-1)},-v^{-1}q^{-\rho+\lambda-k+1};q^2)\\
		&=\rphis{2}{1}{q^{-2x},q^{-2n}}{q^{2k}}{q^2,vq^{2x+\rho-\lambda+k+1}}.
	\end{align*}
	\item The Big $q$-Laguerre polynomials \cite[\S14.11]{KLS} are defined by 
	\[
	P^{\mathrm{L}}_n(x;a,b;q) = \rphis{3}{2}{q^{-n},0,x}{aq,bq}{q,q}.
	\]
	We define the $1$-site duality function
	\begin{align*}
		p^{}_\mathrm{L}(n,x;\lambda,\rho,v,k;q)&= (v^{-1}q^{-\lambda-\rho-k-1};q^{-2})_nP^{\mathrm{ L}}_{x}(q^{-2n};q^{2(k-1)},vq^{\rho+\lambda+k-1};q^2)\\
		&= (v^{-1}q^{-\lambda-\rho-k-1};q^{-2})_n \rphis{3}{2}{ q^{-2x},0,q^{-2n}}{q^{2k},vq^{\rho+\lambda+k+1}}{q^2,q^2}.
	\end{align*}
\end{enumerate}
\ \\
As we take appropriate limits in the duality relation \eqref{eq:duality equation dynamic ASIP}, we obtain the following duality functions which correspond to the dualities given in Table \ref{tab:limits_duality_dynamic_ASIP}. Parameters of these functions will sometimes contain a height function without the dynamic parameter $\lambda$ or $\rho$. Therefore, we define
\begin{align*}
	h_{j,0}^-(\eta) =& \sum_{i=1}^j 2\eta_j + k_j,\\
	h_{j,0}^+(\eta) =& \sum_{i=j}^M 2\eta_j + k_j.
\end{align*}
Now we have the following result.
\begin{proposition}\* \label{prop:degenerate duality}
	\begin{enumerate}[label=(\roman*)]
		\item Define the function $P_{\mathrm J}^v:X_d \times X_d \to \R$ by
		\[
		P^v_{\mathrm J}(\eta,\xi) = \lim_{\lambda \to \infty} P_{\mathrm{AW}}^{vq^{\lambda}}(\eta,\xi).
		\]
		Then $P^v_{\mathrm J}$ is a duality function between $\ASIP{q,\vec{k}}\leftrightarrow\RASIP{q,\vec{k},\rho}$ and
		\[
		P^v_{\hspace{-0.035cm}\mathrm J}(\eta,\xi) =\prod_{j=1}^N p\strut_{\hspace{-0.03cm}\mathrm J}(\eta_j,\xi_j;h^{-}_{j-1,0}(\eta),h^{+}_{j+1}(\xi);v,k_j).
		\]
		\item Define the function $P^v_\mathrm{M}:X_d \times X_d \to \R$ by 
		\[
		P^v_\mathrm{M}(\eta,\xi) = 	\lim_{\rho \to -\infty} P^{vq^{-\rho}}_{\mathrm J}(\eta,\xi),
		\]
		Then $P^v_\mathrm{M}$ is a self-duality function for $\ASIP{q,\vec{k}}$ and
		\[ 
		P^v_\mathrm{M}(\eta,\xi)	= \prod_{j=1}^N p^{}_{\mathrm M}(\eta_j,\xi_j;h^{-}_{j-1,0}(\eta),h^{+}_{j+1,0}(\xi);v,k_j).
		\]
		\item Define the function $P^v_\mathrm{L}:X_d \times X_d \to \R$ by 
		\[
		P^v_\mathrm{L}(\eta,\xi) = \lim_{\rho \to \infty} P^{v^{-1} q^{\rho}}_{\mathsf R}(\eta,\xi).
		\]
		Then $P^v_\mathrm{L}$ is a duality function between $\ASIP{q,\vec{k}}\leftrightarrow \ASIP{q^{-1},\vec{k}}$ and
		\[
		P^v_\mathrm{L}(\eta,\xi) =  \prod_{j=1}^N p_\mathrm{L}^{}(\eta_j,\xi_j;h^{-}_{j-1,0}(\eta),h^{+}_{j+1,0}(\xi);v,k_j).
		\]
	\end{enumerate}
\end{proposition}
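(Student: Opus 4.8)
The plan is to obtain all three dualities as degenerations of the master duality relation \eqref{eq:duality equation dynamic ASIP} of Theorem \ref{thm:AWduality}. The key point is that the generators $\genLASIP$ and $\genRASIP$ do not involve the free parameter $v$, so Theorem \ref{thm:AWduality} holds for every $v\neq 0$; I am therefore free to replace $v$ by a $\lambda$- or $\rho$-dependent quantity such as $vq^{\lambda}$, $vq^{-\rho}$ or $v^{-1}q^{\rho}$, and the resulting function remains a duality function for each finite $\lambda,\rho$. The substance of the proposition is then that, with these rescalings, $P^{\cdot}_{\mathrm{AW}}$ admits finite and nonzero limits, and that these limits factor into the advertised products of big $q$-Jacobi, $q$-Meixner and big $q$-Laguerre polynomials.

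The reason the limits may be taken inside the duality relation with no analytic care is that, evaluated at fixed configurations $\eta,\xi$, both sides of \eqref{eq:duality equation dynamic ASIP} are finite expressions: the generators act as finite-difference operators (a sum of $M-1$ terms involving only the neighbouring configurations $\eta^{j,j+1},\eta^{j+1,j}$), and each $1$-site factor $p^{}_{\mathrm{AW}}$ is a terminating $\rphisempty{4}{3}$. Hence no interchange of a limit with an infinite sum occurs, and it suffices to combine two ingredients: the convergence of the jump rates, which is exactly \eqref{eq:limitsgendynasip}, and the pointwise convergence of the rescaled $1$-site duality functions. For part (i) I would let $\lambda\to\infty$ with $v\mapsto vq^{\lambda}$, recalling that the global boundary $\lambda$ enters each factor through $h^-_{j-1}=\lambda+h^-_{j-1,0}$. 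The resulting factor $q^{2\lambda}$ sends the first prefactor of $p^{}_{\mathrm{AW}}$ to $1$ and annihilates both the upper parameter $q^{2\eta_j+2h^-_{j-1}+2k_j}$ and one lower parameter of the $\rphisempty{4}{3}$, collapsing it to the $\rphisempty{3}{2}$ of big $q$-Jacobi type; meanwhile the powers of $q^{\lambda}$ cancel in the second prefactor and in the surviving lower parameter, which becomes $v^{-1}q^{h^+_{j+1}+h^-_{j-1,0}+k_j+1}$. Since simultaneously $\genLASIP\to\genASIP$ by \eqref{eq:limitsgendynasip}, the limit of \eqref{eq:duality equation dynamic ASIP} is precisely the claimed duality $[\genASIP P^v_{\mathrm J}(\cdot,\xi)](\eta)=[\genRASIP P^v_{\mathrm J}(\eta,\cdot)](\xi)$.

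Parts (ii) and (iii) are obtained by iterating this procedure on the big $q$-Jacobi duality of part (i). For (ii) I would send $\rho\to-\infty$ with $v\mapsto vq^{-\rho}$; this is the standard big $q$-Jacobi $\to q$-Meixner degeneration of the Askey scheme. Here two parameters of the $\rphisempty{3}{2}$ grow, and using the elementary asymptotics $(Z;q^2)_m\sim(-Z)^m q^{2\binom{m}{2}}$ as $Z\to\infty$ the factors $q^{2\binom{m}{2}}$ cancel between numerator and denominator, leaving the $\rphisempty{2}{1}$ of $q$-Meixner type with argument $vq^{2\xi_j+h^+_{j+1,0}-h^-_{j-1,0}+k_j+1}$ and a prefactor tending to $1$. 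Combined with $\genRASIP\to\genASIP$, this gives the self-duality. For (iii) I would instead drive the right boundary to $+\infty$ with the rescaling $v\mapsto v^{-1}q^{\rho}$, which forces a parameter of the big $q$-Jacobi $\rphisempty{3}{2}$ to $0$ and produces the big $q$-Laguerre $\rphisempty{3}{2}$; since $\genRASIP\to L_{q^{-1},\vec{k}}^{\mathrm{ASIP}}$ in this limit, the resulting relation is the duality $\ASIP{q,\vec{k}}\leftrightarrow\ASIP{q^{-1},\vec{k}}$.

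The main obstacle is to pin down, in each of the three cases, the exact power of $q^{\lambda}$ or $q^{\rho}$ by which to rescale $v$, so that the prefactors of the $\rphisempty{4}{3}$ (which on their own tend to $0$ or $\infty$) are compensated by the degenerating $q$-shifted factorials inside the series and a finite, nonzero limit survives. This is the same balancing phenomenon met in the passage to dynamic SIP, where one divides by $(1-q^2)^{|\xi|}(1-q^{-2})^{|\zeta|}$; the difference is that here the compensating factor is furnished by the rescaling of the free parameter rather than by an explicit normalisation. Once the correct rescaling is identified, each limit reduces to the elementary asymptotics of $(a;q^2)_m$ as $a\to 0$ and $a\to\infty$, and the convergence of the generators is already guaranteed by \eqref{eq:limitsgendynasip}.
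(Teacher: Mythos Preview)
Your proposal is correct and follows the same approach as the paper, which simply states that all computations are straightforward limits following from the elementary asymptotic $\lim_{\alpha\to-\infty}\frac{1-\beta q^{\alpha}}{1-\gamma q^{\alpha}}=\frac{\beta}{\gamma}$. Your write-up is considerably more detailed---explaining why no interchange of limit and infinite sum is needed, invoking \eqref{eq:limitsgendynasip} for the generator convergence, and spelling out the mechanism by which parameters in the $\rphisempty{4}{3}$ degenerate---but the underlying idea is identical.
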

\begin{proof}
	All computation are straightforward limits which follow from
	\[
		\lim\limits_{\alpha\to-\infty} \frac{1-\beta q^{\alpha}}{1-\gamma q^{\alpha}}=\frac{\beta}{\gamma}. \qedhere 
	\]	
\end{proof}
As is the case for other situations (see e.g. \cite{GroeneveltWagenaarDyn}), one can go from `orthogonal' duality functions to `classical' (sometimes also called `triangular') duality functions by taking appropriate limits involving the free parameter `$v$'. For example, one can show that
\[
	\lim\limits_{v\to\infty}v^{-|\eta|}P_{\mathrm{J}}^v(\eta,\xi) = \gamma P^{}_\mathsf{R}(\eta,\xi),
\]
where $\gamma$ is a factor depending on preserved quantities of the process. That is, for the duality between $\ASIPno$ and $\RASIPno$, one can go from the Big $q$-Jacobi duality functions `$P_\mathrm{J}^v$' to the Al-Salam--Chihara ones $P^{}_\mathsf{R}$ from Section \ref{sec:DynASIP}. At this point, the names `orthogonal' and `triangular' duality function are confusing, since both duality functions are orthogonal and $P^{}_\mathsf{R}$ is not triangular. However, in some sense the Al-Salam--Chihara polynomials $P^{}_\mathsf{R}$ can be considered as the `triangular' duality functions between ASIP and dynamic ASIP: it does not involve a free parameter and when one would take an appropriate limit involving the dynamic parameter, one obtains the triangular self-duality function of ASIP. \\

One can also take a limit in the free parameter of the Big $q$-Laguerre polynomials (duality $\ASIPpar\leftrightarrow\ASIP{q^{-1},\vec{k}}$) and the $q$-Meixner polynomials (self-duality of $\ASIPpar$). One then obtains classical duality functions. The first is new, the latter gives a similar duality function as in \cite[Theorem 5.1]{CGRSsu11}, where now the parameter $k$ can vary per site.
\begin{proposition}[`Triangular' duality functions]\*
	\begin{enumerate}[label=(\roman*)]
		\item The function $D:X_d\times X_d\to \R$ defined by
		\begin{align*}
			D(\eta,\xi)=\prod_{j=1}^M \frac{(q^{2\xi_j+2k_j};q^2)_{\eta_j}}{(q^{2k_j};q^2)_{\eta_j}}q^{-\eta_j(\eta_j+2\xi_j+h_{j-1,0}^-(\eta)+h_{j+1,0}^+(\xi)+k_j)},
		\end{align*}
		is a duality function between $\ASIPpar$ and $\ASIP{q^{-1},\vec{k}}$.
		\item The function $D^\mathrm{tr}:X_d\times X_d\to \R$ defined by
		\begin{align*}
			D^\mathrm{tr}(\eta,\xi)= \prod_{j=1}^{M} \frac{(q^{-2\xi_j};q^2)_{\eta_j}}{(q^{2k_j};q^2)_{\eta_j}}q^{-\eta_j(\eta_j-2\xi_j+h_{j-1,0}^-(\eta)-h_{j+1,0}^+(\xi)-k_j)}1_{\eta_j\leq \xi_j}
		\end{align*}
		is a self-duality function for $\ASIPpar$.
	\end{enumerate}
\end{proposition}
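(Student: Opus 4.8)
The plan is to realize both $D$ and $D^{\mathrm{tr}}$ as normalized $v\to 0$ or $v\to\infty$ limits of the orthogonal duality functions already produced in Proposition \ref{prop:degenerate duality}: the Big $q$-Laguerre function $P^v_{\mathrm L}$ for part (i) and the $q$-Meixner function $P^v_{\mathrm M}$ for part (ii). The conceptual engine is simple. The generators of $\ASIPpar$ and $\ASIP{q^{-1},\vec{k}}$ do not depend on the free parameter $v$, and for a fixed pair $(\eta,\xi)$ the duality equation involves only the finitely many configurations obtained by moving a single particle. Hence a pointwise limit in $v$ of a family of duality functions is again a duality function, as long as it is nontrivial. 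Moreover, by Remark \ref{rem:InvTotPart} we may freely multiply $P^v_{\mathrm L}$ and $P^v_{\mathrm M}$ by any prefactor depending only on the conserved quantities $|\eta|$ and $|\xi|$ (here a power of $v$ and a sign), since such factors commute with both generators. The whole proof therefore reduces to a normalized $v$-limit, computed site by site in the nested products.

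For part (i) I would take the limit $v\to 0$ of $v^{|\eta|}P^v_{\mathrm L}(\eta,\xi)$. In each single-site factor $p_{\mathrm L}(\eta_j,\xi_j;h^-_{j-1,0}(\eta),h^+_{j+1,0}(\xi);v,k_j)$, the prefactor $(v^{-1}q^{-\lambda-\rho-k-1};q^{-2})_{\eta_j}$ is a polynomial in $v^{-1}$ of degree $\eta_j$, so after multiplication by $v^{\eta_j}$ only its leading coefficient, of order $v^{-\eta_j}$, survives the limit; simultaneously the denominator parameter $vq^{\rho+\lambda+k+1}$ of the $\rphisempty{3}{2}$ tends to $0$ and the terminating series collapses to a $\rphisempty{2}{1}$. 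This terminating $\rphisempty{2}{1}$ is summable by the $q$-Chu--Vandermonde identity, which yields precisely the ratio $(q^{2\xi_j+2k_j};q^2)_{\eta_j}/(q^{2k_j};q^2)_{\eta_j}$ together with a $q$-power. Reading the $\lambda$ and $\rho$ slots as $h^-_{j-1,0}(\eta)$ and $h^+_{j+1,0}(\xi)$ and collecting the per-site $q$-powers reproduces the global exponent $-\eta_j(\eta_j+2\xi_j+h^-_{j-1,0}(\eta)+h^+_{j+1,0}(\xi)+k_j)$ of $D$, up to an overall sign $(-1)^{|\eta|}$ and a $q^{f(|\eta|)}$ factor that may be discarded.

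For part (ii) I would instead take $v\to\infty$ of $v^{-|\eta|}P^v_{\mathrm M}(\eta,\xi)$. Each single-site $\rphisempty{2}{1}$ defining $p_{\mathrm M}(\eta_j,\xi_j;h^-_{j-1,0}(\eta),h^+_{j+1,0}(\xi);v,k_j)$ is a sum over $m$ that terminates at $m=\min(\eta_j,\xi_j)$, with the $m$-th term carrying a factor $v^m$. Dividing by $v^{\eta_j}$ isolates the term $m=\eta_j$: it survives in the limit exactly when $\eta_j\le\xi_j$ (so that $\eta_j$ lies in the summation range), and all terms vanish when $\eta_j>\xi_j$, which produces precisely the indicator $\mathbf{1}_{\eta_j\le\xi_j}$ of $D^{\mathrm{tr}}$. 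Simplifying the surviving coefficient with the identity $(q^{-2\eta_j};q^2)_{\eta_j}/(q^2;q^2)_{\eta_j}=(-1)^{\eta_j}q^{-\eta_j(\eta_j+1)}$ gives the factor $(q^{-2\xi_j};q^2)_{\eta_j}/(q^{2k_j};q^2)_{\eta_j}$ and the correct $q$-power $-\eta_j(\eta_j-2\xi_j+h^-_{j-1,0}(\eta)-h^+_{j+1,0}(\xi)-k_j)$, again up to a harmless sign and total-particle factor stripped off via Remark \ref{rem:InvTotPart}.

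I expect the main obstacle to be bookkeeping rather than conceptual. One has to check that extracting the leading $v$-behaviour factor by factor in the \emph{nested} products is legitimate, since the arguments of the single-site functions couple neighbouring sites through the height functions $h^-_{j-1,0}$ and $h^+_{j+1,0}$, and that the per-site $q$-powers assemble into exactly the stated global exponents. The interchange of the limit with the generator is immediate, because for fixed $\eta$ the generator action is a finite sum; the genuine work is the closed-form evaluation of the collapsing hypergeometric series by $q$-Chu--Vandermonde in (i) and the correct identification of the surviving term together with its indicator in (ii).
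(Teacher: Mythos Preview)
Your proposal is correct and follows essentially the same approach as the paper: for (i) you take the limit $v\to 0$ of $P^v_{\mathrm L}$ normalized by a power of $v$ and evaluate the resulting $\rphisempty{2}{1}$ by $q$-Chu--Vandermonde, and for (ii) you take $v\to\infty$ of $v^{-|\eta|}P^v_{\mathrm M}$ and isolate the top term of the terminating series, producing the indicator. The only cosmetic difference is that the paper normalizes by $(-v)^{|\eta|}$ rather than $v^{|\eta|}$ in (i), which just shifts the sign you discard at the end; and your worry about the nested structure is unfounded, since the height functions carry no $v$-dependence and the $v$-asymptotics factor cleanly site by site.
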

\begin{proof}
	For (i), we take the limit $v\to 0$ in the Big $q$-Laguerre duality function $P_\mathrm{L}^v$ between $\ASIP{q,\vec{k}}$ and $\ASIP{q^{-1},\vec{k}}$. To make the limit convergent, we multiply by $(-v)^{|\eta|}$,
	\begin{align*}
		\lim\limits_{v\to 0} (-v)^{|\eta|}P_\mathrm{L}^v(\eta,\xi)= 	\lim\limits_{v\to 0} \prod_{j=1}^{M} (-v)^{\eta_j}&(v^{-1}q^{-h_{j-1,0}^-(\eta)-h_{j+1,0}^+(\xi)-k_j-1};q^{-2})_{\eta_j} \\
		&\times \rphis{3}{2}{q^{-2\xi_j},0,q^{-2\eta_j}}{q^{2k_j},vq^{h_{j-1,0}^-(\eta)+h_{j+1,0}^+(\xi)+k_j+1}}{q^2,q^2}.
	\end{align*}
	Since 
	\begin{align*}
		\lim\limits_{v\to 0 } (-v)^n (v^{-1}q^{-\lambda-\rho-k-1};q^{-2})_n \rphis{3}{2}{ q^{-2x},0,q^{-2n}}{q^{2k},vq^{\rho+\lambda+k+1}}{q^2,q^2}& \\
		= q^{-n(n+\lambda+\rho+k)}\rphis{2}{1}{q^{-2x},q^{-2n}}{q^{2k}}{q^2,q^2}&,
	\end{align*}
	we can use the $q$-Chu-Vandermonde summation formula \cite[(II.6)]{GR} for a $\rphisempty{2}{1}$ to prove (i).\\
	\indent The triangular duality function in (ii) follows from taking the limit $v\to\infty$ in the $q$-Meixner self-duality function $P_{\mathrm{M}}^v$ for $\ASIPpar$. In order for the limit to converge, we require $\eta \leq \xi$ and multiply by $v^{-|\eta|}$,
	\begin{align*}
		\lim\limits_{v\to\infty} \frac{P_\mathrm{M}^v(\eta,\xi)1_{\eta\leq\xi}}{v^{|\eta|}} = \lim\limits_{v\to\infty} \prod_{j=1}^M v^{-\eta_j} \rphis{2}{1}{q^{-2\xi_j},q^{-2\eta_j}}{q^{2k_j}}{q^2,vq^{2\xi_j+h_{j+1,0}^+(\xi)-h_{j-1,0}^-(\eta)+k_j+1}}1_{\eta_j\leq\xi_j}.
	\end{align*}
	For the one-site duality functions we have
	\begin{align*}
		\lim\limits_{v\to\infty} v^{-n} \rphis{2}{1}{q^{-2x},q^{-2n}}{q^{2k}}{q^2,vq^{2x+\rho-\lambda+k+1}}1_{n\leq x} = \frac{(q^{-2x},q^{-2n};q^2)_n}{(q^{2k},q^2;q^2)_n}q^{n(2x+\rho-\lambda+k+1)}1_{n\leq x}.
	\end{align*}
	Note that $1_{n\leq x}$ could be left out since $(q^{-2x};q^2)_n=0$ if $n > x$. However, we leave it in to emphasize the triangular structure of the duality function. Now (ii) follows from the observation that
	\[
		\frac{(q^{-2n};q^2)_n}{(q^2;q^2)_n} = (-1)^nq^{-n(n+1)}.\qedhere
	\] 
\end{proof}
\begin{remark}
	A short computation shows that
	\[
		D^\mathrm{tr}(\eta,-\xi-\vec{k})=D(\eta,\xi).
	\]
	This is exactly what was pointed out in Remark \ref{rem:asip}: duality between $\ASIPpar$ and $\ASIP{q^{-1},\vec{k}}$ can be obtained from the self-duality of $\ASIPpar$ by replacing `$\xi$' by `$-\xi-\vec{k}$'.\\
\end{remark}
\subsection{Orthogonality relations which follow from Theorem \ref{thm:AWdualityOrth}}
Let us now turn to the orthogonality of the duality functions found in the previous subsection. In terms of functions, the duality between $\ASIP{q^{-1},\vec{k}}$ and $\RASIPpar$ can be obtained from the duality $\ASIPpar \leftrightarrow \RASIPpar$ by sending $q\to q^{-1}$ (since dynamic ASIP is invariant under that transformation). However, this cannot be done for their orthogonality relations due to convergence problems. Therefore, we will now distinguish between the dualities $\ASIP{q^{-1},\vec{k}}\leftrightarrow \RASIPpar$ and $\ASIPpar \leftrightarrow \RASIPpar$. \\
\indent We will prove the orthogonality relations by taking limits from the ones on top in hierarchy: the duality $\LASIPpar\leftrightarrow\RASIPpar$, which has the Askey-Wilson polynomials $P_{\mathrm{AW}}^v$ as duality function. The (bi)orthogonality with respect to the reversible measures can be found in Theorem \ref{thm:AWdualityOrth}. There is a (bi)orthogonality relation with respect to the reversible measures if either (i) $\lambda >-1$ and $\rho\leq -1$ or (ii) $\lambda\leq -1$ and $\rho > -1$. Therefore, the (bi)orthogonality can only survive the following limits:
	\begin{itemize}
		\item $\lambda\to\pm\infty$,
		\item $\rho\to\pm\infty$,
		\item $\lambda\to\infty$ and $\rho\to-\infty$, 
		\item $\lambda\to-\infty$ and $\rho\to\infty$.
	\end{itemize}
	Combining this with the limits \eqref{eq:limitsgendynasip} between dynamic ASIP and ASIP, we obtain three dualities which have orthogonal duality functions, where we filtered out the ones which can be obtained from one another by reversing the order of sites, see Table \ref{tab:limits_orth_duality_dynamic_ASIP}.
	\begin{table}[h] 
		\begin{tabular}{|l|ll|l|}
			\hline
			\ &\multicolumn{2}{|l|}{\hspace{0.85cm} Duality between} & Corresponding limit \\ \hline
			(i)&\multicolumn{1}{|l|}{$\ASIP{q,\vec{k}}$}       &  $\RASIP{q,\vec{k},\rho}$      &       $\lambda\to\infty$              \\ \hline
			(ii)&\multicolumn{1}{|l|}{$\ASIP{q^{-1},\vec{k}}$ }      &  $\RASIPpar$      &  $\lambda\to-\infty$                 \\ \hline
			(iii)&\multicolumn{1}{|l|}{$\ASIPpar$}       &  $ \ASIPpar$       &     $\lambda\to\infty,\rho\to-\infty$           \\ \hline
		\end{tabular}
		\vspace{0.1cm}
		\caption{Three limits of the duality between $\LASIP{q,\vec{N},\lambda}$ and $\RASIP{q,\vec{N},\rho}$.} \label{tab:limits_orth_duality_dynamic_ASIP}\vspace{-0.7cm}
	\end{table}\\
	Note that no orthogonal duality functions between $\ASIPpar$ and $\ASIP{q^{-1},\vec{k}}$ can be found in this manner. \\
	
	Let us start with the limit $\lambda\to\infty$ in the (bi)orthogonality relation \eqref{eq:aworthozetaq}. We obtain an orthogonality relation for the Big $q$-Jacobi duality function. 
	\begin{proposition}[Orthogonality Big $q$-Jacobi]\* \label{prop:degenerate orthogonality qjac}
			If $\rho\leq-1$ we have the following orthogonality relation for the Big $q$-Jacobi duality function $P^v_{\mathrm{J}}(\eta,\xi)$ between $\ASIPpar$ and $\RASIPpar$, 
			\begin{align*}
				\sum_{\eta\in X_d} P^v_{\mathrm{J}}(\eta,\xi) P^v_{\mathrm{J}}(\eta,\xi') W(\eta)\omega_{\mathrm{J}}^{v}(|\eta|,|\xi|,\rho) = \frac{\delta_{\xi,\xi'}}{W_\mathsf{R}(\xi)} \qquad \text{for all }\xi,\xi'\in X_{d,\rho},
			\end{align*}
			where
			\begin{align*}
				\omega_{\mathrm{J}}^{v}(|\eta|,|\xi|,\rho) = v^{-2|\eta|}q^{|\eta|(2|\eta|-1)}\frac{(v^{-1}q^{|\vec{k}|+\rho+1};q^2)_{|\xi|}}{(v^{-1}q^{|\vec{k}|+\rho+1};q^{2})_{|\eta|}}\frac{(v^{-1}q^{2|\eta|+|\vec{k}|-\rho+1};q^2)_\infty}{(v^{-1}q^{-2|\xi|-|\vec{k}|-\rho+1};q^2)_\infty}.
			\end{align*}
	\end{proposition}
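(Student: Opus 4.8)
The plan is to derive this orthogonality relation as the $\lambda\to\infty$ degeneration of the Askey--Wilson (bi)orthogonality \eqref{eq:aworthozetaq} from Theorem \ref{thm:AWdualityOrth}, mirroring the way $P^v_{\mathrm{J}}$ itself was obtained as $\lim_{\lambda\to\infty}P^{vq^\lambda}_{\mathrm{AW}}$ in Proposition \ref{prop:degenerate duality}. Since \eqref{eq:aworthozetaq} holds for every $\lambda>-1$ (with $\rho\leq-1$, which is exactly the hypothesis here) and its right-hand side $\delta_{\xi,\xi'}/W_\mathsf{R}(\xi)$ is independent of $\lambda$, I would substitute $v\mapsto vq^\lambda$ throughout and then let $\lambda\to\infty$. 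The first factor then becomes $P^{vq^\lambda}_{\mathrm{AW}}(\zeta,\xi)\to P^v_{\mathrm{J}}(\zeta,\xi)$ by definition (here the summation variable $\zeta$, which after the limit is the $\ASIP{q,\vec{k}}$-state called $\eta$ in the statement). The second, biorthogonal factor becomes $P^{v^{-1}q^{-\lambda}}_{\mathrm{AW}}(\zeta,\xi')$, whose free parameter blows up; to tame it I would invoke the symmetry $P^{v^{-1}}_{\mathrm{AW}}=\gamma\,P^{v}_{\mathrm{AW}}$ recorded in the remark after Theorem \ref{thm:AWdualityOrth}, where $\gamma$ depends only on the total particle numbers, applied with free parameter $vq^\lambda$. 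This rewrites the second factor as $\gamma\,P^{vq^\lambda}_{\mathrm{AW}}(\zeta,\xi')$, whose surviving Askey--Wilson piece again converges to $P^v_{\mathrm{J}}(\zeta,\xi')$.

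What remains is a bookkeeping of the scalar $\lambda$-dependent factors, which I claim collapse onto the stated weight $\omega^v_{\mathrm{J}}$. There are three sources. The reversible measure $W_\mathsf{L}(\zeta)$ degenerates as in Remark \ref{rem:revdynasip}, producing a factor $q^{\pm2\lambda|\zeta|}$ together with the finite $q^{|\zeta|(2|\zeta|-1)}W(\zeta)$; the symmetry factor $\gamma$ contributes the reciprocal power $q^{\mp2\lambda|\zeta|}$ together with a finite piece; and $\omega^{vq^\lambda}_{\mathrm{AW}}(|\zeta|,|\xi|,\lambda,\rho,q)$ converges, since its two infinite shifted factorials containing $q^{2\lambda}$ tend to $1$ (using $0<q<1$), leaving exactly $(v^{-1}q^{2|\zeta|+|\vec{k}|-\rho+1};q^2)_\infty/(v^{-1}q^{-2|\xi|-|\vec{k}|-\rho+1};q^2)_\infty$. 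The crucial point is that the divergent $q^{\pm2\lambda|\zeta|}$ coming from $W_\mathsf{L}$ and from $\gamma$ are reciprocal and cancel, so that each summand converges to $P^v_{\mathrm{J}}(\zeta,\xi)P^v_{\mathrm{J}}(\zeta,\xi')\,W(\zeta)\,\omega^v_{\mathrm{J}}(|\zeta|,|\xi|,\rho)$, with $\omega^v_{\mathrm{J}}$ assembled from $q^{|\zeta|(2|\zeta|-1)}$, the finite part of $\gamma$ (which I expect to factor as $v^{-2|\zeta|}(v^{-1}q^{|\vec{k}|+\rho+1};q^2)_{|\xi|}/(v^{-1}q^{|\vec{k}|+\rho+1};q^2)_{|\zeta|}$), and the limit of $\omega^{vq^\lambda}_{\mathrm{AW}}$.

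Two points require care. First, the factor $\gamma$ attached to the second Askey--Wilson function depends on $|\xi'|$ rather than $|\xi|$, whereas the target weight $\omega^v_{\mathrm{J}}(|\zeta|,|\xi|,\rho)$ uses the first index $|\xi|$. This is reconciled precisely because the finite part of $\gamma$ factorizes into a function of $|\zeta|$ times a function of $|\xi'|$: on the diagonal $\xi=\xi'$ the two indices agree, while off the diagonal the right-hand side of \eqref{eq:aworthozetaq} vanishes, so the nonvanishing $|\xi'|$-factor may be pulled out of the $\zeta$-sum without affecting the (zero) value, and the remaining $|\xi|$-factor may be reinserted to match $\omega^v_{\mathrm{J}}$. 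Second, the interchange of $\lim_{\lambda\to\infty}$ with the infinite sum over $\zeta\in X_d$ must be justified; since $\xi,\xi'$ range over the finite set $X_{d,\rho}$, I would do this by dominated convergence, constructing a $\lambda$-uniform dominating function from the $L^2$-norm bounds on $P^{}_\mathsf{L}$ and $P^{}_\mathsf{R}$ supplied by Corollary \ref{cor:orthASC}, exactly as in the proof of Proposition \ref{prop:genASIPsym}.

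The hard part will be the explicit determination of the symmetry factor $\gamma$, together with verification of the two properties used above: that its divergent part is exactly $q^{\mp2\lambda|\zeta|}$ (so as to cancel the degeneration of $W_\mathsf{L}$) and that its finite part factorizes into a $|\zeta|$-piece times a $|\xi|$-piece. Both rest on the $v\mapsto v^{-1}$ invariance of the $\rphisempty{4}{3}$ in $p^{}_{\mathrm{AW}}$, so that $\gamma$ is a product over sites of ratios of the prefactors $(vq^{\rho+\lambda+k+1};q^2)_x(vq^{-\rho-\lambda-k-1};q^{-2})_y$; the genuinely nontrivial step is checking that, once the height functions $h^-_{j-1}(\zeta)$ and $h^+_{j+1}(\xi)$ are inserted, this product telescopes into a function of the totals $|\zeta|$ and $|\xi|$ alone, as asserted in the remark following Theorem \ref{thm:AWdualityOrth}.
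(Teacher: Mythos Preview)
Your strategy is correct and matches the paper's: substitute $v\mapsto vq^\lambda$ in the Askey--Wilson orthogonality \eqref{eq:aworthozetaq} and let $\lambda\to\infty$, tracking the four pieces $P^{vq^\lambda}_{\mathrm{AW}}$, $P^{v^{-1}q^{-\lambda}}_{\mathrm{AW}}$, $W_\mathsf{L}$, and $\omega^{vq^\lambda}_{\mathrm{AW}}$ separately. Two execution points differ from the paper and are worth noting. For the second factor, the paper does not route through the $v\mapsto v^{-1}$ symmetry $\gamma$; instead it computes the limit $\lim_{\lambda\to\infty}q^{2\lambda|\eta|}P^{v^{-1}q^{-\lambda}}_{\mathrm{AW}}(\eta,\xi)$ directly at the one-site level (obtaining a $\rphisempty{3}{2}$, hence $P^v_{\mathrm{J}}$ again) and then shows the leftover site-wise prefactors telescope to $(v^{-1}q^{|\vec{k}|+\rho+1};q^2)_{|\xi|}/(v^{-1}q^{|\vec{k}|+\rho+1};q^2)_{|\eta|}$ via $(a;q^2)_m=(a;q^2)_\infty/(aq^{2m};q^2)_\infty$; this is the same telescoping you would need to compute $\gamma$, so your detour through the unproved remark gains nothing. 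For the interchange of limit and sum, the paper does not use $L^2$ bounds ``as in Proposition~\ref{prop:genASIPsym}'' (that argument relies on duality to transfer a generator action, which is not the situation here); it instead derives explicit one-site asymptotics $p^{}_{\mathrm{AW}}(n,x;vq^\lambda)p^{}_{\mathrm{AW}}(n,x';v^{-1}q^{-\lambda})w^{}_\mathsf{dyn}(n;\lambda)\sim q^{-n(2x+2x'+2\rho+2k)}$ uniformly in large $\lambda$ and invokes dominated convergence, which is summable precisely because $\xi,\xi'\in X_{d,\rho}$ forces $2x+\rho+k<0$.
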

	\begin{proof}
		We start with the orthogonality relation \eqref{eq:aworthozetaq} with $\zeta$ replaced $\eta$,
		\[
			\sum_{\eta\in X_d} P^v_{\mathrm{AW}}(\eta,\xi;q) P^{v^{-1}}_{\mathrm{AW}}(\eta,\xi';q) W_\mathsf{L}(\eta,q) \omega^{v}_{\mathrm{AW}}(|\eta|,|\xi|,\lambda,\rho,q)= \frac{\delta_{\xi,\xi'}}{W_\mathsf{R}(\xi,q)}.
		\]
		Let us replace $v$ by $vq^\lambda$ and take the limit $\lambda\to\infty$. Justifying interchanging the infinite sum and the limit $\lambda\to\infty$ is done in Appendix \ref{app:interchangelimitsum}. Per term, we have the following limits,
		\begin{align}
				\lim\limits_{\lambda\to\infty}P^{vq^{\lambda}}_{\mathrm{AW}}(\eta,\xi)&=P_\mathrm{J}^v(\eta,\xi), \label{eq:limorthjac1}\\
				\lim\limits_{\lambda\to\infty} q^{2\lambda|\eta|} P^{v^{-1}q^{-\lambda}}_{\mathrm{AW}}(\eta,\xi) &=v^{-2|\eta|}\frac{(v^{-1}q^{|\vec{k}|+\rho+1};q^2)_{|\xi|}}{(v^{-1}q^{|\vec{k}|+\rho+1};q^{2})_{|\eta|}}P_\mathrm{J}^v(\eta,\xi),\label{eq:limorthjac2}\\
				\lim\limits_{\lambda\to\infty} q^{-2\lambda|\eta|}W_\mathsf{L}(\eta) &= q^{|\eta|(2|\eta|-1)} W(\eta)\label{eq:limorthjac3},\\
				\lim\limits_{\lambda\to\infty} \omega^{vq^\lambda}_{\mathrm{AW}}(|\eta|,|\xi|,\lambda,\rho,q) &= \frac{(v^{-1}q^{2|\eta|+|\vec{k}|-\rho+1};q^2)_\infty}{(v^{-1}q^{-2|\xi|-|\vec{k}|-\rho+1};q^2)_\infty},\label{eq:limorthjac4}
		\end{align}
		from which the proposition easily follows. The first one \eqref{eq:limorthjac1} follows by definition (see Proposition \ref{prop:degenerate duality}). The third \eqref{eq:limorthjac3} and fourth \eqref{eq:limorthjac4} are straightforward\footnote{For the third, use that $\sum_{j=1}^M \eta_j (\eta_j+2\sum_{i=1}^{j-1}\eta_i)=|\eta|^2$.} computations. \\
		\indent So we are left with proving \eqref{eq:limorthjac2}. Note that we actually obtain an orthogonality relation (whilst \eqref{eq:aworthozetaq} is biorthogonal), since we obtain some factor times $P_\mathrm{J}^v$ instead of $P_\mathrm{J}^{v^{-1}}$ in \eqref{eq:limorthjac2}. On the one-site duality function level we have
		\begin{align*}
			\lim\limits_{\lambda\to\infty}q^{2\lambda n}p^{}_{\mathrm{AW}}(n,x;\lambda,\rho,v^{-1}q^{-\lambda},k;q) = \frac{(v^{-1}q^{\rho+k+1};q^2)_x}{(-v)^nq^{n(\rho+k+n)}} \rphis{3}{2}{q^{-2n},q^{-2x},q^{2x+2\rho+2k}}{q^{2k},v^{-1}q^{\rho+k+1}}{q^2,q^2},
		\end{align*}
		where we used that
		\[
			(a;q^{-2})_n=(-a)^nq^{-n(n-1)}(a^{-1};q^2)_n.
		\]
		Therefore,
		\begin{align*}
			\lim\limits_{\lambda\to\infty} q^{2\lambda|\eta|} P^{v^{-1}q^{-\lambda}}_{\mathrm{AW}}(\eta,\xi) = v^{-2|\eta|}P_\mathrm{J}^v(\eta,\xi) \prod_{j=1}^{M}\frac{(v^{-1}q^{h_{j-1,0}^-(\eta)+h_{j+1}^+(\xi)+k_j+1};q^2)_{\xi_j}}{(v^{-1}q^{h_{j-1,0}^-(\eta)+h_{j+1}^+(\xi)+k_j+1};q^{2})_{\eta_j}}.
		\end{align*}
		At this point, it is not at all clear that 
		\[
		\prod_{j=1}^{M}\frac{(v^{-1}q^{h_{j-1,0}^-(\eta)+h_{j+1}^+(\xi)+k_j+1};q^2)_{\xi_j}}{(v^{-1}q^{h_{j-1,0}^-(\eta)+h_{j+1}^+(\xi)+k_j+1};q^{2})_{\eta_j}}
		\]
		is a factor depending on the total number of particles. However, since for $m\in\Z_{\geq 0}$ we have
		\[
			(a;q^2)_m=\frac{(a;q^2)_\infty}{(aq^{2m};q^2)_\infty},
		\]
		we obtain
		\begin{align*}
			\prod_{j=1}^{M}\frac{(v^{-1}q^{h_{j-1,0}^-(\eta)+h_{j+1}^+(\xi)+k_j+1};q^2)_{\xi_j}}{(v^{-1}q^{h_{j-1,0}^-(\eta)+h_{j+1}^+(\xi)+k_j+1};q^{2})_{\eta_j}} &= \prod_{j=1}^{M}\frac{(v^{-1}q^{h_{j-1,0}^-(\eta)+h_{j+1}^+(\xi)+k_j+1+2\eta_j};q^2)_{\infty}}{(v^{-1}q^{h_{j-1,0}^-(\eta)+h_{j+1}^+(\xi)+k_j+1+2\xi_j};q^{2})_{\infty}} \\
			&= \prod_{j=1}^{M}\frac{(v^{-1}q^{h_{j,0}^-(\eta)+h_{j+1}^+(\xi)+1};q^2)_{\infty}}{(v^{-1}q^{h_{j-1,0}^-(\eta)+h_{j}^+(\xi)+1};q^{2})_{\infty}} \\
			&= \frac{(v^{-1}q^{h_{M,0}^-(\eta)+h_{M+1}^+(\xi)+1};q^2)_{\infty}}{(v^{-1}q^{h_{0,0}^-(\eta)+h_{1}^+(\xi)+1};q^{2})_{\infty}} \\
			&= \frac{(v^{-1}q^{2|\eta|+|\vec{k}|+\rho+1};q^2)_{\infty}}{(v^{-1}q^{2|\xi|+|\vec{k}|+\rho+1};q^{2})_{\infty}}\\
			&= \frac{(v^{-1}q^{|\vec{k}|+\rho+1};q^2)_{|\xi|}}{(v^{-1}q^{|\vec{k}|+\rho+1};q^{2})_{|\eta|}},
		\end{align*}
		proving \eqref{eq:limorthjac2}.
	\end{proof}
	\begin{remark}
		We mentioned that one can go from the Big $q$-Jacobi duality function $P_\mathrm{J}^v$ to the Al-Salam--Chihara one $P_\mathsf{R}^{}$ by taking a limit in the free parameter $v$. If we would take this limit in above orthogonality relation as well, we end up with the orthogonality \eqref{eq:orthoP_R} from Corollary \ref{cor:orthASC}(ii).
	\end{remark}	
	Let us now turn to the second orthogonality of Table \ref{tab:limits_orth_duality_dynamic_ASIP}, the one between $\ASIP{q^{-1},\vec{k}}$ and $\RASIPpar$, which corresponds to the limit $\lambda\to-\infty$. Duality functions for this duality can be obtained from the duality function $P^v_\mathrm{J}$ between $\ASIPpar\leftrightarrow\RASIPpar$ by replacing $q$ by $q^{-1}$. That is, we obtain Big $q^{-1}$-Jacobi polynomials $P_\mathrm{J}^v(\eta,\xi;q^{-1})$. For orthogonality, we cannot simply replace $q$ by $q^{-1}$ in Proposition \ref{prop:degenerate orthogonality qjac} because of the infinite shifted factorials. However, we can take the limit $\lambda\to-\infty$ in the orthogonality relation \eqref{eq:aworthoxiq} between $\LASIPno$ and $\RASIPno$. Before we do so, we have to make the connection between the limit $\lambda\to-\infty$ of the Askey-Wilson duality function $P^v_\mathrm{AW}$ with $q^{-1}$ and the Big $q^{-1}$-Jacobi ones. 
	\begin{lemma} \label{lem:AWbigq-1jac}
		The following limits hold,
		\begin{align*}
			\lim\limits_{\lambda\to-\infty}P^{vq^{-\lambda}}_{\mathrm{AW}}(\eta,\xi;q^{-1}) &=  P_\mathrm{J}^{v}(\eta,\xi;q^{-1}),\\
			\lim\limits_{\lambda\to-\infty} q^{-2\lambda|\eta|} P^{v^{-1}q^{\lambda}}_{\mathrm{AW}}(\eta,\xi;q^{-1}) &=v^{-2|\eta|}\frac{(v^{-1}q^{-|\vec{k}|-\rho-1};q^{-2})_{|\xi|}}{(v^{-1}q^{-|\vec{k}|-\rho-1};q^{-2})_{|\eta|}}P_\mathrm{J}^v(\eta,\xi;q^{-1}).
		\end{align*}
	\end{lemma}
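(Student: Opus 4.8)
The plan is to prove both limits by direct term-by-term evaluation. For fixed $\eta,\xi\in X_d$ every one-site $\rphisempty{4}{3}$ in $P^v_{\mathrm{AW}}(\eta,\xi;q^{-1})$ terminates after at most $\min(\eta_j,\xi_j)+1$ terms (the numerator parameters $q^{2\eta_j}$ and $q^{2\xi_j}$ terminate the series in base $q^{-2}$), so each one-site factor is a finite sum and we may pass the limit $\lambda\to-\infty$ inside without any convergence argument. Throughout I use $0<q<1$ and the elementary limits $\lim_{\alpha\to-\infty}(1-\beta q^\alpha)/(1-\gamma q^\alpha)=\beta/\gamma$ and $\lim_{\alpha\to-\infty}(1-\beta q^\alpha)=1$. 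Structurally the two identities are the images under $q\leftrightarrow q^{-1}$ of the limits \eqref{eq:limorthjac1} and \eqref{eq:limorthjac2} used in the proof of Proposition \ref{prop:degenerate orthogonality qjac}, so the computations run parallel to those, only now in base $q^{-2}$.

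For the first identity I would insert $q\mapsto q^{-1}$ in the one-site function \eqref{eq:1sitedualityAW}, recall $h^-_{j-1}=\lambda+h_{j-1,0}^-(\eta)$, and track the $\lambda$-dependence after setting the free parameter to $vq^{-\lambda}$. The free parameter enters only through $vq^{-\lambda}q^{\pm(h^-_{j-1}+h^+_{j+1}+k_j+1)}$; the $+$ combination loses its $\lambda$ and equals $vq^{h_{j-1,0}^-(\eta)+h^+_{j+1}(\xi)+k_j+1}$, whereas the $-$ combination carries $q^{-2\lambda}\to0$. Consequently the prefactor $(vq^{-2\lambda-\cdots};q^{-2})_{\xi_j}\to1$, the surviving prefactor is $(vq^{h_{j-1,0}^-(\eta)+h^+_{j+1}(\xi)+k_j+1};q^2)_{\eta_j}$, and inside the $\rphisempty{4}{3}$ the upper parameter $q^{-2\eta_j-2h^-_{j-1}-2k_j}$ and the lower parameter $vq^{-2\lambda-\cdots}$ both tend to $0$ and disappear, so the $\rphisempty{4}{3}$ collapses to the $\rphisempty{3}{2}$ of $p_{\mathrm J}(\eta_j,\xi_j;h_{j-1,0}^-(\eta),h^+_{j+1}(\xi);v,k_j;q^{-1})$. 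Taking the product over $j$ yields $P^v_{\mathrm J}(\eta,\xi;q^{-1})$.

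For the second identity the free parameter is $v^{-1}q^{\lambda}=(vq^{-\lambda})^{-1}$, so the roles of the two prefactors are interchanged and the surviving prefactor $(v^{-1}q^{2\lambda+c_j};q^2)_{\eta_j}$, with $c_j:=h_{j-1,0}^-(\eta)+h^+_{j+1}(\xi)+k_j+1$, now has argument tending to $\infty$. Here I would use the leading asymptotics $(a;q^2)_n=(-a)^nq^{n(n-1)}(1+O(a^{-1}))$ as $a\to\infty$; the growth $q^{2\lambda\eta_j}$ it produces is cancelled precisely by the prefactor $q^{-2\lambda|\eta|}=\prod_j q^{-2\lambda\eta_j}$, and the $\rphisempty{4}{3}$ collapses to the same $\rphisempty{3}{2}$ as before. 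What remains is
\[
v^{-2|\eta|}\Big(\prod_{j=1}^M (vq^{c_j};q^2)_{\eta_j}\Big)\prod_{j=1}^M\frac{(v^{-1}q^{-c_j};q^{-2})_{\xi_j}}{(v^{-1}q^{-c_j};q^{-2})_{\eta_j}}
\]
times the product of the $\rphisempty{3}{2}$'s, where the first bracket together with those $\rphisempty{3}{2}$'s reassembles $P^v_{\mathrm J}(\eta,\xi;q^{-1})$.

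The remaining and decisive step — the main obstacle — is to show that the last product depends only on the total particle numbers and equals $(v^{-1}q^{-|\vec{k}|-\rho-1};q^{-2})_{|\xi|}/(v^{-1}q^{-|\vec{k}|-\rho-1};q^{-2})_{|\eta|}$. Since $|q^{-2}|>1$ forbids infinite products in base $q^{-2}$, I would first pass to base $q^2$ using $(a;q^{-2})_n=(-a)^nq^{-n(n-1)}(a^{-1};q^2)_n$, turning the quotient into $\prod_j (vq^{c_j};q^2)_{\xi_j}/(vq^{c_j};q^2)_{\eta_j}$ times an explicit power of $q$ and a sign. That base-$q^2$ quotient telescopes exactly as in the proof of Proposition \ref{prop:degenerate orthogonality qjac}, via $(a;q^2)_m=(a;q^2)_\infty/(aq^{2m};q^2)_\infty$ together with the identities $c_j+2\eta_j=h_{j,0}^-(\eta)+h^+_{j+1}(\xi)+1$ and $c_j+2\xi_j=h_{j-1,0}^-(\eta)+h^+_{j}(\xi)+1$, collapsing to a ratio indexed only by $|\eta|$ and $|\xi|$. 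Matching the leftover powers of $q$ then reduces to the single combinatorial identity
\[
\sum_{j=1}^M c_j(\xi_j-\eta_j)=(|\vec{k}|+\rho+1)(|\xi|-|\eta|)+2\sum_{i<j}(\xi_i\xi_j-\eta_i\eta_j),
\]
which I would verify by expanding $c_j$: the mixed $\eta$-$\xi$ cross terms produced by $h_{j-1,0}^-(\eta)$ and $h^+_{j+1}(\xi)$ are both of the form $\sum_{a<b}\eta_a\xi_b$ and cancel, while the $k$-contributions from the regions $m<j$, $m=j$, $m>j$ add up to $|\vec{k}|$ at each site. This confirms that the surviving factor is a function of $|\eta|$ and $|\xi|$ alone, completing the second limit.
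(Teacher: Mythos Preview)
Your argument is correct: the term-by-term computation goes through, the telescoping works after passing to base $q^2$, and your combinatorial identity is right (the two mixed cross terms are indeed both $\sum_{a<b}\eta_a\xi_b$ with opposite signs, so they cancel).

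However, you do more work than necessary. You yourself observe that the two limits are structurally the images under $q\leftrightarrow q^{-1}$ of \eqref{eq:limorthjac1} and \eqref{eq:limorthjac2}; the paper turns this observation into the entire proof. Since $\lambda$ enters $P^v_{\mathrm{AW}}$ only through powers $q^{\pm\lambda}$, one may substitute $\varepsilon=q^{-\lambda}$, so that $\lambda\to-\infty$ becomes $\varepsilon\to0$ and the two limits read $\lim_{\varepsilon\to0}P^{v\varepsilon}_{\mathrm{AW}}(\eta,\xi;q^{-1})$ and $\lim_{\varepsilon\to0}\varepsilon^{2|\eta|}P^{v^{-1}\varepsilon^{-1}}_{\mathrm{AW}}(\eta,\xi;q^{-1})$. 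The already-proven limits \eqref{eq:limorthjac1} and \eqref{eq:limorthjac2}, after the analogous substitution $\varepsilon=q^{\lambda}$, take exactly the same form with $q$ in place of $q^{-1}$. Hence the lemma follows from those limits by replacing $q$ by $q^{-1}$, with no new computation. Your direct approach has the advantage of being self-contained and making the mechanism fully explicit (in particular the base-change and telescoping that justify the product over sites collapsing to a function of $|\eta|$ and $|\xi|$); the paper's approach is shorter and avoids redoing work already done in Proposition~\ref{prop:degenerate orthogonality qjac}.
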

	\begin{proof}
		Both limits follow from the already known limits \eqref{eq:limorthjac1} and \eqref{eq:limorthjac2}. Indeed, we can replace $q^{-\lambda}$ by $\varepsilon$ in the limits of this lemma. The limit $\lambda\to-\infty$ then becomes the limit $\varepsilon \to 0$. Thus the limits in this lemma are equivalent to
		 \begin{align*}
		 	\lim\limits_{\varepsilon\to 0}P^{v\varepsilon}_{\mathrm{AW}}(\eta,\xi;q^{-1}) \qquad \text{and}\qquad \lim\limits_{\varepsilon \to 0} \varepsilon^{2|\eta|} P^{v^{-1}\varepsilon^{-1}}_{\mathrm{AW}}(\eta,\xi;q^{-1}),
		 \end{align*}
	 	where $q^{-\lambda}$ should be replaced by $\varepsilon$ everywhere in $P_{\mathrm{AW}}^{v\varepsilon}$ as well. Similarly, if we replace $q^\lambda$ by $\varepsilon$ in the limits \eqref{eq:limorthjac1} and \eqref{eq:limorthjac2} from the previous proposition, we obtain
	 	\begin{align*}
	 		\lim\limits_{\varepsilon\to 0}P^{v\varepsilon}_{\mathrm{AW}}(\eta,\xi;q) \qquad \text{and}\qquad \lim\limits_{\varepsilon \to 0} \varepsilon^{2|\eta|} P^{v^{-1}\varepsilon^{-1}}_{\mathrm{AW}}(\eta,\xi;q).
	 	\end{align*}
 		Therefore, the limits of this lemma follow from \eqref{eq:limorthjac1} and \eqref{eq:limorthjac2} by replacing $q$ by $q^{-1}$.
	\end{proof}
	Applying this lemma, we can prove orthogonal duality relations of $P^v_{\mathrm{J}}(\eta,\xi;q^{-1})$ with respect to the reversible measure $W_\mathsf{R}$. The proof is similar to the proof of the previous proposition.
	\begin{proposition}[Orthogonality Big $q^{-1}$-Jacobi]
		let $\rho>-1$, then we have the following orthogonality relation for the Big $q^{-1}$-Jacobi duality function $P^v_\mathrm{J}(\eta,\xi;q^{-1})$ between $\ASIP{q^{-1},\vec{k}}$ and $\RASIPpar$,
		\begin{align*}
			\sum_{\xi\in X_d} P_\mathrm{J}^v(\eta,\xi;q^{-1})P_\mathrm{J}^v(\eta',\xi;q^{-1})W_\mathsf{R}(\xi) \omega^{v}_\mathrm{J'}(|\eta|,|\xi|,\rho,q) = \frac{\delta_{\eta,\eta'}}{W(\eta,q^{-1})} \qquad \text{for all } \eta,\eta'\in X_d,
		\end{align*}
		where
		\[
			\omega^{v}_\mathrm{J'}(|\eta|,|\xi|,\rho) =  v^{-2|\eta|}q^{|\eta|(1-2|\eta|)}\frac{(v^{-1}q^{-|\vec{k}|-\rho-1};q^{-2})_{|\xi|}}{(v^{-1}q^{-|\vec{k}|-\rho-1};q^{-2})_{|\eta|}}\frac{(v^{-1}q^{2|\xi|+|\vec{k}|+\rho+1};q^2)_\infty}{(v^{-1}q^{-2|\eta|-|\vec{k}|+\rho+1};q^2)_\infty}.
		\]
	\end{proposition}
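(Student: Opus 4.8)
The plan is to mirror the proof of the preceding proposition, now starting from the biorthogonality relation \eqref{eq:aworthoxiq} for the Askey--Wilson duality functions in base $q^{-1}$ (valid precisely when $\lambda\le-1$ and $\rho>-1$) and degenerating it in the limit $\lambda\to-\infty$. Concretely, I would relabel $\zeta,\zeta'\mapsto\eta,\eta'$, substitute $v\mapsto vq^{-\lambda}$, and multiply both sides by $q^{-2\lambda|\eta|}$, so that \eqref{eq:aworthoxiq} reads
\begin{align*}
	\sum_{\xi\in X_d} P^{vq^{-\lambda}}_{\mathrm{AW}}(\eta',\xi;q^{-1})\,\big[q^{-2\lambda|\eta|}P^{v^{-1}q^{\lambda}}_{\mathrm{AW}}(\eta,\xi;q^{-1})\big]\,W_\mathsf{R}(\xi,q)\,\omega^{vq^{-\lambda}}_{\mathrm{AW}}(|\xi|,|\eta|,\rho,\lambda,q)=\frac{q^{-2\lambda|\eta|}\delta_{\eta,\eta'}}{W_\mathsf{L}(\eta,q)}.
\end{align*}
The substitution $v\mapsto vq^{-\lambda}$ and the extra factor $q^{-2\lambda|\eta|}$ are exactly the two normalizations of Lemma \ref{lem:AWbigq-1jac}: the first duality factor converges to $P_\mathrm{J}^v(\eta',\xi;q^{-1})$, and the normalized second factor converges to $v^{-2|\eta|}\frac{(v^{-1}q^{-|\vec{k}|-\rho-1};q^{-2})_{|\xi|}}{(v^{-1}q^{-|\vec{k}|-\rho-1};q^{-2})_{|\eta|}}P_\mathrm{J}^v(\eta,\xi;q^{-1})$. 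The point of multiplying by $q^{-2\lambda|\eta|}$ is that $P^{v^{-1}q^\lambda}_{\mathrm{AW}}$ and $1/W_\mathsf{L}$ both blow up like $q^{2\lambda|\eta|}$ as $\lambda\to-\infty$, so this factor balances both sides.

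Next I would compute the two remaining $\lambda$-dependent pieces. After inserting $v\mapsto vq^{-\lambda}$ into $\omega_{\mathrm{AW}}$, the two factors built from $v$ (rather than $v^{-1}$) acquire a $q^{-2\lambda}$ inside their argument, hence tend to $(0;q^2)_\infty=1$ as $\lambda\to-\infty$ since $0<q<1$, leaving
\[
	\lim_{\lambda\to-\infty}\omega^{vq^{-\lambda}}_{\mathrm{AW}}(|\xi|,|\eta|,\rho,\lambda,q)=\frac{(v^{-1}q^{2|\xi|+|\vec{k}|+\rho+1};q^2)_\infty}{(v^{-1}q^{-2|\eta|-|\vec{k}|+\rho+1};q^2)_\infty}.
\]
For the right-hand side I would establish the $\lambda\to-\infty$, base-$q^{-1}$ companion of \eqref{eq:limorthjac3}, namely $q^{-2\lambda|\eta|}/W_\mathsf{L}(\eta,q)\to q^{-|\eta|(1-2|\eta|)}/W(\eta,q^{-1})$ (here $h^-_{j-1}\to-\infty$ selects the $a\le-1$ branch of $w_\mathsf{dyn}$, whose $q^{-2z(z+a)}$ factor contributes the power $q^{-2\lambda|\eta|}$, and passing to base $q^{-1}$ on ASIP is what produces $W(\eta,q^{-1})$). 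Collecting the surviving $\eta$- and $\xi$-factors, the product of the normalization from Lemma \ref{lem:AWbigq-1jac} with the limit of $\omega_{\mathrm{AW}}$ is exactly $q^{-|\eta|(1-2|\eta|)}\,\omega^v_{\mathrm{J'}}(|\eta|,|\xi|,\rho)$, and this common factor $q^{-|\eta|(1-2|\eta|)}$ cancels against the one produced on the right-hand side, yielding the stated orthogonality relation. Note that, exactly as in the previous proposition, the limit delivers an honest orthogonality rather than a biorthogonality, since Lemma \ref{lem:AWbigq-1jac} returns a multiple of $P_\mathrm{J}^v(\eta,\xi;q^{-1})$ and not of $P_\mathrm{J}^{v^{-1}}$.

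The step that genuinely needs care is justifying the interchange of the sum over $\xi$ with the limit $\lambda\to-\infty$, which I expect to be the main obstacle: the individual factors $P^{v^{-1}q^{\lambda}}_{\mathrm{AW}}$ and $\omega^{vq^{-\lambda}}_{\mathrm{AW}}$ blow up or vanish separately in $\lambda$ and only their product is controlled, so one must produce a $\xi$-summable dominating function uniform in $\lambda$ near $-\infty$. I would handle this by a dominated-convergence argument identical in spirit to the one deferred to Appendix \ref{app:interchangelimitsum}. Everything else reduces to the elementary limit $\lim_{\alpha\to-\infty}(1-\beta q^{\alpha})/(1-\gamma q^{\alpha})=\beta/\gamma$ together with the identities $(a;q^{-2})_n=(-a)^nq^{-n(n-1)}(a^{-1};q^2)_n$ and $(a;q^2)_m=(a;q^2)_\infty/(aq^{2m};q^2)_\infty$ already used in the base case, now applied with $q$ replaced by $q^{-1}$.
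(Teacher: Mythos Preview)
Your proposal is correct and follows essentially the same approach as the paper: start from \eqref{eq:aworthoxiq}, substitute $v\mapsto vq^{-\lambda}$, take $\lambda\to-\infty$, and use Lemma~\ref{lem:AWbigq-1jac} for the two duality-function limits together with the straightforward limits of $\omega_{\mathrm{AW}}$ and $W_\mathsf{L}$, deferring the interchange of sum and limit to a dominated-convergence argument as in Appendix~\ref{app:interchangelimitsum}. The only (harmless) imprecision is that the exponential factor in the $a\le-1$ branch of $w_\mathsf{dyn}$ is $q^{-2z(z+a+k)}$ rather than $q^{-2z(z+a)}$, but since only the $a$-part carries the $\lambda$-dependence this does not affect your computation.
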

		\begin{proof}
			As said, we take the limit $\lambda\to-\infty$ in \eqref{eq:aworthoxiq} with $v$ replaced by $vq^{-\lambda}$ and $\zeta$ by $\eta$,
			\begin{align*}
					\sum_{\xi\in X_d} P^{vq^{-\lambda}}_{\mathrm{AW}}(\eta,\xi;q^{-1}) P^{v^{-1}q^\lambda}_{\mathrm{AW}}(\eta',\xi;q^{-1}) W_\mathsf{R}(\xi,q) \omega^{vq^{-\lambda}}_{\mathrm{AW}}(|\xi|,|\eta|,\rho,\lambda,q)&= \frac{\delta_{\eta,\eta'}}{W_\mathsf{L}(\eta,q)}.
			\end{align*}
			Justifying interchanging the sum and limit is done similarly as in Proposition \ref{prop:degenerate orthogonality qjac}. Per term, we have the following limits,
			\begin{align}
				\lim\limits_{\lambda\to-\infty}P^{vq^{-\lambda}}_{\mathrm{AW}}(\eta,\xi;q^{-1}) &=  P_\mathrm{J}^{v}(\eta,\xi;q^{-1}),\\
				\lim\limits_{\lambda\to-\infty} q^{-2\lambda|\eta|} P^{v^{-1}q^{\lambda}}_{\mathrm{AW}}(\eta,\xi;q^{-1}) &=v^{-2|\eta|}\frac{(v^{-1}q^{-|\vec{k}|-\rho-1};q^{-2})_{|\xi|}}{(v^{-1}q^{-|\vec{k}|-\rho-1};q^{-2})_{|\eta|}}P_\mathrm{J}^v(\eta,\xi;q^{-1}),\\
				\lim\limits_{\lambda\to-\infty} \omega^{vq^{-\lambda}}_{\mathrm{AW}}(|\xi|,|\eta|,\rho,\lambda,q) &= \frac{(v^{-1}q^{2|\xi|+|\vec{k}|+\rho+1};q^2)_\infty}{(v^{-1}q^{-2|\eta|-|\vec{k}|+\rho+1};q^2)_\infty},\\
				\lim\limits_{\lambda\to-\infty} q^{2\lambda|\eta|}W_\mathsf{L}(\eta) &= q^{|\eta|(1-2|\eta|)}W(\eta,q^{-1}).
			\end{align}
			Using these four limits, proving the proposition is straightforward. The first two are the content of Lemma \ref{lem:AWbigq-1jac}, the third and fourth are straightforward calculations similar to the ones in the proof of Proposition \ref{prop:degenerate orthogonality qjac}.
		\end{proof}
		\begin{remark}
		Similarly as discussed in the previous remark, one can go from the Big $q^{-1}$-Jacobi polynomials to the $q^{-1}$-Al-Salam--Chihara polynomials by taking a limit in the free parameter $v$. When taking this limit in the free parameter $v$, the orthogonality relation of this proposition is equivalent\footnote{One has to reverse the order of sites so that $\LASIPno\leftrightarrow\RASIPno$ and $\ASIPpar\leftrightarrow\ASIP{q^{-1},\vec{k}}$.} to the second one of Corollary \ref{cor:orthASC}(i), which corresponds to the dual orthogonality of the $q^{-1}$-Al-Salam--Chihara polynomials. However, the $q^{-1}$-Al-Salam--Chihara polynomials also have their usual orthogonality relation, namely the first one of Corollary \ref{cor:orthASC}(i). Thus, something a bit unexpected happens: the duality function $P_\mathrm{J}^v(\eta,\xi;q^{-1})$ has one orthogonality relation with respect to a reversible measure, but a limit of this function has two. From the perspective of orthogonal polynomials this has to do with the following. This proposition is the dual orthogonality of the Big $q^{-1}$-Jacobi polynomials. Their usual orthogonality consist of two infinite sums. One sum has support $\Z_{\geq0}$, but the other one has not. Therefore, it cannot be a reversible measure since part of the support lies outside the statespace of the process. However, this part disappears when taking the limit in the free parameter $v$, so that only the infinite sum with support $\Z_{\geq0}$ remains, which corresponds to the first orthogonality of Corollary \ref{cor:orthASC}(i).
		\end{remark}
		Let us now turn to the orthogonality of (iii) in Table \ref{tab:limits_orth_duality_dynamic_ASIP}: the self-duality of $\ASIPpar$. Duality functions are given by the $q$-Meixner duality functions $P_\mathrm{M}^v$. Orthogonality can be obtained from the orthogonality of the Big $q$-Jacobi duality functions $P_\mathrm{J}^v$ by replacing $v$ by $vq^{-\rho}$ and letting $\rho\to-\infty$, see Proposition \ref{prop:degenerate duality}(ii). If $k_1=k_2=...=k_M$ is the same for every site, this result was already proven in \cite{CFG}.
\begin{proposition}\* 
	We have the following orthogonality relation for the $q$-Meixner self-duality functions $P_\mathrm{M}^v$ for $\ASIPpar$,
		\begin{align*}
			\sum_{\eta\in X_d} P^v_{\mathrm{M}}(\eta,\xi) P^v_{\mathrm{M}}(\eta,\xi') W(\eta)\omega_{\mathrm{M}}^{v}(|\eta|,|\xi|) = \frac{\delta_{\xi,\xi'}}{W(\xi)} \qquad \text{for all } \xi,\xi'\in X_d,
		\end{align*}
		where
		\begin{align*}
			\omega_{\mathrm{M}}^{v}(|\eta|,|\xi|)&=(-v)^{|\eta|-|\xi|}\frac{(v^{-1}q^{2|\eta|+|\vec{k}|+1};q^2)_\infty}{(v^{-1}q^{-2|\xi|-|\vec{k}|+1};q^2)_\infty}q^{|\eta|(|\eta|-|\vec{k}|-1)+|\xi|(1-|\xi|-|\vec{k}|)}.
		\end{align*}
\end{proposition}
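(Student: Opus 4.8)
The plan is to obtain the $q$-Meixner orthogonality by degenerating the Big $q$-Jacobi orthogonality of Proposition \ref{prop:degenerate orthogonality qjac}, in exactly the spirit in which that proposition was itself deduced from the Askey--Wilson orthogonality of Theorem \ref{thm:AWdualityOrth}. Concretely, I would start from
\[
\sum_{\eta\in X_d} P^v_{\mathrm{J}}(\eta,\xi) P^v_{\mathrm{J}}(\eta,\xi') W(\eta)\,\omega_{\mathrm{J}}^{v}(|\eta|,|\xi|,\rho) = \frac{\delta_{\xi,\xi'}}{W_\mathsf{R}(\xi)},\qquad \xi,\xi'\in X_{d,\rho},
\]
which holds for $\rho\le-1$, replace $v$ by $vq^{-\rho}$, and let $\rho\to-\infty$. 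By Proposition \ref{prop:degenerate duality}(ii) we have $\lim_{\rho\to-\infty}P^{vq^{-\rho}}_{\mathrm{J}}(\eta,\cdot)=P^v_{\mathrm{M}}(\eta,\cdot)$ for both slots $\xi$ and $\xi'$, while $W(\eta)$ is independent of $\rho$ and survives the limit unchanged. Since $X_{d,\rho}$ exhausts $X_d$ as $\rho\to-\infty$, the resulting relation will hold for all $\xi,\xi'\in X_d$, as claimed.

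The two factors carrying the $\rho$-dependence are the scalar weight $\omega_{\mathrm{J}}^{vq^{-\rho}}(|\eta|,|\xi|,\rho)$ on the left and the reversible measure $W_\mathsf{R}(\xi)$ on the right, and neither has a finite nonzero limit by itself. I would first check that after the substitution $v\mapsto vq^{-\rho}$ the ratio of infinite $q$-shifted factorials in $\omega_{\mathrm{J}}$ becomes $\rho$-free and equals precisely the infinite-factorial part of $\omega_{\mathrm{M}}^v$, while the remaining finite factors blow up as a pure power of $q^{\rho}$ depending only on $|\xi|$; the large-argument asymptotics $(A;q^2)_m\sim(-A)^m q^{m(m-1)}$ together with the identity $(a;q^2)_m=(a;q^2)_\infty/(aq^{2m};q^2)_\infty$ are the tools here, exactly as in the proof of Proposition \ref{prop:degenerate orthogonality qjac}. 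The matching divergence of $1/W_\mathsf{R}(\xi)$ is the right-hand analogue of the measure limit recorded in Remark \ref{rem:revdynasip}. Multiplying both sides by the common $|\xi|$-dependent power of $q^{\rho}$ (which factors out of the sum since $|\xi|$ is fixed, and is consistent on the right because of the $\delta_{\xi,\xi'}$) turns the two divergences into the finite limits $\omega_{\mathrm{M}}^v(|\eta|,|\xi|)$ and $W(\xi)$ respectively.

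Finally, to pass the limit inside the infinite sum over $\eta$ I would use dominated convergence, producing a $\rho$-uniform dominating function in the manner of Appendix \ref{app:interchangelimitsum}, which was already invoked in Proposition \ref{prop:degenerate orthogonality qjac}. I expect the main obstacle to be essentially bookkeeping: tracking all powers of $q^{\rho}$ (and of $v$) through the substitution $v\mapsto vq^{-\rho}$ so that the blow-up on the left and the decay of $W_\mathsf{R}$ on the right cancel exactly and leave the stated weight $\omega_{\mathrm{M}}^v$; the genuine analytic content, namely the interchange of limit and sum, is handled by reusing the domination argument already developed earlier in the section.
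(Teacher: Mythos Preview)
Your proposal is correct and follows essentially the same route as the paper: start from the Big $q$-Jacobi orthogonality of Proposition \ref{prop:degenerate orthogonality qjac}, substitute $v\mapsto vq^{-\rho}$, let $\rho\to-\infty$, and use Proposition \ref{prop:degenerate duality}(ii) for the duality functions together with the matching $q^{2\rho|\xi|}$-rescalings of $\omega_{\mathrm J}^{vq^{-\rho}}$ and $W_\mathsf{R}(\xi)$. The paper records exactly these three termwise limits and then concludes; your additional remark on dominated convergence is the same justification already invoked in Proposition \ref{prop:degenerate orthogonality qjac}.
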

\begin{proof}
	We use proposition \ref{prop:degenerate orthogonality qjac} with $v$ replaced by $vq^{-\rho}$,
	\begin{align*}
		\sum_{\eta\in X_d} P^{vq^{-\lambda}}_{\mathrm{J}}(\eta,\xi) P^{vq^{-\lambda}}_{\mathrm{J}}(\eta,\xi') W(\eta)\omega_{\mathrm{J}}^{vq^{-\lambda}}(|\eta|,|\xi|,\rho) = \frac{\delta_{\xi,\xi'}}{W_\mathsf{R}(\xi)},
	\end{align*}
	and then we take the limit $\lambda\to-\infty$ on both sides. We have the limits,
	\begin{align*}
		\lim\limits_{\rho\to-\infty}P_\mathrm{J}^{vq^{-\rho}}(\eta,\xi)&= P_\mathrm{M}^v(\eta,\xi), \\
		\lim\limits_{\rho\to-\infty}q^{-2\rho|\xi|}\omega_\mathrm{J}^{vq^{-\rho}}&=(-v)^{|\eta|-|\xi|}\frac{(v^{-1}q^{2|\eta|+|\vec{k}|+1};q^2)_\infty}{(v^{-1}q^{-2|\xi|-|\vec{k}|+1};q^2)_\infty}q^{|\eta|(|\eta|-|\vec{k}|-1)+|\xi|(|\xi|+|\vec{k}|)},\\
		\lim\limits_{\rho\to -\infty} q^{2\rho|\xi|}W_\mathsf{R}(\xi)&= q^{|\xi|(1-2|\xi|-2|\vec{k}|)} W(\xi).
	\end{align*}
	The first is Proposition \ref{prop:degenerate duality}(ii), the second and third are similar computations as before. From these limits, the required identity easily follows.
\end{proof}
\begin{remark}
	In this section, we have taken limits in the duality between $\LASIPpar$ and $\RASIPpar$ to obtain (orthogonal) dualities for asymmetric discrete processes. One could do something similar for the symmetric case. That is, take limits in the duality between $\LSIP{\vec{k},\lambda}$ and $\RSIP{\vec{k},\rho}$ to find (orthogonal) dualities between (dynamic) SIP and SIP. Alternatively, these dualities could also be found by taking the limit $q\to 1$ in the results of this section. 
\end{remark}
\section{Dynamic ABEP}\label{sec:dynABEP}
	In \cite{CGRSsu11}, an asymmetric version of the Brownian energy process (ABEP) was introduced. This diffusion process was shown to arise as a limit of ASIP where simultaneously the number of particles goes to infinity and the asymmetry goes to zero. Moreover, it was proven there exists a deterministic transformation `$g$' which turns BEP into ABEP. This transformation can be used to transfer some properties of BEP towards ABEP, such as reversibility, duality and existence. In this section, we will do something similar for dynamic ABEP, a process that arises as a limit of dynamic ASIP. Again, there exists a deterministic transformation `$g$' which turns BEP into dynamic ABEP and we will exploit this to prove duality and reversibility of the process.\\
	\subsection{Definition dynamic ABEP}
	Before we introduce dynamic ABEP, we need the partial energy $E_j^-(\vec{x})$ of $\vec{x}=(x_1,...,x_M)$, where $x_j\geq 0$ represents the energy on site $j$. We define this to be the sum of energies left of and including site $j$, 
	\begin{align*}
		E_j^-(\vec{x})=\sum_{i=1}^j x_i.
	\end{align*}
	Similarly, we define
	\begin{align*}
	E_j^+(\vec{x})=\sum_{i=j}^M x_i.
	\end{align*}
	 Moreover, we introduce the following two functions to ease notation later on,
	\begin{align*}
		\sinh_\sigma(x)=\sinh(\sigma x)\qquad \text{and}\qquad \cosh_\sigma(x)=\cosh(\sigma x).
	\end{align*}
	Now we define ABEP$^{}_\mathsf{L}$, the left version of the dynamic asymmetric Brownian energy process, as follows.
		\begin{Definition}[Dynamic ABEP]\label{def:dynabep}
		Let $\sigma\in\R\backslash\{0\}$ be the asymmetry parameter, $\lambda >0$, and $\vec{k}=(k_1,k_2,...,k_M)$ with each $k_j > 0$. Then ABEP$_\mathsf{L}(\sigma,\vec{k},\lambda)$ is the diffusion process on $X_c=\R_{\geq 0}^N$ with generator acting on $f\in C^2(X_c)$ by
		\[
			\big[\genLABEP f\big](\vec{x})= \sum_{j=1}^{N-1}A_j(\vec{x},\sigma,\lambda) \bigg(\frac{\partial }{\partial x_j}-\frac{\partial }{\partial x_{j+1}}\bigg)^2f(\vec{x}) + B_j(\vec{x},\sigma,\vec{k},\lambda) \bigg(\frac{\partial }{\partial x_j}-\frac{\partial }{\partial x_{j+1}}\bigg) f(\vec{x}),
		\]
		where
		\begin{align*}
			A_j=& \frac{1}{\sigma^2}\sinh_\sigma(x_{j})\sinh_\sigma(x_{j+1})\frac{\sinh_\sigma( \lambda+2E_j^--x_j)\sinh_\sigma(\lambda+2E_j^-+x_{j+1})}{\sinh_\sigma(\lambda+2E_j^-)^2},\\
			B_j=&\frac{1}{\sigma}\bigg[k_j\sinh_\sigma(x_{j+1})\frac{\sinh_\sigma(\lambda+2E_j^-+ x_{j+1})}  {\sinh_\sigma(\lambda+2E_j^-)} -k_{j+1}\sinh_\sigma( x_{j})\frac{\sinh_\sigma(\lambda+2E_j^--x_j)}   {\sinh_\sigma(\lambda+2E_j^-)} \\
			&\hspace{0.3cm} -2\sinh_\sigma (x_j)\sinh_\sigma( x_{j+1})\frac{\cosh_\sigma(\lambda+2E_j^-)\sinh_\sigma(\lambda+2E_j^--x_j)\sinh_\sigma(\lambda+2E_j^-+x_{j+1})}{\sinh_\sigma(\lambda+2E_j^-)^3}\bigg].
		\end{align*}
		\end{Definition}
		\begin{remark}\*
			\begin{itemize} 
				\item Note that if $c$ is a factor depending on the total energy $|\vec{x}|$, then 
				\begin{align*}
					\big[\genLABEP cf\big](\vec{x})=c \big[\genLABEP f\big](\vec{x}),
				\end{align*}
				since by the chain rule,
				\[
					\Big(\frac{\partial}{\partial x_i} - \frac{\partial}{\partial x_j}\Big)c(|\vec{x}|)=0.
				\]
				Therefore, duality functions multiplied by a factor depending on $|\vec{x}|$ are still duality functions. Similarly for reversible measures. 
				\item Similar to the $q\to q^{-1}$ invariance of the dynamic ASIP rates, we have that the dynamic ABEP rates are invariant under sending $\sigma\to-\sigma$.
				\item One could also write the rates without the hyperbolic function, then we have
						\begin{align*}
					A_j&= \frac{1}{4\sigma^2}\big(1-e^{2\sigma x_{j}}\big)\big(e^{-2\sigma x_{j+1}}-1\big)\frac{\big(1-e^{2\sigma \{\lambda+2E_j^--x_j\}}\big)\big(1-e^{2\sigma \{\lambda+2E_j^-+x_{j+1}\}}\big)}{\big(1-e^{2\sigma \{\lambda+2E_j^-\}}\big)^2},\\
					B_j&=\frac{1}{2\sigma}\bigg[k_j\big(1-e^{-2\sigma x_{j+1}}\big)\bigg(\frac{1-e^{2\sigma\{\lambda+2E_j^-+ x_{j+1}\}}  }{1-e^{2\sigma\{\lambda+2E_j^-\}}}\bigg) +k_{j+1}\big(1-e^{2\sigma x_{j}}\big)\bigg(\frac{1-e^{2\sigma\{\lambda+2E_j^--x_j\}}   }{1-e^{2\sigma\{\lambda+2E_j^-\}}}\bigg) \\
					&+\big(1-e^{2\sigma x_j}\big)\big(e^{-2\sigma x_{j+1}}-1\big)\frac{\big(1+e^{2\sigma\{\lambda+2E_j^-\}}\big)\big(1-e^{2\sigma\{\lambda+2E_j^--x_j\}}\big)\big(1-e^{2\sigma\{\lambda+2E_j^-+x_{j+1}\}}\big)}{\big(1-e^{2\sigma\{\lambda+2E_j^-\}}\big)^3}\bigg].
				\end{align*}
				\item Similarly as for dynamic ASIP, one can also define a right version of dynamic ABEP by reversing the order of sites. One then obtains the rates
				\begin{align*}
					A_j&= \frac{1}{4\sigma^2}\sinh_\sigma(x_{j+1})\sinh_\sigma( x_{j})\frac{\sinh_\sigma(\rho+2E_{j+1}^+-x_{j+1})\sinh_\sigma(\rho+2E_{j+1}^++x_{j})}{\sinh_\sigma(\rho+2E_{j+1}^+)^2},\\
					B_j&=\frac{1}{2\sigma}\bigg[k_{j+1}\sinh_\sigma( x_{j})\frac{\sinh_\sigma(\rho+2E_{j+1}^++ x_{j})  }{\sinh_\sigma(\rho+2E_{j+1}^+)} +k_{j}\sinh_\sigma( x_{j+1})\frac{\sinh_\sigma(\rho+2E_{j+1}^+-x_{j+1})   }{\sinh_\sigma(\rho+2E_{j+1}^+)} \\
					&\hspace{0.3cm}+\sinh_\sigma(x_{j+1})\sinh_\sigma( x_{j})\frac{\cosh_\sigma(\rho+2E_{j+1}^+)\sinh_\sigma(\rho+2E_{j+1}^+-x_{j+1})\sinh_\sigma(\rho+2E_{j+1}^++x_{j})}{\sinh_\sigma(\rho+2E_{j+1}^+)^3}\bigg].
				\end{align*}
				\item One can also define dynamic ABEP for $\lambda \leq 0$. To ensure positivity of $A_j$ in that case, it is sufficient to require
				\[
					\lambda + 2|\vec{x}| < 0.
				\] 
				We will do this by limiting our statespace $X_c$, similarly as done for dynamic ASIP. That is, if $\lambda\leq 0$, we will define dynamic ABEP with the same rates, but on the statespace
				\[
					X_{c,\lambda}=\{\vec{x}\in X_c: \lambda + 2|\vec{x}| < 0 \}.
				\]
			\end{itemize}
		\end{remark}
		\subsection{Special cases Dynamic ABEP}
		Without loss of generality, we will take $\sigma >0$ in the remaining of this section.
		\begin{itemize}
			\item $[$ABEP, $\lambda\to-\infty]$. When taking the limit $\lambda\to-\infty$, one obtains ABEP$(-\sigma,\vec{k})$ and when $\lambda\to\infty$ one gets ABEP$(\sigma,\vec{k})$. This process is a slight generalization of ABEP introduced in \cite{CGRSsu11}, where the parameter $k$ may vary per site. Its generator is given by 
			\[
			\hspace{1cm}\big[L^{\mathrm{ABEP}}_{\sigma,\vec{k}}f\big](\vec{x})= \sum_{j=1}^{N-1}C_j(\vec{x},\sigma) \bigg(\frac{\partial }{\partial x_j}-\frac{\partial }{\partial x_{j+1}}\bigg)^2f(\vec{x}) + D_j(\vec{x},\sigma,\vec{k}) \bigg(\frac{\partial }{\partial x_j}-\frac{\partial }{\partial x_{j+1}}\bigg) f(\vec{x}),
			\]
			with
			\begin{align*}
				C_j(\vec{x},\sigma)=& \frac{1}{4\sigma^2}\big(1-e^{-2\sigma x_{j}}\big)\big(e^{2\sigma x_{j+1}}-1\big),\\
				\hspace{1cm} D_j(\vec{x},\sigma,\vec{k})=&\frac{1}{2\sigma}\big(k_j\big(e^{2\sigma x_{j+1}}-1\big)+k_{j+1}\big(e^{-2\sigma x_{j}}-1\big) +\big(e^{-2\sigma x_j}-1\big)\big(e^{2\sigma x_{j+1}}-1\big)\big).
			\end{align*}
			\item $[$Dynamic BEP, $\sigma\to 0]$. By taking the limit $\sigma\to0$ from dynamic ABEP, we can obtain a dynamic version of BEP, which we well call BEP$_\mathsf{L}(\vec{k},\lambda)$. This is the diffusion process on $X_c$ with generator given by
				\[
				\hspace{1cm}\big[L^{\mathrm{BEP}_\mathsf{L}}_{\vec{k},\lambda}f\big](\vec{x})= \sum_{j=1}^{N-1}\hat{A}_j(\vec{x},\lambda) \bigg(\frac{\partial }{\partial x_j}-\frac{\partial }{\partial x_{j+1}}\bigg)^2f(\vec{x}) + \hat{B}_j(\vec{x},\vec{k},\lambda) \bigg(\frac{\partial }{\partial x_j}-\frac{\partial }{\partial x_{j+1}}\bigg) f(\vec{x}),
				\]
				where
				\begin{align*}
					\hat{A}_j(\vec{x},\lambda)=& x_jx_{j+1}\frac{(\lambda+2E_j^--x_j)(\lambda+2E_j^-+x_{j+1})}{(\lambda+2E_j^-)^2},\\
					\hat{B}_j(\vec{x},\vec{k},\lambda)=&k_jx_{j+1}\bigg(\frac{\lambda+2E_j^-+x_{j+1}}{\lambda+2E_j^-}\bigg)-k_{j+1}x_j\bigg(\frac{\lambda+2E_j^--x_j}{\lambda+2E_j^-}\bigg)\\
					&+ 2x_jx_{j+1}\frac{(\lambda+2E_j^--x_j)(\lambda+2E_j^-+x_{j+1})}{(\lambda+2E_j^-)^3}.
				\end{align*}
				\item $[$BEP, $\sigma\to0$ and $\lambda\to\pm\infty]$. Lastly, when taking the limit $\lambda\to\pm\infty$ of dynamic BEP or when taking the limit $\sigma\to 0$ of ABEP, one obtains BEP$(\vec{k})$, which has generator
				\[
				\hspace{1.3cm}\big[L^{\mathrm{BEP}}_{\vec{k}}f\big](\vec{x})= \sum_{j=1}^{N-1}x_jx_{j+1} \bigg(\frac{\partial }{\partial x_j}-\frac{\partial }{\partial x_{j+1}}\bigg)^2f(\vec{x}) + (k_jx_{j+1}-k_{j+1}x_{j}) \bigg(\frac{\partial }{\partial x_j}-\frac{\partial }{\partial x_{j+1}}\bigg) f(\vec{x}).
				\]
				For later reference, we mention that BEP$(\vec{k})$ has a reversible measure given by a product measure of gamma distributions (see \cite{Gr2019}),
				\begin{align}
					\mu^{\mathrm{BEP}}(\vec{x},\vec{k})=\prod_{j=1}^{M}\frac{x^{k_j-1}}{\Gamma(k_j)}e^{-x_j}.\label{eq:BEPrev}
				\end{align}
		\end{itemize} 
		\subsection{Dynamic ABEP as a diffusion limit of dynamic ASIP.}
		Dynamic ABEP arises as a weak asymmetric limit of dynamic ASIP, where the number of particles go to infinity,	similarly as done in \cite{CGRSsu11} when going from ASIP to ABEP. Take $\vec{x}\in X_c$ and for $\varepsilon>0$, let $q=(1-\varepsilon\sigma)$ and replace $\lambda$ by $\lambda/\varepsilon$. Take $\zeta^\varepsilon=\lfloor \vec{x}/\varepsilon\rfloor \in X_d$, where $\lfloor \vec{y}\rfloor$ is $\vec{y}$ where each component is rounded down to an integer. Then $\vec{x}^\varepsilon=\varepsilon\zeta^\varepsilon$ converges to $\vec{x}$ as $\varepsilon$ goes to $0$. Moreover, the the rates of the interacting particle system dynamic ASIP converge to the rates of the diffusion process dynamic ABEP, similarly as in \cite{CGRSsu11}. That is, the difference operator $\genLASIP$  becomes the differential operator $\genLABEP$ in the following sense:
		\begin{align*}
			\lim_{\varepsilon\to 0}\big[L^{\mathrm{ASIP}_\mathsf{L}}_{1-\varepsilon\sigma,\vec{k},\lambda/\varepsilon} f_\varepsilon\big](\lfloor \vec{x}/\varepsilon\rfloor) = \big[\genLABEP f](\vec{x}),
		\end{align*}
		where $f_\varepsilon,f\colon X_c\to \R$ are sufficiently smooth functions such that 
		\[
			\lim\limits_{\varepsilon\to 0}f_\varepsilon(\vec{x}/\varepsilon) = f(\vec{x}).
		\]
		The next theorem makes this claim rigorous.
		\begin{theorem}\label{thm:ratesdynasipdynabep}
			Let $f\in C^3(X_c)$ and suppose that there exists $\varepsilon'>0$ such that for each $0<\varepsilon<\varepsilon'$ we have a function $f_\varepsilon\in C^3(X_c)$ where each partial derivative of $f_\varepsilon(\cdot/\varepsilon)$ of order 3 is bounded in an $\varepsilon'$-neighbourhood of $\vec{x}$, uniformly\footnote{A function $g_\varepsilon$ is bounded on $X$ uniformly in $\varepsilon$ if $\sup\limits_{\vec{x}\in X}|g_\varepsilon(\vec{x})| < M$, where $M$ is independent of $\varepsilon$.} in $\varepsilon$. If for each $\vec{x}\in X_c$
			\begin{align}
				\lim\limits_{\varepsilon\to 0}f_\varepsilon(\vec{x}/\varepsilon) &= f(\vec{x}) \label{eq:dynasipabepf}
			\end{align}
			and
			\begin{align}
				\lim\limits_{\varepsilon\to 0}\left[\bigg(\frac{\partial}{\partial x_j}-\frac{\partial}{\partial x_{j+1}}\bigg)f_\varepsilon(\cdot/\varepsilon)\right](x) &= \left[\bigg(\frac{\partial}{\partial x_j}-\frac{\partial}{\partial x_{j+1}}\bigg)f\right](\vec{x}),\label{eq:dynasipabeppartialf}\\
				\lim\limits_{\varepsilon\to 0}\left[\bigg(\frac{\partial}{\partial x_j}-\frac{\partial}{\partial x_{j+1}}\bigg)^2f_\varepsilon(\cdot/\varepsilon)\right](\vec{x}) &= \left[\bigg(\frac{\partial}{\partial x_j}-\frac{\partial}{\partial x_{j+1}}\bigg)^2f\right](\vec{x})\label{eq:dynasipabeppartial2f}
			\end{align}
			for each $j=1,..,M-1$, then the difference operator $L_{1-\varepsilon\sigma,\vec{k},\lambda/\varepsilon}^{\mathrm{ASIP}_\mathsf{L}}$ applied to $f_\varepsilon(\cdot/\varepsilon)$ converges to differential operator $\genLABEP$ applied to $f$,
			\begin{align}
			\lim_{\varepsilon\to 0}\big[L^{\mathrm{ASIP}_\mathsf{L}}_{1-\varepsilon\sigma,\vec{k},\lambda/\varepsilon} f_\varepsilon\big](\lfloor \vec{x}/\varepsilon\rfloor) = \big[\genLABEP f](\vec{x}).\label{eq:limlasiplabep}
		\end{align}
		
		\end{theorem}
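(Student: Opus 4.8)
The plan is to expand the discrete generator bond by bond and match it, order by order in $\varepsilon$, against the two differential operators $A_j(\partial_{x_j}-\partial_{x_{j+1}})^2$ and $B_j(\partial_{x_j}-\partial_{x_{j+1}})$ appearing in Definition \ref{def:dynabep}. Write $g_\varepsilon:=f_\varepsilon(\,\cdot\,/\varepsilon)$ and $\vec y:=\varepsilon\lfloor\vec x/\varepsilon\rfloor$, so that $\zeta=\lfloor\vec x/\varepsilon\rfloor=\vec y/\varepsilon$ with $\vec y\to\vec x$ and each $\zeta_i=y_i/\varepsilon$ an integer. A jump across the bond $(j,j+1)$ shifts $\zeta$ by $\mp e_j\pm e_{j+1}$, i.e.\ it shifts the continuum variable $\vec y$ by $\mp\varepsilon e_j\pm\varepsilon e_{j+1}$. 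A third order Taylor expansion of $g_\varepsilon$ around $\vec y$, using \eqref{eq:dynasipabeppartialf} and \eqref{eq:dynasipabeppartial2f} to identify the limits of the first and second difference quotients and the uniform third derivative bound to control the remainder, gives
\begin{align*}
f_\varepsilon(\zeta^{j,j+1})-f_\varepsilon(\zeta) &= -\varepsilon\,(\partial_{x_j}-\partial_{x_{j+1}})g_\varepsilon(\vec y) + \tfrac{\varepsilon^2}{2}(\partial_{x_j}-\partial_{x_{j+1}})^2 g_\varepsilon(\vec y) + O(\varepsilon^3),\\
f_\varepsilon(\zeta^{j+1,j})-f_\varepsilon(\zeta) &= +\varepsilon\,(\partial_{x_j}-\partial_{x_{j+1}})g_\varepsilon(\vec y) + \tfrac{\varepsilon^2}{2}(\partial_{x_j}-\partial_{x_{j+1}})^2 g_\varepsilon(\vec y) + O(\varepsilon^3),
\end{align*}
locally uniformly in $\vec x$. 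Hence the bond $(j,j+1)$ contribution to $\big[\genLASIP f_\varepsilon\big](\zeta)$ equals
\[
\varepsilon\big(C^{\mathsf L,-}_{j+1}-C^{\mathsf L,+}_{j}\big)(\partial_{x_j}-\partial_{x_{j+1}})g_\varepsilon(\vec y)+\tfrac{\varepsilon^2}{2}\big(C^{\mathsf L,+}_{j}+C^{\mathsf L,-}_{j+1}\big)(\partial_{x_j}-\partial_{x_{j+1}})^2 g_\varepsilon(\vec y)+O(\varepsilon),
\]
so everything reduces to the $\varepsilon$-asymptotics of the symmetric combination $C^{\mathsf L,+}_{j}+C^{\mathsf L,-}_{j+1}$ and the antisymmetric combination $C^{\mathsf L,-}_{j+1}-C^{\mathsf L,+}_{j}$ of the rates.

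For the diffusion term I would use that, with $q=1-\varepsilon\sigma$ and $\lambda\mapsto\lambda/\varepsilon$, every $q$-number in the rewritten rates \eqref{eq:ratesdynASIPrewritten} has argument of order $1/\varepsilon$. For $a_\varepsilon=u/\varepsilon+O(1)$ one has $q^{\pm a_\varepsilon}\to e^{\mp\sigma u}$ and $q-q^{-1}=-2\varepsilon\sigma+O(\varepsilon^2)$, whence $[a_\varepsilon]_q=\tfrac{1}{\varepsilon\sigma}\sinh_\sigma(u)\big(1+O(\varepsilon)\big)$, where $u$ is the relevant combination of $x_j,x_{j+1}$ and $\lambda+2E_j^-$. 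Applying this factor by factor to \eqref{eq:ratesdynASIPrewritten} yields $\varepsilon^2 C^{\mathsf L,+}_{j}\to A_j$ and $\varepsilon^2 C^{\mathsf L,-}_{j+1}\to A_j$ locally uniformly, so $\tfrac{\varepsilon^2}{2}(C^{\mathsf L,+}_{j}+C^{\mathsf L,-}_{j+1})\to A_j$ and, after summing over $j$ and letting $\varepsilon\to0$, the second order part of \eqref{eq:limlasiplabep} comes out with the correct coefficient.

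The main obstacle is the drift. Since the leading $1/\varepsilon^2$ terms of $C^{\mathsf L,+}_{j}$ and $C^{\mathsf L,-}_{j+1}$ coincide, the limit $\varepsilon\big(C^{\mathsf L,-}_{j+1}-C^{\mathsf L,+}_{j}\big)\to B_j$ is governed entirely by the $1/\varepsilon$ corrections, and extracting these is the heart of the computation. I would expand each $q$-number one order further, $[a_\varepsilon]_q=\tfrac{1}{\varepsilon\sigma}\sinh_\sigma(u)+(\text{a correction involving }\cosh_\sigma(u)\text{ and the }O(1)\text{ part of }a_\varepsilon)+O(\varepsilon)$, insert this into the products in \eqref{eq:ratesdynASIPrewritten}, and collect the $1/\varepsilon$ coefficients of $C^{\mathsf L,+}_{j}$ and $C^{\mathsf L,-}_{j+1}$. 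The two $k$-linear terms of $B_j$ should arise from the asymmetry between $[\zeta_{j+1}+k_{j+1}]_q$ and $[\zeta_j+k_j]_q$ (the same mechanism producing the ABEP drift $D_j$ of \cite{CGRSsu11}), while the third, $\cosh_\sigma$-term should come from the $O(1)$ mismatch between the height factors of the forward and backward rates, essentially the discrete derivative of $\sinh_\sigma(\lambda+2E_j^-)$, which is the genuinely new dynamic contribution. The difficulty here is bookkeeping: one must verify that all $1/\varepsilon^2$ pieces and spurious $1/\varepsilon$ pieces cancel in the antisymmetric combination and that what remains is exactly $B_j$.

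Two technical points close the argument. First, because $\vec y=\varepsilon\lfloor\vec x/\varepsilon\rfloor$ depends on $\varepsilon$, the rate asymptotics above must be established uniformly for the continuum argument in a compact neighbourhood of $\vec x$; evaluating the limits at $\vec y$ and using continuity of $A_j$ and $B_j$ then delivers their values at $\vec x$, so the floor function is harmless. Second, the $O(\varepsilon^3)$ Taylor remainders are multiplied by rates of size $O(\varepsilon^{-2})$, so one needs remainders that are $O(\varepsilon^3)$ with a constant uniform in $\varepsilon$; this is precisely what the hypothesis that the order three partial derivatives of $f_\varepsilon(\,\cdot\,/\varepsilon)$ are bounded uniformly in $\varepsilon$ on an $\varepsilon'$-neighbourhood provides. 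Assembling the surviving drift and diffusion contributions over all bonds $j=1,\dots,M-1$ and letting $\varepsilon\to0$ yields \eqref{eq:limlasiplabep}.
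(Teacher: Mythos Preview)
Your proposal is correct and follows essentially the same approach as the paper: Taylor expand $f_\varepsilon$ to second order with a uniformly controlled third-order remainder, then establish the three rate asymptotics $\varepsilon^2 C^{\mathsf L,\pm}_{j(\!+\!1)}\to A_j$ and $\varepsilon\big(C^{\mathsf L,-}_{j+1}-C^{\mathsf L,+}_j\big)\to B_j$ using the rewritten rates \eqref{eq:ratesdynASIPrewritten} and the asymptotics of $[a_\varepsilon]_q$. The only organisational difference is in the drift computation: the paper packages each rate as $g_i(\varepsilon)/\sinh_\alpha(1)^2$ with $g_i$ analytic at $0$, observes $g_1(0)=g_2(0)=\sigma^2 A_j$, and then extracts $B_j$ from $g_2'(0)-g_1'(0)$ via the quotient rule applied to the full numerator and denominator, whereas you propose expanding each $q$-number factor to subleading order and collecting terms; both routes are valid and lead to the same bookkeeping, with the paper's packaging arguably cleaner for isolating the $\cosh_\sigma$ contribution as coming from the denominator mismatch $d_1'(0)-d_2'(0)$.
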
		
		\begin{proof}
			We can use the Taylor expansion of $f_\varepsilon$ in the coordinates $j$ and $j+1$ to write
			\begin{align*}
				f_\varepsilon (\vec{y}^{j,j+1}) = f_\varepsilon(\vec{y}) - \bigg(\frac{\partial }{\partial y_{j}}-\frac{\partial }{\partial y_{j+1}} \bigg) f_\varepsilon (\vec{y}) + \half \bigg(\frac{\partial }{\partial y_{j+1}}-\frac{\partial }{\partial y_{j}} \bigg)^2  f_\varepsilon (\vec{y}) +R,
			\end{align*}
			where $R$ is the Lagrange remainder term. If we replace $\vec{y}$ by $\vec{x}/\varepsilon$, the chain rule gives us
		\begin{align*}
			\begin{split}f_\varepsilon ((\vec{x}/\varepsilon)^{j,j+1}) =& f_\varepsilon (\vec{x}/\varepsilon) - \varepsilon\left[ \bigg(\frac{\partial }{\partial x_{j}}-\frac{\partial }{\partial x_{j+1}} \bigg) f_\varepsilon (\cdot/\varepsilon)\right](\vec{x}) \\
				&\qquad + \half\varepsilon^2 \left[\bigg(\frac{\partial }{\partial x_{j+1}}-\frac{\partial }{\partial x_{j}} \bigg)^2  f_\varepsilon (\cdot/\varepsilon)\right](\vec{x}) + \mathcal{O} (\varepsilon^3),\end{split}
		\end{align*}
		where we used that each partial derivative of $f_\varepsilon$ of order 3 is bounded uniformly in $\varepsilon$. 
		Therefore, we obtain
		\begin{align}
				\begin{split}f_\varepsilon ((\vec{x}/\varepsilon)^{j,j+1})- f_\varepsilon (\vec{x}/\varepsilon) =&  - \varepsilon\left[ \bigg(\frac{\partial }{\partial x_{j}}-\frac{\partial }{\partial x_{j+1}} \bigg) f_\varepsilon (\cdot/\varepsilon)\right](\vec{x}) \\
					&\qquad + \half\varepsilon^2 \left[\bigg(\frac{\partial }{\partial x_{j+1}}-\frac{\partial }{\partial x_{j}} \bigg)^2  f_\varepsilon (\cdot/\varepsilon)\right](\vec{x}) + \mathcal{O} (\varepsilon^3).\end{split} \label{eq:taylor1}
		\end{align}
		Similarly, we get
		\begin{align}
			\begin{split}f_\varepsilon ((\vec{x}/\varepsilon)^{j+1,j})- f_\varepsilon (\vec{x}/\varepsilon) =&  \varepsilon\left[ \bigg(\frac{\partial }{\partial x_{j}}-\frac{\partial }{\partial x_{j+1}} \bigg) f_\varepsilon (\cdot/\varepsilon)\right](\vec{x}) \\
				&\qquad + \half\varepsilon^2 \left[\bigg(\frac{\partial }{\partial x_{j+1}}-\frac{\partial }{\partial x_{j}} \bigg)^2  f_\varepsilon (\cdot/\varepsilon)\right](\vec{x}) + \mathcal{O} (\varepsilon^3).\end{split} \label{eq:taylor2}
		\end{align}
		Moreover, we claim that for the rates of $\LASIP{1-\varepsilon\sigma,\vec{k},\lambda/\varepsilon}$ we have
		\begin{align}
			C^{\mathsf{L},+}_j(\vec{x}/\varepsilon) &= A_j(\vec{x},\sigma,\lambda)/\varepsilon^2 + \mathcal{O}(1/\varepsilon),\label{eq:ratesasipabep1}\\
			C^{\mathsf{L},-}_{j+1}(\vec{x}/\varepsilon)&= A_j(\vec{x},\sigma,\lambda)/\varepsilon^2 + \mathcal{O}(1/\varepsilon),\label{eq:ratesasipabep2}\\
			 C^{\mathsf{L},-}_{j+1}(\vec{x}/\varepsilon)-C^{\mathsf{L},+}_j(\vec{x}/\varepsilon)&=  B_j(\vec{x},\sigma,\vec{k},\lambda)/\varepsilon + \mathcal{O}(1),\label{eq:ratesasipabep3}
		\end{align}
		where $A$ and $B$ are the factors in the rates of dynamic ABEP from Definition \ref{def:dynabep}. Indeed, define $\zeta^\varepsilon=\lfloor \vec{x}/\varepsilon\rfloor$ and $\vec{x}^\varepsilon=\varepsilon\zeta^\varepsilon$, then from \eqref{eq:taylor1}-\eqref{eq:ratesasipabep3} we get
		\begin{align*}
				\big[L^{\mathrm{ASIP}_\mathsf{L}}_{1-\varepsilon\sigma,\vec{k},\lambda/\varepsilon} f_\varepsilon\big](\zeta^\varepsilon)=\sum_{j=1}^{M-1} &\varepsilon \left(C^{\mathsf{L},-}_{j+1}(\zeta^\varepsilon)-C^{\mathsf{L},+}_j(\zeta^\varepsilon)\right) \left[\bigg(\frac{\partial }{\partial x_{j+1}}-\frac{\partial }{\partial x_{j}} \bigg) f_\varepsilon(\cdot/\varepsilon)\right](\vec{x}^\varepsilon) \\
			+\half &\varepsilon^2\left(C^{\mathsf{L},+}_j(\zeta^\varepsilon)+C^{\mathsf{L},-}_{j+1}(\zeta^\varepsilon)\right)\left[\bigg(\frac{\partial }{\partial x_{j+1}}-\frac{\partial }{\partial x_{j}} \bigg)^2  f_\varepsilon(\cdot/\varepsilon)\right](\vec{x}^\varepsilon) +  \mathcal{O}(\varepsilon) \\
				= \sum_{j=1}^{M-1} &B_j(\vec{x}^\varepsilon,\sigma,\vec{k},\lambda) \left[\bigg(\frac{\partial }{\partial x_{j+1}}-\frac{\partial }{\partial x_{j}} \bigg) f_\varepsilon(\cdot/\varepsilon)\right](\vec{x}^\varepsilon) \\
				+&A_j(\vec{x}^\varepsilon,\sigma,\lambda)\left[\bigg(\frac{\partial }{\partial x_{j+1}}-\frac{\partial }{\partial x_{j}} \bigg)^2  f_\varepsilon(\cdot/\varepsilon)\right](\vec{x}^\varepsilon) +  \mathcal{O}(\varepsilon).
		\end{align*}
		From this and the assumptions \eqref{eq:dynasipabepf}-\eqref{eq:dynasipabeppartial2f} the desired limit \eqref{eq:limlasiplabep} follows. Proving \eqref{eq:ratesasipabep1}, \eqref{eq:ratesasipabep2}, and \eqref{eq:ratesasipabep3} comes down to calculating the Taylor series of $C_j^{\mathsf{L},+}(\vec{x}/\varepsilon)$ and $C_j^{\mathsf{R},-}(\vec{x}/\varepsilon)$. For readability, the proof can be found in Appendix \ref{app:ratesdynasipdynabep}.
		\end{proof}
		\begin{remark}
			If one has a function $f\in C^3(X_c)$, one can find a candidate $f_\varepsilon$ by defining $f_\varepsilon(\vec{x})=f(\lfloor \vec{x}/\varepsilon\rfloor)$. However, more useful is the other way around. That is, if we have functions $f_\varepsilon$ for which the pointwise limit $$\lim\limits_{\varepsilon\to0}f_\varepsilon(\vec{x}/\varepsilon)=f(\vec{x})$$ exists, does \eqref{eq:limlasiplabep} hold? Using the Theorem above, it is sufficient to show uniform boundedness in $\varepsilon$ of all partial derivatives of order 3 and showing \eqref{eq:dynasipabeppartialf} and \eqref{eq:dynasipabeppartial2f}. This basically comes down to showing that taking the limit $\varepsilon\to 0$ and the derivative can be interchanged. In the next section, we will use this strategy to show duality between dynamic ABEP and SIP from the duality between dynamic ASIP and ASIP. 
		\end{remark}
		\subsection{Duality dynamic ABEP with SIP}
		Recall from Corollary \ref{cor:dualityLASIPASIP} that $\LASIPno$ and $\ASIPno$ are in duality with $P^{}_\mathsf{L}$ as duality functions,
		\[
			\Big[\genLASIP P^{}_\mathsf{L}(\eta,\cdot)\Big](\zeta)=\Big[\genASIP P^{}_\mathsf{L}(\cdot,\zeta)\Big](\eta).
		\]
		We want to use Theorem \ref{thm:ratesdynasipdynabep} to show that this results in a duality between $\LABEPno$ and $\SIPno$. That is, we want to replace $q$, $\lambda$ and $\zeta$ by $1-\varepsilon\sigma$, $\lambda/\varepsilon$ and $\vec{x}/\varepsilon$ respectively and then take the limit $\varepsilon\to 0$. This comes down to two steps. First, we have to find the limit 
		\[
			\lim\limits_{\varepsilon\to 0} P_\mathsf{L}^{}(\vec{x}/\varepsilon,\eta;\lambda/\varepsilon,\vec{k};1-\varepsilon\sigma).
		\]
		Secondly, we have to find the limit of the generators on both sides. The right-hand side is easy since this is just the limit $q\to 1$,
		\[
			\lim\limits_{\varepsilon\to 0} \Big[L^{\mathrm{ASIP}}_{1-\varepsilon\sigma,\vec{k}} f\Big](\eta) = \Big[L^\mathrm{SIP}_{\vec{k}} f\Big](\eta).
		\]
		For the left-hand side, we want to use Theorem \ref{thm:ratesdynasipdynabep}. For the first step, we have the following result.
		\begin{proposition} \label{prop:PLtoD}
			Let $\vec{x}\in X_c$ and $\eta\in X_d$, then we have
			\begin{align}
				\lim\limits_{\varepsilon\to 0} \left(\frac{1-(1-\varepsilon\sigma)^{-2}}{2\sigma}\right)^{|\eta|}P^{}_\mathsf{L}(\vec{x}/\varepsilon,\eta;\lambda/\varepsilon,\vec{k};1-\varepsilon\sigma) = D'(\vec{x},\eta),\label{eq:limitPLtoD}
			\end{align}
			where $D'\colon X_c \times X_d \to \R$ is given by
			\[
			D'(\vec{x},\eta) = \prod_{j=1}^{L} \frac{\Gamma(k)}{\Gamma(k+\eta_j)}\bigg(\frac{\cosh_\sigma(\lambda+2E_j^-(\vec{x}))-\cosh_\sigma(\lambda + 2 E^-_{j-1}(\vec{x}))}{\sigma}\bigg)^{\eta_j}.
			\] 
		\end{proposition}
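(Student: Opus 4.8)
The plan is to reduce the limit to a one-site computation and then analyse the terminating $_3\varphi_2$ term by term. Since
\[
P^{}_\mathsf{L}(\eta,\zeta)= q^{\half u(\eta,\vec{k})}\prod_{j=1}^M  p_{\mathrm{A}}^{}(\eta_j,\zeta_j;h^-_{j-1}(\zeta);k_j;q^{-1})
\]
is a product over sites, and both the renormalisation $\left(\frac{1-q^{-2}}{2\sigma}\right)^{|\eta|}=\prod_j\left(\frac{1-q^{-2}}{2\sigma}\right)^{\eta_j}$ and the prefactor $q^{\half u(\eta,\vec{k})}=\prod_j q^{\half\eta_j(k_j+2\sum_{i<j}k_i)}$ factorise over $j$, it suffices to treat each factor separately. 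With $q=1-\varepsilon\sigma$ the prefactor $q^{\half u(\eta,\vec k)}\to 1$, so it plays no role in the limit. Setting $\zeta=\vec{x}/\varepsilon$ with left boundary value $\lambda/\varepsilon$, the relevant height becomes $h^-_{j-1}(\vec{x}/\varepsilon)=\varepsilon^{-1}(\lambda+2E^-_{j-1}(\vec{x}))+\sum_{i<j}k_i$, so writing $a_j=\lambda+2E^-_{j-1}(\vec{x})$ one has $\varepsilon\, h^-_{j-1}\to a_j$ and $a_j+2x_j=\lambda+2E^-_j(\vec{x})$; this is the splitting that will produce the $\cosh_\sigma$-difference.

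First I would record the two elementary asymptotics driving everything: $(1-\varepsilon\sigma)^{1/\varepsilon}\to e^{-\sigma}$, and $1-(1-\varepsilon\sigma)^{b}=b\sigma\varepsilon+\mathcal{O}(\varepsilon^2)$ for fixed $b$. Writing $p^{}_{\mathrm A}$ in base $q^{-1}$ as the prefactor $q^{\half\eta_j(2h^-_{j-1}+k_j+1)}$ times a terminating series (the top parameter $q^{2\eta_j}$ terminates it at $m=\eta_j$), the first asymptotic gives the scaling limits of the large $q$-powers,
\[
q^{2\zeta_j}\to e^{-2\sigma x_j},\qquad q^{-2\zeta_j-2h^-_{j-1}-2k_j}\to e^{2\sigma(a_j+x_j)},\qquad q^{\eta_j h^-_{j-1}}\to e^{-\sigma\eta_j a_j},
\]
the last being the surviving part of the $p^{}_{\mathrm A}$-prefactor. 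The second asymptotic shows that $(q^{2\eta_j};q^{-2})_m$, $(q^{-2k_j};q^{-2})_m$ and $(q^{-2};q^{-2})_m$ are each of order $\varepsilon^m$, with explicit leading coefficients $(2\sigma\varepsilon)^m\eta_j!/(\eta_j-m)!$, $(-2\sigma\varepsilon)^m(k_j)_m$ and $(-2\sigma\varepsilon)^m m!$, while the two remaining shifted factorials tend to $(1-e^{-2\sigma x_j})^m$ and $(1-e^{2\sigma(a_j+x_j)})^m$.

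Next I would combine these inside the finite sum $\sum_{m=0}^{\eta_j}$. A power count shows the $m$-th term is of order $\varepsilon^{-m}$, so after multiplying by the renormalisation $\left(\frac{1-q^{-2}}{2\sigma}\right)^{\eta_j}\sim(-\varepsilon)^{\eta_j}$ only the top term $m=\eta_j$ survives, every $m<\eta_j$ term vanishing. Collecting the constants, the one-site limit equals
\[
\frac{(-1)^{\eta_j}}{(2\sigma)^{\eta_j}(k_j)_{\eta_j}}\Big(e^{-\sigma a_j}(1-e^{-2\sigma x_j})(1-e^{2\sigma(a_j+x_j)})\Big)^{\eta_j}.
\]
Applying the identity
\[
e^{-\sigma a_j}(1-e^{-2\sigma x_j})(1-e^{2\sigma(a_j+x_j)})=-2\big(\cosh_\sigma(a_j+2x_j)-\cosh_\sigma(a_j)\big),
\]
which is a direct expansion of the exponentials into $\cosh_\sigma$, the factors $(-1)^{\eta_j}(-2)^{\eta_j}=2^{\eta_j}$ cancel $(2\sigma)^{\eta_j}$ down to $\sigma^{\eta_j}$, and with $(k_j)_{\eta_j}=\Gamma(k_j+\eta_j)/\Gamma(k_j)$, $a_j=\lambda+2E^-_{j-1}$, $a_j+2x_j=\lambda+2E^-_j$ this becomes exactly
\[
\frac{\Gamma(k_j)}{\Gamma(k_j+\eta_j)}\left(\frac{\cosh_\sigma(\lambda+2E^-_j(\vec{x}))-\cosh_\sigma(\lambda+2E^-_{j-1}(\vec{x}))}{\sigma}\right)^{\eta_j}.
\]
Taking the product over $j$ yields $D'(\vec{x},\eta)$.

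The main obstacle is bookkeeping rather than conceptual: one must track the precise order in $\varepsilon$ of five $q$-shifted factorials simultaneously and verify that the powers of $\varepsilon$, of $2$, and of $(-1)$ cancel correctly, and in particular that the exponential $e^{-\sigma\eta_j a_j}$ coming from the $p^{}_{\mathrm A}$-prefactor is indispensable for assembling the $\cosh_\sigma$-difference. Interchanging limit and summation is automatic since the sum is finite, and replacing $\vec{x}/\varepsilon$ by $\lfloor\vec{x}/\varepsilon\rfloor$ alters each large $q$-power only by a factor $(1-\varepsilon\sigma)^{\mathcal{O}(1)}\to 1$, so the limit is unaffected.
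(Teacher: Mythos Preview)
Your proof is correct and follows essentially the same approach as the paper: reduce to a one-site computation via the product structure, then use the two elementary limits $(1-\varepsilon\sigma)^{a+b/\varepsilon}\to e^{-b\sigma}$ and $(1-(1-\varepsilon\sigma)^a)/(1-(1-\varepsilon\sigma)^b)\to a/b$ on the terminating ${}_3\varphi_2$. The paper compresses the analysis of the series into the phrase ``straightforward computation'', whereas you spell out the power count showing that only the top term $m=\eta_j$ survives and explicitly verify the $\cosh_\sigma$-identity; this extra detail is sound and the bookkeeping of $\varepsilon$-orders, signs, and the prefactor $e^{-\sigma\eta_j a_j}$ is accurate.
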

		\begin{proof}
			The factor
			\[
				\left(\frac{1-(1-\varepsilon\sigma)^{-2}}{2\sigma}\right)^{|\eta|}
			\]
			is present to make the limit convergent and have a similar form as the known duality function between ABEP and SIP \cite{CGRSsu11}. The 1-site duality function of $P^{}_\mathsf{L}$ is given by
			\[
				\begin{split}&\left(\frac{1-(1-\varepsilon\sigma)^{-2}}{2\sigma}\right)^n(1-\varepsilon\sigma)^{\frac{\lambda}{\varepsilon}+\half k +\half}\\
					&\hspace{3cm} \times  \sum_{j=0}^n \frac{((1-\varepsilon\sigma)^{2n},(1-\varepsilon\sigma)^{\frac{2x}{\varepsilon}},(1-\varepsilon\sigma)^{\frac{-2(\lambda+x)}{\varepsilon}-2k};(1-\varepsilon\sigma)^{-2})_j}{((1-\varepsilon\sigma)^{-2k},(1-\varepsilon\sigma)^{-2};(1-\varepsilon\sigma)^{-2})_j}(1-\varepsilon\sigma)^{-2j}.\end{split}
			\]
			Since
			\[
				\lim\limits_{\varepsilon\to0}(1-\varepsilon\sigma)^{a+b/\varepsilon} = e^{-b\sigma}
			\]
			and
			\[
				\lim\limits_{\varepsilon\to 0} \frac{1-(1-\varepsilon\sigma)^a}{1-(1-\varepsilon\sigma)^b}=\frac{a}{b},
			\]	
			the limit \eqref{eq:limitPLtoD} follows from a straightforward computation, using that
			\[
				(k)_n = \frac{\Gamma(k+n)}{\Gamma(k)}. \qedhere 
			\]
		\end{proof}
		\begin{remark}\*
			Multiplying $D'$ by $(-\exp(\sigma \lambda))^{|\eta|}$, letting $\lambda\to-\infty$ and reversing the order of sites, we recover a similar\footnote{Taking $k_j=k$ for all sites $j$ gives the same duality function.} duality function between ABEP and SIP as the one in from \cite[(5.6)]{CGRSsu11}, 
			\begin{align*}
				D^{\mathrm{AB}}_{\hspace{0.04cm}\mathrm{S}}(\vec{x},\eta) = \prod_{j=1}^{L} \frac{\Gamma(k)}{\Gamma(k+\eta_j)}\bigg(\frac{e^{-2\sigma E^+_{j-1}(\vec{x})}-e^{-2\sigma E^+_{j}(\vec{x})} }{2\sigma}\bigg)^{\eta_j}.
			\end{align*}
			Moreover, letting $\sigma\to0$ in the $D^{\mathrm{AB}}_{\hspace{0.04cm}\mathrm{S}}$ gives a similar duality function between BEP and SIP as in \cite{CGGRbep},
			\begin{align}
				D^{\mathrm{B}}_{\hspace{0.04cm}\mathrm{S}}(\vec{x},\eta) = \prod_{j=1}^{L} \frac{\Gamma(k_j)}{\Gamma(k_j+\eta_j)}x_j^{\eta_j}.\label{eq:dualityBEPSIP}
			\end{align}
		\end{remark}
		Above remark shows that $D'$ would fit in naturally in the hierarchy of duality functions. To make the claim that $D'$ is a duality function rigorous, we have to show that $P^{}_L(\vec{x}/\varepsilon,\eta)$ satisfies the assumptions from Theorem \ref{thm:ratesdynasipdynabep}, which will be done in the next theorem.
		\begin{theorem}\label{thm:dualityDynABPSIP}
			The function $D'(\vec{x},\eta)$ is a duality function between $\LABEPpar$ and SIP$(\vec{k})$,
			\[
			\big[\genLABEP D'(\cdot,\eta)\big](\vec{x}) = \big[L^\mathrm{SIP}_{\vec{k}} D'(\vec{x},\cdot)\big](\eta).
			\]
		\end{theorem}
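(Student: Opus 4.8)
The plan is to obtain the claimed duality as the $\varepsilon\to0$ diffusion limit of the duality from Corollary \ref{cor:dualityLASIPASIP}, using exactly the scaling regime of Theorem \ref{thm:ratesdynasipdynabep}. Concretely, in
\[
\Big[\genASIP P^{}_\mathsf{L}(\cdot,\zeta)\Big](\eta)=\Big[\genLASIP P^{}_\mathsf{L}(\eta,\cdot)\Big](\zeta)
\]
I would set $q=1-\varepsilon\sigma$, replace $\lambda$ by $\lambda/\varepsilon$, and substitute $\zeta=\lfloor\vec{x}/\varepsilon\rfloor$ for the dynamic ASIP variable, keeping $\eta$ as the discrete ASIP variable. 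Before taking the limit I would multiply the duality function by the factor $\big(\frac{1-(1-\varepsilon\sigma)^{-2}}{2\sigma}\big)^{|\eta|}$ from Proposition \ref{prop:PLtoD}; since this factor depends only on the conserved total particle number $|\eta|$, it passes through both $\genASIP$ and $\genLASIP$ by the first bullet of Remark \ref{rem:InvTotPart}, so the duality identity remains valid for the normalized function $\tilde P_\varepsilon:=\big(\frac{1-(1-\varepsilon\sigma)^{-2}}{2\sigma}\big)^{|\eta|}P^{}_\mathsf{L}$.

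The two sides then converge to the two sides of the asserted identity. On the ASIP side, $\genASIP$ acts only on the discrete variable $\eta$, so it is a finite sum over the $M-1$ bonds of $\tilde P_\varepsilon$ evaluated at the neighbouring states $\eta^{j,j+1},\eta^{j+1,j}$, with rates $c_j^\pm$ that tend to the SIP rates as $q=1-\varepsilon\sigma\to1$. By Proposition \ref{prop:PLtoD} each $\tilde P_\varepsilon(\eta',\lfloor\vec{x}/\varepsilon\rfloor)\to D'(\vec{x},\eta')$, and since the sum is finite the limit passes through term by term, giving $\big[L^{\mathrm{SIP}}_{\vec{k}}D'(\vec{x},\cdot)\big](\eta)$. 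On the dynamic ASIP side, $\genLASIP$ acts on the scaled variable, and I would apply Theorem \ref{thm:ratesdynasipdynabep} with $f_\varepsilon(\cdot):=\tilde P_\varepsilon(\eta,\cdot)$ --- extended to a smooth function of a continuous argument by reading the terminating $\rphisempty{3}{2}$ as a finite sum of powers $q^{\pm2(\cdot)}$ --- and the smooth limiting function $f:=D'(\cdot,\eta)$, yielding $\big[\genLABEP D'(\cdot,\eta)\big](\vec{x})$. Equating the two limits gives the theorem.

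The main obstacle is verifying the hypotheses of Theorem \ref{thm:ratesdynasipdynabep} for the choice $f_\varepsilon=\tilde P_\varepsilon(\eta,\cdot)$: the pointwise convergence \eqref{eq:dynasipabepf} is precisely Proposition \ref{prop:PLtoD}, but one must also check uniform-in-$\varepsilon$ boundedness of the third-order partial derivatives of $f_\varepsilon(\cdot/\varepsilon)$ near $\vec{x}$ together with the first- and second-order difference-operator convergences \eqref{eq:dynasipabeppartialf}--\eqref{eq:dynasipabeppartial2f}, i.e. that $\lim_{\varepsilon\to0}$ and the operators $\big(\partial_{x_j}-\partial_{x_{j+1}}\big)$ may be interchanged. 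This reduces to a controllable computation: after scaling, $f_\varepsilon(\vec{x}/\varepsilon)$ is a finite linear combination of terms $c(\varepsilon)\,(1-\varepsilon\sigma)^{(\text{affine in }\vec x)/\varepsilon}$, and since $\tfrac1\varepsilon\log(1-\varepsilon\sigma)\to-\sigma$ each such term converges, with all of its $\vec{x}$-derivatives, uniformly on compact neighbourhoods of $\vec{x}$ to an exponential $e^{-\sigma(\text{affine in }\vec x)}$. Hence the required derivatives converge to those of $D'(\cdot,\eta)$ and are uniformly bounded. I expect the only delicate point beyond bookkeeping to be this uniform control of the Taylor remainder terms, which is exactly the interchange-of-limit-and-derivative issue flagged in the remark following Theorem \ref{thm:ratesdynasipdynabep}.
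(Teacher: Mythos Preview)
Your proposal is correct and follows essentially the same approach as the paper: take the $\varepsilon\to0$ limit of the duality in Corollary~\ref{cor:dualityLASIPASIP} under the scaling $q=1-\varepsilon\sigma$, $\lambda\mapsto\lambda/\varepsilon$, $\zeta=\lfloor\vec{x}/\varepsilon\rfloor$, multiply by the normalizing factor from Proposition~\ref{prop:PLtoD}, and verify the hypotheses of Theorem~\ref{thm:ratesdynasipdynabep} by writing $f_\varepsilon(\vec{x}/\varepsilon)$ as a finite linear combination of terms $a(\varepsilon)\prod_j(1-\varepsilon\sigma)^{b_j(\varepsilon)x_j/\varepsilon}$ and using $\tfrac{1}{\varepsilon}\ln(1-\varepsilon\sigma)\to-\sigma$ to control all derivatives uniformly. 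The paper carries out exactly this computation, with the same structural observation about the exponential form of the terms and the same argument for interchanging limit and derivative.
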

		\begin{proof}
			Let
			\[
				f_\varepsilon(\vec{x})=\left(\frac{1-(1-\varepsilon\sigma)^{-2}}{2\sigma}\right)^{|\eta|}P_\mathsf{L}^{}(\vec{x}/\varepsilon,\eta;\lambda/\varepsilon,\vec{k};1-\varepsilon\sigma)
			\]
			and
			\[
				f(\vec{x})=D'(\vec{x},\eta).
			\]
			Then we have to show that $f_\varepsilon$ and $f$ satisfy the assumptions of Theorem \ref{thm:ratesdynasipdynabep}. Note that $f\in C^3(X_c)$. Now let $\varepsilon'>0$ be small enough such that $|\varepsilon'\sigma|<1$. Then $f_\varepsilon\in C^3(X_c)$ for $\varepsilon\in (0,\varepsilon')$. From Proposition \ref{prop:PLtoD} we obtain
			\[
				\lim\limits_{\varepsilon\to 0 } f_\varepsilon(\vec{x}/\varepsilon) = f(\vec{x}).
			\]
			Let us now show that taking a partial derivative of order $\leq 3$ and taking the limit $\varepsilon\to 0$ of $f_\varepsilon(\vec{x})$ can be interchanged. If that would be true, then the limit $\varepsilon\to 0$ of each partial derivative of order $\leq 3$ of $f_\varepsilon$ exists and
			\begin{align*}
				\lim\limits_{\varepsilon\to 0}\left[\bigg(\frac{\partial}{\partial x_j}-\frac{\partial}{\partial x_{j+1}}\bigg)f_\varepsilon(\cdot/\varepsilon)\right](\vec{x}) &= \left[\bigg(\frac{\partial}{\partial x_j}-\frac{\partial}{\partial x_{j+1}}\bigg)f\right](\vec{x}),\\
				\lim\limits_{\varepsilon\to 0}\left[\bigg(\frac{\partial}{\partial x_j}-\frac{\partial}{\partial x_{j+1}}\bigg)^2f_\varepsilon(\cdot/\varepsilon)\right](\vec{x}) &= \left[\bigg(\frac{\partial}{\partial x_j}-\frac{\partial}{\partial x_{j+1}}\bigg)^2f\right](\vec{x}).
			\end{align*}
			To show that we can interchange the limit of $\varepsilon\to0$ and taking a partial derivative, note that $f_\varepsilon$ is a finite sum of terms of the form
			\[
				a(\varepsilon)\prod_{j=1}^M (1-\varepsilon\sigma)^{b_j(\varepsilon) x_j/\varepsilon},
			\]
			where $a(\varepsilon)$ and $b_j(\varepsilon)$ are independent of all $x_i$ and whose limit $\varepsilon\to 0$ exist. A straightforward calculation shows that
			\begin{align*}
				\lim\limits_{\varepsilon\to0}\frac{\partial}{\partial x_j} (1-\varepsilon\sigma)^{b_j(\varepsilon) x_j/\varepsilon} &=  \lim\limits_{\varepsilon\to0} b_j(\varepsilon)(1-\varepsilon\sigma)^{b_j(\varepsilon)x_j/\varepsilon}\frac{\ln(1-\varepsilon\sigma)}{\varepsilon}\\
				&= -\sigma b_j(0)e^{-\sigma b_j(0)x_j}
			\end{align*}
			and
			\begin{align*}
				\frac{\partial}{\partial x_j}\lim\limits_{\varepsilon\to0} (1-\varepsilon\sigma)^{b_j(\varepsilon) x_j/\varepsilon} &= \frac{\partial}{\partial x_j} e^{-\sigma b_j(0)x_j}\\
				&= -\sigma b_j(0)e^{-\sigma b_j(0)x_j}.
			\end{align*}
			Therefore, the partial derivatives of $f_\varepsilon$ are also sums of terms of the form
			\[
				a(\varepsilon)\prod_{j=1}^M (1-\varepsilon\sigma)^{b_j(\varepsilon) x_j/\varepsilon}
			\]
			and taking the limit of $\varepsilon\to 0$ and a partial derivative can be interchanged.\\ \indent Lastly, we have to show that each partial derivative of $f_\varepsilon(\cdot/\varepsilon)$ is bounded in an $\varepsilon'$-neighborhood of $\vec{x}$, uniformly in $\varepsilon$. We have the estimate
			\[
				 (1-\varepsilon\sigma)^{b_j(\varepsilon) x_j/\varepsilon} \leq \max\{(1-\varepsilon|\sigma|)^{-|b_j(\varepsilon)|M_j/\varepsilon}, (1+\varepsilon|\sigma|)^{|b_j(\varepsilon)|M_j/\varepsilon}\}
			\]
			where $M_j=x_j+\varepsilon'$. Since
			\begin{align*}
				\lim\limits_{\varepsilon\to 0}(1-\varepsilon|\sigma|)^{-|b_j(\varepsilon)|M_j/\varepsilon} &= e^{|\sigma||b_j(0)|M_j},\\
				\lim\limits_{\varepsilon\to 0}(1+\varepsilon|\sigma|)^{|b_j(\varepsilon)|M_j/\varepsilon} &= e^{|\sigma||b_j(0)|M_j},
			\end{align*}
			each of the terms 
			\[
				a(\varepsilon)\prod_{j=1}^M (1-\varepsilon\sigma)^{b_j(\varepsilon) x_j/\varepsilon}
			\]
			is bounded in an $\varepsilon'$-neighborhood of $x$, uniformly in $\varepsilon$.
		\end{proof}
		\subsection{Transforming dynamic ABEP to BEP}
		Notice the similarity between the duality function $D'$ between $\LABEPno$ and SIP and the duality function $\DBS$ between BEP and SIP. If we define $g\colon X_c \to X_c$ by $g(\vec{x})=(g_j(\vec{x}))_{j=1}^N$, where each component is given by
		\begin{align*}
			g_j(\vec{x})= \frac{\cosh_\sigma(\lambda+2E_{j}^-(\vec{x}))-\cosh_\sigma(\lambda + 2E_{j-1}^-(\vec{x}))}{\sigma},
		\end{align*}
		we have the following relations between those two duality functions,
		\begin{align}
			\DBS(g(\vec{x}),\eta) = D'(\vec{x},\eta).\label{eq:LABEPBEPduality}
		\end{align}
		We can exploit this relation to show that $g$ is a non-local transformation which connects BEP and $\LABEPno$.
		\begin{theorem}\* 
			The function $g$ is a non-local transformation between $\BEP{\vec{k}}$ and $\LABEPpar$ in the following sense, for $f\in C^3(X_c)$ we have
			\begin{align}
				\big[L^\mathrm{BEP}_{\vec{k}}f\big](g(\vec{x}))= \big[\genLABEP(f\circ g)\big](\vec{x}).\label{eq:LABEPBEPtransformation}
			\end{align}
			Consequently, if $\mathcal{X}(t)$ is an instance of $\LABEPpar$ with $\mathcal{X}(0)=\vec{x}$. Then the process $\mathcal{Z}(t)$ defined by $\mathcal{Z}(t)=g(\mathcal{X}(t))$ is an instance of $\BEP{\vec{k}}$ with initial position $\mathcal{Z}(0)=g(\vec{x})$. 
		\end{theorem}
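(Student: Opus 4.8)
The plan is to deduce the intertwining relation \eqref{eq:LABEPBEPtransformation} by chaining the two Markov dualities that both feature SIP$(\vec{k})$ as the dual partner, and then to upgrade the resulting identity from a single explicit family of functions to all of $C^3(X_c)$ by a local jet-matching argument. I would first record the two inputs: by Theorem \ref{thm:dualityDynABPSIP} the function $D'$ is a duality function between $\LABEPpar$ and $\SIP{\vec{k}}$, and (as recalled in the remark preceding \eqref{eq:dualityBEPSIP}, following \cite{CGGRbep}) the function $\DBS$ is a duality function between $\BEP{\vec{k}}$ and $\SIP{\vec{k}}$. The bridge between them is the pointwise identity \eqref{eq:LABEPBEPduality}, $\DBS(g(\vec{x}),\eta)=D'(\vec{x},\eta)$, which says exactly that $D'(\cdot,\eta)=\DBS(\cdot,\eta)\circ g$ for every fixed $\eta$.

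Fixing $\eta\in X_d$ and writing $f_\eta=\DBS(\cdot,\eta)$, so that $f_\eta\circ g=D'(\cdot,\eta)$, I would apply the $\LABEPno$--SIP duality, then \eqref{eq:LABEPBEPduality}, then the BEP--SIP duality evaluated at the point $\vec{y}=g(\vec{x})$, to obtain
\begin{align*}
	\big[\genLABEP (f_\eta\circ g)\big](\vec{x})
	&=\big[\genLABEP D'(\cdot,\eta)\big](\vec{x})
	=\big[L^{\mathrm{SIP}}_{\vec{k}} D'(\vec{x},\cdot)\big](\eta)\\
	&=\big[L^{\mathrm{SIP}}_{\vec{k}} \DBS(g(\vec{x}),\cdot)\big](\eta)
	=\big[L^{\mathrm{BEP}}_{\vec{k}} \DBS(\cdot,\eta)\big](g(\vec{x}))
	=\big[L^{\mathrm{BEP}}_{\vec{k}} f_\eta\big](g(\vec{x})).
\end{align*}
This is \eqref{eq:LABEPBEPtransformation} for every $f$ of the form $\DBS(\cdot,\eta)$. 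Since $\DBS(\vec{y},\eta)=\prod_{j}\tfrac{\Gamma(k_j)}{\Gamma(k_j+\eta_j)}y_j^{\eta_j}$ is, up to a nonzero constant, the monomial $\prod_j y_j^{\eta_j}$, and $\eta$ ranges over all of $X_d=\Z_{\geq0}^M$, the identity \eqref{eq:LABEPBEPtransformation} holds for every monomial, and hence for every polynomial $f$ by linearity of both sides.

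To reach arbitrary $f\in C^3(X_c)$ I would use that, at a fixed $\vec{x}$, both sides of \eqref{eq:LABEPBEPtransformation} are linear functionals of the $2$-jet of $f$ at $\vec{y}_0:=g(\vec{x})$: the left-hand side because $L^{\mathrm{BEP}}_{\vec{k}}$ is a second-order differential operator, and the right-hand side because $\genLABEP$ is second order and, by the chain rule, the $2$-jet of $f\circ g$ at $\vec{x}$ is determined by the fixed smooth $2$-jet of $g$ at $\vec{x}$ together with the $2$-jet of $f$ at $\vec{y}_0$. Given $f$ and $\vec{x}$, I would choose a polynomial $p$ whose value, gradient and Hessian at $\vec{y}_0$ agree with those of $f$; then both sides of \eqref{eq:LABEPBEPtransformation} for $f$ coincide with the corresponding sides for $p$, which are equal by the polynomial case. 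This is precisely the step where the main obstacle lies: the duality chain only yields the identity for the countable family $\{\DBS(\cdot,\eta)\}$, and one must observe that both operators are genuinely local (depend only on the $2$-jet) in order to close the gap — this is where the regularity hypothesis $f\in C^3$ and the smoothness of $g$ (built from $\cosh_\sigma$ and linear functions, and mapping $X_c$ into $X_c$) are used.

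Finally, the process statement follows from the generator intertwining by the standard Dynkin/martingale-problem argument, as for ABEP in \cite{CGRSsu11}. Writing $\mathcal{Z}(t)=g(\mathcal{X}(t))$ and using \eqref{eq:LABEPBEPtransformation}, for $f$ in the domain of $L^{\mathrm{BEP}}_{\vec{k}}$ one has
\[
	\frac{d}{dt}\mathbb{E}\big[f(\mathcal{Z}(t))\big]
	=\mathbb{E}\big[\genLABEP (f\circ g)(\mathcal{X}(t))\big]
	=\mathbb{E}\big[\big(L^{\mathrm{BEP}}_{\vec{k}} f\big)(g(\mathcal{X}(t)))\big]
	=\mathbb{E}\big[\big(L^{\mathrm{BEP}}_{\vec{k}} f\big)(\mathcal{Z}(t))\big],
\]
so that $\mathcal{Z}(t)$ solves the martingale problem for $L^{\mathrm{BEP}}_{\vec{k}}$ started at $g(\vec{x})$, and is therefore an instance of $\BEP{\vec{k}}$ by well-posedness of that martingale problem.
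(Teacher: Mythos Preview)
Your proof is correct and follows essentially the same route as the paper: chain the $\LABEPno$--SIP and BEP--SIP dualities through the identity $\DBS(g(\vec{x}),\eta)=D'(\vec{x},\eta)$ to get \eqref{eq:LABEPBEPtransformation} for all polynomials, then pass to general $f\in C^3(X_c)$ by matching the $2$-jet of $f$ at $g(\vec{x})$ with a polynomial (the paper uses the second-order Taylor polynomial $T_2$, which is the same device). Your added martingale-problem justification of the process statement is a welcome elaboration of what the paper leaves implicit.
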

		\begin{proof}
		We will prove \eqref{eq:LABEPBEPtransformation} using the duality between SIP and $\LABEPno$ given in \eqref{eq:LABEPBEPduality}. Using the latter, we obtain
		\begin{align*}
			[\genLABEP \DBS(g(\cdot),\eta)](\vec{x})&= [\genLABEP D'(\cdot,\eta)](\vec{x}) \\
			&= [L^\mathrm{SIP}_{\vec{k}}D'(\vec{x},\cdot)](\eta)\\
			&=  [L^\mathrm{SIP}_{\vec{k}}\DBS(g(\vec{x}),\cdot)](\eta) \\
			&= [L^\mathrm{BEP}_{\vec{k}}\DBS(\cdot,\eta)](g(\vec{x})).
		\end{align*}
		Therefore, we obtain \eqref{eq:LABEPBEPtransformation} for all $f$ in the linear span of $\{\DBS(\cdot,\eta)\}_{\eta\in X_d}$, which are all polynomials since $\DBS(\vec{x},\eta)$ is just a constant times the monomial
		\[
			\prod_{j=1}^M x_j^{\eta_j}.
		\]
		Now let $f\in C^3(X_c)$ and $\vec{x}\in X_c$ be arbitrary. Let $T_2$ be the second order Taylor polynomial of $f$ around the point $g(\vec{x})$. Then $T_2$ and $f$ are equal in $g(\vec{x})$, as well as all their first and second order partial derivatives. Hence,
		\[
			\big[L^\mathrm{BEP}_{\vec{k}}(f-T_2)\big](g(\vec{x})) = 0 = \big[\genLABEP((f-T_2)\circ g)\big](\vec{x}).
		\] 
		Consequently, by linearity of the generators,
		\begin{align*}
			\big[L^\mathrm{BEP}_{\vec{k}}f\big](g(\vec{x}))&=\big[L^\mathrm{BEP}_{\vec{k}}(f-T_2\big)](g(\vec{x})) + \big[L^\mathrm{BEP}_{\vec{k}}T_2\big](g(\vec{x})) \\
			&= \big[\genLABEP((f-T_2)\circ g)\big](\vec{x}) +  \big[\genLABEP(T_2\circ g)\big](\vec{x}) \\
			&= \big[\genLABEP(f\circ g)\big](\vec{x}). \qedhere 
		\end{align*}
		\end{proof}
		\begin{remark}\*
			\begin{itemize}
				\item As said before, this transformation $g$ can be used to transfer many properties from BEP to $\LABEPno$ including dualities and reversibility. We will exploit this in the next sections.
				\item Since multiplying $D'$ by $\alpha^{|\eta|}$ also gives a duality function, multiplying each $g_j$ by $\alpha$ also gives a transformation between $\BEP{\vec{k}}$ and $\LABEPpar$. 
				\item This non-local transformation generalizes the one between ABEP and BEP given in \cite{CGRSsu11}. Indeed, multiplying each $g_j$ by $-\exp(\sigma\lambda)$, letting $\lambda\to-\infty$ and reversing the order of sites gives the desired transformation.
			\end{itemize}
		\end{remark}
		Later on, we need the following property of the function $g$.
		\begin{proposition}
			Let $\lambda \geq 0$, then the function $g:X_c\to X_c$ is bijective. The inverse $g^{-1}$ has components given by
			\begin{align*}
				g_j^{-1}(\vec{y})= \frac{\cosh^{-1}\bigg(\cosh(\sigma \lambda) + \sum_{i=1}^jy_i\bigg)  - \cosh^{-1}\bigg(\cosh(\sigma \lambda) + \sum_{i=1}^{j-1}y_i\bigg)}{2\sigma},
			\end{align*}
			where $\cosh^{-1}\colon[1,\infty)\to[0,\infty)$ is  the inverse of $\cosh\colon[0,\infty)\to [1,\infty)$.
		\end{proposition}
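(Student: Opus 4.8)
The plan is to reduce the claimed bijectivity of the multivariate map $g$ to the bijectivity of a single scalar function, exploiting the telescoping structure built into its definition. First I would change to partial-sum coordinates. Since every component $x_j\geq 0$, the map $\vec{x}\mapsto\big(E_1^-(\vec{x}),\ldots,E_M^-(\vec{x})\big)$ is a bijection from $X_c$ onto the cone $\mathcal{C}=\{(s_1,\ldots,s_M):0\leq s_1\leq\cdots\leq s_M\}$, with inverse $x_j=s_j-s_{j-1}$ (setting $s_0=0$); the analogous statement holds for the $\vec{y}$-coordinates. It therefore suffices to describe $g$ in these coordinates.

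Next I would compute the partial sums of $g$. Because consecutive terms cancel, one finds $\sum_{i=1}^j g_i(\vec{x})=\phi\big(E_j^-(\vec{x})\big)$, where $\phi(t)=\tfrac{1}{\sigma}\big(\cosh_\sigma(\lambda+2t)-\cosh_\sigma(\lambda)\big)$ and I used $E_0^-=0$. Thus, after telescoping, $g$ acts diagonally: it applies the same scalar function $\phi$ to each partial sum $E_j^-$. I would then show that $\phi\colon[0,\infty)\to[0,\infty)$ is a strictly increasing bijection. Taking $\sigma>0$ without loss of generality (the rates are invariant under $\sigma\to-\sigma$) and using $\lambda\geq 0$, the argument $\lambda+2t$ remains in $[0,\infty)$, where $\cosh$ is strictly increasing; hence $\phi$ is strictly increasing with $\phi(0)=0$ and $\phi(t)\to\infty$, so $\phi$ maps $[0,\infty)$ bijectively onto itself. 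Being increasing, $\phi$ preserves the ordering of the partial sums, so applying it coordinatewise is a bijection of $\mathcal{C}$ onto itself. Composing the three bijections (partial sums, coordinatewise $\phi$, inverse partial sums) shows that $g$ is a bijection of $X_c$.

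To obtain the explicit inverse, I would invert $\phi$ directly: solving $s=\phi(t)$ gives $\cosh_\sigma(\lambda+2t)=\cosh_\sigma(\lambda)+\sigma s$, and since $\lambda+2t\geq 0$ the branch $\cosh^{-1}\colon[1,\infty)\to[0,\infty)$ is the correct one, yielding $t=\tfrac12\big(\tfrac1\sigma\cosh^{-1}(\cosh_\sigma(\lambda)+\sigma s)-\lambda\big)$. Writing $s_j=\sum_{i=1}^j y_i$, the inverse partial-sum map gives $g_j^{-1}(\vec{y})=t_j-t_{j-1}$, in which the additive $\lambda$-terms cancel, leaving the claimed componentwise formula for $g^{-1}$.

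The main obstacle is not the computation but verifying that the reconstructed coordinates stay nonnegative, i.e.\ that $g$ both maps into $X_c$ and covers all of it; this is exactly where the monotonicity of $\phi$ and the sign hypothesis $\lambda\geq 0$ are essential. If $\lambda<0$ were allowed, the argument $\lambda+2t$ could enter the region where $\cosh$ is decreasing, $\phi$ would fail to be injective, and the choice of branch of $\cosh^{-1}$ would break down, which is consistent with the restriction $\lambda\geq 0$ in the statement.
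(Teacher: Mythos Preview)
Your proposal is correct and follows essentially the same approach as the paper: both telescope the partial sums of $g$ to reduce the problem to inverting a single scalar function of $E_j^-$, then take differences to recover the components. Your framing via the bijection $X_c\leftrightarrow\mathcal{C}$ and the map $\phi$ is a bit more conceptual and, in particular, makes explicit why $g$ lands in $X_c$ and is surjective (via monotonicity of $\phi$), whereas the paper simply computes a candidate inverse and checks both compositions; but the underlying idea is the same.
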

		\begin{proof}
			By telescoping, we have
			\[
				\sigma \sum_{i=1}^j g_i(\vec{x}) = \cosh\big(\sigma\lambda + 2\sigma E_j^-(\vec{x})\big) - \cosh(\sigma\lambda).
			\]
			Since $\cosh\colon[0,\infty)\to [1,\infty)$ is bijective, we can apply $\cosh^{-1}\colon[1,\infty)\to [0,\infty)$ to obtain
			\[
				2\sigma E_j^-(\vec{x}) = \cosh^{-1}\bigg(\cosh(\sigma\lambda)+\sigma \sum_{i=1}^j g_i(\vec{x}) \bigg)-\sigma\lambda.
			\]
			Thus
			\begin{align*}
				x_j =&  E_j^-(\vec{x}) - E_{j-1}^-(\vec{x}) \\
				=& \frac{\cosh^{-1}\big( \cosh(\sigma\lambda) + \sigma\sum_{i=1}^j g_i(\vec{x}) \big)-\cosh^{-1}\big(\cosh(\sigma\lambda)+\sigma \sum_{i=1}^{j-1} g_i(\vec{x})\big)}{2\sigma}
			\end{align*}
			and $g^{-1} (g(\vec{x}))=\vec{x}$. A straightforward computation shows that $g(g^{-1}(\vec{y}))=\vec{y}$ as well. 
		\end{proof}
		\subsection{Reversibility dynamic ABEP}
		Since $\BEP{\vec{k}}$ is reversible with measure $\mu^{\mathsf{BEP}}$ from \eqref{eq:BEPrev}, we can use the function $g$ that transforms $\LABEP{\sigma,\vec{k},\lambda}$ into $\BEP{\vec{k}}$ to show that $\LABEP{\sigma,\vec{k},\lambda}$ is reversible as well. 
		\begin{theorem}
			Let $\lambda \geq 0$, then $\LABEP{\sigma,\vec{k},\lambda}$ has a family of reversible measures given by
			\begin{align*}
				\mu^{}_\mathsf{L}(\vec{x},\sigma,\vec{k},\lambda) = \prod_{j=1}^M \sinh_\sigma\big(\lambda+2E_j^-(\vec{x})\big) \frac{\Big( \cosh_\sigma\big(\lambda + 2 E_j^-(\vec{x})\big)-\cosh_\sigma\big(\lambda + 2 E_{j-1}^-(\vec{x})\big)\Big)^{k_j-1}}{\sigma^{k_j-1}\Gamma(k_j)}.
			\end{align*}
		\end{theorem}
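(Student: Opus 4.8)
The plan is to transfer reversibility from $\BEP{\vec{k}}$ to $\LABEP{\sigma,\vec{k},\lambda}$ through the non-local transformation $g$, exactly as $g$ was used to transfer the generator identity \eqref{eq:LABEPBEPtransformation}. The guiding principle is that $g$ should intertwine the two processes as a \emph{unitary} change of variables between the relevant $L^2$-spaces, and reversibility (symmetry of the generator) is preserved under such a conjugation. Accordingly, I would first identify $\mu_\mathsf{L}$ as the pullback of $\mu^{\mathrm{BEP}}$ under $g$, corrected by the Jacobian of $g$:
\[
\mu_\mathsf{L}(\vec{x}) \;\propto\; \mu^{\mathrm{BEP}}(g(\vec{x}))\,\big|\det Dg(\vec{x})\big|.
\]

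The next step is to compute this explicitly. Since $g_j$ depends only on $E_{j-1}^-$ and $E_j^-$, one has $\partial g_j/\partial x_m = 0$ for $m>j$, so $Dg$ is lower triangular and $\det Dg$ is the product of its diagonal entries $\partial g_j/\partial x_j = 2\sinh_\sigma(\lambda + 2E_j^-(\vec{x}))$. This yields $\det Dg(\vec{x}) = 2^M\prod_{j=1}^M \sinh_\sigma(\lambda+2E_j^-(\vec{x}))$, which is positive for $\lambda\ge 0$ and $\sigma>0$. Substituting $\mu^{\mathrm{BEP}}(g(\vec{x}))=\prod_j g_j(\vec{x})^{k_j-1}e^{-g_j(\vec{x})}/\Gamma(k_j)$ reproduces the factor $\big(\cosh_\sigma(\lambda+2E_j^-)-\cosh_\sigma(\lambda+2E_{j-1}^-)\big)^{k_j-1}/(\sigma^{k_j-1}\Gamma(k_j))$; the remaining product $\prod_j e^{-g_j(\vec{x})}=e^{-\sigma^{-1}(\cosh_\sigma(\lambda+2|\vec{x}|)-\cosh_\sigma(\lambda))}$ depends only on the total energy $|\vec{x}|$ by the telescoping identity used in the previous proposition, and the constant $2^M$ is harmless. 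By the remark after Definition \ref{def:dynabep}, both of these may be dropped without affecting reversibility, giving exactly the stated $\mu_\mathsf{L}$.

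Having pinned down $\mu_\mathsf{L}$, the core of the argument is to show that it is reversible. I would introduce the map $U\colon F\mapsto F\circ g$ and verify, via the change of variables $\vec{y}=g(\vec{x})$ together with the Jacobian computation above, that $U$ is a unitary isomorphism from $L^2(\mu^{\mathrm{BEP}})$ onto $L^2(\mu_\mathsf{L})$; here the bijectivity of $g$ for $\lambda\ge 0$ (previous proposition) is essential. The transformation identity \eqref{eq:LABEPBEPtransformation} reads $\genLABEP\circ U = U\circ \genBEP$. Combining the intertwining with unitarity gives, for $F,H$ in a suitable dense domain,
\[
\big\langle \genLABEP UF,\,UH\big\rangle_{\mu_\mathsf{L}} = \big\langle \genBEP F,\,H\big\rangle_{\mu^{\mathrm{BEP}}} = \big\langle F,\,\genBEP H\big\rangle_{\mu^{\mathrm{BEP}}} = \big\langle UF,\,\genLABEP UH\big\rangle_{\mu_\mathsf{L}},
\]
where the middle equality is the reversibility of $\BEP{\vec{k}}$ with respect to $\mu^{\mathrm{BEP}}$ from \eqref{eq:BEPrev}. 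Since $U$ is onto, $\genLABEP$ is symmetric on $L^2(\mu_\mathsf{L})$, i.e.\ $\mu_\mathsf{L}$ is reversible.

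The main obstacle I anticipate is not the algebra but the functional-analytic bookkeeping: making precise the dense domain of test functions on which the symmetry is checked and confirming that the unbounded diffusion operators generate no boundary contributions after the change of variables (the generator degenerates as $x_j\to 0$, matching the vanishing of $\sinh_\sigma$ there, which is what keeps $g$ a homeomorphism of $X_c$). Once one grants that reversibility of $\BEP{\vec{k}}$ is witnessed by symmetry on such a domain, the isometry $U$ carries this domain over and the computation is immediate; the only genuinely new input is the Jacobian determinant, whose triangular structure makes it a one-line calculation.
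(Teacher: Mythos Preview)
Your proposal is correct and follows essentially the same approach as the paper: pull back the reversible measure of $\BEP{\vec{k}}$ through the bijection $g$, compute the Jacobian as the product of diagonal entries of a lower-triangular matrix, and use the intertwining identity \eqref{eq:LABEPBEPtransformation} together with the change of variables $\vec{y}=g(\vec{x})$ to transfer symmetry of $\genBEP$ to symmetry of $\genLABEP$. The paper phrases the last step concretely by starting from the BEP symmetry for compactly supported $C^2$ functions and substituting $F_i=f_i\circ g^{-1}$, rather than in the unitary-conjugation language you use, but the content is identical, including the observation that the factor $\prod_j e^{-g_j(\vec{x})}$ depends only on $|\vec{x}|$ and can therefore be dropped.
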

		\begin{proof}
			We will show $\mu^{}_\mathsf{L}$ is a reversible measure by showing that the generator of $\LABEP{\sigma,\vec{k},\lambda}$ is self-adjoint with respect to $\mu^{}_\mathsf{L}$. That is, for any $f_1,f_2\in C^2(X_c)$ with compact support we have
			\begin{align}
				\int_{X_c} \Big[\genLABEP f_1\Big](\vec{x}) f_2(\vec{x}) \mu^{}_\mathsf{L}(\vec{x})\mathrm{d}\vec{x} = \int_{X_c} f_1(\vec{x}) \Big[\genLABEP f_2\Big](\vec{x}) \mu^{}_\mathsf{L}(\vec{x})\mathrm{d}\vec{x}.\label{eq:genLABEPselfadjoint}
			\end{align}
			Since $\mu^{\mathrm{BEP}}$ is a reversible measure for $\BEP{\vec{k}}$, we have for any $F_1,F_2\in C^2(X_c)$ with compact support,
			\begin{align*}
				\int_{X_c} \Big[\genBEP F_1\Big](\vec{y}) F_2(\vec{y}) \mathrm{d}\mu^{\mathrm{BEP}}(\vec{y}) \mathrm{d}\vec{y} = \int_{X_c} F_1(\vec{y}) \Big[\genBEP F_2\Big](\vec{y})\mu^{\mathrm{BEP}}(\vec{y}) \mathrm{d}\vec{y}.
			\end{align*}
			Doing a change of variables $\vec{y}=g(\vec{x})$ gives
			\begin{align}
				\begin{split}\int_{X_c} \Big[\genBEP F_1\Big](g(\vec{x})) &F_2(g(\vec{x})) \mu^{\mathrm{BEP}}(g(\vec{x}))|J_g(\vec{x})|\mathrm{d}\vec{x} \\
				&= \int_{X_c} F_1(g(\vec{x})) \Big[\genBEP F_2\Big](g(\vec{x})) \mu^{\mathrm{BEP}}(g(\vec{x}))|J_g(\vec{x})|\mathrm{d}\vec{x},\end{split}\label{eq:genBEPselfadjoint}
			\end{align}
			where $J_g(\vec{x})$ is the Jacobian of $g$ whose entries are given by
			\begin{align*}
				\big(J_g(\vec{x})\big)_{ij} =\bigg[\frac{\partial}{\partial x_j}g_i\bigg] (\vec{x}) = \begin{cases}
					0 &\text{ if } j > i,\\
					2 \sinh_\sigma\big(\lambda + 2 E_i\big) &\text{ if } j=i,\\
					 2 \sinh_\sigma\big(\lambda + 2 E_i\big) - 2 \sinh_\sigma\big( \lambda + 2E_{i-1}\big) &\text{ if } j<i,
				\end{cases}
			\end{align*}
			and $|J_g(\vec{x})|$ is the absolute value of the determinant of $J_g(\vec{x})$. Since $J_g(\vec{x})$ is a lower triangular matrix,
			\begin{align*}
				|J_g(\vec{x})| = \prod_{j=1}^M 2 \sinh_\sigma\big(\lambda + 2 E_j^-\big).
			\end{align*}
			Using \eqref{eq:LABEPBEPtransformation}, we get that \eqref{eq:genBEPselfadjoint} is equivalent to
			\begin{align*}
				\begin{split}\int_{X_c} \Big[\genLABEP \big(F_1\circ g\big)\Big](\vec{x}) &\big(F_2\circ g\big)(\vec{x}) \mu^{\mathrm{BEP}}(g(\vec{x}))|J_g(\vec{x})|\mathrm{d}\vec{x} \\
					&= \int_{X_c} \big(F_1 \circ g\big)(\vec{x}) \Big[\genLABEP \big(F_2\circ g\big)\Big](\vec{x}) \mu^{\mathrm{BEP}}(g(\vec{x}))|J_g(\vec{x})|\mathrm{d}\vec{x},\end{split}
			\end{align*}
			Since $g$ is invertible, we can take $F_1=f_1 \circ g^{-1}$ and $F_2=f_2\circ g^{-1}$. Therefore, $\genLABEP$ is self-adjoint with respect to the measure 
			\begin{align*}
				\mu^{\mathrm{BEP}}\big(g(\vec{x})\big)|J_g(\vec{x})|\mathrm{d}\vec{x} =& \prod_{j=1}^M 2\sinh_\sigma\big(\lambda+2 E_j^-(\vec{x})\big) \frac{g_j(\vec{x})^{k_j-1}}{\Gamma(k_j)}e^{-g_j(\vec{x})}\mathrm{d}x_j \\
				=&c\prod_{j=1}^M \sinh_\sigma\big(\lambda+2 E_j^-(\vec{x})\big) \frac{g_j(\vec{x})^{k_j-1}}{\Gamma(k_j)}\mathrm{d}x_j,
			\end{align*}
			where
			\begin{align*}
				c=2^Me^{\big(\cosh_\sigma(\lambda)-\cosh_\sigma(\lambda + 2 E_N(\vec{x}))\big)/\sigma}
			\end{align*}
			is a factor depending on $|\vec{x}|$, thus proving \eqref{eq:genLABEPselfadjoint}. 
		\end{proof}
		
	\subsection{Dualities of Dynamic ABEP}
		Let us give an overview of all the dualities related to BEP, ABEP and dynamic (A)BEP. We have already proven that dynamic ABEP is dual to SIP and used this to show that there is non-local transformation $g$ which transforms the BEP into dynamic ABEP. By taking an appropriate limit, we can also show that there is non-local transformation which transforms the BEP into dynamic BEP Therefore, any duality between BEP and another process can be lifted to duality between ABEP or dynamic (A)BEP and that process. Thus, we only have to look at dualities of BEP. We know that BEP is dual to itself \cite{ReSau2018} and to SIP \cite{GKR}. The duality between BEP and SIP can be generalized to a duality between BEP and dynamic SIP.
		\begin{proposition}
			Let
			\[
				P_n^{(\alpha,\beta)}(x)=\frac{(\alpha+1)_n}{n!}\rFs{2}{1}{-n,\alpha+\beta+n+1}{\alpha+1}{\frac{1-x}{2}}
			\]
			be the Jacobi polynomials. Then $\BEP{\vec{k}}$ is dual to $\RSIP{\vec{k},\rho}$ with duality function $\hat{P}_\mathrm{J}^v(\vec{y},\xi)$ given by
			\begin{align*}
				\hat{P}_{\mathrm{J}}^v(\vec{y},\xi;\vec{k})=&\prod_{j=1}^M(v+E_{j-1}^-(\vec{y}))^{\xi_j} \frac{\xi_j!}{(k_j)_{\xi_j}}P_{\xi_j}^{(k_j-1,h_{j+1}^+(\xi))}\bigg(1+\frac{2y_j}{v+E_{j-1}^-(\vec{y})}\bigg) \\ 
				=&\prod_{j=1}^M(v+E_{j-1}^-(\vec{y}))^{\xi_j} \rFs{2}{1}{-\xi_j,\xi_j+h_{j+1}^+(\xi)+k_j}{k_j}{\frac{-y_j}{v+E_{j-1}^-(\vec{y})}}.
			\end{align*}
		\end{proposition}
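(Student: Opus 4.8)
The plan is to derive this duality by the same two-step mechanism that produced $\DBS$ from $D'$: first obtain a duality between \emph{dynamic} BEP, the process $\LBEP{\vec{k},\lambda}$, and $\RSIP{\vec{k},\rho}$ as a diffusion limit of the Wilson-polynomial self-duality of dynamic SIP, and then push that duality through the non-local transformation between BEP and dynamic BEP (the $\sigma\to0$ specialization of the map $g$). Concretely, I would start from the duality \eqref{eq:dualityequationdynamicSIP} between $\LSIP{\vec{k},\lambda}$ and $\RSIP{\vec{k},\rho}$ with the Wilson duality function $\hat{P}^v_{\mathrm{W}}$, and take the scaling limit only in the \emph{left} coordinate, leaving the discrete variable $\xi$ untouched.

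First I would apply the diffusion-limit machinery of Theorem \ref{thm:ratesdynasipdynabep}, specialized to $q=1$ (i.e. $\sigma\to0$, so that dynamic ABEP becomes dynamic BEP), to the left generator: with $\zeta=\lfloor\vec{x}/\varepsilon\rfloor$ and $\lambda$ replaced by $\lambda/\varepsilon$, the operator $\genLSIP$ converges to $L^{\mathrm{BEP}_\mathsf{L}}_{\vec{k},\lambda}$, while $\genRSIP$ is left unchanged since the $\xi$-variable is not scaled. In parallel I would compute, exactly as in Proposition \ref{prop:PLtoD}, the limit of the duality function: multiplying $\hat{P}^v_{\mathrm{W}}(\lfloor\vec{x}/\varepsilon\rfloor,\xi)$ by a suitable power of $\varepsilon$ depending only on the preserved quantity $|\xi|$, the rescaled heights satisfy $\varepsilon\, h^-_{j-1}(\zeta)\to\lambda+2E^-_{j-1}(\vec{x})$, and the Wilson $\rFsempty{4}{3}$ confluences to a Jacobi $\rFsempty{2}{1}$. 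This produces a candidate duality function $\tilde{P}^v(\vec{x},\xi)$ between $\LBEP{\vec{k},\lambda}$ and $\RSIP{\vec{k},\rho}$. To upgrade the pointwise limit into a genuine duality relation I would reuse the argument of Theorem \ref{thm:dualityDynABPSIP}, checking uniform boundedness of the third-order partial derivatives so that limit, sum and differentiation may be interchanged; the right-hand side needs only pointwise convergence because $\genRSIP$ is a finite nearest-neighbour sum.

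The second step is to recognize that the limiting function factors through the transformation. By the telescoping definition of $g_0$, whose components are built from $\cosh_\sigma(\lambda+2E^-_j)-\cosh_\sigma(\lambda+2E^-_{j-1})$ in the $\sigma\to0$ limit, one checks $\tilde{P}^v(\vec{x},\xi)=\hat{P}^v_{\mathrm{J}}(g_0(\vec{x}),\xi)$, in precise analogy with $D'(\vec{x},\eta)=\DBS(g(\vec{x}),\eta)$ from \eqref{eq:LABEPBEPduality}: the partial energies $E^-_{j-1}$ and single coordinates $x_j$ of the dynamic-BEP variable reassemble into $E^-_{j-1}(\vec{y})$ and $y_j$ for the BEP variable $\vec{y}=g_0(\vec{x})$, and the prefactors collect into $(v+E^-_{j-1}(\vec{y}))^{\xi_j}$. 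Then, using the $\sigma\to0$ form of the transformation identity \eqref{eq:LABEPBEPtransformation}, $[\genBEP f](g_0(\vec{x}))=[L^{\mathrm{BEP}_\mathsf{L}}_{\vec{k},\lambda}(f\circ g_0)](\vec{x})$, together with surjectivity of $g_0$, the duality transfers to BEP by the same chain of equalities as in the transformation theorem: for $f=\hat{P}^v_{\mathrm{J}}(\cdot,\xi)$ one gets $[\genBEP \hat{P}^v_{\mathrm{J}}(\cdot,\xi)](g_0(\vec{x}))=[L^{\mathrm{BEP}_\mathsf{L}}_{\vec{k},\lambda}\tilde{P}^v(\cdot,\xi)](\vec{x})=[\genRSIP \tilde{P}^v(\vec{x},\cdot)](\xi)=[\genRSIP \hat{P}^v_{\mathrm{J}}(g_0(\vec{x}),\cdot)](\xi)$, which is the asserted duality since $g_0$ is onto.

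I expect the main obstacle to be the explicit confluence Wilson $\to$ Jacobi in the duality function: keeping track of the two denominator parameters $\tfrac12\alpha\pm v$ and of the growing parameters $-\zeta_j,\ \zeta_j+h^-_{j-1}+k_j$ as $\zeta_j\to x_j/\varepsilon$, so that the limit collapses to $\rFs{2}{1}{-\xi_j,\xi_j+h^+_{j+1}(\xi)+k_j}{k_j}{-y_j/(v+E^-_{j-1}(\vec{y}))}$ with argument and prefactor matching $\hat{P}^v_{\mathrm{J}}$ after the telescoping through $g_0$. The analytic interchange (uniform $\varepsilon$-bounds on derivatives) is routine once modelled on Theorem \ref{thm:dualityDynABPSIP}. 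As a consistency check, sending $\rho\to\pm\infty$ together with the appropriate limit in the free parameter $v$ should collapse $\hat{P}^v_{\mathrm{J}}$ to the Brownian-energy/SIP duality function $\DBS$ of \eqref{eq:dualityBEPSIP}, placing the new duality correctly at the top of the BEP hierarchy.
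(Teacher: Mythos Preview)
Your route is genuinely different from the paper's and, while conceptually appealing as a direct parallel to the dynamic-ABEP story, is more involved than necessary. The paper does \emph{not} pass through dynamic BEP or any transformation $g_0$. Instead it takes the two limits in the opposite order: first it lets $\lambda\to\infty$ in the Wilson duality \eqref{eq:dualityequationdynamicSIP} (replacing $v$ by $v-\tfrac12\lambda$ and dividing by $\lambda^{|\zeta|}$), which collapses the ${}_4F_3$ to a ${}_3F_2$ and already yields a duality between plain $\SIP{\vec{k}}$ and $\RSIP{\vec{k},\rho}$; only \emph{then} does it take the diffusion limit $\zeta=\vec y/\varepsilon$, $v\to v/\varepsilon$ on the now non-dynamic left side, confluence ${}_3F_2\to{}_2F_1$ producing $\hat P^v_{\mathrm J}$ directly, and finishes by invoking the argument of Theorem~\ref{thm:dualityDynABPSIP}. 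This bypasses both the dynamic-BEP intermediate process and the factoring step $\tilde P=\hat P^{v'}_{\mathrm J}\circ g_0$ that you would still need to verify.

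There is also a concrete gap in your sketch. The Pochhammer prefactor $(\tfrac12\alpha-v)_{\zeta_j}$ in $\hat p^{}_{\mathrm W}$ has \emph{both} base $\tfrac12\alpha\sim(\lambda+2E^-_{j-1})/(2\varepsilon)$ and index $\zeta_j\sim x_j/\varepsilon$ diverging, so by Stirling its logarithm is of order $1/\varepsilon$; a ``power of $\varepsilon$ depending only on $|\xi|$'' cannot normalize this. The product over $j$ does telescope at leading order (because $\tfrac12\alpha_j+\zeta_j$ matches the leading part of $\tfrac12\alpha_{j+1}$), so the required normalization can be made to depend only on $|\vec x|$ and $\lambda$ and is hence admissible, but this observation is essential and missing. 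Moreover, the free parameter $v$ does not survive your limit as written: without rescaling it is subleading to $\tfrac12\alpha$, so after the transformation the role of the free parameter is taken over by $\lambda$ (one finds the match at $v'=\lambda^2/2$, up to the scaling chosen for $g_0$), not by your original $v$. The paper's order of limits sidesteps both issues: the $\lambda^{|\zeta|}$ normalization in the first step is polynomial because $\zeta$ is still fixed there, and the rescaling $v\to v/\varepsilon$ in the second step keeps the free parameter alive.
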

		\begin{proof}
			We start by obtaining a duality function between SIP and dynamic SIP from the duality function $\hat{P}^v_\mathrm{W}$ between $\LSIP{\vec{k},\lambda}$ and $\RSIP{\vec{k},\rho}$ given in \eqref{eq:dualityequationdynamicSIP}. By replacing $v$ by $v-\half\lambda$, dividing by $\lambda^{|\zeta|}$ and letting $\lambda\to\infty$ we obtain
			\begin{align*}
				\lim\limits_{\lambda\to\infty} \lambda^{-|\zeta|}\hat{P}^{v-\half \lambda}_{\mathrm{W}}(\zeta,\xi) =& \lim\limits_{\lambda\to\infty} \prod_{j=1}^M \lambda^{-\zeta_j} p^{}_{\mathrm{W}}(\zeta_j,\xi_j;h_{j-1}^-(\zeta),h_{j+1}^+(\xi);v-\tfrac12\lambda,\vec{k})\\
				=&\prod_{j=1}^M (\tfrac12\beta+v)_{\xi_j} \rFs{3}{2}{-\zeta_j,-\xi_j,\xi_j+h_{j+1}^+(\xi)+k_j}{k_j,\tfrac12\beta+v}{1},
			\end{align*} 
			where 
			\[
				\beta=h^-_{j-1,0}(\zeta)+h^+_{j+1}(\xi)+k_j+1.
			\]
			Let us now do the diffusion limit of SIP to BEP: take $\vec{y}\in X_c$ and define $\zeta^\varepsilon=\vec{y}/\varepsilon$. If we then replace $v$ by $v/\varepsilon$, multiply above duality function between SIP and dynamic SIP by $\varepsilon^{|\xi|}$ and take the limit $\varepsilon\to 0$ we obtain
			\begin{align*}
				\prod_{j=1}^{M} (v+E_{j-1}^-(\vec{y}))^{\xi_j} \rFs{2}{1}{-\xi_j,\xi_j+h_{j+1}^+(\xi)+k_j}{k_j}{\frac{-y_j}{v+E_{j-1}^-(\vec{y})}}.
			\end{align*}		
			The proposition is now proven similarly as Theorem \ref{thm:dualityDynABPSIP}.	
		\end{proof}
		\begin{remark}
			We can relate this duality function to known duality functions as follows. If we replace $v$ by $-\rho v$, divide the duality function by $\rho^{|\xi|}$, let $\rho\to\infty$ and divide the resulting duality function by $(-v)^{|\xi|}$, we obtain Laguerre polynomials which were found to be duality functions between SIP and BEP (\cite{FrGi2019,Gr2019,ReSau2018}). In our case, we have an extra parameter $v$ and the parameter $k_j$ may vary per site,
			\begin{align}
				\prod_{j=1}^M \rFs{1}{1}{-\xi_j}{k_j}{\frac{y_j}{v}}. \label{eq:LaguerreDuality}
			\end{align}
			If we multiply \eqref{eq:LaguerreDuality} by $(-v)^{|\xi|}$ and let $v\to\infty$ we obtain the classical duality function between SIP and BEP from \cite{GiKuReVa}, where the parameter $k_j$ may now vary per site,
			\begin{align*}
				\prod_{j=1}^M \frac{y_j^{\xi_j}}{(k_j)_{\xi_j}}.
			\end{align*}
			If we replace $v$ by $v/\varepsilon$ and $\xi_j$ by $x_j/\varepsilon$ in \eqref{eq:LaguerreDuality} and let $\varepsilon\to 0$, we obtain Bessel functions of the first kind as self-duality functions for BEP,
			\begin{align*}
				\prod_{j=1}^M \rFs{0}{1}{-}{k_j}{-\frac{x_jy_j}{v}},
			\end{align*}
			similar to the ones found in \cite{Gr2019} and \cite{ReSau2018}.
		\end{remark}
\section{The quantum algebra $\U_q(\mathfrak{su}(1,1))$ and Al-Salam--Chihara polynomials}\label{sec:Uq11}
In this section, we will state the necessary properties regarding the quantum algebra $\U_q(\mathfrak{su}(1,1))$ and the Al-Salam--Chihara polynomials required for 
\begin{itemize}
	\item constructing the generator of dynamic ASIP,
	\item proving duality between ASIP and dynamic ASIP with Al-Salam--Chihara polynomials as duality functions (Theorem \ref{thm:ASCduality}),
	\item proving duality between the left and right version of dynamic ASIP with the Askey-Wilson polynomials as duality functions (Theorem \ref{thm:AWduality}).
\end{itemize}  
We will first introduce $\U_q(\mathfrak{su}(1,1))$ and give a representation of this algebra which has close connections with ASIP and dynamic ASIP. Afterwards, we will connect this algebra with the Al-Salam--Chihara polynomials $p_\mathrm{A}^{}(n,x)$. 
\subsection{The algebra $\U_q(\mathfrak{su}(1,1))$}
We introduce $\mathcal U_q := \U_q\big(\su(1,1)\big)$, the quantized universal enveloping algebra of the Lie algebra $\mathfrak{su}(1,1)$. This is the unital, associative, complex algebra generated by $K$, $K^{-1}$, $E$, and $F$, subject to the relations
\begin{align}
	K K^{-1} = 1 = K^{-1}K, \quad KE = qEK, \quad KF= q^{-1}FK, \quad EF-FE =\frac{K^2-K^{-2}}{q-q^{-1}}.\label{eq:UqRelations}
\end{align}
The Casimir element
\begin{equation} \label{eq:Casimir}
	\Om = \frac{q^{-1} K^2 +qK^{-2}-2}{(q^{-1}-q)^2} + EF= \frac{q^{-1}K^{-2}+qK^2-2}{(q^{-1}-q)^2} +FE
\end{equation}
is a central element of $\U_q$, i.e. $\Omega X=X\Omega$ for all $X\in\U_q$.
The $*$-structure on $\U_q$ is an anti-linear involution defined on the generators by
\[
K^*=K, \quad E^*=-F, \quad F^* = -E, \quad (K^{-1})^* = K^{-1}.
\]
Note that the Casimir element is self-adjoint in $\U_q$, i.e.~$\Om^*=\Om$.\\ 

\noindent The comultiplication $\De:\U_q\to\U_q\otimes\U_q$ is a $*$-algebra homomorphism defined on the generators by
\begin{equation} \label{eq:comult}
	\begin{split}
		\De(K) = K \tensor K,&\quad  \De(E)= K \tensor E + E \tensor K^{-1}, \\
		\De(K^{-1}) = K^{-1} \tensor K^{-1},&\quad  \De(F) = K \tensor F + F \tensor K^{-1}.
	\end{split}
\end{equation}
The self-adjoint element $\Delta(\Omega)$ will be the generator of our Markov processes. It follows from \eqref{eq:Casimir} and \eqref{eq:comult} that
\begin{equation} \label{eq:De(OM)}
	\begin{split}
		\De(\Om) = &\frac{1}{(q^{-1}-q)^{2}} \big[ q( K^2 \tensor K^2 )+ q^{-1} (K^{-2} \tensor K^{-2}) -2 (1\tensor 1) \big] \\
		&+ K^2 \tensor FE +KE\tensor FK^{-1} + FK \tensor K^{-1}E + FE \tensor K^{-2}.
	\end{split}
\end{equation}
Another important element in $\U_q$ is the twisted primitive element\footnote{Note the subtle difference with $Y_\rho$ from \cite{GroeneveltWagenaarDyn}. Because of the different $*$-structure on $\U_q(\mathfrak{su}(1,1))$ compared to $\U_q(\mathfrak{su}(2))$, there is a minus sign in front of $FK$ to make sure $Y_\rho$ is self-adjoint. Moreover, the constant in front of the factor $K^2-1$ is slightly different. This is due to the form of the spectrum we will encounter in the representation of $\U_q(\mathfrak{su}(1,1))$ of $Y_\rho$.}  $Y_\rho$, defined by
\[
Y_\rho = q^{\frac12} EK - q^{-\frac12}FK + \mu_\rho(K^2-1), \quad \rho\in\R,
\]
where 
\[
	\mu_\rho = \frac{q^\rho+q^{-\rho}}{q^{-1}-q}.
\]
Then $Y_\rho^*=Y_\rho$ and it satisfies the co-ideal property,
\begin{align}
	\De(Y_\rho) = K^2 \tensor Y_\rho + Y_\rho \tensor 1.\label{eq:coproductYrho}
\end{align}
In Lie algebras, the comultiplication of an element $X$ is defined by $\Delta(X)=1\otimes X + X\otimes 1$. Note that $Y_\rho$ almost satisfies this. The $K^2$ in above equation will cause the asymmetry of the process.

\subsection{A representation of $\U_q$ related to $\ASIP{q,\vec{k}}$} In \cite{CGRSsu11} it was shown that the generator of $\ASIP{q,\vec{k}}$ can be realized by $\Delta(\Omega)$ when $k_j=k$ for all sites $j$. Non-surprisingly, this is still true when the $k_j$ may vary per site, which we will show here. Define $H_j$ to be the Hilbert space of functions $f \colon \Z_{\geq 0}\to\C$ with inner product induced by the orthogonality measure \eqref{eq:1siterevASIP},
\[
\langle f,g \rangle_{H_j} = \sum_{n=0}^{\infty} f(n) \overline{g(n)} w(n;q,k_j)u_j(\vec{k})^{-2n},
\]
where we recall that $w(n;q,k_j)$ is given by
\begin{align}
	w(n;q,k)=q^{-n(k-1)}\frac{(q^{2k};q^2)_n}{(q^2;q^2)_n}=w(\eta;q^{-1},k)
\end{align} 
and the factor
\begin{align}
	u_j(\vec{k})= q^{\half k_j - \sum_{l=1}^{j} k_j}
\end{align}
is present to prevent ground-state transformations later which is done in \cite{CGRSsu11}. Note that 
\[
	\prod_{j=1}^{M}w(\eta_j;q,k_j)u_j(\vec{k})^{-2\eta_j} = W(\eta;q,\vec{k}),
\] 
where $W$ is the reversible measure of ASIP given in \eqref{eq:revmeasASIP}. Let $H$ be the Hilbert space of functions on $X_d=Z_{\geq 0}^M$ with inner product given by
\begin{align*}
	\langle f,g\rangle_H = \sum_\eta f(\eta)\overline{g(\eta)}W(\eta;q,\vec{k}),
\end{align*} 
Since $H$ is isomorphic to the closure of the algebraic $M$-fold tensor product of $H_j$,
\begin{align}
	H\simeq\overline{H_1 \otimes H_2 \otimes  \ldots \otimes H_M},
\end{align}
We can identify functions in $$H_1\otimes H_2\otimes\ldots \otimes H_M$$ as functions on $X_d=Z_{\geq 0}^M$. 
Let $F_j$ be the subspace of $H_j$ consisting of functions with compact support. Then we define the (unbounded) $*$-representation $\pi_j$ of $\U_q$ on $H_j$ with $F_j$ as dense domain by
\begin{equation} \label{eq:representation}
	\begin{split}
		[\pi_j(K)f](n ) &= q^{n+\frac12 k_j} f(n), \\
		[\pi_j(E)f](n) &= u_j(\vec{k})[n]_q f(n-1), \\
		[\pi_j(F) f](n) & = -\frac{[n+k_j]_q}{u_j(\vec{k})}  f(n+1),\\
		[\pi_j(K^{-1}) f](n) &= q^{-n-\frac12 k_j}f(n).
	\end{split}
\end{equation}
One can easily verify that this is a $*$-representation, i.e. 
\[
	\langle\pi_j(X)f,g\rangle_{H_j}=\langle f,\pi_j(X^*)g\rangle_{H_j}
\]
for all $X\in \U_q$ and $f,g\in F_j$, by checking this for the generators $K,K^{-1},E$ and $F$.\\

\noindent Denote by $\pitensor$ the tensor product representation of $\pi_j$ and $\pi_{j+1}$,
\[
\pitensor(X\otimes Y)= \pi_j(X)\otimes \pi_{j+1}(Y),\qquad X,Y\in \U_q.
\] 
A direct calculation shows that the representation $\pitensor$ of $-\Delta(\Omega)$ is the generator of $\ASIP{q,\vec{k}}$ for sites $j$ and $j+1$ minus some constant, i.e.
\begin{align}
	\begin{split}[\pitensor (-\Delta(\Omega))f](\eta) =& c^+_j [f(\eta^{j,j+1})-f(\eta)] + c^-_{j+1} [f(\eta^{j+1,j})-f(\eta)] \\
		&- \big[\tfrac12(k_j+k_{j+1}-1)\big]_q^2f(\eta).\end{split}\label{eq:ASIPCasimir}
\end{align}
where
\begin{align*}
	c_j^+ &= q^{n_j+k_j-n_{j+1}-1}[n_j]_q[k_{j+1}+n_{j+1}]_q,\\
	c_{j}^-&= q^{-(n_{j}+k_{j}-n_{j-1}-1)}[n_{j}]_q[k_{j-1}+n_{j-1}]_q.
\end{align*}
Therefore, if we add the constant and sum over $j$, we get the generator of $\ASIP{q,\vec{k}}$,
\[
\genASIP= \sum_{j=1}^{M-1} \pitensor \big( \big[\tfrac12(k_j+k_{j+1}-1)\big]_q^2 -\Delta(\Omega)\big).
\]
$\genASIP$ is symmetric on functions with finite support with respect to the measure $W$, since the $\pi_j$ are $*$-representations, $\De$ is a $*$-homomorphism and $\Omega^*=\Omega$. Therefore, $W$ is a reversible measure for $\ASIPno$. \\

\noindent In the representation $\pi_j$, the Al-Salam--Chihara polynomials $p_\mathrm{A}^{}(\cdot,x)$ are eigenfunctions of $Y_\rho$ (see e.g. \cite{Groeneveltquantumaskey}),
\begin{align}
	[\pi_j(Y_\rho)p_\mathrm{A}^{}(\cdot,x)](n)= (\mu_{2x+k+\rho}-\mu_\rho)  p_{\mathrm{A}}^{}(n,x).\label{eq:ASCeigenfunction}
\end{align}
This can be proven by using the 3-term recurrence relation for the Al-Salam--Chihara polynomials (see \cite[(14.8.4)]{KLS}) and matching this with the explicit action of $\pi_j(Y_\rho)$,
\begin{align*}
	[\pi_j(Y_\rho)f](n) = u_j(\vec{k})q^{\half k_j+n-\half}[n]_qf(n-1)+\mu_\rho(q^{k+2n}-1)f(n)-q^{\half k_j +n +\half}\frac{[k+n]_q}{u_j(\vec{k})}f(n+1).
\end{align*}
\section{Algebraic construction of dynamic ASIP}\label{sec:dynasipalgebra}
	\subsection{Constructing the generator}
	The generator of dynamic ASIP is found in the same way as the generator of generalized dynamic ASEP in \cite{GroeneveltWagenaarDyn}: we transfer the action of $\pitensor(\De(\Om))$ from the $\eta$-variable to the $\xi$-variable, where we now use the Al-Salam--Chihara polynomials instead of the $q$-Krawtchouk polynomials. We will proceed in the following three steps. 
	\begin{enumerate}[label=(\arabic*)]
		\item In their usual action, we can let the operators $\pitensor(\Delta(Y_{h^{+}_{j+2}(\xi)}))$ and $\pitensor(\De(K^{-2}))$ act on the $\eta$ variable of the duality function $P^{}_{\mathsf{R}}(\eta,\xi)$ defined in Theorem \ref{thm:ASCduality}, which is a nested product of Al-Salam--Chihara polynomials. We will show that we can transfer these actions to be exclusively depending on the $\xi$ variable. This is the content of Lemma \ref{lem:etatoxi}. Conceptually, this is similar to the three-term recurrence relation of orthogonal polynomials $\{p_n(x)\}_{n=0}^\infty$,
		\[
			xp_n(x)=a_n p_{n-1}(x)+b_n p_n(x) + c_n p_{n+1}(x).
		\]
		Here, an action in the $x$-variable is transferred to an action only depending on the $n$-variable.	
		\item Then we show that $\Omega$ can be written in terms of a polynomial of degree 3 in $Y_\rho$ and $K^{-2}$, i.e. the latter two elements are `building blocks' for the Casimir $\Omega$. Consequently, $\Delta(\Omega)$ can be written in terms of $\Delta(Y_\rho)$ and $\De(K^{-2})$. 
		\item In the last step, we explicitly compute the action of $\pitensor(\Delta(\Omega))$ on $P^{}_{\mathsf{R}}(\eta,\xi)$ in the $\xi$ variable by combining the previous two steps. This will give the generator on sites $j$ and $j+1$ of $\RASIP{q,\vec{k},\rho}$, which is summarized in Theorem \ref{Thm:dynamicASIP}.
	\end{enumerate} 
	For step (1), we will show that we can transfer the $\eta$-dependent actions 
	\begin{align*}
		[\pitensor(\De(Y_{h^{+}_{k+2}(\xi)}))P^{}_{\mathsf{R}}(\cdot,\xi)](\eta)\qquad \text{and}\qquad [\pitensor(\De(K^{-2}))P^{}_{\mathsf{R}}(\cdot,\xi)](\eta)
	\end{align*} 
	to the $\xi$ variable. For the first, we show that $P_\mathsf{R}^{}(\cdot,\xi)$ are eigenfunctions of $\De(Y_{h^{+}_{k+2}(\xi)})$, which follows from the univariate case \eqref{eq:ASCeigenfunction} and the structure \eqref{eq:coproductYrho} of the twisted primitive element $Y_\rho$ with respect to comultiplication. For the second, we show that $P_\mathsf{R}^{}(\eta,\xi)$ satisfies a $9$-term recurrence relation, which follows from $q$-difference equations of the univariate Al-Salam--Chihara polynomials $p_\mathrm{A}^{}(n,x)$.
	\begin{lemma}\label{lem:etatoxi}
		The operator $\pitensor\Big(\De\Big(Y_{h^{+}_{k+2}(\xi)}\Big)\Big)$ acts as a multiplication operator on $P^{}_{\mathsf{R}}(\eta,\xi)$,
		\begin{align}
			\Big[\pitensor\Big(\De\Big(Y_{h^{+}_{k+2}(\xi)}\Big)\Big)P^{}_{\mathsf{R}}(\cdot,\xi)\Big](\eta)=&\ \big(\mu_{h^{+}_{j}(\xi)}-  \mu_{h^{+}_{j+2}(\xi)}\big)P^{}_{\mathsf{R}}(\eta,\xi).\label{eq:lemetatoxi1}
		\end{align}
		The operator $\pitensor\big(\De(K^{-2})\big)$ is a 9-term operator for $P^{}_{\mathsf{R}}(\eta,\xi)$ in the $\xi$-variable,
		\begin{align} 
			\big[\pitensor\big(\De\big(K^{-2}\big)\big)P^{}_{\mathsf{R}}(\cdot,\xi)\big](\eta) =\sum_{i_1,i_2=-1}^1  C_j(i_1,i_2) P^{}_{\mathsf{R}}(\eta,\xi+i_1\varepsilon_j + i_2 \varepsilon_{j+1}), \label{eq:lemetatoxi2}
		\end{align}
		where $\epsilon_j$ is the standard unit vector in $\R^M$ with 1 as $j$-th element and all $C_j(i_1,i_2)$ can be explicitly computed and do not depend on $\eta$. The terms relevant for this paper are given by
		\begin{align}
			&C_{j}(-1,1)= \alpha_q(h^+_{j}(\xi)) C_{j}^{\mathsf{R},+}(\xi),\label{eq:9termrates1} \\
			&C_j(1,-1)=\alpha_q(h_{j}^+(\xi)) C_{j+1}^{\mathsf{R},-}(\xi),\label{eq:9termrates2}
		\end{align}
		where
		\begin{align*}
			&\alpha_q(\rho)=  -\frac{q^2(q+q^{-1})(q-q^{-1})^2}{(1-q^{2-2\rho})(1-q^{2\rho+2})}
		\end{align*}
		and $C_{j}^{\mathsf{R},+}$, $C_{j+1}^{\mathsf{R},-}$ are the rates from $\RASIP{q,\vec{N},\rho}$ given in \ref{eq:ratesDynASIPR}.
	\end{lemma}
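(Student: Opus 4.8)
The first identity \eqref{eq:lemetatoxi1} I would obtain from the coideal property \eqref{eq:coproductYrho} together with the univariate eigenvalue equation \eqref{eq:ASCeigenfunction}. Writing $\rho=h^+_{j+2}(\xi)$ and applying $\pitensor$ to \eqref{eq:coproductYrho} gives $\pitensor(\De(Y_\rho))=\pi_j(K^2)\tensor\pi_{j+1}(Y_\rho)+\pi_j(Y_\rho)\tensor 1$. On the site-$(j+1)$ factor $p_\mathrm{A}(\cdot,\xi_{j+1};h^+_{j+2};k_{j+1})$ the twist parameter already equals $\rho$, so \eqref{eq:ASCeigenfunction} applies verbatim and produces the eigenvalue $\mu_{h^+_{j+1}}-\mu_{h^+_{j+2}}$ once we use $h^+_{j+1}=h^+_{j+2}+2\xi_{j+1}+k_{j+1}$. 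On the site-$j$ factor the twist parameter is instead $h^+_{j+1}$, so I would first rewrite $Y_\rho=Y_{h^+_{j+1}}+(\mu_\rho-\mu_{h^+_{j+1}})(K^2-1)$, which is immediate since $Y_\rho$ depends on $\rho$ only through $\mu_\rho$. The point is then that the two $\eta_j$-dependent contributions---one from $\pi_j(K^2)$ in the first summand, one from the $(K^2-1)$ correction in the second---carry opposite coefficients $\mu_{h^+_{j+1}}-\mu_{h^+_{j+2}}$ and $\mu_{h^+_{j+2}}-\mu_{h^+_{j+1}}$ and cancel, leaving exactly the constant $\mu_{h^+_j}-\mu_{h^+_{j+2}}$. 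This cancellation is the whole reason the operator is indexed by $h^+_{j+2}$ rather than by a single-site twist.

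For \eqref{eq:lemetatoxi2} I would use that $K^{-2}$ is grouplike, so $\De(K^{-2})=K^{-2}\tensor K^{-2}$ and hence $\pitensor(\De(K^{-2}))$ acts on the $\eta$-variable simply as multiplication by the scalar $q^{-2\eta_j-k_j}q^{-2\eta_{j+1}-k_{j+1}}$. The task is then to move the two factors $q^{-2\eta_j}$ and $q^{-2\eta_{j+1}}$ onto the $\xi$-variable. For this I would invoke the $q$-difference equations of the univariate polynomials $p_\mathrm{A}(n,x;\rho;k)$, i.e.\ the relations expressing $q^{-2n}p_\mathrm{A}(n,x;\rho;k)$ as a three-term combination of $p_\mathrm{A}(n,x+1;\cdot;k)$, $p_\mathrm{A}(n,x;\cdot;k)$ and $p_\mathrm{A}(n,x-1;\cdot;k)$; these are the dual form of the three-term recurrence and follow from the self-dual $\rphisempty{3}{2}$ structure of $p_\mathrm{A}$ recorded after \eqref{eq:1siteASCq}. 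Applying such a relation in the variable $\xi_j$ (the twist $h^+_{j+1}$ being independent of $\xi_j$) and another in the variable $\xi_{j+1}$, and multiplying, yields $3\times 3=9$ terms.

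The step that requires real care---and what I expect to be the main obstacle---is the nested dependence of the height function: since $h^+_{j+1}(\xi)=h^+_{j+2}(\xi)+2\xi_{j+1}+k_{j+1}$ depends on $\xi_{j+1}$, a shift $\xi_{j+1}\to\xi_{j+1}+i_2$ must be accompanied by a shift of $2i_2$ in the twist parameter of the site-$j$ polynomial in order to recombine the nine terms as genuine shifts $P^{}_\mathsf{R}(\eta,\xi+i_1\varepsilon_j+i_2\varepsilon_{j+1})$ of the duality function. I would therefore take the $\xi_{j+1}$-difference equation in the parameter-shifting form that reproduces exactly this coupled shift, so that each of the nine contributions becomes a scalar multiple of $P^{}_\mathsf{R}(\eta,\xi+i_1\varepsilon_j+i_2\varepsilon_{j+1})$. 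Once everything is written in this shifted basis, the coefficients $C_j(i_1,i_2)$ emerge as explicit functions of $\xi$ with no residual $\eta$-dependence; verifying this $\eta$-independence is the substantive check, after which I would simplify the two off-diagonal coefficients and match them with \eqref{eq:9termrates1}--\eqref{eq:9termrates2}, recognising the factor $\alpha_q(h^+_j)$ times the rates $C^{\mathsf{R},+}_j$ and $C^{\mathsf{R},-}_{j+1}$ of \eqref{eq:ratesDynASIPR}. The remaining seven coefficients are obtained by the same bookkeeping but are not needed in the sequel.
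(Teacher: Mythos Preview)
Your proposal is correct and follows essentially the same route as the paper. For \eqref{eq:lemetatoxi1} the paper cites \cite[Proposition~5.5]{Groeneveltquantumaskey} but then explicitly mentions, as an alternative, precisely the argument you give: the coideal property of $Y_\rho$ combined with the one-site eigenvalue equation \eqref{eq:ASCeigenfunction}, the details being deferred to Appendix~D of \cite{GroeneveltWagenaarDyn}. Your observation $Y_\rho=Y_{h^+_{j+1}}+(\mu_\rho-\mu_{h^+_{j+1}})(K^2-1)$ and the resulting cancellation of the $q^{2\eta_j+k_j}$-contributions is exactly the computation that underlies that appendix.

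For \eqref{eq:lemetatoxi2} the paper does not redo the derivation but imports \cite[Proposition~5.10]{Groeneveltquantumaskey} and then reads off the two relevant coefficients from the explicit functions $A_k$ and $D_k$ of \cite[Lemma~3.5 and Lemma~5.9]{Groeneveltquantumaskey}. What you sketch---the grouplike action $\De(K^{-2})=K^{-2}\otimes K^{-2}$ followed by one-site $q$-difference relations transferring $q^{-2\eta_j}$ and $q^{-2\eta_{j+1}}$ to shifts in $\xi_j,\xi_{j+1}$---is exactly the content of those cited results. You correctly single out the only delicate point, namely that the twist parameter $h^+_{j+1}$ of the site-$j$ polynomial depends on $\xi_{j+1}$, so that one of the two univariate relations must be taken in the form that simultaneously shifts $x$ and $\rho$; this is precisely the $D_k$-type relation of \cite[Lemma~5.9]{Groeneveltquantumaskey}. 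The only difference is presentational: the paper outsources the bookkeeping to the reference and then verifies \eqref{eq:9termrates1}--\eqref{eq:9termrates2} by a direct substitution, whereas you describe the mechanism from first principles.
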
 
	\begin{proof}
		Both \eqref{eq:lemetatoxi1} and \eqref{eq:lemetatoxi2} can be found in \cite{Groeneveltquantumaskey}, but with different notation and normalization. Let us first reduce to two sites by taking $M=2$, since the the general case follows easily from this. Then
		\begin{align*}
			P_\mathsf{R}^{}(\eta,\xi)&=p_\mathrm{A}^{}(\eta_1,\xi_1;h_2^+(\xi))p_\mathrm{A}^{}(\eta_2,\xi_2;h_3^+(\xi))\\
			&=p_\mathrm{A}^{}(\eta_1,\xi_1;\rho+2\xi_2+k_2)p_\mathrm{A}^{}(\eta_2,\xi_2;\rho) .
		\end{align*} 
		For \eqref{eq:lemetatoxi1}, the first equation of \cite[Proposition 5.5]{Groeneveltquantumaskey} with $N=j=2$, $u=1$ and $(q^{h_1^+(\xi)},q^{h_2^+(\xi)})$ instead of $(x_1,x_2)$ reads
		\[
			[\pitensortwo(\De(Y_{\rho}))P^{}_{\mathsf{R}}(\cdot,\xi)](\eta)= \big(\mu_{h_1^+(\xi)}-  \mu_{\rho}\big)P^{}_{\mathsf{R}}(\eta,\xi),
		\]
		which proves \eqref{eq:lemetatoxi1} for $M=2$. Alternatively, the result also follows from the fact that the 1-site duality function $p^{}_{\mathrm{A}}$ is an eigenfunction of $\pi_j(Y_\rho)$ and the structure of $Y_\rho$ with respect to the coproduct $\Delta$, see Appendix D of \cite{GroeneveltWagenaarDyn} for more details.\\
		
		\noindent From \cite[Proposition 5.10]{Groeneveltquantumaskey} we obtain \eqref{eq:lemetatoxi2} by taking 
		\[
			N=j=2, u=1
		\]
		 and variables $(q^{h_1^+(\xi)},q^{h_2^+(\xi)},q^\rho)$ instead of $(x_1,x_2,x_3)$. The terms that are relevant in this paper are the terms where $\xi\to\xi^{j,j+1}$ and $\xi\to\xi^{j+1,j}$. Since $h_1^+(\xi)$ does not change if a particle moves from site $1$ to $2$ or vice versa, we have to look at the terms in \cite{Groeneveltquantumaskey} corresponding to $(q^{2\nu_1}q^{h_1^+(\xi)},q^{\nu_2}q^{h_2^+(\xi)})$ where $\nu_1=0$ and $\nu_2=\pm1$. Let us explicitly calculate 
		\begin{align*}
			C_{1}(-1,1)= A_{k_2}\big(q^{h_2^+(\xi)},q^\rho\big)D_{k_1}\big(q^{h_1^+(\xi)},q^{h_2^+(\xi)}\big),
		\end{align*}
		where $A_{k_2}$ and $C_{k_1}$ can be found in \cite[Lemma 3.5]{Groeneveltquantumaskey} and \cite[Lemma 5.9]{Groeneveltquantumaskey} respectively. This gives
		\begin{align*}
			C_{1}(-1,1)&= \frac{q^{-k_2}(1-q^{2h_2^+(\xi)-2x_2})(1-q^{2x_2+2k_2})}{(1-q^{2h_2^+(\xi)})(1-q^{2h_2^+(\xi)+2})} \frac{q^{3-k_1}(q+q^{-1})(1-q^{2h_1^+(\xi)-2x_1})(1-q^{-2x_1})}{(1-q^{2h_1^+(\xi)+2})(1-q^{2-2h_1^+(\xi)})} \\
			&= -\frac{q^2(q+q^{-1})(q-q^{-1})^2}{(1-q^{2h_1^+(\xi)+2})(1-q^{2-2h_1^+(\xi)})}\frac{[h_2^+(\xi)-x_2]_q[x_2+k_2]_q[h_1^+(\xi)-x_1]_q[x_1]_q}{[h_2^+(\xi)]_q[h_2^+(\xi)+1]_q}\\
			&= -\frac{q^2(q+q^{-1})(q-q^{-1})^2}{(1-q^{2h_1^+(\xi)+2})(1-q^{2-2h_1^+(\xi)})} C_{j}^{\mathsf{R},+}(\xi).
		\end{align*}
		Similarly we obtain
		\begin{align*}
				C_{1}(1,-1)= -\frac{q^2(q+q^{-1})(q-q^{-1})^2}{(1-q^{2h_1^+(\xi)+2})(1-q^{2-2h_1^+(\xi)})} C_{j+1}^{\mathsf{R},-}(\xi).
		\end{align*}
		To obtain the general case, note that in the above steps we could pick the parameter $\rho$ freely and $\pitensor$ acts on $\eta_j,\eta_{j+1}$, while leaving $\eta_1,...,\eta_{j-1},\eta_{j+2},...,\eta_M$ invariant. Moreover, for $m\not\in\{j,j+1\}$ we have
		\[
			h_m^+\big(\xi^{j,j+1}\big)=h_m^+\big(\xi\big).
		\] 
		Therefore, we can inductively work from right to left to obtain \eqref{eq:lemetatoxi1} where we have to adjust $\rho$ every step: we have to add $2\xi_{j+1}+k_{j+1}$ to $\rho$ when going from sites $(j,j+1)$ to $(j-1,j)$. We do this for $j=M-1$ down to $j=1$. This exactly agrees with our definition of the height function $h_j^+$, see the proof of \cite[Lemma 7.1]{GroeneveltWagenaarDyn} for more details.
	\end{proof}
	Let us now turn to step (2): writing $\Omega$ as a degree 3 polynomial in $Y_\rho$ and $K^{-2}$. One can prove that 
	\begin{align}
		\Omega = -\frac{f(Y_\rho+\mu_\rho,K^{-2})}{(q+q^{-1})(q-q^{-1})^2}+ \frac{(q+q^{-1})K^{-2}}{(q-q^{-1})^2} +\mu_\rho\frac{Y_\rho+\mu_\rho}{q+q^{-1}} -\frac{ 2}{(q-q^{-1})^2} ,\label{eq:CasimirinYrhoK-2}
	\end{align}
	where $f \colon \U_q\times \U_q \to\U_q$ is the function given by
	\begin{align}
		f(A,B)=(q^2+q^{-2})ABA - A^2B-BA^2.
	\end{align}
	This identity in $\U_q$ can be shown by either a direct calculation using the commutation relations \eqref{eq:UqRelations}, or by observing that \eqref{eq:CasimirinYrhoK-2} is actually a relation in the degenerate version of the Askey-Wilson algebra $\text{AW}(3)$ generated by $Y_\rho$ and $K^{-2}$, see e.g. \cite{GranZhed} or \cite[Theorem 2.2]{GroeneveltWagenaar}\footnote{Note that in \cite{GroeneveltWagenaar}, the Casimir differs from $\Om$ by a scaling factor and an additive constant.}. Now, taking the coproduct on both sides we obtain
	\begin{align}
		\De(\Omega) = -\frac{f(\De(Y_\rho)+\mu_\rho,\De(K^{-2}))}{(q+q^{-1})(q-q^{-1})^2}+ \frac{(q+q^{-1})\De(K^{-2})}{(q-q^{-1})^2} + \mu_\rho\frac{\De(Y_\rho)+\mu_\rho}{q+q^{-1}}  -\frac{ 2}{(q-q^{-1})^2}  ,\label{eq:DeCasimirinYrhoK-2}
	\end{align}
	completing step (2). Note that we can pick our parameter $\rho$ freely, in particular we can take $\rho=h^{+}_{j+2}(\xi)$ for all $j=1,...,M-1$. \\
	\\
	Now we have all the ingredients to transfer the action of $\pitensor(\Delta(\Omega))$ on $P^{}_\mathsf{R}$ from the $\eta$-variable to the $\xi$ variable by combining \eqref{eq:DeCasimirinYrhoK-2} with Lemma \ref{lem:etatoxi}.
	\begin{theorem}\label{Thm:dynamicASIP}
		The action of the operator $\pitensor(\De(\Omega))$ on the $\eta$-variable of $P^{}_{\mathsf{R}}(\eta,\xi)$ can be transferred to the $\xi$-variable,
		\begin{align*}
		\begin{split}[\pitensor(-\De(\Omega))P^{}_{\mathsf{R}}(\cdot,\xi)](\eta) =&  C^{\mathsf R,+}_j [P^{}_{\mathsf{R}}(\eta,\xi^{j,j+1})-P^{}_{\mathsf{R}}(\eta,\xi)] +  C^{\mathsf R,-}_{j+1}[P^{}_{\mathsf{R}}(\eta,\xi^{j+1,j})-P^{}_{\mathsf{R}}(\eta,\xi) ] \\
			&- \big[\tfrac12(k_j+k_{j+1}-1)\big]_q^2 )P^{}_{\mathsf{R}}(\eta,\xi).\end{split}
	\end{align*}
	Here, $C^{\mathsf R,+}_j$ and $C^{\mathsf R,-}_{j+1}$ are the rates from $\RASIP{q,\vec{N},\rho}$ given in \eqref{eq:ratesDynASIPR}.
\end{theorem}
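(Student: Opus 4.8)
The plan is to carry out step~(3) of the program: insert the two transfer identities of Lemma~\ref{lem:etatoxi} into the coproduct identity~\eqref{eq:DeCasimirinYrhoK-2}, with the free parameter chosen as $\rho=h^{+}_{j+2}(\xi)$, and simplify. With this choice the operator $A:=\pitensor\big(\De(Y_{h^{+}_{j+2}(\xi)})+\mu_{h^{+}_{j+2}(\xi)}\big)$ acts on $P^{}_{\mathsf{R}}(\cdot,\xi)$, by~\eqref{eq:lemetatoxi1}, as multiplication by the scalar $\mu_{h^{+}_{j}(\xi)}$, while $B:=\pitensor(\De(K^{-2}))$ acts by the nine-term shift~\eqref{eq:lemetatoxi2}. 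Crucially, $h^{+}_{j+2}$ is invariant under the shifts $\xi\mapsto\xi+i_1\varepsilon_j+i_2\varepsilon_{j+1}$ occurring in~\eqref{eq:lemetatoxi2}, so the \emph{same} operator $A$ acts on each shifted function $P^{}_{\mathsf{R}}(\cdot,\xi+i_1\varepsilon_j+i_2\varepsilon_{j+1})$ as multiplication by $\mu_{h^{+}_{j}(\xi)+2(i_1+i_2)}$.

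Since $A$ is diagonal on this family, the cubic expression $f(A,B)=(q^2+q^{-2})ABA-A^2B-BA^2$ collapses: applying each of $ABA$, $A^2B$, $BA^2$ to $P^{}_{\mathsf{R}}(\cdot,\xi)$ and using that scalars commute with $B$, one obtains $f(A,B)P^{}_{\mathsf{R}}(\cdot,\xi)=\sum_{i_1,i_2}C_j(i_1,i_2)\big[(q^2+q^{-2})aa'-a^2-a'^2\big]P^{}_{\mathsf{R}}(\cdot,\xi')$, where I abbreviate $a=\mu_{h^{+}_{j}(\xi)}$, $a'=\mu_{h^{+}_{j}(\xi)+2(i_1+i_2)}$ and $\xi'=\xi+i_1\varepsilon_j+i_2\varepsilon_{j+1}$. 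Feeding this, together with the diagonal action of the summand $\tfrac{\mu_\rho}{q+q^{-1}}A$, the nine-term summand $\tfrac{q+q^{-1}}{(q-q^{-1})^2}B$ and the constant $-\tfrac{2}{(q-q^{-1})^2}$, into~\eqref{eq:DeCasimirinYrhoK-2}, I find that $\pitensor(\De(\Omega))P^{}_{\mathsf{R}}(\cdot,\xi)$ is a nine-term operator in $\xi$ whose coefficient of $P^{}_{\mathsf{R}}(\cdot,\xi')$, for $(i_1,i_2)\neq(0,0)$, equals
\[
\frac{C_j(i_1,i_2)}{(q+q^{-1})(q-q^{-1})^2}\Big[-(q^2+q^{-2})aa'+a^2+a'^2+(q+q^{-1})^2\Big].
\]

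The core of the argument is to show this collapses to the nearest-neighbour generator of $\RASIP{q,\vec{k},\rho}$. First I would check the two vanishing mechanisms. When $i_1+i_2=\pm1$ we have $a'=\mu_{h^{+}_{j}\pm2}$, and the bracket above is identically zero --- a short identity in $q^{\pm h^{+}_{j}}$ that one verifies directly; the remaining off-site terms are the double shifts $i_1=i_2=\pm1$, which do not occur, i.e.\ $C_j(1,1)=C_j(-1,-1)=0$ in the explicit nine-term relation of~\cite{Groeneveltquantumaskey}. Hence only the genuine hops $(i_1,i_2)=(-1,1)$ and $(1,-1)$, together with the diagonal, survive. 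For these $a'=a$ and the bracket reduces to $(q+q^{-1})^2-(q-q^{-1})^2a^2=(q^2-q^{2h^{+}_{j}})(1-q^{-2-2h^{+}_{j}})$; substituting $C_j(-1,1)=\alpha_q(h^{+}_{j}(\xi))C^{\mathsf{R},+}_j(\xi)$ and $C_j(1,-1)=\alpha_q(h^{+}_{j}(\xi))C^{\mathsf{R},-}_{j+1}(\xi)$ from Lemma~\ref{lem:etatoxi}, the prefactor $\alpha_q(h^{+}_{j})$ cancels the bracket exactly (the ratio is $1$), leaving $C^{\mathsf{R},+}_j$ and $C^{\mathsf{R},-}_{j+1}$ after the overall sign from $-\De(\Omega)$. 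The diagonal coefficient is collected from the $(0,0)$ contributions of all four summands; using that the scalar $\big[\tfrac12(k_j+k_{j+1}-1)\big]_q^2$ is exactly the constant appearing on the $\eta$-side in~\eqref{eq:ASIPCasimir} and transfers unchanged, these assemble into $-C^{\mathsf{R},+}_j-C^{\mathsf{R},-}_{j+1}-\big[\tfrac12(k_j+k_{j+1}-1)\big]_q^2$, which is precisely the claimed expression.

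It suffices to treat the pair $(j,j+1)$ with two sites, since for general $M$ the height $h^{+}_{m}$ with $m\notin\{j,j+1\}$ is unchanged by these shifts and the nesting reduces everything to the two-site computation, exactly as in Lemma~\ref{lem:etatoxi}. I expect the main obstacle to be the vanishing of the non-nearest-neighbour terms: proving the bracket identity for $|i_1+i_2|=1$ and confirming $C_j(\pm1,\pm1)=0$, because it is here that the conservation of $\xi_j+\xi_{j+1}$ --- far from obvious in the transferred $\xi$-picture --- is actually produced. Once that is in place, the identification of the surviving coefficients with the rates~\eqref{eq:ratesDynASIPR} is a mechanical cancellation.
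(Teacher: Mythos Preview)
Your approach is essentially the paper's: both plug Lemma~\ref{lem:etatoxi} into the identity~\eqref{eq:DeCasimirinYrhoK-2} with $\rho=h^{+}_{j+2}(\xi)$, obtain the coefficient $\beta_j(i_1+i_2)C_j(i_1,i_2)$ in front of each shifted term, and use the identity $(q^2+q^{-2})\mu_\rho\mu_{\rho\pm2}-\mu_\rho^2-\mu_{\rho\pm2}^2-(q+q^{-1})^2=0$ to kill the terms with $i_1+i_2=\pm1$. Your cancellation $\beta_j(0)\alpha_q(h^+_j(\xi))=1$ for the surviving hops is exactly the paper's.

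Two points are worth flagging. First, your handling of the $|i_1+i_2|=2$ terms is slightly off in its justification. The nine-term relation of~\cite{Groeneveltquantumaskey} is indexed by shifts of the \emph{heights} $(h^+_j,h^+_{j+1})$, each by $0,\pm2$; translating to $(\xi_j,\xi_{j+1})$ this means $i_1+i_2\in\{-1,0,1\}$ automatically (with terms like $(i_1,i_2)=(\pm2,\mp1)$ appearing, not $(\pm1,\pm1)$). So the case $|i_1+i_2|=2$ simply never occurs, and $\beta_j(\pm1)=0$ already disposes of every non-conserving term --- there is no separate vanishing of $C_j(\pm1,\pm1)$ to verify. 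Second, and more substantively, your plan for the diagonal constant is the weak point: computing $C_j(0,0)$ and assembling all four diagonal contributions by hand is laborious, and your appeal to~\eqref{eq:ASIPCasimir} ``transferring unchanged'' is not a direct computation. The paper sidesteps this entirely with a clean trick: since $p^{}_{\mathrm A}(0,x)=1$ for all $x$, one has $P^{}_{\mathsf R}(0,\xi)\equiv1$, so evaluating both sides at $\eta=0$ collapses the $\xi$-side to the unknown constant $\gamma$ while the $\eta$-side, by~\eqref{eq:ASIPCasimir}, gives $-\big[\tfrac12(k_j+k_{j+1}-1)\big]_q^2$ directly. Adopting this evaluation-at-zero argument would make your proof both shorter and complete.
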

\begin{remark} 
	Note that the factor $\big[\tfrac12(k_j+k_{j+1}-1)\big]_q^2$ is the same as the one appearing in \eqref{eq:ASIPCasimir} for $\ASIP{q,\vec{k}}$.
\end{remark}
\begin{proof}
	The idea is to use \eqref{eq:DeCasimirinYrhoK-2} and Lemma \ref{lem:etatoxi} to transfer the action of $\pitensor(\De(\Om))$ on $P^{}_{\mathsf{R}}(\eta,\xi)$ from the $\eta$-variable to the $\xi$-variable. Applying $\pitensor$ to \eqref{eq:DeCasimirinYrhoK-2} we obtain for any $\rho\in\C$,
	\begin{align}
		\begin{split}[\pitensor(-\Delta(\Omega))=& \frac{f\big(\pitensor(\De(Y_{\rho})+\mu_\rho,\pitensor(\De(K^{-2}))\big)-(q+q^{-1})^2\pitensor(\De(K^{-2}))}{(q+q^{-1})(q-q^{-1})^2} \\
			&- \mu_\rho\frac{\pitensor(\De(Y_{\rho})+\mu_\rho)}{q+q^{-1}}  +\frac{ \pitensor(2)}{(q-q^{-1})^2}. \end{split}\label{eq:DeOmwrittenout}
	\end{align}
	By Lemma \ref{lem:etatoxi}, we have 
	\begin{align}
		[\pitensor(\De(Y_{h^{+}_{j+2}(\xi)})+\mu_{h^{+}_{j+2}(\xi)})P^{}_\mathsf{R}(\cdot,\xi)](\eta) = \mu_{h_j^+(\xi)}P^{}_\mathsf{R}(\eta,\xi)\label{eq:piDeYrho}
	\end{align}
	and that $\pitensor(\De(K^{-2}))$ acts on $P^{}_\mathsf{R}(\eta,\xi)$ as a $9$-term operator in the $\xi$-variable. We will show that only 3 terms of this 9-term operator are nonzero in \eqref{eq:DeOmwrittenout}. These are exactly the terms corresponding to $P^{}_\mathsf{R}(\eta,\xi+i_1\varepsilon_{j}-i_1\varepsilon_{j+1})$ where $i_1\in\{-1,0,1\}$. That is, the number of particles is preserved. Writing out \eqref{eq:DeOmwrittenout}, with $\rho=h^{+}_{k+2}(\xi)$, applied to $P^{}_\mathsf{R}$ we obtain
	\begin{align}
		\begin{split}
			\big[\pitensor(-\De(\Om))P^{}_{\mathsf{R}}(\cdot,\xi)\big](\eta) =&\sum_{i_1,i_2=-1}^1  \beta_j(i_1+i_2)C_j(i_1,i_2) P^{}_{\mathsf{R}}(\eta,\xi+i_1\varepsilon_j + i_2 \varepsilon_{j+1})\\
			&+ \left(\frac{\mu_{h^{+}_{j+2}(\xi)}\mu_{h^{+}_{j}(\xi)}}{q+q^{-1}}-\frac{2}{(q-q^{-1})^2} \right)P^{}_\mathsf{R}(\eta,\xi),
		\end{split}\label{eq:proof9term}
	\end{align}
	where $C_j(i_1,i_2)$ can be found in Lemma \ref{lem:etatoxi} and
	\begin{align*}
		\beta_j(m)=& \frac{(q^2+q^{-2})\mu_{h^{+}_{j}(\xi)}\mu_{h^{+}_{j}(\xi)+2m}-\mu_{h^{+}_{j}(\xi)}^2-\mu_{h^{+}_{j}(\xi)+2m}^2 -(q+q^{-1})^2}{(q+q {-1})(q-q^{-1})^2} .
	\end{align*}
	For all $\rho\in\R$ we have the identities
	\begin{align}
		&(q^2+q^{-2})\mu_\rho\mu_{\rho+2} - \mu_\rho^2-\mu_{\rho+2}^2 - (q+q^{-1})^2 =0,\\
		&(q^2+q^{-2})\mu_\rho\mu_{\rho} - \mu_\rho^2-\mu_{}^2 - (q+q^{-1})^2 =- q^{-2}(1-q^{2-2\rho})(1-q^{2\rho+2}),
	\end{align} 
	as readily verified by a direct computation. Therefore, $\beta_j(m)=0$ if $m= \pm 1$ and
	\[
		\beta_j(0) = 1/\alpha_q(h_j^+(\xi)),
	\] 
	where $\alpha_q(h_j^+(\xi))$ can be found in Lemma \ref{lem:etatoxi}. Therefore, only the terms in \eqref{eq:proof9term} with $P^{}_\mathsf{R}(\eta,\eta,\xi+i_1\varepsilon_j - i_1 \varepsilon_{j+1})$ remain. Thus, using \eqref{eq:9termrates1} and \eqref{eq:9termrates2}, the right-hand side of \eqref{eq:proof9term} becomes
	\begin{align}
			C_j^{\mathsf{R},+}\big[P^{}_\mathsf{R}(\eta,\xi^{j,j+1})-P^{}_\mathsf{R}(\eta,\xi)\big] + C_{j+1}^{\mathsf{R},-}\big[P^{}_\mathsf{R}(\eta,\xi^{j+1,j})-P^{}_\mathsf{R}(\eta,\xi)\big]+ \gamma P^{}_\mathsf{R}(\eta,\xi),\label{eq:DeOmForm}
	\end{align}
	for some constant $\gamma$ independent of $\eta$. To find this factor, observe that $P^{}_\mathsf{R}(0,\xi)=1$ for any $\xi$ since $p_\mathrm{A}^{}(0,x)=1$ by \eqref{eq:1siteASCq}. Thus, if we take $\eta=0$ in \eqref{eq:DeOmForm}, we obtain
	\begin{align*}
		[\pitensor(-\De(\Omega))P^{}_{\mathsf{R}}(\cdot,\xi)](0) = \gamma.
	\end{align*}
	Since $\pitensor(\De(\Om))$ is related to the generator of the $\ASIP{q,\vec{k}}$ process via \eqref{eq:ASIPCasimir}, we also have that
	\begin{align*}
		[\pitensor(-\De(\Omega))P^{}_{\mathsf{R}}(\cdot,\xi)](0) = -\big[\tfrac12(k_j+k_{j+1}-1)\big]_q^2,
	\end{align*}
	proving the theorem. 
\end{proof}
\subsection{Duality ASIP and Dynamic ASIP}
	From the way we constructed the generator of dynamic ASIP, we automatically get a duality between $\ASIP{q,\vec{k}}$ and $\RASIP{q,\vec{k},\rho}$ with the multivariate Al-Salam--Chihara polynomials $P^{}_\mathsf{R}$ as duality function, which was the content of Theorem \ref{thm:ASCduality}.
	\begin{proof}[Proof of Theorem \ref{thm:ASCduality}]
		We have to show that
		\[
			\big[\genASIP P^{}_\mathsf{R}(\cdot,\xi) \big] (\eta) = \big[\genRASIP P^{}_\mathsf{R}(\eta,\cdot) \big] (\xi),
		\]
		where $P^{}_\mathsf{R}$ are the multivariate Al-Salam--Chihara polynomials. Combining \eqref{eq:ASIPCasimir} with Theorem \ref{Thm:dynamicASIP}, we have
		\begin{align*}
			\big[\genASIP P^{}_{\mathsf{R}}(\cdot,\xi)\big](\eta) &=\sum_{j=1}^{M-1} c^+_j [P^{}_{\mathsf{R}}(\eta^{j,j+1},\xi)-P^{}_{\mathsf{R}}(\eta,\xi)] + c^-_{j+1} [P^{}_{\mathsf{R}}(\eta^{j+1,j},\xi)-P^{}_{\mathsf{R}}(\eta,\xi)].\\
			&= \sum_{j=1}^{M-1}[ \pitensor\big([\tfrac12(k_j+k_{j+1}-1)]_q^2-\De(\Om)\big)P^{}_{\mathsf{R}}(\cdot,\xi)](\eta) \\
			&= \sum_{j=1}^{M-1} C^{\mathsf R,+}_j [P^{}_{\mathsf{R}}(\eta,\xi^{j,j+1})-P^{}_{\mathsf{R}}(\eta,\xi)] +  C^{\mathsf R,-}_{j+1}[P^{}_{\mathsf{R}}(\eta,\xi^{j+1,j})-P^{}_{\mathsf{R}}(\eta,\xi) ]\\
			&= \big[\genRASIP P^{}_{\mathsf{R}}(\eta,\cdot)\big](\xi). \qedhere
		\end{align*}
	\end{proof}
\subsection{Duality $\LASIP{q,\vec{k},\lambda}$ and $\RASIP{q,\vec{k},\rho}$}
	In this section we will prove Theorem \ref{thm:AWduality}, which states the duality between $\LASIPno$ and $\RASIPno$ with the multivariate Askey-Wilson polynomials $P^{v}_\mathrm{AW}$ as duality function. By Theorem \ref{thm:ASCduality}, $\RASIPno$ is dual to $\ASIPno$, and by Corollary \ref{cor:dualityLASIPASIP}, $\ASIPno$ is dual to $\LASIPno$. Since $\ASIPno$ is reversible, $\RASIPno$ is also dual to $\LASIPno$ with duality functions given by  the scalar-product approach (see \cite[Proposition 4.1]{CFGGR}), which was sketched below Theorem \ref{thm:AWduality}. 
	\begin{proposition}
		The function $P^v\colon X_d\times X_d \to \R$ defined by
		\begin{align*}
			P^v(\zeta,\xi)= \sum_{\eta\in X_d}v^{|\eta|} P^{}_\mathsf{L}(\eta,\zeta) P^{}_\mathsf{R}(\eta,\xi) W(\eta),
		\end{align*}
		is a duality function between $\LASIPpar$ and $\RASIPpar$,
		\begin{align*}
			[\genLASIP P^v(\cdot,\xi)](\zeta) = 	[\genRASIP P^v(\zeta,\cdot)](\xi).
		\end{align*}
	\end{proposition}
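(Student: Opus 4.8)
The plan is to prove the identity by the scalar-product method, pushing the generator stepwise through the three processes that enter the definition of $P^v$: $\genLASIP$ acts on the $\zeta$-variable, $\genRASIP$ on the $\xi$-variable, and the auxiliary generator $\genASIP$ on the summation variable $\eta$. Before moving any operator I would check that the defining sum is absolutely convergent in the relevant parameter regime: by Cauchy--Schwarz in the Hilbert space $H$ of square-integrable functions for $W$ together with the finite norms $\|P^{}_\mathsf{L}(\cdot,\zeta)\|_H=1/W_\mathsf{L}(\zeta)$ and $\|P^{}_\mathsf{R}(\cdot,\xi)\|_H=1/W_\mathsf{R}(\xi)$ from Corollary \ref{cor:orthASC}, the unweighted sum converges, and the additional factor $v^{|\eta|}$ is constant on each particle-number sector and may be treated as in Remark \ref{rem:InvTotPart}.

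Since $\genLASIP$ is a finite-range difference operator acting only on $\zeta$, I can interchange it with the sum and then apply the two ingredient dualities in turn. Corollary \ref{cor:dualityLASIPASIP} converts the $\zeta$-action into an $\eta$-action,
\[
[\genLASIP P^v(\cdot,\xi)](\zeta)=\sum_{\eta\in X_d} v^{|\eta|}\big[\genASIP P^{}_\mathsf{L}(\cdot,\zeta)\big](\eta)\,P^{}_\mathsf{R}(\eta,\xi)\,W(\eta).
\]
Because $\genASIP$ preserves the total number of particles, the scalar $v^{|\eta|}$ commutes with it, so $v^{|\eta|}[\genASIP P^{}_\mathsf{L}(\cdot,\zeta)](\eta)=[\genASIP(v^{|\cdot|}P^{}_\mathsf{L}(\cdot,\zeta))](\eta)$. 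The heart of the argument is then to transfer $\genASIP$ from the left factor to the right factor using that $\genASIP$ is symmetric with respect to $W$, which is the content of Proposition \ref{prop:genASIPsym}. This gives
\[
[\genLASIP P^v(\cdot,\xi)](\zeta)=\sum_{\eta\in X_d} v^{|\eta|}P^{}_\mathsf{L}(\eta,\zeta)\big[\genASIP P^{}_\mathsf{R}(\cdot,\xi)\big](\eta)\,W(\eta).
\]
Finally, Theorem \ref{thm:ASCduality} rewrites $[\genASIP P^{}_\mathsf{R}(\cdot,\xi)](\eta)$ as $[\genRASIP P^{}_\mathsf{R}(\eta,\cdot)](\xi)$, and since $\genRASIP$ acts only on $\xi$ it can be pulled back out of the sum, producing $[\genRASIP P^v(\zeta,\cdot)](\xi)$, which is the claim.

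The main obstacle is the symmetry step. Proposition \ref{prop:genASIPsym} is stated for the bare duality functions $P^{}_\mathsf{L}(\cdot,\zeta)$ and $P^{}_\mathsf{R}(\cdot,\xi)$, whereas here the left factor carries the extra weight $v^{|\cdot|}$, so I cannot cite it verbatim and must instead re-run its dominated-convergence argument for $f=v^{|\cdot|}P^{}_\mathsf{L}(\cdot,\zeta)$ and $g=P^{}_\mathsf{R}(\cdot,\xi)$. The delicate point is that $\genASIP$ is an unbounded operator, so interchanging the limit defining $\langle\genASIP f,g\rangle_H$ with the infinite sum requires an integrable dominating function; as in the proof of Proposition \ref{prop:genASIPsym} one builds it from the duality relation for $P^{}_\mathsf{L}$ and the finite $H$-norm of $P^{}_\mathsf{R}(\cdot,\xi)$, but one must verify that the weighted sums remain convergent in the parameter range under consideration. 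This is exactly the regime in which $P^v$ is well-defined, and it is consistent with the later closed-form evaluation of $P^v$ as a (finite) multiple of the Askey--Wilson duality function in Theorem \ref{thm:AWduality}.
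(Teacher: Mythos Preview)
Your proposal is correct and follows essentially the same route as the paper: define $P^v_\mathsf{L}(\eta,\zeta)=v^{|\eta|}P^{}_\mathsf{L}(\eta,\zeta)$, use Corollary~\ref{cor:dualityLASIPASIP} to pass $\genLASIP\to\genASIP$, invoke the symmetry of $\genASIP$ from Proposition~\ref{prop:genASIPsym} to move it to the other factor, and then use Theorem~\ref{thm:ASCduality} to pass $\genASIP\to\genRASIP$. If anything, you are slightly more careful than the paper about the symmetry step: the paper cites Proposition~\ref{prop:genASIPsym} directly even though one of the two functions is $v^{|\cdot|}P^{}_\mathsf{L}$ rather than a bare $P^{}_\mathsf{L}$, while you correctly note that the dominated-convergence argument must be rerun with the extra weight (and that this forces a restriction on $v$, consistent with the convergence condition in Lemma~\ref{lem:AWsumASC}).
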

	\begin{proof}
	Since $v^{|\eta|}$ only depends on the total number of particles which is invariant under the action of the generator, the function
	\[
		P^v_\mathsf{L}(\eta,\zeta)=v^{|\eta|}P^{}_\mathsf{L}(\eta,\zeta)
	\]
	is also a duality function between $\ASIP{q,\vec{k}}$ and $\LASIPpar$. Therefore,
	\begin{align*}
		\big[\genLASIP P^v(\cdot,\xi)\big](\zeta) &= \sum_{\eta\in X_d} \big[\genLASIP P_\mathsf{L}^v(\eta,\cdot)\big](\zeta) P^{}_\mathsf{R}(\eta,\xi)W(\eta) \\
		&= \sum_{\eta\in X_d} \big[\genASIP P_\mathsf{L}^v(\cdot,\zeta)\big](\eta) P^{}_\mathsf{R}(\eta,\xi)W(\eta).
	\end{align*}
	Since $\genASIP$ is symmetric with respect to $W$ for the duality functions $P^{}_\mathsf{L}$ and $P^{}_\mathsf{R}$ (see Proposition \ref{prop:genASIPsym}), above expression equals
	\begin{align*}
		\sum_{\eta\in X_d} P_\mathsf{L}^v(\eta,\zeta)  \big[\genASIP P^{}_\mathsf{R}(\cdot,\xi)\big](\eta)W(\eta)&= \sum_{\eta\in X_d} P_\mathsf{L}^v(\eta,\zeta)  \big[\genRASIP P^{}_\mathsf{R}(\eta,\cdot)\big](\xi)W(\eta) \\
		&=	\big[\genRASIP P^v(\zeta,\cdot)\big](\xi),
	\end{align*}
	proving the proposition.
	\end{proof}
	Let us now show that the duality function $P^v$ is a doubly nested product of Askey-Wilson polynomials. Doubly nested in the sense that the $j$-th product depends on $\xi_{j+1},\xi_{j+2},\ldots,\xi_M$ via $h_{j+1}^+(\xi)$ and also on $\zeta_1,\zeta_2,\ldots,\zeta_{j-1}$ via $h_{j-1}^-(\zeta)$. This result is a direct corollary of the following lemma, which shows that the inner product of the 1-site duality functions $p^{}_{\mathrm{A}}$ in base $q$ and $q^{-1}$ are Askey-Wilson polynomials, where the inner product is with respect to the 1-site reversible measure $w$ of $\ASIPpar$. This 
	\begin{lemma}\label{lem:AWsumASC}
		Let $p^{}_{\mathsf{AW}}(y,x;\lambda,\rho;v,k,q)$ be the 1-site duality function from \eqref{eq:1sitedualityAW} and let $q<1$. If $$|vq|< q^{2x+k+\rho-\lambda},$$ then we have the following summation formula between the Al-Salam--Chihara polynomials and Askey-Wilson polynomials,
		\begin{align}
			\begin{split}\frac{(vq^{2y+\lambda-\rho+k+1};q^2)_\infty}{(vq^{-2x+\lambda-\rho-k+1};q^2)_\infty} &p^{}_{\mathsf{AW}}(y,x;\lambda,\rho;v,k;q) \\
				&= \sum_{n=0}^\infty v^n p^{}_{\mathrm{A}}(n,y;\lambda;k,q^{-1}) p^{}_{\mathrm{A}}(n,x;\rho;k,q)w(n;q,k).\end{split}\label{eq:AWsumASC}
		\end{align}
		Consequently, $P^v$ is a doubly nested product of Askey-Wilson polynomials if $v< q^{2|\xi|+|\vec{k}|+\lambda-\rho-1}$,
		\begin{align}
			P^v(\zeta,\xi)= \prod_{j=1}^M \frac{(vq^{2\zeta_j+h^-_{j-1}(\zeta)-h^+_{j+1}(\xi)+k_j+1};q^2)_\infty}{(vq^{-2\xi_j+h^-_{j-1}(\zeta)-h^+_{j+1}(\xi)-k_j+1};q^2)_\infty} p^{}_{\mathsf{AW}}(\zeta_j,\xi_j;h^-_{j-1}(\zeta),h^+_{j+1}(\xi);v,k_j;q). \label{eq:PvAW}
		\end{align}
	\end{lemma}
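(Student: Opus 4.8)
The plan is to prove first the one-site summation formula \eqref{eq:AWsumASC} and then to deduce the doubly nested product \eqref{eq:PvAW} from it by a factorization argument. For the second (reduction) step I would observe that the reversible measure $W$ from \eqref{eq:revmeasASIP} is a product measure, that the factor $q^{u(\eta,\vec k)}$ from \eqref{eq:defu} and the total-mass factors $q^{\mp\frac12 u(\eta,\vec k)}$ sitting in $P_{\mathsf R}$ and $P_{\mathsf L}$ are products of single-site terms, and that $P_{\mathsf L}$ and $P_{\mathsf R}$ are themselves products over the sites $j$. Consequently the summand of $P^v(\zeta,\xi)=\sum_{\eta} v^{|\eta|}P_{\mathsf L}(\eta,\zeta)P_{\mathsf R}(\eta,\xi)W(\eta)$ factorizes over $j$, and since the $\eta_j$ are summed independently over $\Z_{\geq0}$, the sum splits as a product of one-site sums. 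Each one-site sum is an instance of the left-hand side of \eqref{eq:AWsumASC} with $y=\zeta_j$, $x=\xi_j$, $k=k_j$, $\lambda=h^-_{j-1}(\zeta)$ and $\rho=h^+_{j+1}(\xi)$, once the single-site normalization $u_j(\vec k)$ appearing in $W$ has been absorbed into the free parameter; multiplying the resulting right-hand sides gives \eqref{eq:PvAW}. The hypothesis $v< q^{2|\xi|+|\vec{k}|+\lambda-\rho-1}$ is exactly what forces the one-site condition $|vq|<q^{2\xi_j+k_j+h^+_{j+1}(\xi)-h^-_{j-1}(\zeta)}$ at every site, which is a routine monotonicity check in the height functions. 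I would treat this reduction as routine bookkeeping.

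The genuine content is the one-site identity \eqref{eq:AWsumASC}, and I would attack it by direct $q$-series manipulation. First I would insert the explicit $\rphisempty{3}{2}$ expressions from \eqref{eq:1siteASCq} for both Al-Salam--Chihara factors and the explicit weight $w(n;k,q)$ from \eqref{eq:1siterevASIP}. Because $p^{}_{\mathrm A}(n,y;\lambda;k;q^{-1})$ lives in base $q^{-1}$, I would convert its $q$-shifted factorials to base $q^2$ via the elementary relation $(a;q^{-2})_n=(-a)^nq^{-n(n-1)}(a^{-1};q^2)_n$, so that both summands are written in the common base $q^2$; the two series then carry independent indices, say $i$ from the $y$-polynomial and $\ell$ from the $x$-polynomial. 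Collecting the powers of $q^n$ from the normalizing prefactors $q^{\frac12 n(2\lambda+k+1)}$ and $q^{-\frac12 n(2\rho+k+1)}$ in \eqref{eq:1siteASCq} together with the $q^{-n(k-1)}$ in $w(n;k,q)$ produces a clean geometric weight $v^n q^{n(\lambda-\rho-k+1)}$ in $n$. Under the convergence hypothesis $|vq|< q^{2x+k+\rho-\lambda}$ the resulting triple series is absolutely convergent, which justifies interchanging the order of summation and performing the inner sum over $n$ first.

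The inner sum over $n$ is, for each fixed $(i,\ell)$, a geometric-type series that I expect to evaluate in closed form by the $q$-binomial theorem \cite{GR}, producing precisely the infinite-product prefactor $\frac{(vq^{2y+\lambda-\rho+k+1};q^2)_\infty}{(vq^{-2x+\lambda-\rho-k+1};q^2)_\infty}$ together with residual $q$-shifted factorials in $i$ and $\ell$. The main obstacle is the final step: showing that the remaining double sum over $i$ and $\ell$ collapses to the single $\rphisempty{4}{3}$ of \eqref{eq:1sitedualityAW} defining the Askey-Wilson polynomial. I anticipate this requires a nontrivial $q$-hypergeometric transformation (a Sears-type transformation for terminating $\rphisempty{4}{3}$ series, or a suitable application of the $q$-Saalschütz summation should the intermediate series turn out balanced), and matching the $q$-powers to the two denominator parameters $vq^{\rho+\lambda+k+1}$ and $v^{-1}q^{\rho+\lambda+k+1}$ of the $\rphisempty{4}{3}$ is where the real care is needed. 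As an alternative that sidesteps the transformation, I would note that \eqref{eq:AWsumASC} can be read as an interbasis (overlap) coefficient between eigenbases of twisted primitive elements in a tensor product representation of $\U_q(\su(1,1))$, and so could instead be extracted from the results of \cite{Groeneveltquantumaskey}; this is consistent with how the eigenfunction and $9$-term statements of Lemma \ref{lem:etatoxi} are obtained.
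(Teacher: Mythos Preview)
Your reduction from the multisite identity \eqref{eq:PvAW} to the one-site formula \eqref{eq:AWsumASC} is essentially what the paper does: write out $P^v$, observe that the factors $q^{\pm\frac12 u(\eta,\vec k)}$ cancel against the $u_j(\vec k)$-part of $W$ so that the $j$-th factor depends only on $\eta_j$, interchange sum and product, and apply \eqref{eq:AWsumASC} sitewise. That part is fine.

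For the one-site identity itself the paper does not carry out your direct triple-series computation. After writing the right-hand side of \eqref{eq:AWsumASC} as
\[
\sum_{n\geq0} v^n q^{n(\lambda-\rho-k+1)}\frac{(q^{2k};q^2)_n}{(q^2;q^2)_n}\,\rphisempty{3}{2}(\cdots;q^2,q^2)\,\rphisempty{3}{2}(\cdots;q^{-2},q^{-2}),
\]
it simply matches parameters and invokes \cite[Lemma~4.6]{Groeneveltquantumaskey}, which is precisely this Poisson-type kernel summing two Al-Salam--Chihara polynomials to a $\rphisempty{4}{3}$. So your ``alternative'' is in fact the paper's route.

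Your direct approach is in principle viable, but one step is too optimistic. For fixed inner indices $(i,\ell)$, the $n$-dependence carries not only $\frac{(q^{2k};q^2)_n}{(q^2;q^2)_n}z^n$ but also the factors $(q^{-2n};q^2)_\ell$ and $(q^{2n};q^{-2})_i$ coming from the two $\rphisempty{3}{2}$'s. Rewritten in base $q^2$ these contribute $\frac{(q^2;q^2)_n}{(q^2;q^2)_{n-\ell}}\cdot\frac{(q^2;q^2)_n}{(q^2;q^2)_{n-i}}$ (up to elementary powers), so the inner $n$-sum is not a $_1\varphi_0$ and the $q$-binomial theorem does not apply directly; at best you get a $_2\varphi_1$-type sum after a shift, or you must first expand one of these factors as a polynomial in $q^{2n}$ and then resum. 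Either way you end up with the nontrivial recombination you already flagged as ``the main obstacle''. That obstacle is exactly what \cite[Lemma~4.6]{Groeneveltquantumaskey} packages for you, and the paper chooses to cite it rather than reprove it.
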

	\begin{proof}
		Writing out the right hand side of \eqref{eq:AWsumASC} in terms of $\rphisempty{3}{2}$'s gives
		\begin{align*}
			\sum_{n=0}^\infty v^n q^{n(\lambda-\rho-k+1)}\frac{(q^{2k};q^2)_n}{(q^2;q^2)_n}&\rphis{3}{2}{q^{-2n}, q^{-2x}, q^{2x+2\rho+2k}}{q^{2k},0}{q^2,q^2} \\
			&\qquad \times \rphis{3}{2}{q^{2n}, q^{2y}, q^{-2y-2\lambda-2k}}{q^{-2k},0}{q^{-2},q^{-2}}.
		\end{align*}
		Showing that this sum of Al-Salam--Chihara polynomials are Askey-Wilson polynomials comes down to \cite[Lemma 4.6]{Groeneveltquantumaskey} and a change parameters. Indeed, \cite[Lemma]{Groeneveltquantumaskey} gives 
		\begin{align}
			\begin{split}c_1&\rphis{4}{3}{q^{-2m},abcdq^{2(m-1)},a\hat{x},a\hat{x}^{-1}}{ab,ac,ad}{q^2,q^2}\\
				&=\sum_{n=0}^\infty c_2  \rphis{3}{2}{q^{-2n}, \sigma q^{k}\hat{x}, \sigma q^{k}\hat{x}^{-1}}{q^{2k},0}{q^2,q^2} 
				\rphis{3}{2}{q^{2n}, \tau q^{-k}\hat{y}, \tau q^{-k}\hat{y}^{-1}}{q^{-2k},0}{q^{-2},q^{-2}},\end{split}\label{eq:sumchiharaAW}
		\end{align}
		where
		\begin{align*}
			c_1&=q^{-m(m-1)} \frac{(acq^{2m},bcq^{2m};q^2)_\infty}{(c\hat{x},c\hat{x}^{-1};q^2)_\infty} \frac{(ac,ad;q^2)_m}{(-ad)^m},\\
			c_2&=\frac{v^n q^{n(k-1)}}{(\sigma\tau)^{n}} 
			\frac{(q^{-2k};q^{-2})_n}{(q^{-2};q^{-2})_n},
		\end{align*}
		and $(a,b,c,d,\hat{y})=(q^k\sigma,q^k\sigma^{-1},qv\tau^{-1},qv^{-1}\tau^{-1},\tau q^{-k-2m})$. Taking 
		\begin{align*}
			 m&=y,\qquad \hat{x}=q^{2x+\rho+k},\qquad \tau=q^{-\lambda},\qquad \sigma=q^{\rho},
		\end{align*}
		\eqref{eq:sumchiharaAW} becomes
		\begin{align}
			\begin{split}c_1&\rphis{4}{3}{q^{-2y},q^{2y+2\lambda+2k},q^{-2x},q^{2x+2\rho+2k}}{q^{2k},vq^{\rho+\lambda+k+1},v^{-1}q^{\rho+\lambda+k+1}}{q^2,q^2}\\
				&=\sum_{n=0}^\infty c_2  \rphis{3}{2}{q^{-2n}, q^{-2x}, q^{2x+2\rho+2k}}{q^{2k},0}{q^2,q^2} 
				\rphis{3}{2}{q^{2n}, q^{2y}, q^{-2y-2\lambda-2k}}{q^{-2k},0}{q^{-2},q^{-2}},\end{split}\label{eq:sumchiharaAW2}
		\end{align}
		where 
		\begin{align*}
			c_1&=q^{-y(y-1)} \frac{(vq^{2y+\rho+\lambda+k+1},vq^{2y-\rho+\lambda+k+1};q^2)_\infty}{(vq^{2x+\rho+\lambda+k+1} ,vq^{-2x-\rho+\lambda-k+1};q^2)_\infty} \frac{(vq^{\rho+\lambda+k+1},v^{-1}q^{\rho+\lambda+k+1};q^2)_y}{(-v^{-1}q^{\rho+\lambda+k+1})^y}\\
			&= q^{-y(y-1)} \frac{(vq^{\rho+\lambda+k+1};q^2)_x(vq^{2y-\rho+\lambda+k+1};q^2)_\infty}{(vq^{-2x-\rho+\lambda-k+1};q^2)_\infty} \frac{(v^{-1}q^{\rho+\lambda+k+1};q^2)_y}{(-v^{-1}q^{\rho+\lambda+k+1})^y},  \\
			c_2&=\frac{v^n q^{n(k-1)}}{(q^{\rho-\lambda})^{n}} 
			\frac{(q^{-2k};q^{-2})_n}{(q^{-2};q^{-2})_n} = v^n q^{n(\lambda-\rho-k+1)}\frac{(q^{2k};q^2)_n}{(q^2;q^2)_n},
		\end{align*}
		and the sum converges absolutely if $|vq| < q^{2x+k+\rho-\lambda}$ (see \cite[section 7.1]{Groeneveltquantumaskey} for details). \\
		\indent Now \eqref{eq:AWsumASC} follows from the definition \eqref{eq:1sitedualityAW} of $p^{}_\mathsf{AW}$. To show that $P^v$ is a doubly-nested product of Askey-Wilson polynomials, let us write out its definition,
		\begin{align*}
			P^v(\zeta,\xi)&= \sum_{\eta\in X_d} v^{|\eta|}P^{}_{\mathsf{L}}(\eta,\zeta)P^{}_\mathsf{R}(\eta,\xi)W(\eta) \\
			&= \sum_{\eta_1}\sum_{\eta_2} \cdots \sum_{\eta_M} \prod_{j=1}^M v^{\eta_j}p^{}_\mathrm{A}(\eta_j;\zeta_j;h^-_{j-1};k_j,q^{-1}) p^{}_{\mathrm{A}}(\eta_j\xi_j;h^+_{j+1};k_j,q)w(\eta_j,q,k_j).
		\end{align*}
		Note that $\eta_j$ only appears in the $j$-th term of the product, since the terms with $u_j(\vec{k})$ exactly cancel. Therefore, we can interchange sum and product to obtain
		\begin{align*}
			P^v(\zeta,\xi)&=  \prod_{j=1}^M \sum_{\eta_j} v^{\eta_j}p^{}_\mathrm{A}(\eta_j;\zeta_j;h^-_{j-1};k_j,q^{-1}) p^{}_{\mathrm{A}}(\eta_j\xi_j;h^+_{j+1};k_j,q)w(\eta_j,q,k_j).
		\end{align*}
		Now \eqref{eq:PvAW} follows from taking $|v|<q^{2|\xi|+|\vec{k}|+\rho-\lambda-1}$ and applying the summation formula \eqref{eq:AWsumASC} $M$-times.
	\end{proof}
	\noindent Let us now show that $P^v$ and $P^v_{\mathsf{AW}}$ from \eqref{eq:dualityAW} are the same up to a factor depending on preserved quantities of the process and the stricter conditions on $q$ and $v$ can be lifted, proving Theorem \ref{thm:AWduality}.
	\begin{proof}[Proof of Theorem \ref{thm:AWduality}]
		By Lemma \ref{lem:AWsumASC} and the definition \eqref{eq:dualityAW} of $P^v_{\mathsf{AW}}$, we have
		\begin{align*}
			P^v(\zeta,\xi)&= \prod_{j=1}^M \frac{(vq^{2\zeta_j+h^-_{j-1}(\zeta)-h^+_{j+1}(\xi)+k_j+1};q^2)_\infty}{(vq^{-2\xi_j+h^-_{j-1}(\zeta)-h^+_{j+1}(\xi)-k_j+1};q^2)_\infty} p^{}_{\mathsf{AW}}(\zeta_j,\xi_j;h^-_{j-1}(\zeta),h^+_{j+1}(\xi);v,k_j;q) \\
			&= P^v_{\mathsf{AW}}(\zeta,\xi) \prod_{j=1}^M\frac{(vq^{2\zeta_j+h^-_{j-1}(\zeta)-h^+_{j+1}(\xi)+k_j+1};q^2)_\infty}{(vq^{-2\xi_j+h^-_{j-1}(\zeta)-h^+_{j+1}(\xi)-k_j+1};q^2)_\infty}.
		\end{align*}
		Now note that
		\begin{align*}
			\prod_{j=1}^{M} \frac{(vq^{2\zeta_j-h_{j+1}^+(\xi)+h_{j-1}^-(\zeta)+k_j+1};q^2)_\infty}{(vq^{-2\xi_j-h_{j+1}^+(\xi)+h_{j-1}^-(\zeta)-k_j+1};q^2)_\infty} &= \prod_{j=1}^{M} \frac{(vq^{h_{j}^-(\zeta)-h_{j+1}^+(\xi)+1};q^2)_\infty}{(vq^{h_{j-1}^-(\zeta)-h_{j}^+(\xi)+1};q^2)_\infty} \\
			&=\frac{(vq^{h_{M}^-(\zeta)-h_{M+1}^+(\xi)+1};q^2)_\infty}{(vq^{h_{0}^-(\zeta)-h_{1}^+(\xi)+1};q^2)_\infty}  \\
			&= \frac{(vq^{2|\zeta|+|\vec{k}|+\lambda-\rho+1};q^2)_\infty}{(vq^{-2|\xi|-|\vec{k}|+\lambda-\rho+1};q^2)_\infty},
		\end{align*}
		since $h_{M+1}^+(\xi)=\rho$, $h_{1}^+(\xi)= \rho+2|\xi|+|\vec{k}|$, $h_{0}^-(\zeta)=\lambda$ and $h_{M}^-(\zeta)=\lambda + 2|\zeta|+|\vec{k}|$. Therefore, the factor above depends only on the preserved quantities $\lambda,\rho, |\xi|,|\zeta|$ and $\vec{k}$, showing that $P^v_\mathsf{AW}$ is also a duality function between $\LASIPpar$ and $\RASIPpar$. Note that both sides of the duality relation
		\begin{align*}
			[\genLASIP P^v_{\mathrm{AW}}(\cdot,\xi)](\zeta)=[\genRASIP P^v_{\mathrm{AW}}(\zeta,\cdot)](\xi)
		\end{align*}
		are analytic in $v$, so the condition on $v$ can be lifted by using analytic continuation. Similarly, both sides are meromorphic functions in $q$ and analytic for $q>1$. Therefore, the duality relation is also valid for $q>1$.
	\end{proof}
	\begin{remark}
		The reason to work with $P^v_\mathsf{AW}$ as duality function instead of $P^v$ is purely aesthetic. Working with $P^v_\mathsf{AW}$ is easier since the factor in front of the Askey-Wilson polynomial is shorter and doesn't involve infinite shifted factorials. Therefore, we can more easily send $q\to q^{-1}$ in the duality function. Moreover, when taking limits of the duality function in Section \ref{sec:AsymmetricDegenerations}, the factors in front of the $q$-hypergeometric functions are shorter as well.
	\end{remark}
	
\appendix
	\newpage 
	\section{Dual orthogonality} \label{app:dualorth}
		Let $\{p_n(x)\}_{n=0}^\infty$ be a set of orthogonal polynomials whose orthogonality has a continuous and discrete part, i.e.
		\begin{align*}
			\int_\R p_n(x)p_m(x)w(x)\mathrm{d}x + \sum_{j=0}^\alpha p_n(x_j)p_m(x_j)w(x_j) = \frac{\delta_{n,m}}{h(n)},
		\end{align*}
		where $\alpha \in \Z_{\geq0}\cup \{\infty\}$ and $w(x_j)\neq 0$ for all $j=0,1,\ldots,\alpha$. The case where there is no continuous part is allowed, i.e. $w(x)=0$ on $\R\backslash\{1,2,\ldots,\alpha\}$. Let $H$ be the Hilbert space induced by this measure, i.e. it has inner product
		\begin{align*}
			\langle f,g\rangle_H = \int_\R f(x)g(x)w(x)\mathrm{d}x + \sum_{j=0}^\alpha f(x_j)g(x_j)w(x_j).
		\end{align*}
		If $\{p_n(x)\}_{n=0}^\infty$ is an orthogonal basis for $H$, then we have the dual orthogonality relation
		\begin{align*}
			\sum_{n=0}^{\infty} p_n(x_i)p_n(x_j) h(n) = \frac{\delta_{x_i,x_j}}{w(x_j)}
		\end{align*}
		for all $j=0,1,\ldots,\alpha$. \\
		
		\noindent To prove this, fix $x_j$ and consider the delta function 
		\begin{align*}
			\delta_{x_j}(x) = \delta_{x_j,x},
		\end{align*} 
		which is in $H$. Writing out $\delta_{x_j}$ in the orthogonal basis $\{p_n\}_{n=0}^\infty$ yields
		\begin{align}
			\delta_{x_j} = \sum_{n=0}^\infty \frac{\langle p_n,\delta_{x_j}\rangle^{}_H}{\langle p_n,p_n\rangle^{}_H}p_n,\label{eq:appconvdelta}
		\end{align}
		where the convergence is in $H$. Since
		\[
					\left(\delta_{x_j}(x_i) - \sum_{n=0}^N \frac{\langle p_n,\delta_{x_j}\rangle^{}_H}{\langle p_n,p_n\rangle^{}_H}p_n(x_i)\right)^2w(x_j) \leq \Big|\Big|\delta_{x_j}(x_i) - \sum_{n=0}^N \frac{\langle p_n,\delta_{x_j}\rangle^{}_H}{\langle p_n,p_n\rangle^{}_H}p_n(x_i)\Big|\Big|_H,
		\]
		which goes to zero when $N\to\infty$, we also get the pointwise equality
		\[
			\delta_{x_j}(x_i) = \sum_{n=0}^\infty \frac{\langle p_n,\delta_{x_j}\rangle^{}_H}{\langle p_n,p_n\rangle^{}_H}p_n(x_i).
		\]
		Using
		\begin{align*}
			&\langle p_n,\delta_{x_j}\rangle^{}_H = p_n(x_j)w(x_j),\\
			&\langle p_n,p_n\rangle^{}_H = 1/h(n), 
		\end{align*}	
		the dual orthogonality follows.
		
	\section{Interchanging sum and limit in Proposition \ref{prop:degenerate orthogonality qjac}} \label{app:interchangelimitsum}
	\noindent Let us prove that
	\begin{align*}
		\lim\limits_{\lambda\to\infty}&\sum_{\eta\in X_d} P^{vq^\lambda}_{\mathrm{AW}}(\eta,\xi;q) P^{v^{-1}q^{-\lambda}}_{\mathrm{AW}}(\eta,\xi';q) W_\mathsf{L}(\eta,q) \omega^{vq^\lambda}_{\mathrm{AW}}(|\eta|,|\xi|,\lambda,\rho,q)\\
		&=\sum_{\eta\in X_d} \lim\limits_{\lambda\to\infty} P^{vq^\lambda}_{\mathrm{AW}}(\eta,\xi;q) P^{v^{-1}q^{-\lambda}}_{\mathrm{AW}}(\eta,\xi';q) W_\mathsf{L}(\eta,q) \omega^{vq^\lambda}_{\mathrm{AW}}(|\eta|,|\xi|,\lambda,\rho,q).
	\end{align*}
	We will show that on the $1$-site duality function level we have
	\begin{align}
		p^{}_{\mathrm{AW}}(n,x;vq^\lambda) p^{}_{\mathrm{AW}}(n,x';v^{-1}q^{-\lambda}) w^{}_\mathsf{\mathsf{dyn}}(n;\lambda)  \sim q^{-n(2x+2x'+2\rho+2k)},\label{eq:orderpaw}
	\end{align}
	and for the factor $\omega_\mathrm{AW}^{vq^\lambda}$,
	\[
		\omega^{vq^{\lambda}}_{\mathrm{AW}}(|\eta|,|\xi|,\lambda,\rho,q) \sim 1,
	\]
	both for large $n$ and all $\lambda> T$ for some number $T$. Since $\xi\in X_{d,\rho}$, we have $2x+k+\rho < 0$. Hence, the sum
	\[
		\sum_{n=0}^{\infty} q^{-n(2x+2x'+2\rho+2k)}
	\] 
	converges absolutely. Therefore, interchanging the limit and sum is justified using the dominated convergence theorem and summing from right to left (i.e. start with $\eta_M$, then $\eta_{M-1}$, etc.). To prove \eqref{eq:orderpaw}, let us write out
	\begin{align*}
		p^{}_{\mathrm{AW}}(n,x;vq^\lambda) =   (v&q^{\rho+2\lambda+k+1};q^2)_x(vq^{-\rho-k-1};q^{-2})_n\\ 
		&\times \rphis{4}{3}{q^{-2n},q^{2n+2\lambda+2k},q^{-2x},q^{2x+2\rho+2k}}{q^{2k},vq^{\rho+2\lambda+k+1},v^{-1}q^{\rho+k+1}}{q^2,q^2}.
	\end{align*}
	We have
	\[
		(vq^{\rho+2\lambda+k+1};q^2)_x \sim 1,
	\]
	since
	\[
		|1-vq^{\rho+2\lambda+k+1}| < 1
	\]
	for $\lambda$ large enough. Using that
	\[
		(a;q^{-2})_n=(-a)^nq^{-n(n-1)}(a^{-1};q^2)_n,
	\]	
	we get
	\[
		(vq^{-\rho-k-1};q^{-2})_n \sim v^n q^{-n(\rho+k+n)}.
	\]
	For the $\rphisempty{4}{3}$ we use that
	\[
		\frac{(q^{2n+2\lambda+k};q^2)_j}{(vq^{\rho + 2\lambda +k +1};q^2)_j} \sim 1,
	\]
	to obtain
	\[
		\rphis{4}{3}{q^{-2n},q^{2n+2\lambda+2k},q^{-2x},q^{2x+2\rho+2k}}{q^{2k},vq^{\rho+2\lambda+k+1},v^{-1}q^{\rho+k+1}}{q^2,q^2} \sim q^{-2nx}.
	\]
	Thus
	\begin{align}
		p^{}_{\mathrm{AW}}(n,x;vq^\lambda) \sim v^nq^{-n(n+2x+\rho +k)}.\label{eq:orderpaw1}
	\end{align}
	Similarly,
	\begin{align}
		q^{2\lambda n}p^{}_{\mathrm{AW}}(n,x;v^{-1}q^{-\lambda}) \sim v^{-n}q^{-n(n+2x'+\rho +k)} \label{eq:orderpaw2}
	\end{align}
	Lastly, we have
	\begin{align*}
		q^{-2\lambda n}w^{}_\mathsf{\mathsf{dyn}}(n;\lambda) \sim q^{2n^2}.
	\end{align*}
	Combining this with \eqref{eq:orderpaw1} and \eqref{eq:orderpaw2} proves \eqref{eq:orderpaw}.
	\section{Details proof Theorem \ref{thm:ratesdynasipdynabep}}\label{app:ratesdynasipdynabep}
	\noindent In this section we prove the following equations,
			\begin{align}
		C^{\mathsf{L},+}_j(\vec{x}/\varepsilon) &= A_j(\vec{x},\sigma,\lambda)/\varepsilon^2 + \mathcal{O}(1/\varepsilon),\label{eq:ratesasipabep1app}\\
		C^{\mathsf{L},-}_{j+1}(\vec{x}/\varepsilon)&= A_j(\vec{x},\sigma,\lambda)/\varepsilon^2 + \mathcal{O}(1/\varepsilon),\label{eq:ratesasipabep2app}\\
		C^{\mathsf{L},-}_{j+1}(\vec{x}/\varepsilon)-C^{\mathsf{L},+}_j(\vec{x}/\varepsilon)&=  B_j(\vec{x},\sigma,\vec{k},\lambda)/\varepsilon + \mathcal{O}(1),\label{eq:ratesasipabep3app},
	\end{align}
	where $A_j$ and $B_j$ are the factors in the rates $\LABEP{\sigma,\vec{k},\lambda}$ from Definition \ref{def:dynabep},
	\begin{align*}
		A_j=& \frac{1}{\sigma^2}\sinh_\sigma(x_{j})\sinh_\sigma(x_{j+1})\frac{\sinh_\sigma( \lambda+2E_j^--x_j)\sinh_\sigma(\lambda+2E_j^-+x_{j+1})}{\sinh_\sigma(\lambda+2E_j^-)^2},\\
		B_j=&\frac{1}{\sigma}\bigg[k_j\sinh_\sigma(x_{j+1})\frac{\sinh_\sigma(\lambda+2E_j^-+ x_{j+1})}  {\sinh_\sigma(\lambda+2E_j^-)} -k_{j+1}\sinh_\sigma( x_{j})\frac{\sinh_\sigma(\lambda+2E_j^--x_j)}   {\sinh_\sigma(\lambda+2E_j^-)} \\
		&\hspace{0.3cm} -2\sinh_\sigma (x_j)\sinh_\sigma( x_{j+1})\frac{\cosh_\sigma(\lambda+2E_j^-)\sinh_\sigma(\lambda+2E_j^--x_j)\sinh_\sigma(\lambda+2E_j^-+x_{j+1})}{\sinh_\sigma(\lambda+2E_j^-)^3}\bigg].
	\end{align*}
	Recall from \eqref{eq:ratesdynASIPrewritten}  that we can write the rates of $\LASIPpar$ as
	\begin{align}
		\begin{split}
			C_j^{\mathsf{L},+}&=[\zeta_j]_q[\zeta_{j+1}+k_{j+1}]_q\frac{[h^-_{j-1}+\zeta_{j}]_q[h^-_{j}+\zeta_{j+1}]_q}{[h^-_j]_q[h^-_j-1]_q},\\
			C_j^{\mathsf{L},-}&=[\zeta_{j}]_q[\zeta_{j-1}+k_{j-1}]_q\frac{[h^-_{j-1}-\zeta_{j-1}]_q[h^-_j-\zeta_{j}]_q}{[h^-_{j-1}]_q[h^-_{j-1}+1]_q}.
		\end{split}
	\end{align}
	Therefore, the rate $C^{\mathsf{L},+}_j(\vec{x}/\varepsilon)$ of $\LASIP{1-\varepsilon\sigma,\vec{k},\lambda/\varepsilon}$ is equal to
	\begin{align*}
		&\left[\frac{x_j}{\varepsilon}\right]_{1-\varepsilon\sigma}\left[\frac{x_{j+1}}{\varepsilon}+k_{j+1}\right]_{1-\varepsilon\sigma}\frac{\left[\frac{x_j+\lambda}{\varepsilon} +\sum_{i=1}^{j-1} 2\frac{x_{i}}{\varepsilon}+k_i\right]_{1-\varepsilon\sigma}\left[\frac{x_{j+1}+\lambda}{\varepsilon} +\sum_{i=1}^{j} 2\frac{x_{i}}{\varepsilon}+k_i\right]_{1-\varepsilon\sigma}}{\left[\frac{\lambda}{\varepsilon} +\sum_{i=1}^{j} 2\frac{x_{i}}{\varepsilon}+k_i\right]_{1-\varepsilon\sigma}\left[\frac{\lambda}{\varepsilon}-1 +\sum_{i=1}^{j} 2\frac{x_{i}}{\varepsilon}+k_i\right]_{1-\varepsilon\sigma}}\\
		&=\frac{\sinh_{\alpha}\!\big(\frac{x_j}{\varepsilon}\big)\sinh_{\alpha}\!\big(\frac{x_{j+1}}{\varepsilon}+k_{j+1}\big)\sinh_{\alpha}\!\big(\frac{x_{j}+\lambda}{\varepsilon}  + 2E_{j-1}^-\big(\frac{\vec{x}}{\varepsilon}+\vec{k}\big)\big)\sinh_{\alpha}\!\big(\frac{x_{j+1}+\lambda}{\varepsilon} + 2E_j^-\big(\frac{\vec{x}}{\varepsilon}+\vec{k}\big)  \big)}{\sinh_{\alpha}\!\big(1\big)^2\sinh_{\alpha}\!\big(\frac{\lambda}{\varepsilon} + 2E_j^-\big(\frac{\vec{x}}{\varepsilon}+\vec{k}\big) \big)\sinh_{\alpha}\!\big(\frac{\lambda}{\varepsilon}-1 + 2E_j^-\big(\frac{\vec{x}}{\varepsilon}+\vec{k}\big) \big)},
	\end{align*}
	where $\alpha=\ln(1-\varepsilon\sigma)$. By the summation formula for a geometric series, we have for $|\varepsilon|<1/|\sigma|$, 
	\begin{align*}
		2\sinh\!\big(\!\ln(1-\varepsilon\sigma)\big)=1-\varepsilon\sigma-(1-\varepsilon\sigma)^{-1}= -2\sigma\varepsilon +\mathcal{O}(\varepsilon^2).
	\end{align*}
	Therefore,
	\begin{align}
		\frac{1}{\sinh_\alpha(1)^2} = \frac{1}{\sigma^2\varepsilon^2}+\mathcal{O}(1/\varepsilon),\label{eq:1/sinh}
	\end{align}
	where we used that the meromorphic function\footnote{We use the principal branch of the logarithm.} $1/\sinh_\alpha(1)^2$ has a Laurent series around $\varepsilon=0$.
	Let us define the function $g_1$ as the rate $C^{\mathsf{L},+}_{j}(x/\varepsilon)$ without the factor $\sinh_\alpha(1)^2$ in the denominator, i.e.
	\[
	g_1(\varepsilon)=\frac{\sinh_{\alpha}\!\big(\frac{x_j}{\varepsilon}\big)\sinh_{\alpha}\!\big(\frac{x_{j+1}}{\varepsilon}+k_{j+1}\big)\sinh_{\alpha}\!\big(\frac{x_{j}+\lambda}{\varepsilon} + E_{j-1}\big(\frac{2x}{\varepsilon}+\vec{k}\big)\big)\sinh_{\alpha}\!\big(\frac{x_{j+1}+\lambda}{\varepsilon} + E_{j}\big(\frac{2x}{\varepsilon}+\vec{k}\big)  \big)}{\sinh_{\alpha}\!\big(\frac{\lambda}{\varepsilon} + E_{j}\big(\frac{2x}{\varepsilon}+\vec{k}\big) \big)\sinh_{\alpha}\!\big(\frac{\lambda}{\varepsilon}-1 + E_{j}\big(\frac{2x}{\varepsilon}+\vec{k}\big) \big)}.
	\] 
	Since
	\begin{align}
		\lim_{\varepsilon\to 0}\ \ln(1-\varepsilon\sigma\big)\big(a+b/\varepsilon\big)= -b\sigma,\label{eq:limitproofdynasipabep}
	\end{align}
	$g_1$ is analytic in a neighborhood around $0$, which has a Taylor series. With \eqref{eq:ratesasipabep1app}, \eqref{eq:ratesasipabep2app}, \eqref{eq:ratesasipabep3app}, and \eqref{eq:1/sinh} in mind, we are interested in the first two terms,
	\[
	g_1(\varepsilon)=g_1(0)+g_1'(0)\varepsilon + \mathcal{O}(\varepsilon^2).
	\]
	From \eqref{eq:limitproofdynasipabep} we obtain
	\begin{align*}
		g_1(0) &= 4\sinh_\sigma(x_j)\sinh_\sigma( x_{j+1})\frac{\sinh_\sigma(\lambda+2E_{j-1}(x) + x_j\big)\sinh_\sigma\!\big(\lambda+E_j(x)+x_{j+1}\big)}{\sinh_\sigma\!\big(\lambda+2E_j(x)\big)^2}\\
		&= \sigma^2 A_j(x,\sigma,\lambda),
	\end{align*}
	which combined with \eqref{eq:1/sinh} proves \eqref{eq:ratesasipabep1app}. Entirely similar one can show that
	\[
	C^{\mathsf{L},-}_{j+1}(x/\varepsilon) = \frac{g_2(\varepsilon)}{\sinh_\alpha(1)^2},
	\]
	where 
	\[
	g_2(\varepsilon)=\frac{\sinh_{\alpha}\!\big(\frac{x_{j+1}}{\varepsilon}\big)\sinh_{\alpha}\!\big(\frac{x_j}{\varepsilon}+k_j\big)\sinh_{\alpha}\!\big(\frac{\lambda-x_j}{\varepsilon}  + E_{j}\big(\frac{2x}{\varepsilon}+\vec{k}\big)\big)\sinh_{\alpha}\!\big(\frac{\lambda-x_{j+1}}{\varepsilon} + E_{j+1}\big(\frac{2x}{\varepsilon}+\vec{k}\big)  \big)}{\sinh_{\alpha}\!\big(\frac{\lambda}{\varepsilon} + E_{j}\big(\frac{2x}{\varepsilon}+\vec{k}\big) \big)\sinh_{\alpha}\!\big(\frac{\lambda}{\varepsilon}+1 + E_{j}\big(\frac{2x}{\varepsilon}+\vec{k}\big) \big)}.
	\] 
	Then again
	\[
	g_2(\varepsilon)=g_2(0)+g_2'(0)\varepsilon+\mathcal{O}(\varepsilon^2),
	\]
	where 
	\[
	g_2(0)=\sigma^2 A_j(x,\sigma,\lambda),
	\]
	proving \eqref{eq:ratesasipabep2app}. Let us now turn to \eqref{eq:ratesasipabep3app}. Since $g_1(0)=g_2(0)$, we have
	\begin{align}
		C^{\mathsf{L},-}_{j+1}(x/\varepsilon)-C^{\mathsf{L},+}_{j}(x/\varepsilon) = \frac{g_2(\varepsilon)-g_1(\varepsilon)}{\sinh_\alpha(1)^2} = \frac{(g'_2(0)-g'_1(0))}{4\sigma^2\varepsilon} + \mathcal{O}(1).\label{eq:Btermratedynabep}
	\end{align}
	Therefore, we have to compute $g_1'(0)$ and $g_2'(0)$. Let us define the numerator en denominator of $g_1$ and $g_2$ by
	\[
	g_j(\varepsilon)=\frac{n_j(\varepsilon)}{d_j(\varepsilon)}.
	\]
	Since $n_1(0)=n_2(0)$ and $d_1(0)=d_2(0)$, the quotient rule gives us
	\begin{align}
		g_2'(0)-g_1'(0) = \frac{n_2'(0)-n_1'(0)}{d_1(0)}+ \frac{n_1(0)(d_1'(0)-d_2'(0))}{d_1(0)^2}.\label{eq:quotientrule}
	\end{align}
	Using
	\begin{align*}
		\frac{\mathrm{d}}{\mathrm{d}\varepsilon}\sinh_{\ln(1-\varepsilon\sigma)}(a+b/\varepsilon)\Big\rvert_{\varepsilon=0} &=- \cosh_{\ln(1-\varepsilon\sigma)}(a+b/\varepsilon)\bigg(\frac{\sigma(a+b/\varepsilon)}{1-\varepsilon\sigma}+\frac{b\ln(1-\varepsilon\sigma)}{\varepsilon^2}\bigg)\Big\rvert_{\varepsilon=0}\\
		&=-\cosh_\sigma(b)(a\sigma -\tfrac12 b\sigma^2),
	\end{align*}
	we can compute $n_2'(0)-n_1'(0)$ and see that only the terms with $k_j$ and $k_{j+1}$ do not cancel,
	\begin{align*}
		n_2'(0)-n_1'(0)&= k_j\sigma\cosh_\sigma(x_j^\varepsilon)\sinh_\sigma(x_{j+1}^\varepsilon)\sinh_\sigma(\lambda+2E_j(x)-x_j^\varepsilon)\sinh_\sigma(\lambda+2E_{j+1}(x)-x_{j+1}^\varepsilon) \\
		&+k_j\sigma\cosh_\sigma(\lambda+2E_j(x)-x_j^\varepsilon)\sinh_\sigma(x_{j+1}^\varepsilon)\sinh_\sigma(x_j^\varepsilon)\sinh_\sigma(\lambda+2E_{j+1}(x)-x_{j+1}^\varepsilon)\\
		&-k_{j+1}\sigma\cosh_\sigma(x_{j+1}^\varepsilon)\sinh_\sigma(x_{j}^\varepsilon)\sinh_\sigma(\lambda+2E_{j-1}(x)+x_j^\varepsilon)\sinh_\sigma(\lambda+2E_{j}(x)+x_{j+1}^\varepsilon) \\
		&+k_{j+1}\sigma \cosh_\sigma(\lambda+2E_{j+1}(x)-x_{j+1}^\varepsilon)\sinh_\sigma(x_{j+1}^\varepsilon)\sinh_\sigma(x_j^\varepsilon)\sinh_\sigma(\lambda+2E_{j}(x)-x_{j}^\varepsilon).
	\end{align*}
	If we now use 
	\begin{align*}
		2\cosh(x)\sinh(y)=\sinh(x+y)-\sinh(x-y),
	\end{align*}
	we obtain
	\begin{align*}
		n_2'(0)-n_1'(0)=\sigma k_j \sinh_\sigma(\lambda+2E_j(x)\sinh_\sigma(x_{j+1}^\varepsilon)\sinh_\sigma(\lambda+2E_j(x)+x_{j+1}^\varepsilon)) \\
		-\sigma k_{j+1}\sinh_\sigma(\lambda+2E_j(x)\sinh_\sigma(x_{j}^\varepsilon)\sinh_\sigma(\lambda+2E_j(x)-x_{j}^\varepsilon)).
	\end{align*}
	Since 
	\[
	d_1(0)=\sinh_\sigma(\lambda+2E_j(x))^2,
	\]
	the equation \eqref{eq:ratesasipabep3app} for the terms of $B_j(x,\sigma,\vec{k},\lambda)$ with $k_j$ and $k_{j+1}$ follows from \eqref{eq:Btermratedynabep} and \eqref{eq:quotientrule}. The last term of $B_j(x,\sigma,\vec{k},\lambda)$ then follows from computing $d_1'(0)-d_2'(0)$ and $n_1(0)$ and using again \eqref{eq:Btermratedynabep} and \eqref{eq:quotientrule}. We have
	\begin{align*}
		d_1'(0)-d_2'(0) = -2\sigma \cosh_\sigma(\lambda+2E_j(x))\sinh_\sigma(\lambda+2E_j(x))
	\end{align*}
	and 
	\begin{align*}
		n_1(0)= \sinh_\sigma(x_j)\sinh_\sigma( x_{j+1})\sinh_\sigma(\lambda+2E_{j}(x) - x_j\big)\sinh_\sigma\!\big(\lambda+E_j(x)+x_{j+1}\big),
	\end{align*}
	and \eqref{eq:ratesasipabep3app} follows.

\end{document}